\DeclareSymbolFont{cyrillic}{T2A}{cmr}{m}{n}
\DeclareMathSymbol{\loba}{\mathalpha}{cyrillic}{203} 
\newcommand{\HH}{\mathbb{H}}
\newcommand{\CC}{\mathbb{C}}
\newcommand{\QQ}{\mathbb{Q}}
\newcommand{\NN}{\mathbb N}
\newcommand*{\ee}{\mathrm{e}}
\DeclareMathOperator{\PSL}{PSL}
\newtheorem*{theorem*}{Theorem}
\newtheorem*{main*}{Theorem}
\newtheorem*{proposition*}{Proposition}
\newtheorem*{properties*}{Properties}
\newtheorem{corollary}{Corollary}
\newtheorem*{corollary*}{Corollary}
\newtheorem*{lemma*}{Lemma}
\newtheorem*{remark*}{Remark}
\newtheorem*{conj*}{Conjectures}
\newtheorem{proposition}{Proposition}
\newtheorem*{example*}{Example}
\newtheorem*{examples*}{Examples}
\newtheorem{remark}{Remark}
\newtheorem*{definition*}{Definition}
\def\loba{\loba}
\title[The non-arithmetic cusped hyperbolic 3-orbifold of minimal volume]{The non-arithmetic cusped hyperbolic\\ 3-orbifold of minimal volume
}
\author{Simon T. Drewitz}
\address{Department of Mathematics\\
University of Fribourg\\
CH-1700 Fribourg\\Switzerland}
\email{s.t.drewitz@dunelm.org.uk}
\author{Ruth Kellerhals}
\address{Department of Mathematics\\
University of Fribourg\\
CH-1700 Fribourg\\Switzerland}
\email{Ruth.Kellerhals@unifr.ch}
\newtheorem{lem}{Lemma}
\theoremstyle{definition}
\numberwithin{equation}{section}
\def\loba{\hbox{\rm J\kern-1pt I}}
\def\vol{\hbox{vol}}
\tikzset{orthobullet/.style={circle, fill=black, draw=black,
                              minimum size=5pt, inner sep=0pt}}
\newcommand{\oneoverdalignedthree}{%
  \begin{tikzpicture}[scale=1]
    \pgfmathsetmacro{\d}{1}
    \pgfmathsetmacro{\height}{sqrt(3)/2}
    \pgfmathsetmacro{\heightb}{1/sqrt(3)}
    \draw (210:\heightb) -- (330:\heightb) -- (90:\heightb) -- cycle;
    \draw (210:\heightb) circle[radius=0.5];
    \draw (330:\heightb) circle[radius=0.5];
    \draw (90:\heightb) circle[radius=0.5];
    \draw[dotted] (210:\heightb) -- +(30:\height);
    \draw[dotted] (330:\heightb) -- +(30+120:\height);
    \draw[dotted] (90:\heightb) -- +(30-120:\height);
  \end{tikzpicture}
}
\newcommand{\oneoverdaligned}[1]{%
  \begin{tikzpicture}[scale=1]
    \pgfmathsetmacro{\d}{2*cos(180/#1)}
    \pgfmathsetmacro{\height}{\d*sqrt(3)/2}
    \pgfmathsetmacro{\heightb}{\d/sqrt(3)}
    \pgfmathsetmacro{\w}{\d-1/\d}
    \draw (210:\heightb) -- (330:\heightb) -- (90:\heightb) -- cycle;
    \draw (210:\heightb) circle[radius=0.5];
    \draw (330:\heightb) circle[radius=0.5];
    \draw (90:\heightb) circle[radius=0.5];
    \pgfmathsetmacro{\dkglobal}{\d}
    \pgfmathsetmacro{\rkglobal}{1/\d}
    \foreach \k in {1,...,\numexpr (#1-2)/2\relax}
    {
      \foreach \x in {210, 330, 90}
      {
        \foreach \y in {-150, 150}
        {
          \draw ($(\x:\heightb) + (\x+\y:1/\dkglobal)$)
            circle[radius=0.5*\rkglobal^2];
        }
      }
      \pgfmathparse{\d-1/\dkglobal}
      \global\let\dkglobal=\pgfmathresult
      \pgfmathparse{\rkglobal/\dkglobal}
      \global\let\rkglobal=\pgfmathresult
    }
    \draw[dotted] (210:\heightb) -- +(30:\height);
    \draw[dotted] (330:\heightb) -- +(30+120:\height);
    \draw[dotted] (90:\heightb) -- +(30-120:\height);
  \end{tikzpicture}
}
\begin{document}
\begin{abstract}    
We show that the 1-cusped quotient of the hyperbolic space $\HH ^3$ by the
tetrahedral Coxeter group $\Gamma_*=[5,3,6]$ has minimal volume among all non-arithmetic cusped hyperbolic 3-orbifolds,
and as such it is uniquely determined. Furthermore, the lattice $\Gamma_*$ is incommensurable to any Gromov-Piatetski-Shapiro type lattice. Our methods have their origin in the work of C. Adams \cite{Adams1, Adams2}. We extend considerably this approach via the geometry of the underlying horoball configuration induced by a cusp.


\smallskip
\noindent \textbf{Keywords.} Hyperbolic space, non-arithmetic lattice,
Coxeter group, horoball diagram, volume.
\end{abstract}

\maketitle

\qquad\small\today


\font\titel=cmbx10 at 20 truept
\font\titelsl=cmsl10 at 20 truept
\font\titelrm=cmr10 at 20 truept
\font\mittel=cmr10 at 14 truept
\font\bfmittel=cmbx8 at 14 truept
\font\medklein=cmr10 at 12 truept
\font\klein=cmr8
\font\kklein=cmr9
\font\bfklein=cmbx8
\font\bfkklein=cmbx9
\font\itklein=cmsl10 at 10 truept

\font\refklein=cmr12 at 11 truept
\font\titel=cmbx10 scaled\magstep2

\def\bar{\overline}
\def\vol{\hbox{vol}}
\def\t{\theta}
\def\vol{\hbox{vol}}

\def\<{\langle}
\def\>{\rangle}

\def\Li{\mathop{\rm Li}\nolimits}
\def\IM{\mathop{\rm Im}\nolimits}

\def\gd{\hbox{$\bullet\kern-6pt$ --\kern-5pt---\kern-5pt---
\kern-6pt}}
\def\gr #1{\hbox{$\bullet\kern-6pt $ ---\kern-5pt\raise
6pt\hbox{$\scriptstyle #1$}\kern-8pt ---\kern-7pt ---\kern-6pt --\kern-0pt}}
\def\grs #1{\hbox{$\bullet\kern-4pt $ --\kern-3pt\raise
9pt\hbox{$#1$}\kern-13pt ---\kern-6pt ---\kern-5pt --\kern0pt}}

\def\grastrich #1{\hbox{$\bullet\kern-2pt $ ---\kern-3pt\raise
8pt\hbox{$\alpha'_#1$}\kern-7pt --\kern-6pt ---\kern-5pt ---\kern+1pt}}
\def\grinf{\hbox{$\bullet\kern-2pt $ ---\kern-4pt\raise
6pt\hbox{$\infty$}\kern-11pt --\kern-6pt ---\kern-5pt ---\kern+1pt}}
\def\grh #1{\hbox{$\bullet\kern-2pt $ ---\kern-3pt\raise
8pt\hbox{$#1$}\kern-7pt --\kern-6pt ---\kern-5pt ---\kern+1pt}}
\def\gra #1{\hbox{$\bullet\kern -2pt$ --\raise
8pt\hbox{$\alpha_{#1}$}\kern-13pt
---\kern-4pt\kern-4pt ---\kern-5pt ---\kern +2pt}}
\def\graa #1{\hbox{$\bullet\kern -2pt$ ---\kern-6pt\raise
8pt\hbox{$\alpha_{#1}$}\kern-14pt
---\kern-4pt\kern-4pt ---\kern-5pt ---\kern +2pt}}
\def\graablack #1{\hbox{$\bullet\kern -2pt$ ---\kern-6pt\raise
8pt\hbox{$\alpha_{#1}$}\kern-14pt
---\kern-4pt\kern-4pt ---\kern-5pt ---\kern +2pt$\bullet$}}
\def\diamondgra #1{\hbox{$\diamond\kern -2pt$ --\raise
8pt\hbox{$\alpha_{#1}$}\kern-15pt
---\kern-4pt\kern-4pt ---\kern-5pt ---\kern +2pt}}
\def\gradiamond #1{\hbox{$\circ\kern -2pt$ --\raise
8pt\hbox{$\alpha_{#1}$}\kern-13pt
---\kern-4pt\kern-4pt ---\kern-5pt ---$\diamond$}}
\def\diamondgr #1{\hbox{$\diamond\kern -2pt$ ---\raise
8pt\hbox{${#1}$}\kern-13pt
---\kern-4pt\kern-4pt ---\kern-5pt ---\kern +2pt}}
\def\grdiamond #1{\hbox{$\circ\kern -2pt$ --\raise
8pt\hbox{${#1}$}\kern-13pt
---\kern-4pt\kern-4pt ---\kern-5pt ---\kern+2pt$\diamond$}}
\def\blackgra #1{\hbox{$\bullet\kern -2pt$ --\raise
8pt\hbox{$\alpha_{#1}$}\kern-13pt
---\kern-4pt\kern-4pt ---\kern-5pt ---\kern +2pt}}
\def\grablack #1{\hbox{$\circ\kern -2pt$ --\raise
8pt\hbox{$\alpha_{#1}$}\kern-13pt
---\kern-4pt\kern-4pt ---\kern-5pt ---\kern +1pt$\bullet$}}
\def\blackgr #1{\hbox{$\bullet\kern-2pt $ ---\kern-3pt\raise
8pt\hbox{$#1$}\kern-7pt --\kern-6pt ---\kern-5pt ---\kern+1pt}}
\def\grblack #1{\hbox{$\circ\kern-2pt $ ---\kern-3pt\raise
8pt\hbox{$#1$}\kern-7pt --\kern-6pt ---\kern-5pt ---\kern+1pt$\bullet$}}
\def\grslash{\hbox{$\bullet$\kern-1pt\raise8pt\hbox{$\diagup$}\raise14pt\hbox{$\bullet$}\raise8pt\hbox{$\diagdown$} \kern-3pt$\bullet$\kern0pt---\kern-3pt---\kern0pt$\bullet$}}
\def\gra3tilde{\hbox{$\bullet$\kern-2pt\raise8pt\hbox{$\diagup$}\kern-1pt\raise14pt\hbox{$\bullet$}\raise8pt\hbox{$\diagdown$} \kern-5pt$\bullet$\kern-28pt---\kern-3pt---\kern-2pt-}}

\def\l#1{\displaystyle{\tilde{}\,}\hbox{$#1$}}
\def\r#1{\hbox{$#1$}\raise-8pt\hbox{${\,\tilde{}}$}}

\def\Vor{\parindent=17pt\par\hang\textindent}
\def\ad#1{#1}
\def\add#1{[#1]}


\section{Introduction}\label{sec:intro}
Let $\mathbb H^3$ be the hyperbolic $3$-space viewed in the
upper half space $U^3$ of Poincar\'e, and let $\hbox{Isom}\mathbb H^3$
be its isometry group. A cusped hyperbolic $3$-orbifold $V$ is
the quotient of $\mathbb H^3$ by a non-cocompact lattice $\Gamma\subset\hbox{Isom}\mathbb H^3$, that is,
by a discrete group of hyperbolic isometries with a non-compact fundamental polyhedron of finite volume. In particular, $\Gamma$ contains a non-trivial parabolic subgroup whose elements fix a point $q$ on the boundary $\partial\mathbb H^3$. Without loss of generality, assume that $q=\infty$ with stabiliser $\Gamma_{\infty}<\Gamma$. The group
$\Gamma_{\infty}$ gives rise to precisely invariant subsets which are horoballs
centred at $\infty$. They can be arranged in such a way that there is a maximal horoball $B_{\infty}$
touching some of its $\Gamma$-images, called full-sized horoballs, and which project to a maximal cusp $C\subset V$ of finite volume in $V$. Hence, the number of non-conjugate parabolic subgroups of $\Gamma$
equals the number of maximal cusps in $V$.
The orthogonal projection of the full-sized horoballs to the horosphere $\partial B_{\infty}=H_{\infty}$ yields a horoball-packing with a characteristic horoball diagram on $H_{\infty}$. In this way, a combination of results from crystallography and a density result of K. B\"or\"oczky \cite[Theorem 4]{Boer} allows one to deduce lower volume bounds.

Using this picture, R. Meyerhoff \cite{Mey2} identified the 1-cusped quotient space $\HH^3/[3,3,6]$ as the minimal volume cusped hyperbolic $3$-orbifold. The Coxeter group with symbol
$[3,3,6]$ generates the group of symmetries of an ideal regular tetrahedron $S_{reg}^{\infty}$ of dihedral angle $\frac{\pi}{3}$. Furthermore,
it
is an arithmetic group commensurable to the Eisenstein modular group PSL$(2,\mathbb Z[\omega])$ where $\omega= (-1+\sqrt{-3})/2$
is a primitive cubic root of unity.
The orbifold $\HH^3/[3,3,6]$ is covered by the well-known (non-orientable) $1$-cusped Gieseking manifold $G$ which, by a result of Adams \cite{Adams4}, is of minimal volume among all cusped hyperbolic $3$-manifolds.

The result of Meyerhoff was considerably extended by Adams \cite[Theorem 6.1, Corollary 6.2]{Adams1} who identified the
six cusped (orientable and non-orientable) hyperbolic $3$-orbifolds of smallest volume. Crucial in Adams' proof was the assumption of the strict upper volume bound of $\frac{1}{4}\,\vol(S_{reg}^{\infty})$ in the case of orientable orbifolds, implying that such an orbifold has only one cusp and a single orbit of full-sized horoballs modulo the action of the corresponding stabiliser.
In \cite{NR}, W. Neumann and A. Reid characterised Adams' spaces and showed that they are all arithmetically defined.

In this work, we consider {\it non-arithmetic} cusped hyperbolic $3$-orbifolds and prove the following result.
\begin{theorem*}\label{thm:main}
Among all non-arithmetic cusped hyperbolic 3-orbifolds, the 1-cusped quotient space $V_*$
of $\HH^3$ by the tetrahedral Coxeter group $[5,3,6]$ has minimal volume. As such the orbifold $V_*$ is unique, and its
volume $v_*$ is given explicitly by \eqref{eq:vol536}.
\end{theorem*}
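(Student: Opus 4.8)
The plan is to pin down $v_*=\vol(V_*)$ as an explicit upper bound, to verify that the candidate is genuinely non-arithmetic, and then to show that \emph{every} cusped hyperbolic $3$-orbifold of volume at most $v_*$ is either arithmetic or isometric to $V_*$. First I would evaluate the volume of the Coxeter tetrahedron with symbol $[5,3,6]$ in terms of the Lobachevsky function $\loba$, obtaining the value $v_*$ recorded in the statement; this fixes the target that any competitor must undercut. In parallel I would confirm that $\Gamma_*=[5,3,6]$ is non-arithmetic via Vinberg's arithmeticity criterion: the entry $\cos(\pi/5)=(1+\sqrt5)/4$ forces the defining field to contain $\QQ(\sqrt5)$, and the Galois conjugate obtained by $\sqrt5\mapsto-\sqrt5$ fails to yield a definite form, so $V_*$ is a bona fide non-arithmetic orbifold realising the bound $v_*$.

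The core of the argument is a volume-bounded enumeration. Let $V=\HH^3/\Gamma$ be an arbitrary cusped hyperbolic $3$-orbifold with $\vol(V)\le v_*$. Choosing a cusp, normalising its parabolic fixed point to $\infty$, and passing to a maximal horoball $B_\infty$ tangent to its full-sized $\Gamma$-translates, I would project these translates orthogonally onto the horosphere $H_\infty=\partial B_\infty$ to obtain the characteristic horoball diagram on the Euclidean plane $H_\infty$, on which the cusp stabiliser $\Gamma_\infty$ acts as a crystallographic group. Böröczky's density bound \cite{Boer} then converts the hypothesis into a strong constraint: it estimates $\vol(V)$ from below by the cusp volume divided by the optimal horoball-packing density, so $\vol(V)\le v_*$ restricts both the cusp cross-section orbifold $H_\infty/\Gamma_\infty$ and the admissible tangency configuration to finitely many possibilities.

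Next I would sharpen this estimate using the detailed geometry of the horoball diagram, which is the main new ingredient. The diameters of the full-sized horoballs, the mutual Euclidean distances of their centres, and the covolume of $\Gamma_\infty$ are tied together by explicit lattice and isometry relations; analysing these under $\vol(V)\le v_*$ should isolate the admissible wallpaper symmetries and tangency patterns, and hence a finite list of candidate groups, each generated by $\Gamma_\infty$ together with a single horoball-interchanging element. For every candidate I would reconstruct $\Gamma$ and decide its arithmeticity: the orbifolds of volume strictly below $v_*$ are expected to reduce to Adams' minimal-volume family \cite{Adams1}, all arithmetic by Neumann--Reid \cite{NR}, so that the unique non-arithmetic orbifold surviving at volume exactly $v_*$ is $V_*$, which I would identify with $\HH^3/[5,3,6]$ directly from its horoball data, yielding both minimality and uniqueness.

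The hard part will be this geometric case analysis: since $v_*$ lies well above the absolute minimum $\vol(\HH^3/[3,3,6])$ of \cite{Mey2}, Böröczky's bound is far from sharp over the whole range, and controlling the enumeration demands a careful interplay between the crystallographic structure of $\Gamma_\infty$, the packing of full-sized horoballs, and the arithmeticity test, so as to guarantee that no smaller-volume non-arithmetic orbifold is missed.
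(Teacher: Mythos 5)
Your overall strategy---normalise a maximal cusp at $\infty$, study the induced horoball diagram under the crystallographic action of $\Gamma_\infty$, bound the volume from below by B\"or\"oczky's density theorem, and sift the surviving configurations through an arithmeticity test---is exactly the skeleton of the paper's proof, and your use of Vinberg's criterion to certify that $[5,3,6]$ is non-arithmetic matches the paper as well. The genuine gap is your pivotal claim that every orbifold of volume below $v_*\approx 0.1715$ ``reduces to Adams' minimal-volume family'', so that arithmeticity follows from Neumann--Reid. Adams' structural results require the volume hypothesis $\vol(V)<\frac{1}{4}\,\mu_3\approx 0.2537$ for \emph{orientable} orbifolds, and it is only under this bound that one gets a single cusp and a \emph{single} $\Gamma_\infty$-orbit of full-sized horoballs. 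A non-orientable orbifold with $\vol(V)\le v_*$ has orientable double cover of volume up to $2v_*\approx 0.343$, outside Adams' range, so neither his enumeration nor the single-orbit property can be invoked; this forces a separate analysis of cusps with two or more equivalence classes of full-sized horoballs (the paper's Section \ref{step4}), which your proposal omits entirely. Even in the single-orbit setting, Adams' list stops far below $v_*$, and the remaining volume window is not ``finitely many possibilities'': it contains infinite families parametrised by the distance $d$ between full-sized horoballs and the angle $\theta$, which the paper controls through a hierarchy of $(\frac{1}{d_k})$-balls obeying a Chebyshev-type recursion (identifying the groups $[l,3,6]$, $l=3,4,5,6$, with $[5,3,6]$ emerging as the unique non-arithmetic survivor), and, for $d>\sqrt{3}$, through truncated orthoschemes together with Schl\"afli monotonicity.

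A second, related flaw is that the dichotomy ``arithmetic or isometric to $V_*$'' is false in the relevant range, so the case analysis cannot be closed by arithmeticity alone. The enumeration produces non-arithmetic orbifolds that must be excluded by \emph{volume}: the $1$-cusped quotient by the index-two extension of $[(3^3,6)]$ by its graph symmetry has volume $\approx 0.182$, within $0.011$ of $v_*$ (it arises from $(\frac{1}{d})$-balls on the angle bisectors with $d^2=1+\sqrt{3}$), and a similar non-arithmetic competitor built from $[(4^3,3)]$ appears in the $\{2,4,4\}$ case; no density estimate detects such razor-thin margins, one must identify the orbifolds exactly. The enumeration also produces arithmetic orbifolds beyond Adams' six (for instance one of volume $\frac{3}{8}\,\loba(\frac{\pi}{3})\approx 0.127$, commensurable with $[6,3,6]$), and there are multi-cusped orbifolds below $v_*$ (the $2$-cusped quotient $\HH^3/[3,6,3]$ has volume $\frac{\mu_3}{6}\approx 0.169<v_*$), so the reduction to a single rigid cusp of type $\{2,3,6\}$ or $\{2,4,4\}$ needs its own argument: the paper handles this with Adams' multi-cusped and non-rigid-cusp results and a separate exclusion of type $\{3,3,3\}$. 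Without these steps both the minimality and your uniqueness claim at volume exactly $v_*$ remain unproved.
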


The Coxeter group $[5,3,6]$ gives rise to the group of symmetries of
an ideal regular dodecahedron $D_{reg}^{\infty}\subset\HH^3$ of dihedral angle
$\frac{\pi}{3}$. By applying different face identifications to
$D_{reg}^{\infty}$, the orbifold $V_*$ admits several
non-isometric non-arithmetic cover manifolds; see \cite{Everitt}.

Notice that the non-arithmetic orbifold $V_*=\HH^3/[5,3,6]$ does not relate to a Gromov--Piatetski-Shapiro construction.
In fact, the Coxeter tetrahedron associated to $[5,3,6]$ is not {\it splittable} in the
sense of \cite[Section 6.2, Example 6.10]{Stover}. Therefore, by \cite[Lemma 6.9]{Stover},
the group $\Gamma_*$ is incommensurable to any Gromov--Piatetski-Shapiro type lattice.

By considering the smooth case of non-arithmetic cusped hyperbolic 3-manifolds, the above theorem allows us to deduce the following result
in terms of the non-arithmetic tetrahedral Coxeter group.

\begin{proposition*}
The fundamental group of a non-arithmetic cusped hyperbolic 3-manifold of minimal volume is incommensurable to the Coxeter group $[5,3,6]$; its volume is smaller than or equal to $24\cdot \hbox{\rm covol}([(3^3,6)])\approx 8.738570$.
\end{proposition*}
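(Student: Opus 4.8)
The plan is to prove the two assertions separately: the upper bound by exhibiting a concrete non-arithmetic cover, and the incommensurability by combining the main theorem with a volume estimate for the whole commensurability class of $[5,3,6]$.

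\emph{The upper bound.} I would realise it by an explicit cover of a non-arithmetic Coxeter orbifold. The cyclic Coxeter tetrahedron group $[(3^3,6)]$ is non-arithmetic, as follows from the commensurability classification of non-compact hyperbolic Coxeter tetrahedra together with the arithmeticity criterion applied to its invariant trace field. Its fundamental tetrahedron has two ideal vertices of Euclidean type $[3,6]$ and two finite vertices with spherical stabilisers $[3,3]\cong S_4$ of order $24$, so the largest finite subgroup has order $24$. The goal is then to produce a torsion-free subgroup $\Gamma'\leq[(3^3,6)]$ of index $24$, equivalently a transitive action of $[(3^3,6)]$ on $24$ points under which every finite-order element acts without fixed points; since any torsion element of a Coxeter group is conjugate into a vertex, edge or face stabiliser, such a $\Gamma'$ is automatically torsion-free. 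Then $\HH^3/\Gamma'$ is a non-arithmetic cusped hyperbolic $3$-manifold of volume $24\cdot\covol([(3^3,6)])\approx 8.738570$, which bounds the minimal volume from above.

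\emph{The incommensurability.} The decisive observation is that the main theorem forces $[5,3,6]$ to be maximal in its commensurability class: if it were a proper finite-index subgroup of its commensurator $\Gamma_{\max}$, then $\HH^3/\Gamma_{\max}$ would be a non-arithmetic cusped $3$-orbifold of volume strictly below $v_*$, contradicting minimality. Since $[5,3,6]$ is non-arithmetic, Margulis' theorem makes its commensurator a lattice, so by maximality every lattice commensurable with $[5,3,6]$ is, up to conjugacy, a subgroup of $[5,3,6]$ itself. I would then use the following divisibility: any finite subgroup $F\leq[5,3,6]$ acts freely by right translation on the cosets of a torsion-free subgroup $\Gamma'$, a fixed point producing a nontrivial torsion element of $\Gamma'$; hence $|F|$ divides the index. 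Applied to the spherical vertex group $[5,3]\cong H_3$ of order $120$, this shows that every torsion-free subgroup has index divisible by $120$, so every hyperbolic $3$-manifold commensurable with $[5,3,6]$ has volume at least $120\,v_*=\vol(D_{reg}^\infty)\approx 20.58$.

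Comparing the two estimates then finishes the proof: the minimal volume of a non-arithmetic cusped $3$-manifold is at most $24\cdot\covol([(3^3,6)])\approx 8.738570 < 20.58$, so such a minimal manifold cannot lie in the commensurability class of $[5,3,6]$, and in particular its fundamental group is incommensurable with $[5,3,6]$. I expect the real obstacle to be the first step, the construction of the index-$24$ torsion-free subgroup of $[(3^3,6)]$. The divisibility argument shows $24$ is the smallest index not excluded by the finite subgroups, but a genuine torsion-free subgroup of that index must be non-normal, since no group of order $24$ contains both a copy of $S_4$ and an element of order $6$; exhibiting it amounts to writing down a permutation representation on $24$ points in which all elliptic elements, including the order-$6$ rotations of the two cusp groups, act freely. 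The remaining steps are soft once the maximality of $[5,3,6]$ has been extracted from the main theorem.
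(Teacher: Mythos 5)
Your proposal is correct and follows essentially the same route as the paper: the upper bound comes from a minimal (index $24$) torsion-free subgroup of the non-arithmetic group $[(3^3,6)]$, the lower bound $120\,v_*\approx 20.58$ for any manifold in the commensurability class of $[5,3,6]$ comes from the divisibility/l.c.m.\ argument for torsion-free subgroups (with the spherical vertex group $[5,3]$ of order $120$), and the comparison $8.74<20.58$ then yields the incommensurability. The only differences are cosmetic: you make explicit the maximality of $[5,3,6]$ in its commensurability class (via Margulis and the main Theorem), which the paper leaves implicit in its Section 2.1 discussion of commensurators while instead quoting the incommensurability of $[5,3,6]$ and $[(3^3,6)]$ from the literature, and you propose constructing the index-$24$ torsion-free subgroup by an explicit permutation representation where the paper cites Everitt and the first author's thesis for its existence.
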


Furthermore and as a by-product of the geometric methods used to prove the Theorem, 
we obtain the following two-dimensional analogue which has possibly been overlooked so far.
\begin{proposition*}\label{prop:main-2dim}
Among all non-arithmetic cusped hyperbolic $2$-orbifolds, the 1-cusped quotient space $V_*$
of $\HH^2$ by the triangle Coxeter group $[5,\infty]$ has mi\-nimal area. As such the orbifold $V_*$ is unique, and its
area is given by $\frac{3\pi}{10}$.
\end{proposition*}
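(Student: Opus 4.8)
The plan is to reduce the statement to the Gauss--Bonnet formula together with a short arithmeticity check, using the cusp geometry only to control which orbifolds can occur below the area threshold $\tfrac{3\pi}{10}$. For a complete hyperbolic $2$-orbifold $W=\HH^2/\Delta$ the area equals $-2\pi\,\chi^{\mathrm{orb}}(W)$, so minimising the area is the same as minimising $\lvert\chi^{\mathrm{orb}}(W)\rvert$ among \emph{cusped} $\Delta$, and here $\chi^{\mathrm{orb}}(V_*)=-\tfrac{3}{20}$. First I would record that $V_*$ is precisely the reflection orbifold of the Coxeter triangle $T$ with angles $\tfrac{\pi}{5},\,\tfrac{\pi}{2},\,0$ attached to $[5,\infty]$, whose area $\pi-\tfrac{\pi}{5}-\tfrac{\pi}{2}=\tfrac{3\pi}{10}$ matches the claim; its orientation-preserving subgroup is the Hecke group $H_5=(2,5,\infty)$. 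Since arithmeticity is a commensurability invariant, it will suffice to decide arithmeticity for $H_5$.

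The core step, and the one where I expect the main work to lie, is to enumerate all cusped hyperbolic $2$-orbifolds of area $\le\tfrac{3\pi}{10}$. Placing a cusp of $W$ at $\infty$ in the upper half-plane, the stabiliser $\Delta_\infty$ is an infinite cyclic or infinite dihedral group acting on the horocycle $H_\infty\cong\RR$, and the full-sized horoballs tangent to the maximal horoball $B_\infty$ project to a $\Delta_\infty$-invariant discrete point set on $H_\infty$; the two-dimensional analogue of the Böröczky density estimate applied to this horocycle diagram bounds the area of $W$ from below and shows that a small-area cusp forces a highly restricted diagram. Concretely, I would argue that orientable cusped $2$-orbifolds (spheres with cusps and cone points) all have area $\ge\tfrac{\pi}{3}>\tfrac{3\pi}{10}$, that reflection groups in a hyperbolic polygon with at least four sides have area $\ge\tfrac{\pi}{2}$, and that non-orientable underlying surfaces or additional interior cone points only increase $\lvert\chi^{\mathrm{orb}}\rvert$. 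Hence every cusped orbifold of area $\le\tfrac{3\pi}{10}$ is the reflection orbifold of a triangle $(n_1,n_2,\infty)$, and the inequality $\pi-\tfrac{\pi}{n_1}-\tfrac{\pi}{n_2}\le\tfrac{3\pi}{10}$ leaves exactly the three candidates $(2,3,\infty)$, $(2,4,\infty)$ and $(2,5,\infty)=V_*$, of areas $\tfrac{\pi}{6}<\tfrac{\pi}{4}<\tfrac{3\pi}{10}$.

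It then remains to separate these three by arithmeticity. The groups $(2,3,\infty)$ and $(2,4,\infty)$ are the Hecke groups $H_3=\PSL(2,\ZZ)$ and $H_4$, both arithmetic, appearing in Takeuchi's classification of arithmetic triangle groups; by contrast $(2,5,\infty)=H_5$ has $\lambda=2\cos\tfrac{\pi}{5}=\tfrac{1+\sqrt5}{2}$ and invariant trace field $\QQ(\sqrt5)$, and the nontrivial Galois place obstructs the quaternion-algebra ramification condition, so $H_5$ is non-arithmetic (equivalently, $(2,5,\infty)$ is absent from Takeuchi's list). Thus the two orbifolds of area strictly below $\tfrac{3\pi}{10}$ are arithmetic, while $V_*$ is the unique cusped hyperbolic $2$-orbifold realising $\chi^{\mathrm{orb}}=-\tfrac{3}{20}$: the enumeration above shows no other combinatorial type attains this value, and among triangles $\tfrac{1}{n_1}+\tfrac{1}{n_2}=\tfrac{7}{10}$ forces $(n_1,n_2)=(2,5)$. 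This establishes that $V_*=\HH^2/[5,\infty]$ is the non-arithmetic cusped hyperbolic $2$-orbifold of minimal area $\tfrac{3\pi}{10}$, and that it is uniquely determined. The main obstacle is the threshold enumeration; the arithmeticity input, by contrast, is classical.
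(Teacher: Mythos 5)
Your proposal is correct in substance, but it follows a genuinely different route from the one behind this paper. The paper obtains the two-dimensional statement as a by-product of the horoball machinery developed for the three-dimensional Theorem (deferring details to \cite{Drewitz}): place the cusp at $\infty$, study the horocycle diagram of full-sized horodisks under the stabiliser $\Gamma_{\infty}$ (here infinite cyclic or infinite dihedral), and convert the resulting configuration constraints into area bounds, exactly as in Section \ref{sec:chapter3}. You instead bypass the cusp geometry entirely: Gauss--Bonnet turns area minimisation into minimising $\lvert\chi^{\mathrm{orb}}\rvert$; the orbifold Euler characteristic formula (cone points, corner reflectors, cusps counted as interior punctures or ideal boundary vertices) gives a finite enumeration of all cusped hyperbolic $2$-orbifolds of area at most $\frac{3\pi}{10}$, namely the reflection triangles $(2,3,\infty)$, $(2,4,\infty)$, $(2,5,\infty)$; and the classical arithmeticity input (Takeuchi's list, or equivalently that a non-cocompact arithmetic Fuchsian group is commensurable with $\PSL(2,\ZZ)$ and hence has invariant trace field $\QQ$, while $(2,5,\infty)$ has invariant trace field $\QQ(\sqrt{5})$) eliminates the first two. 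Your route buys a short, self-contained and fully rigorous argument in dimension two, where such an enumeration is finite and elementary; the paper's route buys uniformity of method, since the identical horoball-diagram analysis is what carries the three-dimensional proof, where no Gauss--Bonnet shortcut exists.

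Two steps should be tightened. First, your claim that reflection groups of hyperbolic polygons with at least four sides have area $\ge\frac{\pi}{2}$ is false as stated (a compact Coxeter quadrilateral with angles $\frac{\pi}{2},\frac{\pi}{2},\frac{\pi}{2},\frac{\pi}{3}$ has area $\frac{\pi}{6}$); it holds here only because a \emph{cusped} polygon reflection orbifold must have an ideal vertex, whence area $\ge 2\pi-0-3\cdot\frac{\pi}{2}=\frac{\pi}{2}$ for quadrilaterals and more for higher $k$. Second, the phrase that non-orientable types or extra cone points ``only increase $\lvert\chi^{\mathrm{orb}}\rvert$'' should be replaced by the explicit check of the mixed types: for instance, a disk with one ideal boundary vertex and an interior cone point of order $n\ge3$ has area $\pi-\frac{2\pi}{n}\ge\frac{\pi}{3}>\frac{3\pi}{10}$, a disk with an interior puncture and a corner reflector has area $\ge\frac{\pi}{2}$, and $\mathbb{RP}^2$ with a puncture and a cone point has area $\ge\pi$. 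These verifications are routine with the Euler characteristic formula, but they constitute precisely the threshold enumeration on which your whole argument rests, so they must be carried out rather than asserted.
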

For fixed dimension $n$ with $2\le n\le 9$, the cusped hyperbolic $n$-orbifold of minimal volume
is known and intimately related to an arithmetic Coxeter simplex group (see \cite{HK} and \cite {Hild1, Hild2}).
In comparison to this, it is
much more difficult to identify the non-arithmetic
cusped hyperbolic $n$-orbifolds of minimal volume for $n\ge4$ by providing a presentation of their fundamental groups. In view of the Gromov--Piatetski-Shapiro construction,
observe that non-arithmetic non-cocompact hyperbolic Coxeter simplex groups do not exist anymore for $n\ge4$.

This work is structured as follows. In Section \ref{sec:chapter2} we present the necessary background about cusped hyperbolic orbifolds, their (non-)arithmeticity and the realisation as quotients by hyperbolic Coxeter groups. We provide a short overview about volume computations for hyperbolic (truncated) tetrahedra and finish by presenting the necessary
information about horoball packings and cusp densities.
Section \ref{sec:chapter3} contains the proof of our Theorem which consists of several steps. We first show that a non-arithmetic cusped hyperbolic 3-orbifold $\HH^3/\Gamma$ of minimal volume has exactly one cusp $C$, and then, that $C$ is a rigid cusp of type $\{2,3,6\}$ or $\{2,4,4\}$. We study these two cases separately and have also to distinguish -- in contrast to Adams' work \cite{Adams1} --
whether there is one or more equivalence classes of a full-sized horoball $B$ covering $C$
with respect to $\Gamma_{\infty}$. An essential aspect is the view of the $\Gamma_{\infty}$-periodic horoball packing induced by $B$ by means of its horoball diagram. The possible
configurations of full-sized horoballs and their hierarchy allow us to estimate mi\-nimal distances of their centres and
to derive lower volume bounds. In all cases, we are able to identify fundamental polyhedra for $\Gamma$ or to exclude groups when they are arithmetic or of a too big covolume.
In Section \ref{sec:chapter4}, we discuss briefly the related results given by the two propositions above, both due to the first author \cite{Drewitz}. In the parts \ref{subsec:appendix1} and \ref{subsec:appendix2} of the Appendix, we describe certain small volume orientable orbifolds with precisely one cusp of type $\{2,3,6\}$ and $\{2,4,4\}$, respectively, whose associated parabolic groups give rise to only one equivalence class of full-sized horoballs. In the case of $\{2,3,6\}$, we correct the corresponding construction and result of Adams \cite[p. 10]{Adams1}.
\subsection*{Acknowledgements}
The authors would like to thank Colin Adams for helpful discussions.
The second author was partially
supported by Schweizerischer Nationalfonds 200021--172583.

\section{Non-arithmetic cusped hyperbolic 3-orbifolds}\label{sec:chapter2}
\subsection{Cusps of hyperbolic orbifolds}\label{cusp}
Consider the hyperbolic space $\mathbb H^3$ in the upper half space $U^3=\{\,(x,y,t)\in\mathbb R^3\mid t>0\,\}$ equipped
with the metric $\,ds^2=\frac{dx^2+dy^2+dt^2}{t^2}\,$. Points
on the boundary $\partial U^3=\{(x,y,0)=(u,0)\in\mathbb R^3\}\cup\{\infty\}=\mathbb R^2\cup\{\infty\}$ are called {\it ideal} points, and points in $\mathbb R^3$ with $t<0$ are called {\it ultraideal}.

Let $\,\Gamma\subset\hbox{Isom}\mathbb H^3$ be a non-cocompact lattice, that is, $\Gamma$ is a discrete group with a non-compact fundamental polyhedron $P\subset\HH^3$
of finite volume.
Then, the quotient space $V=\HH^3/\Gamma$ is a {\it cusped} hyperbolic 3-orbifold of finite volume
which is a smooth
manifold if the group $\Gamma$ has no torsion elements.
By Selberg's lemma (see \cite[Theorem 7.5.7]{Rat-book}, for example),
$\Gamma$ always has a finite index
subgroup $\Lambda$ which is torsion-free. In particular, $\Gamma$ and $\Lambda$ are
{\it commensurable} groups in $\hbox{Isom}\mathbb H^3$, that is, the
intersection of $\Gamma$ with some conjugate of $\Lambda$ in $\hbox{Isom}\mathbb H^3$
is of finite index in both groups. Recall that commensurability is an equivalence relation
preserving properties such as cocompactness, finite covolume and arithmeticity.

Each cusp $C$ of $V$
is of the form $\,B_q/\Gamma_q$ where $B_q\subset\HH^3$ is a horoball based at an ideal point
$\,q\in\partial\HH^3$ where $\Gamma_q\subset\Gamma$ is the (non-trivial) stabiliser of $q$ in
$\Gamma$. Enlarge $C$ so that it touches either itself or another cusp of $V$. Such a cusp
is called a {\it maximal} cusp of $V$.
Without loss of generality, we will assume that a maximal cusp of $V$ is covered by the horoball
$\,B_{\infty}=\{(x,y,t)\in\mathbb R^3\mid t>1\}$
based at $\infty$, with distance 1 from the ground space $\{t=0\}$, and bounded by the
horosphere
$H_{\infty}=\{(x,y,t)\in\mathbb R^3\mid t=1\,\}$. Recall that the induced metric $ds^2\mid_{t=1}\,$
on $H_{\infty}$ coincides with the Euclidean
metric with distance function denoted by $d_0$.
Since the cusp $C$ is maximal, there are horoball images $\gamma(B_{\infty})$
with $\gamma\in\Gamma$ not fixing $\infty$ whose closures touch $B_{\infty}$. These images are called {\it
full-sized} horoballs. By looking at their orthogonal projections onto the horosphere
$H_{\infty}$, we get horodisks whose centres coincide with the touching points of the corresponding full-sized horoballs. It will be convenient to identify full-sized horoballs and their base points with their horodisks and centres and vice versa.
We always suppose, without loss of generality, that one full-sized horoball is based at the origin $0$ of $\{t=0\}$.

The group $\Gamma_{\infty}$ is a {\it crystallographic} group acting cocompactly
by Euclidean isometries on $\{t=0\}$ (and on $H_{\infty}$). As such it contains a {\it translation lattice} $L\subset\Gamma_{\infty}$
of rank two, with minimal translation length $\tau\ge1$, and a finite subgroup $\phi\subset O(2)$, called the {\it point group},
which is a subgroup of the automorphism group $\hbox{Aut}(L)$. The latter group consists of all Euclidean
isometries fixing the origin and mapping
$L$ onto itself.

The orthogonal projection of the full-sized horoballs onto the horosphere
$H_{\infty}$ provides a sphere packing by balls of diameter 1 of the Euclidean plane $\{t=1\}$.
Recall that the densest packing of the Euclidean plane is achieved by the hexagonal lattice packing
where each ball is surrounded by six balls.
In the sequel, planar Euclidean and spatial horoball packings of large local densities will play an important role.

The ideal fixed point $\infty$ can be seen as an ideal vertex of
a fundamental polyhedron $P$ of $\Gamma$ whose vertex link $P\cap H_{\infty}$
is related either to a triangle $\Delta=\{p,q,r\}$ with angles $\frac{\pi}{p},\frac{\pi}{q},\frac{\pi}{r}$ satisfying $\,(p,q,r)=(2,3,6)\,,\,(2,4,4)$ or $(3,3,3)$
or to a parallelogram (see \cite[Section 2]{Adams1}).

In this context, a cusp in $V$ is called {\it rigid} ~if Dehn filling cannot be performed, and otherwise it is called {\it non-rigid}.
In \cite{Adams2}, Adams showed that a cusp is rigid if and only if there are singular curves of order different from $2$ going directly out of the cusp.

The {\it type}
of a rigid cusp is denoted by $\{p,q,r\}$ according to the orders
$p,q$ and $r$ of its singular axes and
the triangular description mentioned above.
Assuming that there is at
least one non-rigid cusp, Adams found the three (uniquely determined) orientable
orbifolds of smallest, second smallest and
third smallest (limit) volumes; they all have arithmetic fundamental groups (see \cite[Chapter 4, Chapter 7]{Adams2} and the
proof of Proposition \ref{prop:case1} below).

The (non-)arithmeticity of a discrete group $\,\Gamma\subset\hbox{Isom}\mathbb H^3$ of finite covolume can be characterised by
the following fundamental property due to Margulis (see \cite[Theorem 10.3.5]{MR1}, for example).
Consider the {\it commensurator}
\[
\textrm{Comm}(\Gamma)=\{\,\gamma\in \textrm{Isom}\mathbb H^3\,\mid\,\Gamma\hbox{ and }\gamma \Gamma\gamma^{-1}
\hbox{ are commensurable}\,\}
\]
of $\Gamma$ in $\textrm{Isom}\mathbb H^3$. Then, the group
$\textrm{Comm}(\Gamma)$ is a discrete subgroup in $\textrm{Isom}\mathbb H^3$ containing $\Gamma$ with finite index if and only
if $\Gamma$ is non-arithmetic.
In particular, for $\Gamma$ non-arithmetic, the commensurator $\textrm{Comm}(\Gamma)$ is the maximal element
in the commensurability class of $\Gamma$ so that all non-arithmetic hyperbolic orbifolds and manifolds with fundamental groups
commensurable to $\Gamma$ cover a smallest common quotient.
\subsection{Hyperbolic Coxeter orbifolds}\label{Coxeter}
Arithmeticity can be described in a nice way for the class of {\it hyperbolic Coxeter groups} and the associated quotients called {\it Coxeter orbifolds}. Consider first a
{\it Coxeter polyhedron} in $\mathbb H^3$ which is a convex polyhedron
$P_C\subset\mathbb H^3$ of finite volume all of whose dihedral angles are integral submultiples of $\pi$.
A hyperbolic Coxeter group
$\Gamma_C\subset \textrm{Isom}\mathbb H^3$ (with fundamental polyhedron $P_C$)
is the discrete group generated by the (finitely many)
reflections in the facets of $P_C$.
Since the dihedral angles of $P_C$ are non-obtuse, Andreev's theorem (see \cite{Andreev1}, \cite{Andreev2}, \cite{Roeder}) provides
necessary and sufficient
conditions for its existence. In particular, there are infinitely many non-isometric Coxeter polyhedra in $\mathbb H^3$ but only
finitely many Coxeter tetrahedra. In fact, there are precisely 9 compact Coxeter tetrahedra and 23 non-compact ones (for a list with
their volumes, see \cite[pp. 347-348]{JKRT1}).
By Vinberg's seminal work \cite{V1}, \cite{V2} about hyperbolic Coxeter polyhedra in any dimension, many of their properties can be read off
from their Coxeter graphs and Gram matrices. Let $P_C\subset\mathbb H^3$ be a hyperbolic Coxeter polyhedron bounded
by $N\ge4$ geodesic planes $H_1,\ldots,H_N$.
Consider the $N\times N$ Gram matrix $G=G(P_C)=(g_{ij})$ of $P_C$ which is a real symmetric matrix with $g_{ii}=1$ and, for $i\not=j$,
\begin{equation}
\label{eq:Gram}
-g_{ij}=\left\{
\begin{array}{ll}
\cos\frac{\pi}{m_{ij}}&\textrm{if $H_i,H_j$ intersect at the angle $\frac{\pi}{m_{ij}}$ in $\mathbb H^3$},\\
1&\textrm{if $H_i,H_j$ meet at $\partial\mathbb H^3$},\\
\cosh l_{ij}&\textrm{if $H_i,H_j$ are at distance $l_{ij}$ in $\mathbb H^3$}.\\
\end{array}
\right.
\end{equation}

In case of many orthogonal bounding planes and small $N$ it is
convenient to represent $P_C$ by means of its {\it Coxeter graph} which is
a (weighted) graph
$\Sigma=\Sigma(P_C)$ of order $N$ defined as follows.
To each bounding plane $H$ of $P_C$
we associate a node $\nu$ in $\Sigma$. Two different nodes $\nu_i,\nu_j$ are
connected by an edge with a weight if the planes $H_i,H_j$ are not orthogonal. The weight equals $m_{ij}$
if $g_{ij}=-\cos\frac{\pi}{m_{ij}}$. In the special (and frequent) case $m_{ij}=3$, however, the edge carries no label.
An edge will be decorated by the symbol $\infty$ if $g_{ij}=-1$.
Edges related to disjoint planes with $g_{ij}<-1$ are replaced by dotted edges, and the
weights are usually omitted.
In order to describe a
Coxeter graph in an abbreviated way, we use the
{\it Coxeter symbol}. In particular, $[p,q,r]$
with integral components is associated to a linear Coxeter graph
describing a {\it hyperbolic Coxeter orthoscheme} with dihedral angles $\frac{\pi}{p},\frac{\pi}{q},\frac{\pi}{r}$, and the
Coxeter symbol $[(p^r,q^s)]$
describes a polyhedron with cyclic Coxeter graph having $r\ge1$
consecutive weights $p$ followed by $s$ consecutive weights $q$
(see \cite[Appendix]{JKRT1}, for example). In the sequel, we often
represent a Coxeter group by quoting the Coxeter symbol of its Coxeter polyhedron.

For the arithmeticity of hyperbolic Coxeter groups, there is a very efficient criterion due to Vinberg (see \cite[pp. 226-227]{V2}).
We quote it in the special case of a non-compact Coxeter polyhedron $P_C\subset\mathbb H^3$
with Gram matrix $G=(g_{ij})$ and with associated reflection
group
$\Gamma_C\subset \textrm{Isom}\mathbb H^3$. Write $2G=:(h_{ij})$ and form {\it cycles (of length $k$)} of the form
\begin{equation}\label{cycles}
h_{i_1i_2}h_{i_2i_3}\cdot\ldots\cdot h_{i_{k-1}i_k}h_{i_ki_1}\,,
\end{equation}
with distinct indices $i_j$ in $2\,G$. Then, $\Gamma_C$ is arithmetic with field of definition $\mathbb Q$ if and only if
all the cycles of $2\,G$ are rational integers.
Furthermore, if the Coxeter graph $\Sigma(P_C)$ contains no dotted edges,
then, by a result of Guglielmetti \cite[Proposition 1.13]{Gugliel}, $\Gamma_C$
is arithmetic if and only if all weights of $\Sigma(P_C)$ lie in $\{\infty,2,3,4,6\}$, and
each cycle of length at least 3 in $2\,G$ lies in $\mathbb Z$.

\begin{examples*}\label{ex-nonarithm}\hfill\\
1. Among all non-compact Coxeter orthoschemes $[p,q,r]$ in $\HH^3$, only $[5,3,6]$ defines a non-arithmetic reflection group, which we
denote by $\Gamma_*$. The associated Coxeter orbifold $V_*$ has one cusp.

2. The Coxeter tetrahedron $P_{\circ}$ with the cyclic graph $[(3^3,6)]$ yields a non-arithmetic reflection group, denoted $\Gamma_{\circ}$.
The associated Coxeter orbifold $V_{\circ}$ has 2 cusps.
\end{examples*}

\begin{remark}\label{incomm}
By \cite[Theorem 3]{JKRT2}, the groups $[5,3,6]$ and $[(3^3,6)]$ are not commensurable; in particular, their invariant trace fields are
different (see \cite[Section 13.2]{MR1}). Due to the graph
symmetry $[(3^3,6)]$, the Coxeter tetrahedron $P_{\circ}$ has a symmetry plane $H_r$ along which it can be dissected
into 2 isometric tetrahedra (of non-Coxeter type), each with one cusp. The group extension
$\Gamma_{\circ}^r:=[(3^3,6)]\ast C_r$ by the cyclic group $C_r$ generated by the half-turn $r$
with respect to $H_r$ is a non-arithmetic discrete group containing $[(3^3,6)]$ with index 2 and giving rise to the 1-cusped
quotient space $V_{\circ}^r$.
\end{remark}

\subsection{Volumes of non-compact hyperbolic tetrahedra}\label{volume}
Consider a finite volume orthoscheme $R=R(\alpha,\beta)\subset \mathbb H^3$ with one ideal vertex $q$
and dihedral angles $\alpha,\beta,\beta'=\frac{\pi}{2}-\beta$
such that $\beta'\le\alpha<\frac{\pi}{2}$ (see Figure \ref{fig:ortho}). More precisely, $R$ is a tetrahedron bounded by geodesic planes $H_1,\ldots,H_4$ with opposite vertices $p_1=q,p_2,p_3,p_4$
such that the Gram matrix $G(R)=(g_{ij})$ is given by
\[
 G(R)=\begin{pmatrix}
 1 & -\cos\alpha & 0 & 0 \\
 -\cos\alpha & 1 & -\cos\beta & 0 \\
 0 & -\cos\beta & 1 & -\cos\beta' \\
 0 & 0 & -\cos\beta' & 1 \\
\end{pmatrix}\,\,.
\]
The matrix $G(R)$ is of signature $(3,1)$ and has -- beside positive definite principal submatrices --
exactly one positive semi-definite principal submatrix of rank 2
(at the lower right) characterising the ideal vertex $q$ of $R$.

\begin{figure}[H]
  \hspace*{-2.5cm}
    \def\svgwidth{\textwidth}
    \import{pics/}{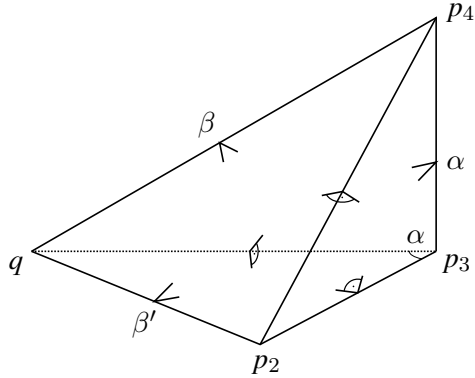}
  \caption{An orthoscheme $R(\alpha,\beta)$ with ideal vertex $q$}
  \label{fig:ortho}
\end{figure}

It is well-known that the scissors congruence group ${\cal P}(\mathbb H^3)$ is generated by the classes of orthoschemes $R(\alpha,\beta)$
(see \cite[Chapter 3]{K2}, for example). Hence, the volume of any polyhedron $P\subset\HH^3$ is a linear combination of
volumes of orthoschemes of type $R(\alpha,\beta)$.

The volume of $R(\alpha,\beta)$ is given by the expression
(see \cite{K0}, for example)
\begin{equation}\label{eq:vol-ortho}
    \vol(R(\alpha,\beta))=\frac{1}{4}\,
       \left\{\loba(\frac{\pi}{2}+\alpha-\beta)-\loba(\frac{\pi}{2}+\alpha+\beta)+2\,\loba(\beta)\,\right\}\,\,,
\end{equation}
    where $\loba(x)=\frac{1}{2}~\sum\limits_{r=1}^{\infty}\frac{\sin(2rx)}{r^2}=
 -\int\limits_0^{x}\,\log\vert2~\sin t\vert\,dt\,,\,x\in\mathbb R,\,$ is Lobachevsky's function. The function
 $\loba(x)$ is odd, $\pi$-periodic and satisfies the distribution formula (see \cite[Appendix]{JKRT1}, for example)
 \[
\frac{1}{k}\,\loba(kx)=\sum_{r=0}^{k-1}\loba\left(x+
\frac{r\pi}{k}\right)\,\,,\,\,k\in\NN\,\,.
 \]
As a consequence, for the volume of an ideal tetrahedron $T(\alpha,\beta,\gamma)$ with dihedral angles $\alpha,\beta,\gamma$ along edges emanating from an ideal vertex such that $\alpha+\beta+\gamma=\pi$, one deduces that
\begin{equation}\label{eq:vol-ideal}
\vol(T(\alpha,\beta,\gamma))=\loba(\alpha)+\loba(\beta)+\loba(\gamma)\,\,.
\end{equation}
As a special case,
an ideal regular tetrahedron $S_{reg}^{\infty}$ (of dihedral angle $\frac{\pi}{3}$) is of volume $\mu_3=\vol(S_{reg}^{\infty})=24\,\vol(R(\frac{\pi}{3},\frac{\pi}{3}))$. An ideal regular octahedron $O_{reg}^\infty$ (of dihedral angle $\frac{\pi}{2}$) can be dissected into $48$ orthoschemes
 $R(\frac{\pi}{3},\frac{\pi}{4})$ so that the volume $\omega_3$ of $O_{reg}^\infty$ is given by
 $\,\omega_3=48\,\vol(R(\frac{\pi}{3},\frac{\pi}{4}))$.

\begin{examples*}\label{volumes}\hfill\\
The volume of the (arithmetic) Coxeter orthoscheme $R(\frac{\pi}{3},\frac{\pi}{3})=[3,3,6]$ equals
 \begin{equation}\label{eq:vol336}
   \frac{1}{8}\,\loba(\frac{\pi}{3}) \approx 0.042289\,\,.
 \end{equation}
 In particular, we deduce that $\mu_3=\hbox{\rm vol}(S^{\infty}_{reg})=3\,\loba(\frac{\pi}{3})\approx 1.014942$.

The volume of the (arithmetic) Coxeter orthoscheme $R(\frac{\pi}{3},\frac{\pi}{4})=[3,4,4]$ equals
 \begin{equation}\label{eq:vol344}
   \frac{1}{6}\,\loba(\frac{\pi}{4}) \approx 0.076330\,\,.
 \end{equation}
 As a consequence, we have that $\omega_3=\vol(O_{reg}^\infty)=8\,\loba(\frac{\pi}{4})\approx 3.663862$.

The volume of the (non-arithmetic) Coxeter orthoscheme $R(\frac{\pi}{5},\frac{\pi}{3})=[5,3,6]$ equals
 \begin{equation}\label{eq:vol536}
   \frac{1}{2}\,\loba(\frac{\pi}{3})+ \frac{1}{4}\,
    \left\{\,\loba(\frac{\pi}{6}+\frac{\pi}{5})+\loba(\frac{\pi}{6}-\frac{\pi}{5})\right\} \approx 0.171502\,\,.
 \end{equation}

By dissection, the volume of the four Coxeter tetrahedra with cyclic Coxeter symbol $[(3,6,3,m)]\,,\,3\le m\le6\,,$ can be determined quite easily. In particular,
the volume of the non-arithmetic Coxeter tetrahedron $[(3^3,6)]$ equals
 \begin{equation}\label{eq:volcycle}
   \frac{5}{8}\,\loba(\frac{\pi}{3})+\frac{1}{3}\,\loba(\frac{\pi}{4})\approx 0.364107\,\,,
 \end{equation}
\end{examples*}
which is the smallest one among the four.
\newline
For the volumes of {\it all} hyperbolic Coxeter tetrahedra, we refer to \cite[Appendix]{JKRT1}.

A generalisation of the volume formula \eqref{eq:vol-ortho}
for orthoschemes $R(\alpha,\beta)$ with one ideal vertex can be obtained by allowing that $\alpha+\beta<\frac{\pi}{2}$. This condition is equivalent to the assumption that the vertex $p_4$ as depicted in Figure \ref{fig:ortho} is ultraideal, that is, the face planes $H_q, H_2$ and $H_3$ opposite to $q$, $p_2$ and $p_3$ intersect (in $p_4$) outside of $\mathbb H^3\cup\partial \mathbb H^3$. In this situation, $R(\alpha,\beta)$ is of infinite volume. However, there is a unique hyperbolic plane $H$ intersecting orthogonally $H_q, H_2$ and $H_3$. In fact, in the projective model of $\mathbb H^3$, the plane $H$ is the {\it polar plane} associated to $p_4$. By truncating $R(\alpha,\beta)$ by means of $H$, we obtain a {\it simply truncated orthoscheme} denoted by $R_t(\alpha,\beta)$. By \cite[(34)]{K0}, the volume of $R_t(\alpha,\beta)$ is given analytically by the same formula \eqref{eq:vol-ortho}, however under the constraint $\alpha+\beta<\frac{\pi}{2}$. This fact combined with suitable dissection procedures can be applied to determine the volumes of various families of truncated polyhedra in $\mathbb H^3$.
\begin{example*}\label{truncated-ortho}\hfill\\
For $\,k,l\in\mathbb N$ with $\frac{1}{k}+\frac{1}{l}<\frac{1}{2}$,
consider a simply truncated Coxeter orthoscheme $R_t(\frac{\pi}{k},\frac{\pi}{l})$ with one ideal vertex $q$. Its Coxeter graph is given by
\[
\gr{k}\gr{l}\grs{\frac{2l}{l-2}}\bullet\cdot\cdots\bullet\,\,\,\,.
\]
In particular, the volume of the simply truncated Coxeter orthoscheme
$R_t(\frac{\pi}{k},\frac{\pi}{3})\,,\,k\ge7\,,$ is given by
 \begin{equation}\label{eq:trunc-36}
 \frac{1}{2}\,\loba(\frac{\pi}{3})+\frac{1}{4}\,\left\{\,\loba(\frac{\pi}{6}+\frac{\pi}{k})+\loba(\frac{\pi}{6}-\frac{\pi}{k})\,\right\}\,\,,
 \end{equation}
while the volumes of the
Coxeter family
$R_t(\frac{\pi}{k},\frac{\pi}{6})\,,\,k\ge4\,,$ are equal to
 \begin{equation}\label{eq:trunc-63}
 \frac{1}{2}\,\loba(\frac{\pi}{6})+\frac{1}{4}\,\left\{\,\loba(\frac{\pi}{3}+\frac{\pi}{k})+\loba(\frac{\pi}{3}-\frac{\pi}{k})\,\right\}\,\,.
 \end{equation}


In both cases, for $\,k\rightarrow\infty$, the limiting Coxeter polyhedron is a pyramid with two ideal vertices and Coxeter symbol $[\infty,3,6,\infty]$ and has
volume $\,\frac{5}{4}\,\loba(\frac{\pi}{3})\approx0.42289$.
\end{example*}

\begin{remark}\label{Schlaefli}
Notice that for the infinite families of polyhedra $R(\alpha,\beta)$ and $R_t(\alpha,\beta)$ with {\rm fixed} angle $\beta$, the volume is strictly increasing when the dihedral angle $\alpha$ decreases.
This is a direct consequence of Schl\"afli's formula for the volume differential; see \cite[Section 2]{K0}.
\end{remark}
\subsection{Some horoball geometry}\label{horo}
Let $V=\HH^3/\Gamma$ be a finite volume hyperbolic 3-orbifold  with a set of disjoint
cusps ${\cal C}=\{C_1,\ldots,C_m\}\,,\,m\ge1$. Let $C\in{\cal C}$ be a maximal cusp such that
$C=B_{\infty}/\Gamma_{\infty}$. Consider the image horoballs $\gamma(B_{\infty})$
with $\gamma\not\in\Gamma_{\infty}$ and project them
orthogonally to the horosphere $H_{\infty}=\{t=1\}$.
The full-sized horoballs
project onto Euclidean balls of diameter 1 and yield a periodic packing of the
Euclidean plane by equal balls. Any minimal configuration of
them providing the entire information (about translational and finite order symmetries) of
the crystallographic group $\Gamma_{\infty}$
yields a
{\it horoball diagram} or {\it cusp diagram} $D\subset H_{\infty}$
(see \cite[Figure 1 or Figure 6]{Adams1}, for example, and Section \ref{notations}).

Denote by $F\subset\mathbb R^2$ a fundamental polygon for the action of $\Gamma_{\infty}$
on the horosphere $H_{\infty}$, and let $\hbox{vol}_0(F)$ be its Euclidean area. Then, the volume
of $C$ can be expressed by
(see \cite[Section 5]{Cox2})
\begin{equation}\label{eq:cusp-volume}
 \vol(C)=\frac{1}{2}\,\hbox{vol}_0(F)\,\,.
\end{equation}
Notice that if $\Gamma_{\infty}$ is a reflection group, its index two
subgroup $\Gamma^+_{\infty}$ of orientation preserving rotations yields twice the volume of \eqref{eq:cusp-volume}.

The concept of local density of a horoball $B$ covering $C$
with respect to its Dirichlet-\hbox{Vorono\v\i}
cell $D(B)$ leads to the following lower volume bound for $V$ in comparison with the total cusp volume $\vol({\cal C})=\sum_{r=1}^m\,\vol(C_r)$
(see \cite{Mey1} and \cite[Lemma 3.2]{K3}).
\begin{equation}\label{eq:density-bound}
 \vol(V)\ge\frac{\vol({\cal C})}{d_3(\infty)}\,\,,
\end{equation}
where $d_3(\infty)=\frac{\sqrt{3}}{2\,\mu_3}\approx0.853276$ is the simplicial horoball density
related to an ideal regular tetrahedron $S_{reg}^{\infty}$ of volume $\,3\,\loba(\frac{\pi}{3})$.
\newline
Observe that the bound \eqref{eq:density-bound} is sharp if the lift of each element of ${\cal C}$
to $\HH^3$ induces a {\it regular} horoball packing (see \cite[Section 2]{K3}).

In the case of a $1$-cusped orbifold with maximal cusp $C$, the cusp density $\delta(C)=\vol(C)/\vol(V)<1$ is
bounded from above by (see \eqref{eq:density-bound})
\begin{equation}\label{eq:cuspdensity-bound}
 \delta(C)\le d_3(\infty)\,\,.
\end{equation}

The following facts will be useful when studying horoball diagrams. 






\begin{lem}{\rm \cite[Lemma 4.4]{Adams2}}\label{horo0}
The centres of two tangent horoballs of radii $r_1$ and $r_2$ are at Euclidean distance $2\sqrt{r_1r_2}$.
\end{lem}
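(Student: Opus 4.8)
The plan is to compute directly in the upper half-space model $U^3$, where a horoball based at a finite ideal point is a Euclidean ball tangent to the ground plane $\{t=0\}$. First I would fix coordinates: a horoball of radius $r$ based at $p\in\mathbb{R}^2=\{t=0\}$ is the Euclidean ball of radius $r$ whose Euclidean centre sits at height $r$ directly above $p$, i.e.\ at the point $(p,r)\in U^3$, and whose unique tangency point with the ground plane is $p$. This base point $p$ is precisely the ``centre'' of the horoball in the sense identified in Section~\ref{cusp}. If one of the two horoballs were based at $\infty$, I would first apply an isometry of $\mathbb{H}^3$ moving its base point to a finite point, so that without loss of generality both horoballs are Euclidean balls of this shape.

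Next I would translate the tangency hypothesis into Euclidean geometry. Two such balls $B_1,B_2$, with Euclidean centres $(p_1,r_1)$ and $(p_2,r_2)$ and radii $r_1,r_2$, are externally tangent precisely when the Euclidean distance between their centres equals the sum of their radii, that is,
\[
\|(p_1,r_1)-(p_2,r_2)\|^2=(r_1+r_2)^2.
\]
Expanding the left-hand side and writing $d_0=d_0(p_1,p_2)$ for the Euclidean distance between the base points in the plane $\{t=0\}$ gives $d_0^2+(r_1-r_2)^2=(r_1+r_2)^2$.

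Finally, subtracting $(r_1-r_2)^2$ from $(r_1+r_2)^2$ cancels the square terms and yields $d_0^2=4\,r_1r_2$, hence $d_0=2\sqrt{r_1r_2}$, as claimed. The computation is entirely elementary, so there is no genuine obstacle; the only point that requires care is the bookkeeping in the first step, namely identifying the Euclidean radius of a horoball with the height of its Euclidean centre above its base point, and recognising that the ``centre'' appearing in the statement is the base point on $\partial\mathbb{H}^3$ rather than the Euclidean centre of the ball. Once this dictionary is in place, the result is an immediate application of the Pythagorean theorem.
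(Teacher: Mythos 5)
Your proof is correct. Note that the paper itself gives no argument for this lemma at all: it is imported by citation from Adams, so there is no in-paper proof to compare against; your Pythagorean computation (tangency of the two Euclidean balls gives $d_0^2+(r_1-r_2)^2=(r_1+r_2)^2$, hence $d_0=2\sqrt{r_1r_2}$) is the standard argument and is complete, including the key dictionary point that the ``centre'' of a horoball is its base point on $\partial\mathbb{H}^3$, not the Euclidean centre of the ball. One tiny remark: the reduction for a horoball based at $\infty$ is unnecessary and slightly misleading, since such a horoball has no Euclidean radius and the statement tacitly concerns two horoballs at finite base points; also, for distinct base points tangency is automatically external, which justifies using $r_1+r_2$ rather than $|r_1-r_2|$.
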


\begin{lem}{\rm \cite[Lemma 4.3]{Adams2}, \cite[Lemma 1]{HK}}\label{horo1}
Consider the horoball $B_{\infty}$ and a full-sized horoball $B_u$ based at $\,u\in\mathbb R^2$.
Denote by $l$ a geodesic with endpoints $u$ and $v\in \mathbb R^2\setminus\{u\}$, and let $\delta_0=d_0(u,v)$ be the
Euclidean distance from $u$ to $v$. Put $a=(u,1)$, and let $p$ be the intersection point of $l$ with $H_u=\partial B_u$.
Then, the induced distance $d(a,p)$ from $a$ to $p$ on $H_u$ is given by $\,d(a,p)=\frac{1}{\delta_0}$.
\end{lem}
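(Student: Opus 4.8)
The plan is to reduce the statement to a flat computation on a horosphere based at $\infty$ by applying a hyperbolic isometry that sends the base point $u$ to $\infty$. First I would normalise by a Euclidean translation of $\RR^2$ so that $u=0$, which changes neither $d(a,p)$ nor $\delta_0$. Since $B_u$ is full-sized, it is the Euclidean ball of diameter $1$ tangent to $\{t=0\}$ at $0$ and to $H_\infty$ at $a=(0,0,1)$; concretely $H_u=\partial B_u=\{\,X\in U^3 : |X|^2=t\,\}$, where $X=(x,y,t)$. I would then take $j$ to be the inversion in the unit sphere centred at the origin, $j(X)=X/|X|^2$, which is a hyperbolic isometry and hence preserves the induced metrics on horospheres. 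A direct check shows that $j$ carries $H_u$ onto $H_\infty=\{t=1\}$, on which the induced hyperbolic metric coincides with the Euclidean metric $d_0$. Consequently $d(a,p)=d_0(j(a),j(p))$, and the problem becomes the computation of two explicit points in the plane $\{t=1\}$.

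For those two points I would argue as follows. The point $a=(0,0,1)$ lies on the sphere of inversion, so $j(a)=a$. For $p$, rotate about the $t$-axis so that $v=(\delta_0,0)$ lies on the positive $x$-axis; then $l$ is the semicircle in the plane $y=0$ through $0$ and $(\delta_0,0,0)$, and intersecting its equation with $|X|^2=t$ produces a unique point $p$ with $t_p\in(0,1)$ (the other solution being the ideal base point $0$ itself). Because $p\in H_u$ we have $|p|^2=t_p$, so $j(p)=p/t_p$ has third coordinate $1$ and first coordinate $x_p/t_p$; the elementary algebra then gives $x_p/t_p=1/\delta_0$, whence $j(p)=(1/\delta_0,0,1)$. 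Reading off the planar distance yields $d(a,p)=d_0\big((0,0,1),(1/\delta_0,0,1)\big)=1/\delta_0$, as claimed.

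The computation itself is routine; the point needing care is the normalisation. I would stress that taking the inversion radius equal to $1$ is precisely what makes $j$ map $H_u$ onto $\{t=1\}$, so that no spurious scaling factor enters when transporting the induced distance: a sphere of radius $\rho$ would send $H_u$ to $\{t=\rho^2\}$, whose induced metric is $\rho^{-2}d_0$, and the two effects cancel, but fixing $\rho=1$ is cleanest. One should also confirm that the relevant intersection of $l$ with $H_u$ is the single crossing where the geodesic exits $B_u$ on its way from $v$ towards $u$, which is exactly the solution with $t_p>0$ isolated above. This identity, used in tandem with \ref{horo0}, is the basic conversion between Euclidean centre distances and induced horospherical distances that drives the horoball-diagram estimates in what follows.
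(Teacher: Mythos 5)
Your proof is correct. Note that the paper itself gives no proof of Lemma \ref{horo1}: it is quoted directly from \cite[Lemma 4.3]{Adams2} and \cite[Lemma 1]{HK}, and your argument is essentially the standard one underlying those references. Concretely, your computation is sound at every step: after translating $u$ to the origin, the full-sized condition gives $H_u=\{\,|X|^2=t\,\}$, the inversion $j(X)=X/|X|^2$ is a hyperbolic isometry carrying $H_u$ onto $H_\infty=\{t=1\}$ (so induced horospherical distances are preserved and become Euclidean), $j(a)=a$, and intersecting the semicircle $x^2+t^2=\delta_0 x$ with $x^2+t^2=t$ yields the unique point $p$ with $t_p>0$ and $j(p)=(1/\delta_0,0,1)$, giving $d(a,p)=1/\delta_0$. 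Your two cautionary remarks — that the inversion radius must be $1$ so that no scaling factor enters, and that the intersection point with $t_p>0$ is unique — are exactly the right points to flag.
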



\begin{corollary}{\rm \cite[Lemma 2]{HK}}\label{horo2}
Consider a horoball $B_u(h)$ of diameter $h$ in $\HH^3$. Then, the interior
of its upper hemisphere is an open disk of radius 1 with respect to the induced metric
on the boundary $H_u(h)$.
\end{corollary}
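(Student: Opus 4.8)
The plan is to normalise coordinates and then transport the whole picture to a horoball based at $\infty$, where the induced flat metric on the horosphere is transparent. Place $u$ at the origin $0$ of the ground plane, so that $B_u(h)$ is the Euclidean ball of radius $h/2$ centred at $(0,0,h/2)$, tangent to $\{t=0\}$ at $0$ and with top point $a=(0,0,h)$. Its boundary horosphere $H_u(h)$ is the corresponding Euclidean sphere, the upper hemisphere being the cap $\{t>h/2\}$ and its equator the circle $\{t=h/2,\ x^2+y^2=(h/2)^2\}$. Recall (as already noted for $H_\infty$) that the metric induced on any horosphere is flat, so $H_u(h)$ is intrinsically a Euclidean plane and ``disk of radius $1$'' refers to this flat metric.

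First I would apply the hyperbolic isometry $j$ given by inversion in the unit sphere centred at $0$, namely $j(X)=X/|X|^2$, which preserves $U^3$ and sends $u=0$ to $\infty$. On $H_u(h)$ one has $|X|^2=ht$, whence the $t$-coordinate of $j(X)$ equals $1/h$; thus $j$ maps $B_u(h)$ to the horoball $\{t>1/h\}$ and $H_u(h)$ onto the horizontal plane $\Pi=\{t=1/h\}$, with $a=(0,0,h)$ going to $(0,0,1/h)$. Evaluating $j$ on the equator points $(\pm h/2,0,h/2)$ and $(0,\pm h/2,h/2)$ shows that the equator is sent to the Euclidean circle of radius $1/h$ centred at $(0,0,1/h)$ in $\Pi$. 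Since the tangent point $0$ lies on the lower hemisphere and is sent to $\infty$, the open upper hemisphere is carried bijectively onto the bounded open Euclidean disk $\Delta\subset\Pi$ of radius $1/h$ about $(0,0,1/h)$, while the lower hemisphere covers the unbounded complement.

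It remains to read off the induced metric on $\Pi$. Restricting $ds^2=(dx^2+dy^2+dt^2)/t^2$ to $t=1/h$ gives $ds^2|_{\Pi}=h^2\,(dx^2+dy^2)$, so induced distances on $\Pi$ equal $h$ times Euclidean ones. Consequently $\Delta$ is a metric disk of radius $h\cdot(1/h)=1$ centred at the image of $a$. As $j$ is an isometry, the open upper hemisphere of $H_u(h)$, with its induced metric, is isometric to this open disk of radius $1$; note that the diameter $h$ cancels, which is exactly why the radius is independent of $h$.

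I expect the only delicate point to be the second step, namely verifying that the open \emph{upper} hemisphere -- and not the lower one -- maps to the bounded disk and that the image circle has radius exactly $1/h$; both follow from the sample-point computation together with the fact that $0$ is the unique preimage of $\infty$. Alternatively, one could avoid inversion by first rescaling $U^3$ by the dilation $X\mapsto X/h$ (a hyperbolic isometry) to reduce to a full-sized horoball $B_u(1)$ and then invoking Lemma \ref{horo1}: the geodesic joining $u$ to a ground point $v$ meets $H_u(1)$ orthogonally at a point $p$ at induced distance $1/\delta_0$ from $a$, where $\delta_0=d_0(u,v)$; this point lies on the equator precisely when $\delta_0=1$, giving induced distance $1$, and as $v$ sweeps out all horizontal directions the points $p$ fill out the disk of radius $1$ about $a$.
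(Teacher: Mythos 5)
Your proposal is correct. The paper itself gives no written proof: it states the result as a corollary of Lemma \ref{horo1} (citing \cite[Lemma 2]{HK}), and its implicit derivation is exactly the argument you sketch in your closing paragraph — rescale to a full-sized horoball, then note that the geodesic from $u$ to $v$ meets $H_u$ on the open upper hemisphere precisely when $\delta_0=d_0(u,v)>1$, in which case Lemma \ref{horo1} gives induced distance $1/\delta_0<1$ from the top point, so the hemisphere is swept out by the open metric disk of radius $1$. Your primary argument is a genuinely different, self-contained route: conjugating by the inversion $j(X)=X/|X|^2$ turns the horosphere $H_u(h)$ into the flat horosphere $\Pi=\{t=1/h\}$, where the identity $|X|^2=ht$, the image of the equator (a Euclidean circle of radius $1/h$ about the image of $a$), and the conformal factor $h$ of the induced metric on $\Pi$ make both the flatness and the radius $1$ manifest, with the pleasant feature that the dependence on $h$ visibly cancels. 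What the inversion computation buys is independence from Lemma \ref{horo1} and a direct handling of arbitrary diameter $h$; what the paper's (and your alternative) route buys is brevity, since it reuses the already-established distance formula and explains why the statement is labelled a corollary. Both arguments are sound; the only point needing the small justification you supply is that the \emph{upper} hemisphere, not the lower one, maps to the bounded disk, which follows from connectivity and the fact that the tangent point $0$ is the unique preimage of $\infty$.
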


\begin{corollary}{\rm \cite[Corollary 7]{Hild2}}\label{horo3}
Let $B_u(h)$ and $B_v(k)$ be two horoballs with common touching point $p$ and of Euclidean diameter $h$ and $k$, respectively.
Denote by $\delta_0$ the Euclidean distance $d_0(u,v)$ and by $\delta=\delta_0/h$ the induced distance $d((u,h),(v,h))$
on the horosphere based at $\infty$ and at Euclidean height $h$.
Then, the following identity holds.
\[
 \frac{\sqrt{h}}{\sqrt{k}}=\frac{\delta_0}{k}=\frac{h}{\delta_0}=\frac{h\,\delta}{k}=\frac{1}{\delta}\,\,.
\]
\end{corollary}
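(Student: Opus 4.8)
The plan is to reduce the entire chain of five equalities to the single tangency relation $\delta_0^2=hk$ together with the defining relation $\delta=\delta_0/h$; once these two facts are in hand, the corollary is a matter of elementary manipulation of square roots.

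First I would record the geometric description of the two horoballs. In the upper half space model, $B_u(h)$ is the Euclidean ball tangent to the ground space $\{t=0\}$ at $u$ of Euclidean diameter $h$; hence its Euclidean centre is $(u,\tfrac{h}{2})$ and its Euclidean radius is $\tfrac{h}{2}$. Likewise $B_v(k)$ has centre $(v,\tfrac{k}{2})$ and radius $\tfrac{k}{2}$. Since the two horoballs are tangent at $p$, the Euclidean distance between their centres equals the sum of the radii, so $\delta_0^2+\bigl(\tfrac{h}{2}-\tfrac{k}{2}\bigr)^2=\bigl(\tfrac{h}{2}+\tfrac{k}{2}\bigr)^2$, which simplifies to $\delta_0^2=hk$, that is $\delta_0=\sqrt{hk}$. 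Equivalently, this is exactly the statement of Lemma \ref{horo0} applied to the radii $r_1=\tfrac{h}{2}$ and $r_2=\tfrac{k}{2}$, since $2\sqrt{r_1r_2}=\sqrt{hk}$.

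Next I would use $\delta_0=\sqrt{hk}$ to evaluate each of the five expressions. Directly, $\tfrac{\delta_0}{k}=\tfrac{\sqrt{hk}}{k}=\tfrac{\sqrt{h}}{\sqrt{k}}$ and $\tfrac{h}{\delta_0}=\tfrac{h}{\sqrt{hk}}=\tfrac{\sqrt{h}}{\sqrt{k}}$. For the remaining two I would first recall that the induced metric on the horosphere $\{t=h\}$ is $\tfrac{1}{h}$ times the Euclidean metric (clear from $ds^2=t^{-2}(dx^2+dy^2+dt^2)$ by setting $dt=0$), so the stated definition $\delta=\delta_0/h$ indeed records the induced distance $d((u,h),(v,h))$; substituting $\delta_0=\sqrt{hk}$ gives $\delta=\sqrt{k/h}$. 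Then $\tfrac{h\,\delta}{k}=\tfrac{h\sqrt{k/h}}{k}=\tfrac{\sqrt{h}}{\sqrt{k}}$ and $\tfrac{1}{\delta}=\sqrt{h/k}=\tfrac{\sqrt{h}}{\sqrt{k}}$, which closes the chain.

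I do not expect a genuine obstacle: the only geometric input is the tangency condition, whose reduction to $\delta_0^2=hk$ is the substantive point (already packaged in Lemma \ref{horo0}), and everything else is bookkeeping with the definitions. The one thing to watch is consistency of conventions — that \emph{diameter} means the Euclidean diameter of the ball and that $\delta$ is measured in the induced (hyperbolic) metric at height $h$ rather than in the Euclidean metric — so that each symbol in the asserted identity is interpreted correctly.
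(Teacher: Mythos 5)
Your proof is correct. Note, however, that the paper gives no internal proof of this statement at all: it is quoted directly from Hild \cite[Corollary 7]{Hild2}, so there is no argument in the text to compare against. Your derivation is the natural self-contained one, and it fits the paper's own toolkit: the tangency relation $\delta_0^2=hk$ is precisely Lemma \ref{horo0} applied with $r_1=\frac{h}{2}$, $r_2=\frac{k}{2}$; the observation that the induced metric on $\{t=h\}$ is $\frac{1}{h}$ times the Euclidean metric justifies reading $\delta=\delta_0/h$ as the induced distance; and the five-term chain then follows by substituting $\delta_0=\sqrt{hk}$ and $\delta=\sqrt{k/h}$. A pleasant by-product of your route is that it also explains the paper's (uncited) Corollary \ref{horo4} as the special case $h=1$ of the same relation $\delta_0^2=hk$, whereas a proof following the paper's quoted Lemma \ref{horo1} would instead start from the distance formula $d(a,p)=\frac{1}{\delta_0}$ on the horosphere $\partial B_u$; both reduce to the same elementary Euclidean geometry of tangent spheres in the upper half-space model. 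The one interpretive point you flag — that the two horoballs are externally tangent, so the distance between their Euclidean centres is the sum of the radii — is indeed the correct reading, since horoballs based at distinct boundary points and touching at a single point cannot be nested.
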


\begin{corollary}\label{horo4}
Consider a full-sized horoball $B_u$ and a horoball $B_v(k)$ of diameter $k$ based at $\,v\in\mathbb R^2$ which touches $B_u$.
Denote by $\delta_0$ the Euclidean distance $d_0(u,v)$.
Then, the diameter $k$ of $B_v$ equals $\delta_0^2$.
\end{corollary}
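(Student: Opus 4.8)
The plan is to reduce the statement to the tangency formulas already at our disposal, recognising that this corollary is nothing but the special case of Lemma \ref{horo0} (equivalently Corollary \ref{horo3}) in which one of the two touching horoballs is full-sized. The only input that is genuinely specific to the situation is the Euclidean size of a full-sized horoball, so I would isolate that first.

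First I would record what ``full-sized'' forces on $B_u$. By definition a full-sized horoball is an image $\gamma(B_{\infty})$ with $\gamma\notin\Gamma_{\infty}$ whose closure touches the horosphere $H_{\infty}=\{t=1\}$ bounding $B_{\infty}=\{t>1\}$. Being based at the finite point $u\in\mathbb R^2$, the horoball $B_u$ is a Euclidean ball tangent to the ground plane $\{t=0\}$ at $(u,0)$, and tangency with $B_{\infty}$ forces its highest point to sit at height $t=1$. Hence $B_u$ has Euclidean diameter $h=1$, i.e.\ $B_u=B_u(1)$, in agreement with the fact recorded earlier that full-sized horoballs project onto Euclidean disks of diameter $1$.

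With this normalisation in hand the claim follows in one line. Applying Corollary \ref{horo3} to the two touching horoballs $B_u(h)=B_u(1)$ and $B_v(k)$, the chain of identities stated there gives in particular $\delta_0/k=h/\delta_0$, whence $\delta_0^2=h\,k$; substituting $h=1$ yields $k=\delta_0^2$. Alternatively, and perhaps more transparently, I would argue straight from Lemma \ref{horo0}: writing the Euclidean radii as $r_1=h/2=\tfrac12$ for $B_u$ and $r_2=k/2$ for $B_v$, that lemma says the base points $u,v$ satisfy $\delta_0=2\sqrt{r_1r_2}=2\sqrt{\tfrac12\cdot\tfrac{k}{2}}=\sqrt{k}$, and squaring gives $k=\delta_0^2$.

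There is essentially no analytic obstacle here: the entire content is the observation that a full-sized horoball has diameter exactly $1$, after which either tangency formula closes the argument immediately. The one place warranting a word of care is precisely that normalisation $h=1$, since it is what distinguishes this corollary from the more general relation $\delta_0^2=hk$ underlying Lemma \ref{horo0} and Corollary \ref{horo3}.
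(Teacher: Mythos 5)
Your proposal is correct and follows exactly the route the paper intends: the corollary is stated as an immediate consequence of Corollary \ref{horo3} (equivalently Lemma \ref{horo0}), where the only extra input is the normalisation $h=1$ for a full-sized horoball, which the paper fixes in Section \ref{cusp}. Substituting $h=1$ into $\delta_0^2=hk$ gives $k=\delta_0^2$, just as you argue.
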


\begin{lem}{\rm \cite[Lemma 4.6]{Adams2}, \cite[Lemma 1.2]{Adams1}}\label{horo5}
Consider two horoballs $B_u(h)$ and $B_v(k)$ based at $\,u,v\in\mathbb R^2$ of diameter $h$ and $k$, respectively,
which cover the cusp $C$ of $V$. Suppose that $B_u(h)$ and $B_v(k)$ are not tangent, and denote by $d_0(u,v)=:r$
the Euclidean distance between their base points $u$ and $v$.
Then, there exists a horoball of diameter $\frac{hk}{r^2}$ covering $C$.
\end{lem}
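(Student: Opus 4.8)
The plan is to exploit that $B_u(h)$ and $B_v(k)$, covering the same cusp $C$, both lie in the single $\Gamma$-orbit of the maximal horoball $B_\infty$, and to produce the new horoball by pushing $B_v(k)$ through the group element that moves the configuration at $u$ to $\infty$. Since $B_u(h)$ covers $C$, there is $\gamma\in\Gamma$ with $\gamma(B_\infty)=B_u(h)$, so $\gamma^{-1}\in\Gamma$ maps $B_u(h)$ onto $B_\infty$ and sends $u$ to $\infty$. The horoball $B':=\gamma^{-1}(B_v(k))$ then automatically covers $C$, because $\gamma^{-1}\in\Gamma$ and $B_v(k)$ covers $C$; moreover, as $v\neq u$, its base point $\gamma^{-1}(v)$ is a finite point of $\RR^2$, so $B'$ is a genuine finite-diameter horoball. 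The whole statement thus reduces to verifying that the Euclidean diameter of $B'$ equals $\tfrac{hk}{r^2}$.

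For the diameter computation I would write down an explicit isometry realising the passage $B_u(h)\mapsto B_\infty$. After translating $u$ to the origin, this is the inversion $j(x)=h\,x/|x|^2$ in the sphere of radius $\sqrt{h}$ centred at $u$: it carries the diameter-$h$ horoball based at $u$ onto $B_\infty=\{t>1\}$, since the top point $(u,h)$ is sent to height $1$, and it fixes the ground plane $\{t=0\}$. Applying $j$ to $B_v(k)$, the base point $v$ is sent to $u+\tfrac{h}{r^2}(v-u)$, and the new diameter follows either by substituting the defining equation of the horosphere $\partial B_v(k)$ into $x\mapsto j(x)$ and reading off the coefficient of the height variable, or, more conceptually, by observing that the conformal scale factor of $j$ at $v$ is $h/|v-u|^2=h/r^2$ and that a horoball tangent to $\{t=0\}$ has its Euclidean diameter multiplied by exactly this factor. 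Either route gives image diameter $\tfrac{h}{r^2}\cdot k=\tfrac{hk}{r^2}$.

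It remains to pass from the explicit inversion $j$ back to a genuine group element: the actual $\gamma^{-1}\in\Gamma$ sending $B_u(h)$ to $B_\infty$ differs from $j$ only by an element of $\Gamma_\infty$, that is, by a Euclidean isometry of $H_\infty$, and such isometries preserve Euclidean diameters of horoballs. Hence $\gamma^{-1}(B_v(k))$ has the same diameter $\tfrac{hk}{r^2}$ as $j(B_v(k))$ and covers $C$, as required. I expect the main obstacle to be not conceptual but bookkeeping: obtaining the exact factor $\tfrac{hk}{r^2}$ (rather than a value off by a power of the scale factor) requires the full, non-infinitesimal inversion computation, and one must justify replacing $j$ by an element of $\Gamma$ without altering the diameter. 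Finally, I would note that the non-tangency hypothesis is what renders the output meaningful: were $B_u(h)$ and $B_v(k)$ tangent, Lemma~\ref{horo0} would force $r^2=hk$, so the produced horoball would have diameter $1$ and merely reproduce a full-sized horoball, carrying no new metric information.
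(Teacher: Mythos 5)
Your proof is correct and follows essentially the same route as the source the paper cites for this lemma (Adams): since both horoballs are $\Gamma$-images of $B_\infty$, map $B_u(h)$ to $B_\infty$ by a group element, realise that element up to a diameter-preserving isometry as the inversion $j$ in the sphere of radius $\sqrt{h}$ centred at $u$, and read off the diameter $\frac{hk}{r^2}$ of the image of $B_v(k)$. One small inaccuracy, which does not affect the argument: the composition $\gamma^{-1}\circ j^{-1}$ need not lie in $\Gamma_\infty$ (it need not lie in $\Gamma$ at all, since $j\notin\Gamma$ in general); it is merely an isometry of $\HH^3$ preserving $B_\infty$, hence a Euclidean isometry on the horospherical level, which is exactly the property you use.
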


Consider the action of the crystallographic subgroup $\Gamma_{\infty}\subset\Gamma$
on the set of all full-sized horoballs. The following facts are fairly obvious (see \cite[Lemma 3.1 and Lemma 2.1]{Adams1}).
\begin{lem}\label{3balls}
Suppose that $\Gamma_{\infty}$ identifies all full-sized horoballs. If the shortest translation length satisfies
$\tau>1$,
then there cannot be a set of three full-sized horoballs which are pairwise tangent.
\end{lem}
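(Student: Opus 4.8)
The plan is to derive a contradiction by showing that three mutual tangencies force a symmetry of the cusp pattern on $H_{\infty}$ that is incompatible with $\tau>1$. First I would fix the metric data. Every full-sized horoball has Euclidean diameter $1$, so by Lemma~\ref{horo0} two tangent ones have centres at distance $2\sqrt{\tfrac12\cdot\tfrac12}=1$. Since the full-sized horoballs are images of the precisely invariant $B_{\infty}$, distinct ones have disjoint interiors, so any two of their (diameter-$1$) projections are disjoint or tangent; hence the centre set $S\subset\mathbb R^{2}$ has minimal distance exactly $1$, realised precisely by tangent pairs (alternatively via Lemma~\ref{horo5}, a pair at distance $<1$ would yield a horoball of diameter $>1$ covering the cusp). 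By hypothesis $S$ is a single $\Gamma_{\infty}$-orbit, and $\Gamma_{\infty}$ contains a translation lattice $L$ of minimal length $\tau>1$. Assuming three pairwise tangent full-sized horoballs, their centres $A,B,C$ then form an equilateral triangle of side $1$.

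Next I would use transitivity. Pick $g\in\Gamma_{\infty}$ with $g(A)=B$. Since $g$ preserves $S$ and $|B-A|=1<\tau$, the displacement is not a lattice vector, so the rotational part of $g$ is nontrivial. Suppose first that $g$ is a rotation, of order $n$; then the iterates $A_{k}=g^{k}(A)$ lie in $S$ with $|A_{k+1}-A_{k}|=1$, tracing a closed equilateral polygon of unit side inscribed in a circle of radius $\rho=1/(2\sin(\pi/n))$ about the centre $O$ of $g$, with $n\in\{2,3,4,6\}$ by the crystallographic restriction. The decisive case is $n=6$: then $\rho=1$, so $O$ lies at distance $1$ from both $A$ and $B$, i.e. $O\in\{C,C'\}$ with $C'=\overline C$. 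If $O=C$, then $C\in S$ is a $6$-fold centre; since $S$ is one orbit, every point of $S$ is then a $6$-fold centre, so $S$ coincides with the lattice of $6$-fold centres, whose minimal separation equals $\tau>1$ — contradicting $|A-B|=1$.

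The hard part will be the remaining configurations — rotational parts of order $2,3,4$, reflections and glides, and the variant $O=C'\notin S$ above — where no vertex of the triangle is visibly a rotation centre of $S$. Here I would argue through the $L$-coset structure: two points of $S$ in the same coset differ by a lattice vector of length $\ge\tau>1$, so tangent points lie in distinct cosets, whence $A,B,C$ occupy three distinct cosets of $L$. Tracking where $g$ and a second identification send the two common neighbours of an edge, one forces in each case either a pair of centres at distance $<1$ (against minimality) or two half-turns/reflections whose product is a nonzero lattice translation of length $1$ (against $\tau>1$); for instance, if the point group is $C_{2}$ then $B$ and $C$ both lie in the coset of $-A$, so $B-C\in L$ has length $1$, a contradiction. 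I expect the order-$3$ case (where $g$ cyclically permutes $A,B,C$ about the centroid, which is too close to $A$ to lie in $S$) to be the most delicate, and would resolve it by propagating the triangle under $L$ and showing that adjacent copies unavoidably bring two centres within distance $<1$. An alternative and perhaps cleaner route uses the full group $\Gamma$: since $B_{\infty}$ is itself tangent to each full-sized horoball, $\{B_{\infty},B_A,B_B,B_C\}$ are four mutually tangent horoballs, and the element of $\Gamma$ interchanging $B_{\infty}$ with $B_A$, combined with a $\Gamma_{\infty}$-identification, should produce the forbidden short translation directly.
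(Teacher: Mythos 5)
Your reduction of Lemma~\ref{3balls} to plane crystallography cannot be completed, because the two-dimensional statement you are implicitly proving is false. Your fully worked cases (point groups of order at most two, and the $6$-fold sub-case $O=C$) are correct, but the order-$3$ case you defer (``propagating the triangle under $L$'') is not merely delicate --- it admits a genuine counterexample. Let $\Gamma_\infty$ be the wallpaper group of type $p3$ generated by a triangular lattice $L$ of arbitrarily large minimal length $\tau$ and a rotation $r$ of order $3$ about a point $O$, and let $A$ be a point with $d_0(A,O)=1/\sqrt3$. Then $A$, $rA$, $r^2A$ form an equilateral triangle of side $1$ lying in a single $\Gamma_\infty$-orbit; this orbit has minimal distance exactly $1$, since distinct $L$-translates of the triangle stay at distance at least $\tau-1$ from one another; and the three centres occupy three distinct cosets of $L$, consistent with your coset observation. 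So three pairwise tangent full-sized horoballs are perfectly compatible with ``one $\Gamma_\infty$-class and $\tau>1$'', and no argument that sees only $\Gamma_\infty$ --- in particular no propagation of the triangle under $L$ --- can rule them out.

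The obstruction is necessarily three-dimensional and global, which is why the paper does not prove the lemma inside $\Gamma_\infty$ at all but cites Adams \cite[Lemma 3.1 and Lemma 2.1]{Adams1}: the argument there runs through Lemma~\ref{elliptic} (order-two elliptic elements of the \emph{full} group $\Gamma$ whose axes pass through the points of tangency), used under the standing hypothesis of Section~\ref{horo} that $\Gamma$ is a lattice. Both inputs are essential, and your closing ``alternative route'' underestimates this: the planar configuration above, together with every elliptic demanded by Lemma~\ref{elliptic}, is realised by the discrete group generated by $\Gamma_\infty$ and the rotation group of the regular ideal tetrahedron with ideal vertices $\infty$, $A$, $rA$, $r^2A$ --- its three half-turns about common perpendiculars of opposite edges interchange $B_\infty$ with the full-sized horoballs and their axes pass through all the tangency points. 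In that group, composing the element interchanging $B_\infty$ with the ball at $A$ with the identification $r\in\Gamma_\infty$ yields an order-$3$ rotation fixing a vertex of the tetrahedron, not a short translation. What fails for this group is finite covolume: its limit set is a proper subset of $\partial\HH^3$. Any correct proof of Lemma~\ref{3balls} must therefore invoke the lattice (finite-volume) assumption together with the elliptics of Lemma~\ref{elliptic}, neither of which your proposal uses.
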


\begin{lem}\label{elliptic}
Suppose that $\Gamma_{\infty}$ identifies all full-sized horoballs.
Then, every point of tangency between two $\Gamma$-equivalent horoballs lies on the axis of an order two
elliptic isometry in $\Gamma$ such that its axis is tangent to both horoballs.
\end{lem}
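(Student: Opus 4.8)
The idea is to produce an element of $\Gamma$ that interchanges the two tangent horoballs and then to read off, from its local behaviour at the point of tangency, that it is a half-turn whose axis lies in the common tangent plane of the two horospheres. First I would normalise the configuration. Let $B_1,B_2$ be the two $\Gamma$-equivalent horoballs, tangent at $p$. Applying a suitable element of $\Gamma$ carrying $B_1$ to $B_\infty$, and noting that this element sends $B_2$ (which is tangent to $B_1$) to a horoball tangent to $B_\infty$, I may assume $B_1=B_\infty$ and $B_2=B_0$, a full-sized horoball based at some $u\in\RR^2$ and tangent to $B_\infty$ at $p=(u,1)$. The geodesic $\ell$ joining the two centres $\infty$ and $u$ is then the vertical line through $u$; it meets both horospheres orthogonally at $p$, and the common tangent plane of the two horospheres at $p$ is the horizontal plane $w^{\perp}$, where $w$ denotes the (vertical) unit tangent of $\ell$ at $p$.

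Next I would construct the interchanging element. Since $B_0$ is full-sized, $B_0=g(B_\infty)$ for some $g\in\Gamma\setminus\Gamma_\infty$. Then $g^{-1}(B_\infty)$ is tangent to $g^{-1}(B_0)=B_\infty$, hence is again full-sized; by the hypothesis that $\Gamma_\infty$ identifies all full-sized horoballs there is $\phi\in\Gamma_\infty$ with $\phi\big(g^{-1}(B_\infty)\big)=B_0$. Setting $\sigma:=\phi g^{-1}\in\Gamma$ one checks $\sigma(B_0)=\phi(B_\infty)=B_\infty$ and $\sigma(B_\infty)=\phi\big(g^{-1}(B_\infty)\big)=B_0$, so $\sigma$ interchanges $B_\infty$ and $B_0$. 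In particular $\sigma$ fixes their unique point of tangency $p$ and interchanges the centres $\infty$ and $u$, so its differential at $p$ satisfies $d\sigma_p(w)=-w$. Observe that the hypothesis is used essentially here: it is exactly what allows the correction $\phi$ to be chosen in $\Gamma_\infty$, so that $\sigma$ genuinely \emph{swaps} the two horoballs rather than merely mapping one to the other.

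It then remains to identify $\sigma$ as the desired elliptic isometry. An orientation-preserving isometry fixing $p$ acts on the tangent space at $p$ as an element of $\SO(3)$; having $w$ as a $(-1)$-eigenvector forces it to be the rotation by $\pi$ about the axis $A$ through $p$ spanned by the $(+1)$-eigenvector, which lies in $w^{\perp}$. Thus $\sigma$ is an order-two elliptic isometry whose axis $A$ is a geodesic through $p$ tangent to the plane $w^{\perp}$; consequently $A$ meets each of $B_\infty$ and $B_0$ only in $p$ and is tangent to both horoballs, which is exactly the assertion (and the normalisation is reversed by the element used in the first step).

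The step requiring genuine care, and the main obstacle, is the orientation bookkeeping in the last paragraph: because $\Gamma$ may well contain reflections (the groups in question are reflection groups), the swap $\sigma$ produced above is a priori only orientation-reversing, in which case its isotropy action at $p$ is a rotary reflection rather than a half-turn and no geodesic axis appears. To rule this out I would examine the finite setwise stabiliser of the pair $\{B_\infty,B_0\}$, which splits as $K\sqcup\sigma K$ with $K$ fixing each horoball and hence acting on $w^{\perp}$; whenever $K$ contains an orientation-reversing element — a reflection in a vertical plane through $\ell$, which is present in the point-group stabiliser of $u$ for the rigid cusp types $\{2,3,6\}$ and $\{2,4,4\}$ relevant here — replacing $\sigma$ by its product with that reflection yields an orientation-preserving swap, to which the half-turn argument applies verbatim.
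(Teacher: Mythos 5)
Your normalisation, your construction of the interchanging element $\sigma=\phi g^{-1}$, and your analysis of the orientation-preserving case are all correct, and together they are precisely the argument this lemma rests on: the paper itself gives no proof, dismissing the statement as "fairly obvious" with a citation to Adams' Lemma 2.1, and Adams' argument is exactly your second and third paragraphs. In Adams' setting the group is orientation-preserving, so the proof ends there, with the stronger conclusion that \emph{every} element interchanging the two horoballs is automatically a half-turn about a geodesic tangent to both at $p$ (in particular $\sigma^2=\mathrm{id}$, since an isometry fixing $p$ with identity differential is the identity).

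The genuine gap is in your final paragraph, i.e.\ exactly the case you flag as the main obstacle. Your repair requires an orientation-reversing element in $K$, which (since any isometry preserving $B_\infty$ fixes $\infty$) is the point stabiliser $\mathrm{Stab}_{\Gamma_\infty}(u)$, and you justify its existence by invoking the rigid cusp types $\{2,3,6\}$, $\{2,4,4\}$ and reflections in their point groups. But the lemma carries no such hypotheses: $B_0$ need not be centred at a singular point, and even a rigid cusp of type $\{2,3,6\}$ or $\{2,4,4\}$ may have purely rotational $\Gamma_\infty$ (cross-section a turnover $S^2(2,3,6)$ or $S^2(2,4,4)$ -- the paper explicitly treats such "no mirror symmetry" configurations), in which case $K$ is cyclic and contains no reflection at all; nor does a reflection elsewhere in $\Gamma$ need to stabilise the pair $\{B_\infty,B_0\}$. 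If no correcting element exists, then every interchanging element is orientation-reversing, hence of the form (rotation by some angle about $\ell$) composed with (reflection in the hemisphere through $p$ orthogonal to $\ell$); the set of all of them is the single coset $\sigma K$, and unless one of these rotary angles vanishes, all of them are rotary reflections of order bigger than two, so your argument produces no order-two element whatsoever. Note also that the conclusion you insist on -- a genuine rotation -- is stronger than what the paper means and uses: in the proof of Proposition \ref{prop:reflect}, the order-two isometry furnished by this lemma is explicitly allowed to be a \emph{reflection} whose fixed plane is tangent to both horoballs, i.e.\ "axis" is to be read as fixed-point set. So the correct completions are either to assume $\Gamma$ orientation-preserving (then your first three paragraphs are a complete proof, matching Adams; the paper passes to orientation-preserving subgroups when it needs the rotation form, cf.\ Lemma \ref{lem:hororotation} and Section \ref{step4}), or to prove the dichotomy "half-turn with tangent axis, or reflection in the common tangent plane" by analysing the coset $\sigma K$ -- which still requires ruling out the all-rotary-reflection possibility and is not addressed by your appeal to cusp types.
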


\section{Proof of the Theorem}\label{sec:chapter3}
In this section, we provide a proof in several steps of our main result stated as follows.
\begin{main*}\label{main-thm}
Among all non-arithmetic cusped hyperbolic 3-orbifolds, the 1-cusped quotient space $V_*$
of $\HH^3$ by the tetrahedral Coxeter group $[5,3,6]$ has minimal volume. As such the orbifold $V_*$ is unique, and its
volume $v_*$ is given explicitly by \eqref{eq:vol536}.
\end{main*}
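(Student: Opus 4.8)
The plan is to establish the minimal volume claim through a case analysis on the cusp structure, following the strategy outlined in the introduction. First I would reduce to the case of a single cusp: suppose $V=\HH^3/\Gamma$ is a non-arithmetic cusped hyperbolic $3$-orbifold of minimal volume. Using the density bound \eqref{eq:density-bound} together with the total cusp volume $\vol(\mathcal C)=\sum_r\vol(C_r)$, a multi-cusp orbifold forces a larger volume than the candidate $v_*$ given in \eqref{eq:vol536}, since each additional cusp contributes positively to $\vol(\mathcal C)$ while the simplicial density $d_3(\infty)$ is fixed. Thus the minimiser has exactly one cusp $C$, and I would fix the normalisation so that $C=B_\infty/\Gamma_\infty$ with $\Gamma_\infty$ crystallographic acting on $H_\infty$.

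Next I would argue that $C$ must be a \emph{rigid} cusp. By Adams' characterisation (\cite{Adams2}), a non-rigid cusp admits Dehn filling, and the three smallest-volume orbifolds with at least one non-rigid cusp are all arithmetic; so for a non-arithmetic minimiser the cusp is rigid, of type $\{2,3,6\}$, $\{2,4,4\}$, or $\{3,3,3\}$ according to the angles of the triangular cusp cross-section. The $\{3,3,3\}$ type, corresponding to $[3,3,6]$, is arithmetic and gives Meyerhoff's minimum, so it is excluded for the non-arithmetic problem; this leaves the two rigid types $\{2,3,6\}$ and $\{2,4,4\}$ to be treated separately.

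The heart of the argument is then a geometric analysis of the $\Gamma_\infty$-periodic horoball packing induced by a full-sized horoball $B$ covering $C$, recorded in the horoball diagram $D\subset H_\infty$. For each cusp type I would distinguish whether there is one or several $\Gamma_\infty$-equivalence classes of full-sized horoballs — this is the genuinely new refinement over \cite{Adams1}. Using Lemmas \ref{horo0}--\ref{horo5} and Corollaries \ref{horo2}--\ref{horo4}, I would bound the minimal Euclidean distance $\tau\ge1$ between horoball centres from below; in particular Lemma \ref{3balls} rules out three pairwise-tangent full-sized horoballs when $\tau>1$, and Lemma \ref{elliptic} locates tangency points on order-two elliptic axes. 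These constraints pin down $\hbox{vol}_0(F)$ for a fundamental polygon $F$ of $\Gamma_\infty$, hence $\vol(C)=\tfrac12\hbox{vol}_0(F)$ by \eqref{eq:cusp-volume}, and combined with the density bound \eqref{eq:density-bound} they yield a lower bound on $\vol(V)$ in every configuration. Wherever the bound dips below $v_*$, I would explicitly identify the resulting fundamental polyhedron and discard the group either as arithmetic (via Vinberg's criterion) or as having covolume exceeding $v_*$; the surviving extremal configuration is the truncated orthoscheme $R(\tfrac\pi5,\tfrac\pi3)=[5,3,6]$, whose volume is exactly $v_*$ by \eqref{eq:vol536}.

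\textbf{The main obstacle} I anticipate is the fine case distinction in the $\{2,3,6\}$ and $\{2,4,4\}$ branches when several equivalence classes of full-sized horoballs occur: here the horoball diagram no longer has a single orbit, so I cannot read off $\hbox{vol}_0(F)$ from one tangency pattern. I expect to need a careful hierarchy argument, using Lemma \ref{horo5} to produce auxiliary horoballs of diameter $hk/r^2$ and Corollary \ref{horo4} to control induced diameters, in order to exclude packings that are too dense (below $d_3(\infty)$) or that would force arithmeticity. Establishing uniqueness — that $[5,3,6]$ is the \emph{only} group attaining $v_*$ — will require showing the extremal horoball configuration is rigid and matches $V_*$ exactly, together with the incommensurability remark (Remark \ref{incomm}) separating $[5,3,6]$ from the competing non-arithmetic group $[(3^3,6)]$.
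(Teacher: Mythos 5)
Your skeleton (first one cusp, then rigidity, then a case analysis of the rigid types $\{2,3,6\}$ and $\{2,4,4\}$ via horoball diagrams) is the same as the paper's, but the two reduction arguments you actually spell out both fail. The single-cusp reduction cannot be obtained from the density bound \eqref{eq:density-bound} alone: your claim that a multi-cusped orbifold forces volume larger than $v_*$ is false, because the $2$-cusped quotient $\HH^3/[3,6,3]$ has volume $\frac{\mu_3}{6}=\frac{1}{2}\,\loba(\frac{\pi}{3})\approx 0.169<v_*\approx 0.1715$. Nothing in \eqref{eq:density-bound} forces the individual cusp volumes of a minimiser to be large, so the lower bound $\vol(V)\ge\vol({\cal C})/d_3(\infty)$ can perfectly well dip below $v_*$ even with two cusps. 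The paper (Proposition \ref{prop:case1}) instead invokes Adams' classification \cite{Adams3} of the four smallest volumes of orbifolds with $m\ge2$ cusps, namely $\frac{\mu_3}{6},\,\frac{5\mu_3}{24},\,\frac{\omega_3}{16},\,\frac{\mu_3}{4}$, observes that only $\frac{\mu_3}{6}$ lies below $v_*$, and then uses Adams' \emph{uniqueness} statement to identify the realising orbifold as the quotient by the \emph{arithmetic} Coxeter group $[3,6,3]$ --- an arithmeticity argument that no volume estimate can replace.

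The exclusion of the cusp type $\{3,3,3\}$ is likewise a non sequitur as you state it. That type does not ``correspond to $[3,3,6]$'', and the arithmeticity of Meyerhoff's minimiser says nothing about other, possibly non-arithmetic, $\{3,3,3\}$-cusped orbifolds of volume below $v_*$. The paper's Proposition \ref{prop:case2} again rests on a classification: by \cite[Theorem 4.2]{Adams1}, an orientable orbifold with a $\{3,3,3\}$ cusp has volume $\frac{\mu_3}{6}$, $\frac{\mu_3}{3}$, $\frac{5\mu_3}{12}$, or at least $\frac{\mu_3}{2}$; the first two values are attained uniquely by orbifolds tied to the arithmetic groups $[3,3^{[3]}]$ and $[3,6,3]$, while $\frac{5\mu_3}{12}$ already exceeds $2v_*$, the covolume of the \emph{rotation subgroup} of $[5,3,6]$ --- note the factor $2$, which you would also need, since these comparisons are made among orientable orbifolds. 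Your remaining paragraphs correctly reproduce the strategy announced in the paper's introduction for the rigid types, but they stay at that level of generality; the actual proof requires the hierarchy of $(\frac{1}{d})$-, $(\frac{1}{w})$- and $(\frac{1}{d_k})$-balls with the recursion \eqref{eq:recursion}, the truncated-orthoscheme bounds \eqref{eq:trunc-36} and \eqref{eq:trunc-63}, and the separate analysis of several equivalence classes in Section \ref{step4}. For a proposal that vagueness is acceptable, but the two reductions, as you wrote them, are wrong and must be replaced by the classification-plus-arithmeticity arguments above.
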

For the proof, we adapt and generalise the strategies and results of Adams as developed in \cite{Adams1}, \cite{Adams2} and \cite{Adams3}.
\subsection{The non-rigid and multiply cusped cases}\label{step1}
Let $\mu_3=\vol(S_{reg}^{\infty})=3\,\loba(\frac{\pi}{3})\approx 1.014942$ and
$\omega_3=\vol(O_{reg}^{\infty})=8\,\loba(\frac{\pi}{4})\approx 3.663862$
be the volumes of an ideal regular tetrahedron
and of an ideal regular octahedron, respectively (see  \eqref{eq:vol336} and \eqref{eq:vol344}).

Denote by $V=\HH^3/\Gamma$ a non-arithmetic cusped orbifold of minimal volume.
By a result of Meyerhoff \cite{Mey2}, the smallest volume of {\it any} cusped hyperbolic 3-orbifold equals $\frac{\mu_3}{24}$ and
is realised in a unique way by the volume of the quotient of $\HH^3$ by the arithmetic tetrahedral Coxeter group $[3,3,6]$.
Therefore, the volume of $V$ has to satisfy the inequalities
\begin{equation}\label{eq:minvol-bound}
0.042289\approx\frac{\mu_3}{24}< \vol (V)\le\vol (V_*)=v_*
\approx 0.171502\,\,.
\end{equation}
\begin{proposition}\label{prop:case1}
A non-arithmetic cusped hyperbolic 3-orbifold $V$ of minimal volume has precisely one cusp, and this cusp is a rigid one.
\end{proposition}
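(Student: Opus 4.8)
The plan is to treat the two assertions together, since both reduce to controlling the same short list of ``light'' cusp types, and to play the lower volume estimate \eqref{eq:density-bound} against the upper bound $\vol(V)\le v_*\approx 0.171502$ of \eqref{eq:minvol-bound}. The uniform input is that a maximal cusp $C=B_\infty/\Gamma_\infty$ has $\vol(C)=\tfrac{1}{2}\,\vol_0(F)$ by \eqref{eq:cusp-volume}, where $F$ is a fundamental domain of the crystallographic group $\Gamma_\infty$ on $H_\infty$, and that the minimal translation length satisfies $\tau\ge 1$: two tangent full-sized horoballs have radius $\tfrac{1}{2}$, so by Lemma \ref{horo0} their centres lie at Euclidean distance $2\sqrt{\tfrac{1}{2}\cdot\tfrac{1}{2}}=1$. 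Writing $\vol_0(F)=\vol_0(F_L)/|\phi|$, with $\vol_0(F_L)\ge\tfrac{\sqrt{3}}{2}\,\tau^2$ the covolume of the translation lattice $L\subset\Gamma_\infty$ (minimised by the hexagonal lattice) and $|\phi|$ the order of the point group $\phi\subset\Ort(2)$, yields the uniform lower bound $\vol(C)\ge\tfrac{\sqrt{3}}{4\,|\phi|}\,\tau^2\ge\tfrac{\sqrt{3}}{4\,|\phi|}$.

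First I would discard the voluminous cusps. If the point group has order at most $2$---that is, for a torus cusp ($p1$), a pillow cusp ($p2$), or a single-reflection type---then $\vol(C)\ge\tfrac{\sqrt{3}}{8}\approx 0.2165$, whence $\vol(C)>v_*\cdot d_3(\infty)\approx 0.1464$ and \eqref{eq:density-bound} already forces $\vol(V)>v_*$, a contradiction. Since the only orientable non-rigid cusps are of type $p1$ or $p2$, this settles rigidity outright in the orientable case and shows that every surviving cusp is either rigid, of type $\{2,3,6\}$, $\{2,4,4\}$ or $\{3,3,3\}$, or a non-orientable reflection cusp with point group of order $4$.

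It is exactly on these light types that the density bound runs out: two of the smallest cusps $\{2,3,6\}$ contribute only $2\cdot\tfrac{\sqrt{3}}{48}$ to $\vol(\mathcal C)$, leaving \eqref{eq:density-bound} far below $v_*$, so neither the cusp count nor the rigidity of the order-$4$ reflection cusps can be read off from volume alone. Here I would argue geometrically. The extremal configuration $\tau=1$ with a regular (simplicial) horoball packing---the case of equality in \eqref{eq:density-bound}---is in each instance realised only by an arithmetic group (for the hexagonal $\{2,3,6\}$ cusp it is $[3,3,6]$); a non-arithmetic $\Gamma$ must deviate from it, and the deviation has to be quantified through the horoball diagram of a chosen maximal cusp on $H_\infty$. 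Using the hierarchy of full-sized horoballs together with Lemmas \ref{3balls} and \ref{elliptic}, which constrain the tangency pattern once $\Gamma_\infty$ identifies all full-sized horoballs, one bounds from below the minimal distance between horoball centres, hence $\vol_0(F)$ and the volume of the part of $V$ lying outside the cusps. The finitely many surviving crystallographic data are then eliminated either by Vinberg's cycle condition and its graph version \cite[Proposition 1.13]{Gugliel} (they turn out to be arithmetic) or by a direct volume computation exceeding $v_*$; the order-$4$ non-rigid reflection cusps are dispatched in the same spirit by matching them against Adams's list of small orbifolds carrying a non-rigid cusp \cite[Chapter 4, Chapter 7]{Adams2}, all of which are arithmetic.

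The main obstacle is precisely this band of borderline light cases, where the hexagonal or square extremal packing places the density bound only marginally below $v_*$. Converting that near-equality into a strict gap cannot be done by \eqref{eq:density-bound} alone: one has to read the actual horoball diagram, use the hierarchy of full-sized horoballs to bound the minimal distance between their centres from below, and invoke the non-arithmeticity of $\Gamma$---via the Vinberg--Guglielmetti criterion---to exclude exactly the arithmetically defined configurations, foremost $[3,3,6]$, that alone would meet the bound.
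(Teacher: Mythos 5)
Your opening estimate is correct and is a genuinely different, more elementary route for part of the statement: from $\vol(C)=\tfrac{1}{2}\vol_0(F)$, $\vol_0(F)=\vol_0(F_L)/|\phi|$, $\vol_0(F_L)\ge\tfrac{\sqrt{3}}{2}\tau^2$ and $\tau\ge1$ you get $\vol(C)\ge\tfrac{\sqrt{3}}{4|\phi|}$, and for $|\phi|\le 2$ this exceeds $v_*\,d_3(\infty)\approx 0.1464$, so \eqref{eq:density-bound} eliminates every orbifold possessing a cusp whose point group has order at most $2$ --- in particular every orientable orbifold with a non-rigid ($p1$ or $p2$) cusp. The paper handles the entire non-rigid case instead by quoting Adams' limit-volume results \cite[Corollary 4.2 and Section 7]{Adams2} (three smallest volumes $\tfrac{\omega_3}{12}\approx 0.305$, $0.444$, $0.458$, all arithmetic), so on that sub-case your argument is self-contained where the paper's is citation-based; for the surviving non-orientable reflection cusps of point-group order $4$ you fall back on the same Adams list, which matches the paper's route via the orientable double cover.

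The genuine gap is in the cusp-count assertion. You correctly note that the density bound cannot exclude two rigid cusps (two $\{2,3,6\}$ cusps give $\vol(\mathcal{C})/d_3(\infty)\approx 0.085\ll v_*$), but what you offer instead --- bound minimal distances in the horoball diagram, then apply Vinberg--Guglielmetti or compute a volume --- is a program, not a proof, and it fails as stated for two reasons. First, Lemmas \ref{3balls} and \ref{elliptic} and the whole full-sized-horoball hierarchy presuppose that the full-sized horoballs are $\Gamma$-images of $B_\infty$; with a second cusp the horoballs covering the other cusp are \emph{not} $\Gamma$-equivalent to $B_\infty$, so that machinery (which is what Sections \ref{step3}--\ref{step4} of the paper develop, and only for the one-cusp case) does not apply. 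Second, and decisively, no volume estimate of any kind can settle this case, because a multiply-cusped orbifold of volume below $v_*$ actually exists: $\HH^3/[3,6,3]$ has two cusps and volume $\tfrac{\mu_3}{6}\approx 0.1692 < v_*\approx 0.1715$. One must therefore \emph{classify} all multiply-cusped orbifolds of volume at most $v_*$ and verify that each is arithmetic. This is precisely what the paper imports from Adams \cite[Lemma 2.2, Corollary 2.6]{Adams3}: the four smallest volumes of orbifolds with $m\ge 2$ cusps are $\tfrac{\mu_3}{6},\,\tfrac{5\mu_3}{24},\,\tfrac{\omega_3}{16},\,\tfrac{\mu_3}{4}$, only the first is $\le v_*$, and its unique realisation is the arithmetic Coxeter group $[3,6,3]$. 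Your proposal neither cites this classification nor contains the substantial work needed to reprove it, so the ``precisely one cusp'' half of the proposition remains unproved.
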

\begin{proof}
In \cite{Adams3}, Adams considered the small volume spectrum of hyperbolic 3-orbifolds with $m\ge2$ cusps.
He proved that the four smallest (non-orientable) orbifolds
of this sort have volumes equal to
\begin{equation}\label{eq:minvol-mult}
 \frac{\mu_3}{6}\,\,,\,\,\frac{5\,\mu_3}{24}\,\,,\,\,\frac{\omega_3}{16}\quad\hbox{and}\quad\frac{\mu_3}{4}
\end{equation}
(see \cite[Corollary 2.6]{Adams3}). Comparing the four values in \eqref{eq:minvol-mult} with
the volume bound in \eqref{eq:minvol-bound} shows that all values except the first one
given by $\frac{\mu_3}{6}$
are strictly bigger than $\vol (V_*)$. Furthermore, in \cite[Lemma 2.2]{Adams3}, Adams identified the individual
multiply-cusped orbifolds with small volumes and proved their uniqueness. According to his proof \cite[pp. 155-156]{Adams3}, one easily deduces
that the value $\frac{\mu_3}{6}$
is the volume of the quotient space of $\HH^3$ by the {\it arithmetic} tetrahedral Coxeter group $[3,6,3]$.
Therefore, $V$ can have only one cusp.

Suppose that a hyperbolic 3-orbifold $V$ has a single cusp $C$, and that $C$ is non-rigid. By
the results of Adams \cite[Corollary 4.2 and Section 7]{Adams2} about limit volumes of orientable hyperbolic 3-orbifolds, the three smallest
volumes are $\frac{\omega_3}{12}\approx 0.305322$, $0.444457$ and $0.457983$. These values are realised in a unique way by orientable orbifolds
which are explicitly described in the proof of \cite[Lemma 7.1]{Adams2}. In fact, their fundamental groups are all arithmetic
and related to Bianchi groups
PSL$(2,{\cal O}_d)$ where ${\cal O}_d$ is the ring of integers of the imaginary quadratic number field $\mathbb Q(\sqrt{-d})$.
In particular, the
fundamental group of the orbifold with volume $\frac{\omega_3}{12}$ is the Bianchi group PSL$(2,{\cal O}_1)$ which is commensurable to
the tetrahedral Coxeter group $[3,4,4]$ and to the fundamental group of the Borromean Rings complement
(see also \cite[Section 9.2]{MR1}). As a consequence, a non-orientable hyperbolic 3-orbifold with a single, non-rigid cusp
of volume smaller than or equal to $\vol (V_*)$ is arithmetic.
\end{proof}
\subsection{The cusp type \texorpdfstring{$\{3,3,3\}$}{(3,3,3)}}\label{step2}
By Proposition \ref{prop:case1}, a non-arithmetic non-compact orbifold $V=\HH^3/\Gamma$ of minimal volume has only one cusp, and the cusp
is a rigid one of type $\{2,3,6\}\,,\,\{2,4,4\}$ or $\{3,3,3\}$. The next result allows us to exclude the type $\{3,3,3\}$
from further consideration.
\begin{proposition}\label{prop:case2}
A non-arithmetic cusped hyperbolic 3-orbifold $V$ of minimal volume cannot have a cusp of type $\{3,3,3\}$.
\end{proposition}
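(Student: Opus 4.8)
The plan is to determine the flat geometry of the cusp cross-section and then play the horoball-packing density bound \eqref{eq:density-bound} against the minimality hypothesis $\vol(V)\le v_*$. By Proposition \ref{prop:case1} we may assume $V=\HH^3/\Gamma$ has a single rigid cusp $C=B_\infty/\Gamma_\infty$, now of type $\{3,3,3\}$. First I would record the structure of $\Gamma_\infty$: the three order-$3$ singular axes force $\Gamma_\infty$ to contain the orientation-preserving $(3,3,3)$-triangle group, so its point group contains a rotation of order $3$ and, being preserved by such a rotation, its translation lattice $L$ is necessarily hexagonal. Writing $\tau\ge1$ for the minimal translation length, the covolume of $L$ is then exactly $\tfrac{\sqrt3}{2}\tau^2$, and since the point group of the turnover cross-section $S^2(3,3,3)$ has order $3$, a fundamental polygon $F$ for $\Gamma_\infty$ on $H_\infty$ satisfies $\vol_0(F)=\tfrac{\sqrt3}{6}\tau^2$. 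By \eqref{eq:cusp-volume} this yields the clean expression $\vol(C)=\tfrac{\sqrt3}{12}\tau^2$.

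Next I would feed this into the density estimate \eqref{eq:density-bound}, which for a single cusp reads $\vol(V)\ge \vol(C)/d_3(\infty)$ with $d_3(\infty)\approx 0.853276$, giving
\[
\vol(V)\ \ge\ \frac{\vol(C)}{d_3(\infty)}\ =\ \frac{\sqrt3}{12\,d_3(\infty)}\,\tau^2 .
\]
Combined with the minimality bound $\vol(V)\le v_*\approx 0.171502$ from \eqref{eq:vol536}, this forces $\tau^2\le 12\,d_3(\infty)\,v_*/\sqrt3\approx 1.014$, i.e. $\tau<1.007$. Thus the whole proposition reduces to analysing configurations in which $\tau$ is essentially equal to $1$; the naive inequality alone is not decisive, since at $\tau=1$ it only gives $\vol(V)\gtrsim 0.169$, just below $v_*$.

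The borderline case is where the real work lies. When $\tau=1$ the lattice translates of the full-sized horoball based at the origin are mutually tangent along the hexagonal lattice, and together with $B_\infty$ they realise the configuration of an ideal regular tetrahedron; hence the induced horoball packing of $\HH^3$ is the densest simplicial one and \eqref{eq:density-bound} becomes an equality. By the regularity discussion following \eqref{eq:density-bound}, this means $\Gamma$ is commensurable with the tessellation group of the ideal regular tetrahedron, namely the Eisenstein tetrahedral Coxeter group $[3,3,6]$, which is \emph{arithmetic} --- contradicting non-arithmeticity of $\Gamma$. For the remaining sliver $1<\tau<1.007$ the two translated full-sized horoballs are near-tangent but disjoint, and I would invoke Lemma \ref{horo5} together with Corollary \ref{horo4} to produce an auxiliary horoball of diameter $1/\tau^2$ covering $C$; iterating this and using the order-$3$ symmetry of $\Gamma_\infty$ should either generate three pairwise tangent full-sized horoballs (forcing $\tau=1$ by Lemma \ref{3balls}, back to the arithmetic case) or densify the packing enough that the strict inequality in \eqref{eq:density-bound} pushes $\vol(V)$ above $v_*$.

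The main obstacle is precisely this near-extremal analysis: because the density bound lands just below $v_*$, one cannot exclude the type $\{3,3,3\}$ by a single estimate, and the decisive input is the \emph{rigidity} of Böröczky's density theorem (equality forces the simplicial/regular packing) combined with the arithmeticity criterion of Section \ref{Coxeter} to identify the extremal group as the Eisenstein group $[3,3,6]$. A secondary technical point, to be handled with Lemma \ref{horo5}, is to rule out the almost-regular packings with $\tau$ in the tiny interval $(1,1.007)$; a possible complication is the non-orientable case, where $\Gamma_\infty$ may contain reflections and a larger point group, shrinking $\vol(C)$ and widening the admissible range of $\tau$, so that the same extremal-implies-arithmetic dichotomy must be verified there as well.
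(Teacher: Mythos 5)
Your reduction is set up correctly, and it is genuinely different from the paper's route: for $\Gamma_\infty$ orientation-preserving of type $(3,3,3)$ one indeed has $\vol(C)=\frac{\sqrt{3}}{12}\tau^2$, so \eqref{eq:density-bound} together with $\vol(V)\le v_*$ pins $\tau$ into a window just above $1$, and you rightly observe that this single estimate cannot decide the case because $\mu_3/6<v_*$. The gap is that the step you rely on to close the window is invalid. From $\tau=1$ you infer that \eqref{eq:density-bound} ``becomes an equality'' and then appeal to the rigidity of B\"or\"oczky's theorem. But equality in \eqref{eq:density-bound} is the statement $\vol(V)=\vol(C)/d_3(\infty)=\mu_3/6$ about the \emph{actual} volume of $V$, whereas your hypotheses only give $\mu_3/6\le\vol(V)\le v_*$: a hexagonal pattern of mutually tangent full-sized horoballs in the cusp diagram does not force the whole $\Gamma$-orbit of $B_\infty$ to be the regular packing, so rigidity cannot be invoked, and a priori a non-arithmetic group with exactly this cusp diagram and volume strictly between $\mu_3/6$ and $v_*$ could exist. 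Ruling out such groups is the entire content of the proposition, and it is precisely what your argument leaves open. The same objection applies, with more force, to the range $1<\tau<1.007$, where ``iterating Lemma \ref{horo5} \dots should either \dots or densify the packing'' is a programme rather than a proof, and to the case where $\Gamma_\infty$ contains reflections, which you flag but do not treat: there the point group has order $6$, $\vol(C)=\frac{\sqrt{3}}{24}\tau^2$, and the admissible window grows to $\tau\le\sqrt{12\,v_*/\mu_3}\approx 1.42$, far outside any near-tangency regime.

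The paper avoids all of this by doing no horoball analysis at all for this cusp type: it quotes Adams' Theorem 4.2 of \cite{Adams1}, by which an orientable hyperbolic $3$-orbifold with a $\{3,3,3\}$ cusp has volume $\mu_3/6$, $\mu_3/3$, $5\mu_3/12$, or at least $\mu_3/2$, the first three values being attained by \emph{unique} orbifolds which are identified explicitly and are arithmetic (for $\mu_3/6$, the quotient by the rotation subgroup of $[3,3^{[3]}]$; for $\mu_3/3$, an orbifold commensurable with $[3,6,3]$). Since $5\mu_3/12>2v_*$, any orbifold with a $\{3,3,3\}$ cusp and volume at most $v_*$ -- non-orientable ones included, by passing to the orientation double cover, of volume $2\vol(V)\le 2v_*$ -- must be one of these arithmetic examples, which is the desired contradiction. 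To make your approach work without citing Adams, you would essentially have to reprove the uniqueness part of his theorem, that is, carry out for $\{3,3,3\}$ cusp diagrams the kind of configuration-by-configuration analysis the paper performs for $\{2,3,6\}$ and $\{2,4,4\}$ in Sections \ref{step3a} and \ref{step3b}; that analysis is the missing ingredient in your proposal.
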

\begin{proof}
In \cite[Theorem 4.2]{Adams1}, Adams proved that an orientable hyperbolic 3-orbifold with a cusp of type $\{3,3,3\}$
has volume either $\,\frac{\mu_3}{6}\,,\,\frac{\mu_3}{3}\,,\,\frac{5\,\mu_3}{12}\,$ or at least $\,\frac{\mu_3}{2}$. Furthermore, he
showed that there are unique orbifolds whose volumes equal these first three values, and that they are the double covers
of certain {\it unique} orientable orbifolds with one cusp of type $\{2,3,6\}$ of small volume. He
provides an explicit description of the latter orbifolds as follows
(see \cite[p. 10]{Adams1}).

The unique orientable orbifold with one cusp of type $\{2,3,6\}$ and of volume $\,\frac{\mu_3}{12}$
is the orientable double cover of the arithmetic orbifold $\HH^3/[3,3,6]$ implying that the orbifold
with cusp of type $\{3,3,3\}$ and of volume
$\frac{\mu_3}{6}$ is the quotient of $\HH^3$ by the rotation subgroup of the arithmetic tetrahedral Coxeter group with symbol $\,[3,3^{[3]}]$.

The unique orientable
orbifold with one cusp of type $\{2,3,6\}$ and of volume $\,\frac{\mu_3}{6}$
is the orientable double cover of the quotient space of $\HH^3$ by the $\mathbb Z_2$-extension of the arithmetic Coxeter group $[3,6,3]$.
Let us add that this orbifold as well as its double cover with one cusp of type $\{3,3,3\}$ are {\it not} Coxeter orbifolds anymore.

Finally, observe that the third smallest value $\frac{5\,\mu_3}{12}\approx 0.422892$ in the above sequence
is strictly bigger than the covolume $2\,\vol (V_*)$
of the rotation subgroup of $[5,3,6]$. As a consequence, a
non-arithmetic cusped orbifold $V$ of minimal volume cannot have a cusp of type $\{3,3,3\}$.
\end{proof}
\subsection{Some notations}\label{notations}
Before we continue with the proof, let us fix some notations.
Consider a non-compact orbifold $V=\HH^3/\Gamma$ with precisely one (maximal)
cusp, denoted by $C$, such that $C$ is a rigid one.
We assume that the {\it cusp point} (or {\it parabolic fixed point})
associated to $C$ is based at $\infty$ and that
$C=B_{\infty}/\Gamma_{\infty}$ with $B_{\infty}$ the horoball at height 1 from the ground plane $\{t=0\}$.
Denote by $\tau\ge1$ the minimal translation length induced by the translation lattice $L\subset \Gamma_{\infty}$.

Suppose that $C$ is of type $\{p,q,r\}$ with $(p,q,r)=(2,3,6)$ or $(2,4,4)$ (see Section \ref{cusp}).
In particular, $C$ contains 3 singular (rotation) axes
of orders $2,q,r$, respectively.
By Proposition \ref{prop:case2}, these
assumptions do hold if $V$ is non-arithmetic of minimal volume.

Consider a cusp diagram $D\subset H_{\infty}$ of $\Gamma_{\infty}$. If $C$ is of type $\{2,3,6\}$, then
$D$ is a Euclidean regular triangle of edge length $\tau$, and if $C$ is of type $\{2,4,4\}$, then
$D$ is given by a Euclidean square of edge length $\tau$. In particular, the singular axes
of $C$
give rise to singular points $a_2, a_q$ and $a_r$ in $D$ in the following way.
The midpoint of each edge of $D$ is a singular point $a_2$ of order 2, the centre of $D$ is a singular point $a_q$ of order $q$,
and each vertex of $D$ is a singular point $a_r$ of order $r$.
Since the horoballs covering $C$ do not intersect in their interiors, the vertical axis $l_s$ in the upper half space
$U^3$ passing through
a singular point $a_s\,,\,s\in\{2,q,r\}\,,$
can lie in the interior of a full-sized horoball $B_u$ only if the horoball $B_u$ is centred at the intersection $u$ of $l_s$ with
$\{t=0\}$, that is, if $\,a_s=(u,1)$. We say that $B_u$ is {\it centred at $a_s$} and write $B$ omitting the index $u$, for short.
By barycentric decomposition, $D$ is tiled into copies of its {\it characteristic triangle} $\Delta$ with vertices $a_s\,,\,s\in\{2,q,r\}\,,$
and which is either
a right-angled triangle $[3,6]$ or a right-angled triangle $[4,4]$; see Figure \ref{fig:Figure2}.
\begin{figure}
  \centering
  \begin{minipage}{0.49\textwidth}
    \centering
    \def\svgwidth{0.8\textwidth}
    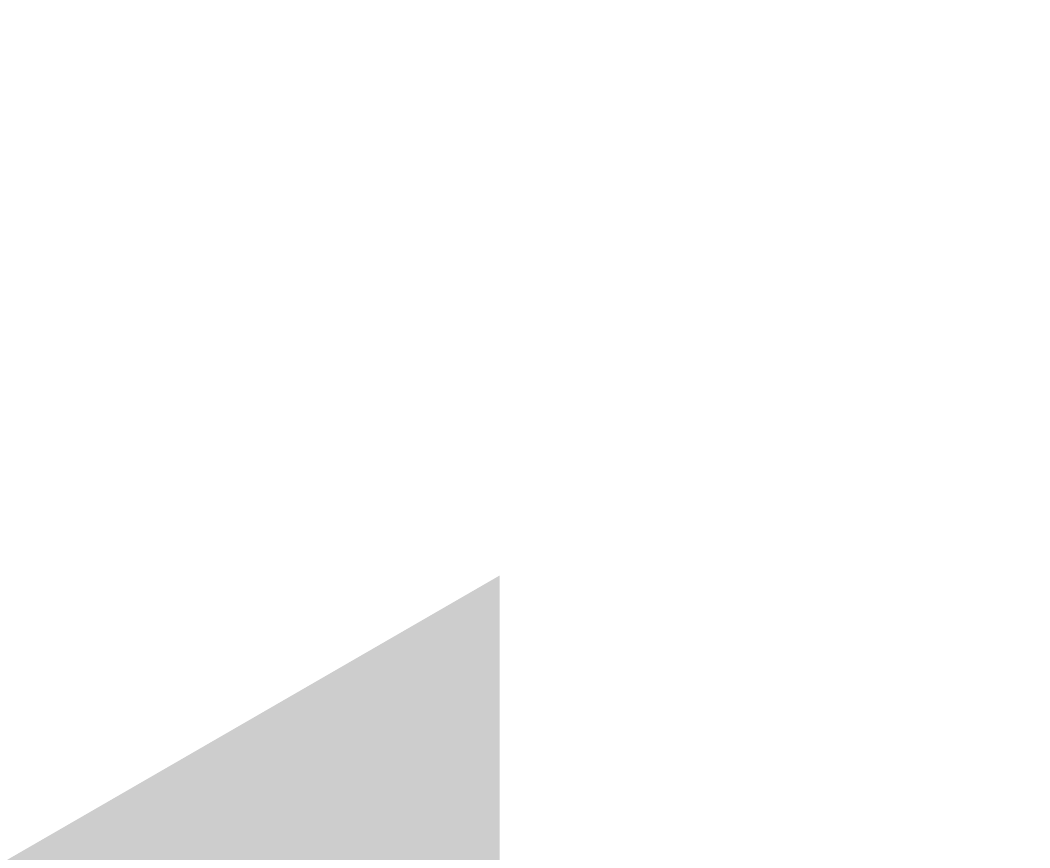
  \end{minipage}\hfill
  \begin{minipage}{0.49\textwidth}
    \centering
    \def\svgwidth{0.8\textwidth}
    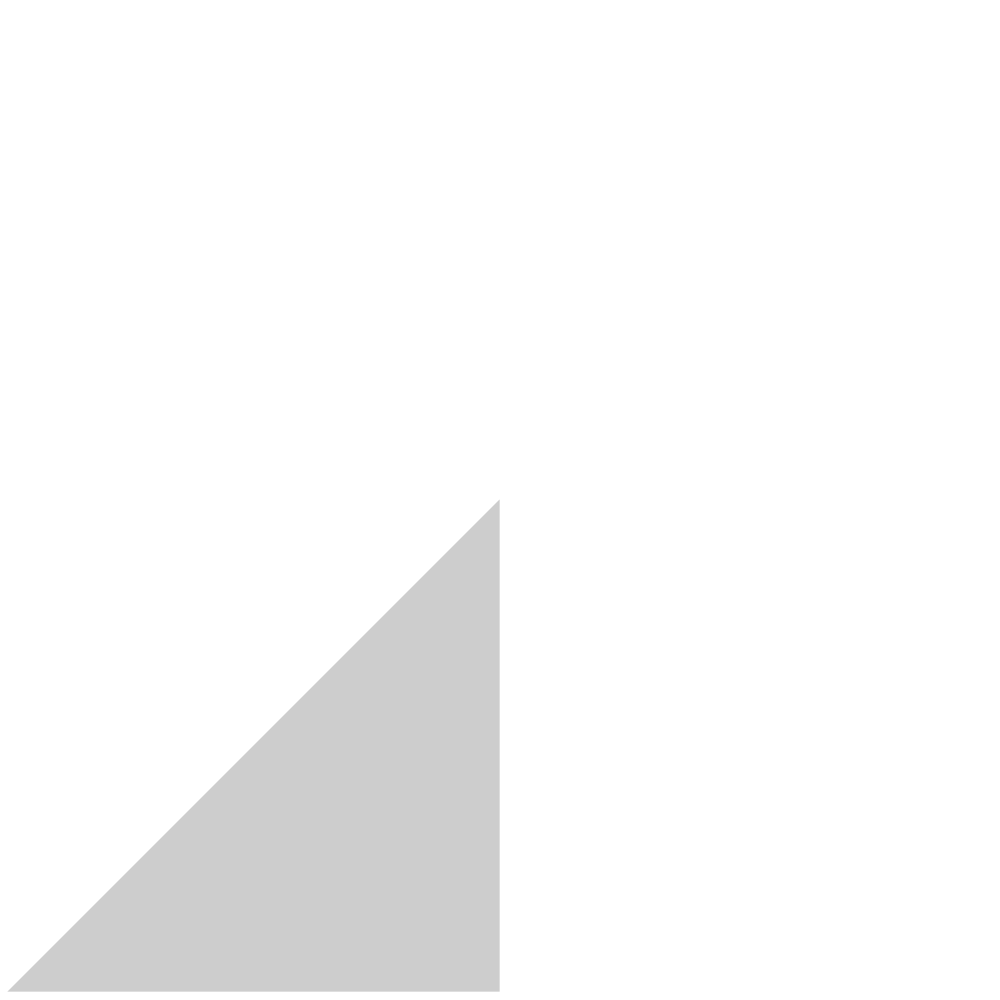
  \end{minipage}
  \caption{Tiling of
  $D$ into copies of its characteristic triangle $\Delta$}
  \label{fig:Figure2}
\end{figure}
The aim is to determine or estimate in terms of $\tau$ or -- more easily -- by means of the smallest distance $d$ of full-sized horoballs
the cathetus length $d_0(a_2,a_p)$ or $d_0(a_2,a_q)$
of $\Delta$. The position of the full-sized horoballs, projecting to full-sized horodisks (of radius $\frac{1}{2}$) in $H_{\infty}$,
will play a crucial role.
These investigations will allow us to bound the volume of $V$ by the expression (see \eqref{eq:cusp-volume} and \eqref{eq:density-bound})
\begin{equation}\label{eq:vol-Delta}
 \vol (V)\ge\frac{\vol (C)}{d_3(\infty)}=\frac{\mu_3}{\sqrt{3}}\cdot\vol_0(\Delta)\,\,.
\end{equation}

\subsection{One equivalence class of full-sized horoballs}\label{step3}
Assume that the crystallographic
group $\Gamma_{\infty}$ gives rise to only one equivalence class of full-sized horoballs. If there is a full-sized horoball
which is {\it not} centred at any of the singular points $a_s\,,\,s\in\{2,q,r\}\,,$ in the cusp diagram $D$, then the least
cusp volume $\vol(C)$ is given by (see also \cite[Figure 1(a), p. 4, and Figure 6(a), p. 12]{Adams1} and Figure \ref{fig:Figure3})
\begin{equation}\label{eq:no-singular}
 \vol(C)= \begin{cases}
\frac{\sqrt{3}}{12}\,\big(1+\frac{\sqrt{3}}{2}\big)\approx 0.269338 & \text{\quad if $C$ is of type $\{2,3,6\}\,\,,$} \\
\frac{1}{4}& \text{\quad if $C$ is of type $\{2,4,4\}\,\,.$}
\end{cases}
\end{equation}

\begin{figure}
  \centering
  \begin{minipage}{0.49\textwidth}
    \centering
    \def\svgwidth{0.8\textwidth}
    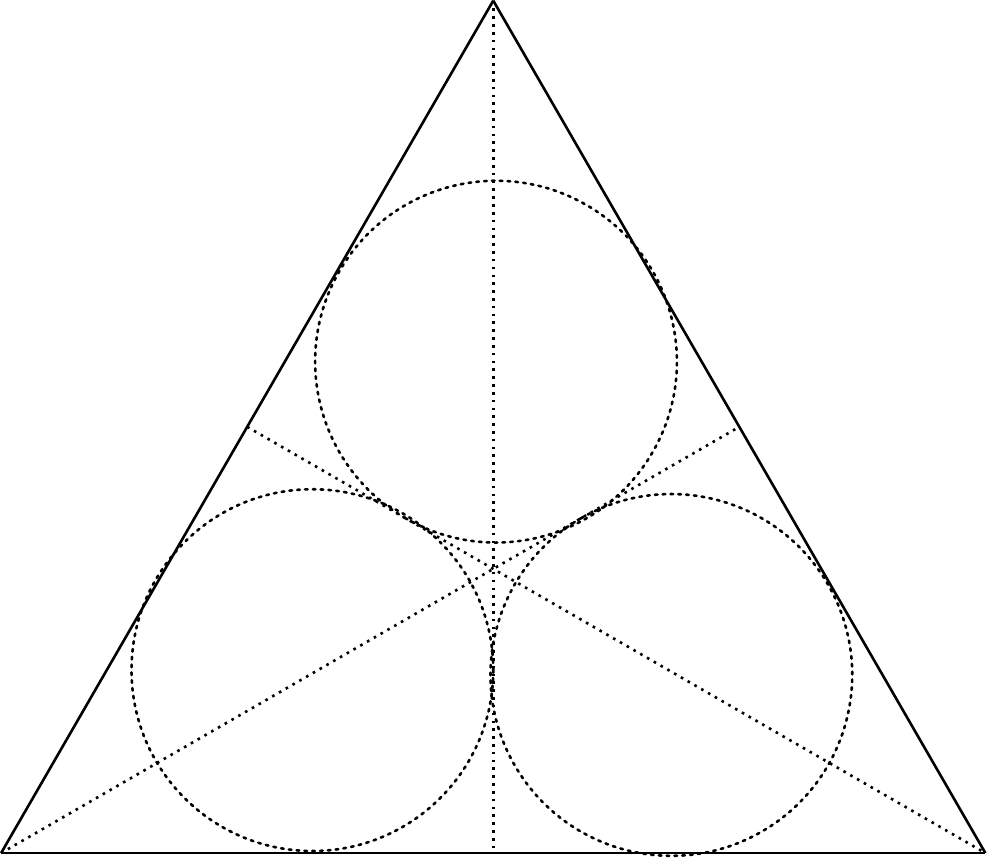
  \end{minipage}\hfill
  \begin{minipage}{0.49\textwidth}
    \centering
    \def\svgwidth{0.8\textwidth}
    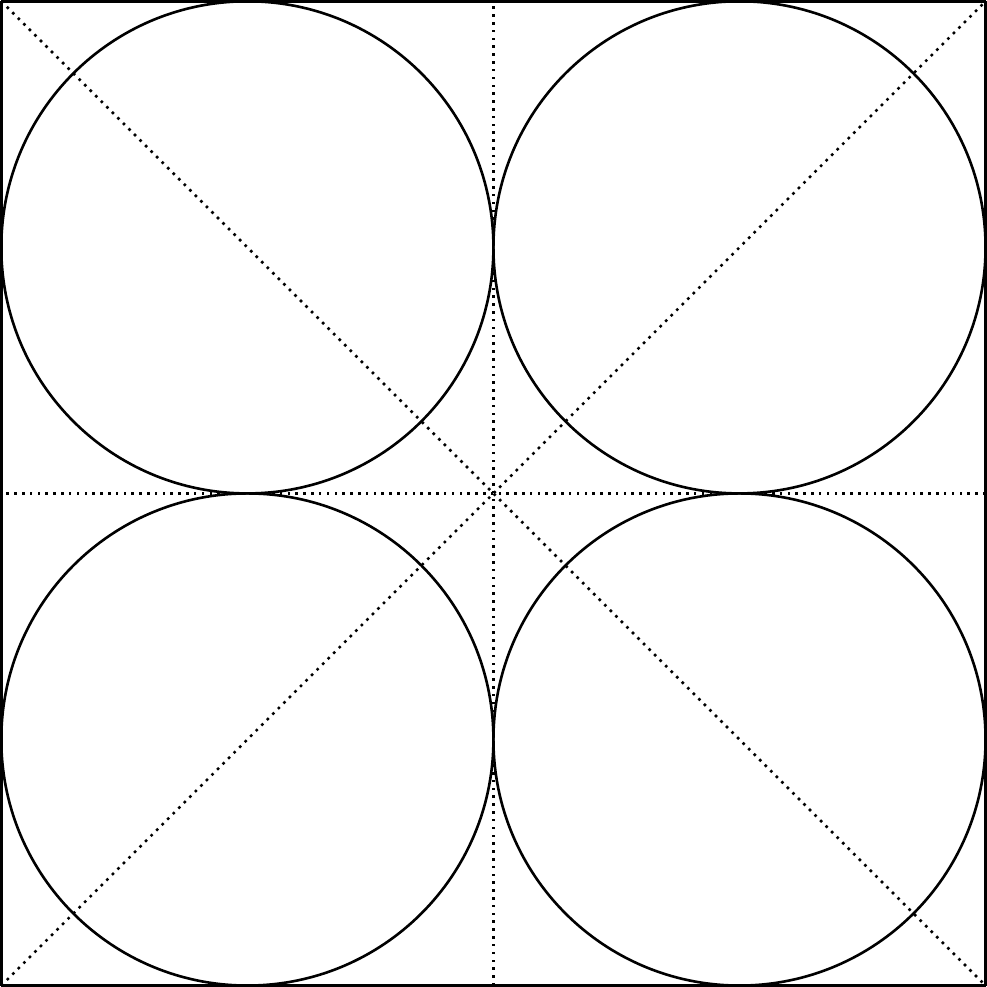
  \end{minipage}
  \caption{Full-sized horoballs not centred at singular points}
  \label{fig:Figure3}
\end{figure}

As a consequence, the volume of $V$ is strictly bigger than $v_*=\vol(\HH^3/[5,3,6])$ (compare with \eqref{eq:minvol-bound}).

From now on, we assume that
there is a full-sized horoball centred at one of the singular points $a_s\,,\,s\in\{2,q,r\},$ in the cusp diagram $D$.
We treat the cases $s\in\{2,3,6\}$ and $s\in\{2,4,4\}$ separately.
\subsubsection{The case \texorpdfstring{$\{2,3,6\}$}{(2,3,6)}}\label{step3a}
(i)\quad First we assume that there are at least two full-sized horoballs which touch one another, that is, the minimal distance
$d$ of the centres of full-sized horoballs equals 1. In particular, in the cusp diagram $D$, there will be at least two full-sized disks,
centred at equivalent singular points, which touch one another; see Figure \ref{fig:Figure4}.

\begin{figure}
  \centering
  \begin{minipage}{0.4\textwidth}
    \def\svgwidth{\textwidth}
    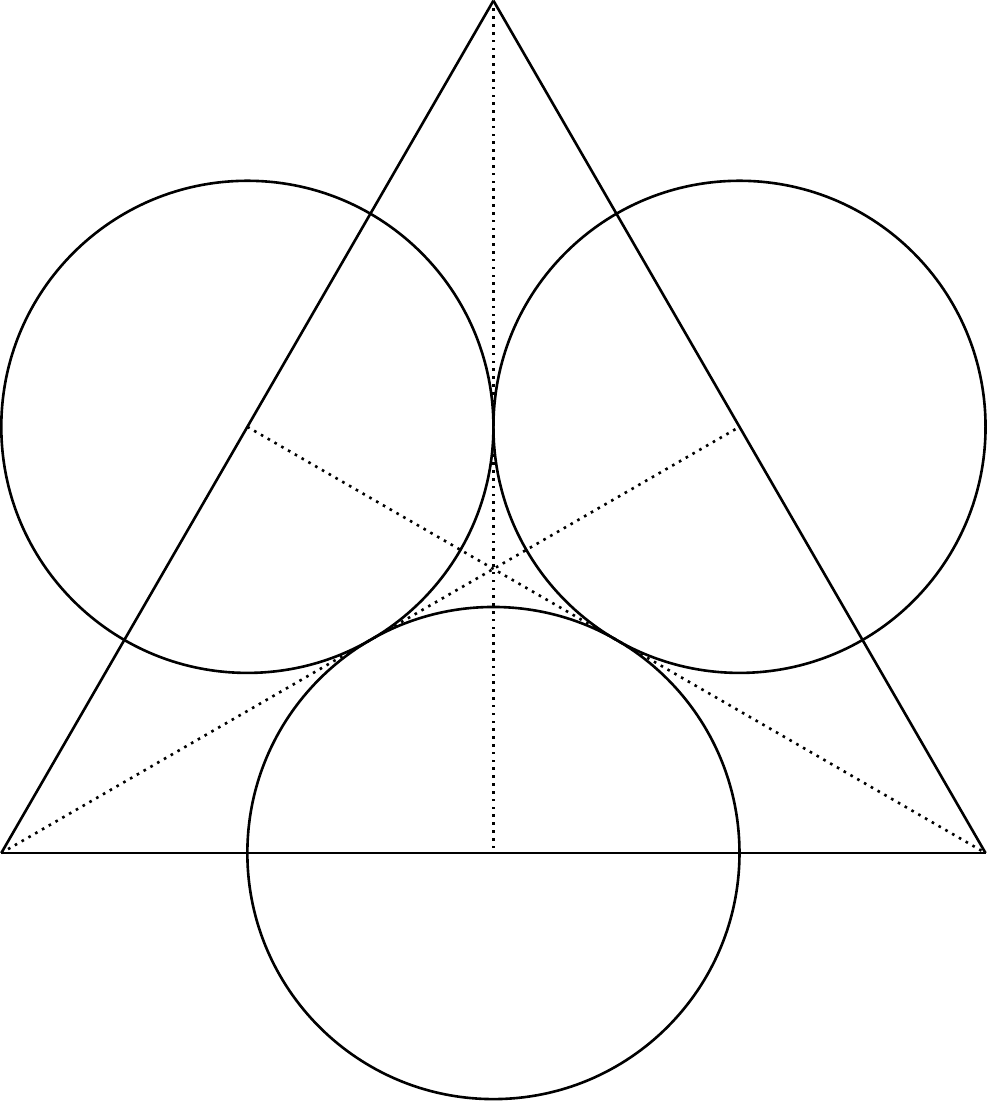
  \end{minipage}

  \vspace{1em}
  \begin{minipage}{0.45\textwidth}
    \def\svgwidth{\textwidth}
    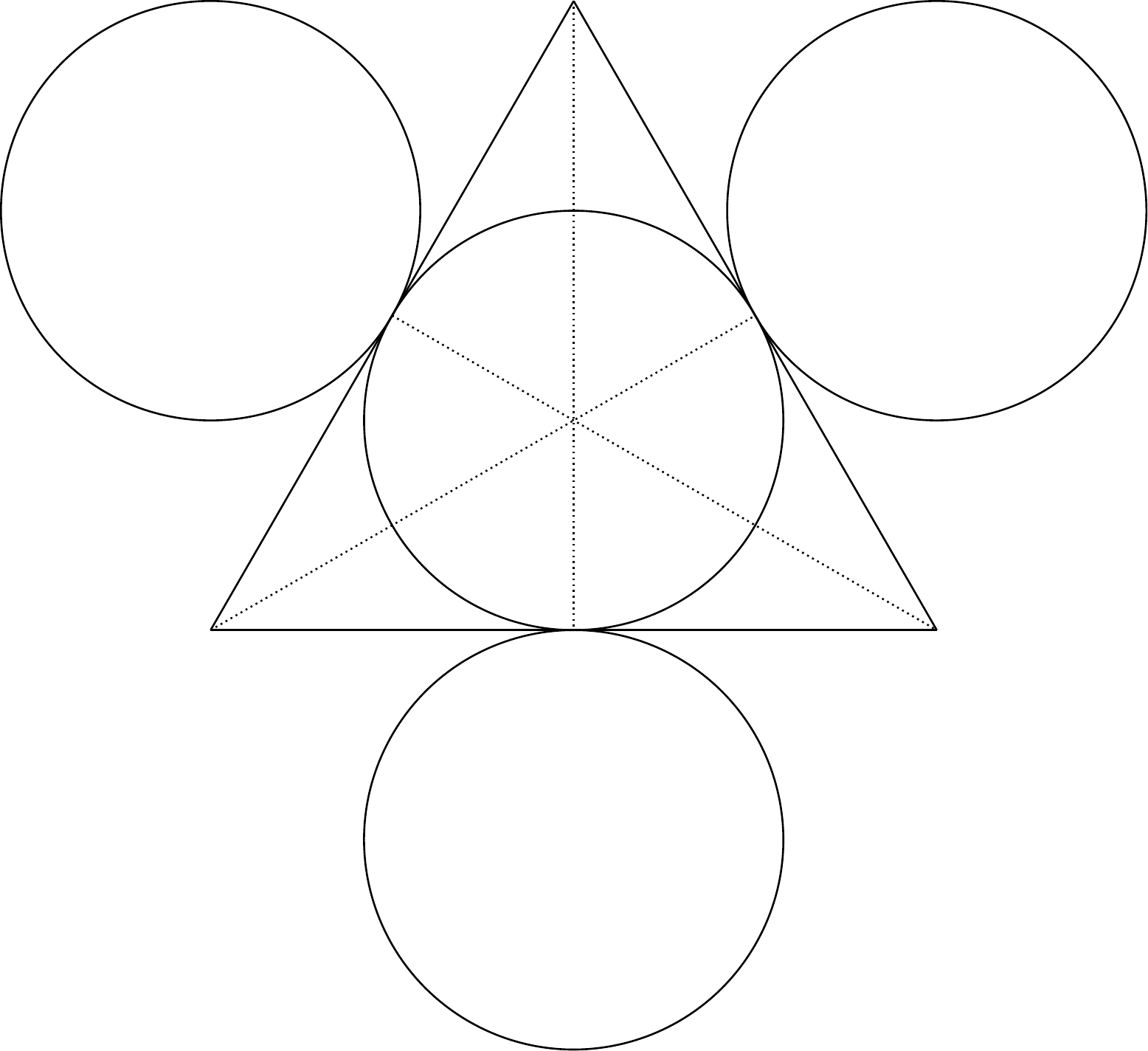
  \end{minipage}\hfill
  \begin{minipage}{0.45\textwidth}
    \def\svgwidth{\textwidth}
    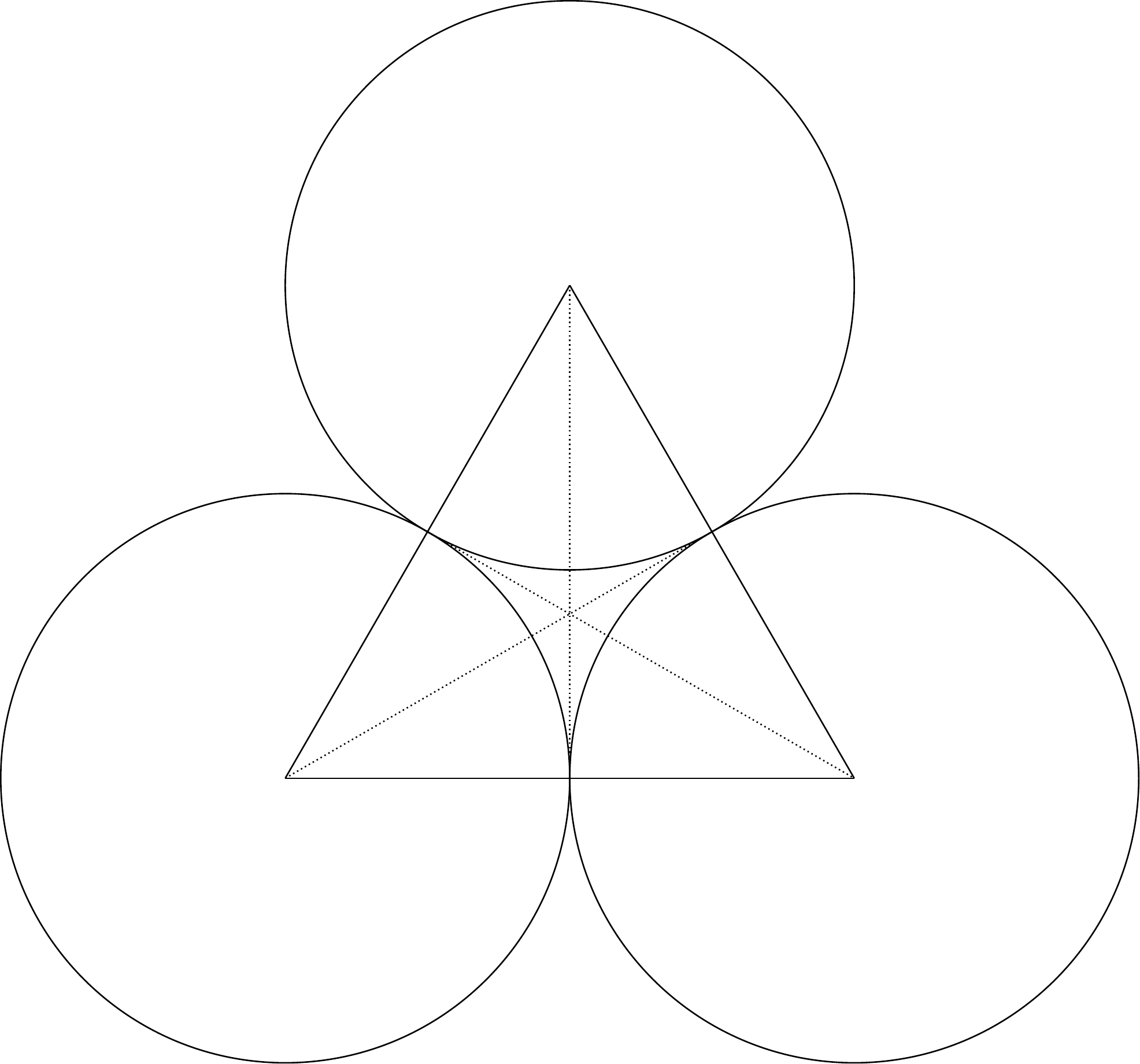
  \end{minipage}
  \caption{A $\{2,3,6\}$-cusp with full-sized horoballs centred at equivalent singular points}
  \label{fig:Figure4}
\end{figure}

{\it Order 6.\quad}If there is a full-sized
horoball (of radius $\frac{1}{2}$) centred at the singular point $a_6$ in $\Delta$, then $\,d_0(a_2,a_6)=\frac{1}{2}$,
and the cusp volume $\vol(C)$
equals $\frac{\sqrt{3}}{48}$. Since there are exactly three full-sized horoballs touching one another and the horoball $B_{\infty}$,
it is easy to see that the (up to isometry) unique orbifold $V_0$ corresponding to this configuration
is given by the quotient of $\HH^3$ modulo the {\it arithmetic} Coxeter group $[3,3,6]$ (see \cite{Mey1}, \cite{Mey2}).
The orbifold is covered by
Gieseking's manifold and is known to be the cusped hyperbolic 3-orbifold of minimal volume (which equals
$\,\frac{\mu_3}{24}=\frac{1}{8}\,\loba(\frac{\pi}{3})\approx0.042289$).
In particular, the cusp density $\delta(C)$ satisfies $\vol(C)/\vol(V_0)=d_3(\infty)$ (see also \eqref{eq:cuspdensity-bound}).

{\it Order 3.\quad}Suppose that there is a full-sized horoball
centred at the singular point $a_3$ in $\Delta$. Then,
$\,d_0(a_2,a_3)=\frac{1}{2}$ and $\,d_0(a_2,a_6)=\frac{\sqrt{3}}{2}$ so that
the cusp volume
equals $\frac{\sqrt{3}}{16}$ \cite[Figure 1(c), p. 4]{Adams1}.
By Lemma \ref{elliptic},
it is not difficult to see that there is a unique orbifold
which corresponds to this configuration. Its fundamental group is given by the $\mathbb Z_2$-extension of
the {\it arithmetic} Coxeter group $[3,6,3]$ (and has covolume
$\frac{1}{4}\,\loba(\frac{\pi}{3})\approx0.084579$).

{\it Order 2.\quad}Suppose that there is a full-sized horoball
centred at the vertex $a_2\in \Delta$.
Then,
$\,d_0(a_2,a_3)=\frac{1}{\sqrt{3}}$ and $\,d_0(a_2,a_6)=1$, and the
cusp volume is given by
$\frac{\sqrt{3}}{12}\approx 0.144338$ (see \cite[Figure 1(b), p. 4]{Adams1}). Note that this volume is also the smallest cusp volume for the case that we have full-sized horoballs at both the  singular points $a_2$ and $a_6$.
There is a unique orbifold
which corresponds to this configuration, and it is given by
the {\it arithmetic} Coxeter simplex with symbol $[3^{1,1},6]$ and 2 cusps. By cutting along the symmetry plane of $[3^{1,1},6]$, we get an orbifold whose fundamental group is commensurable to $[3^{1,1},6]$ and isomorphic to the arithmetic Coxeter group $[4,3,6]$ of covolume
$\frac{5}{16}\,\loba(\frac{\pi}{3})\approx0.105723$ (see \cite[p. 347]{JKRT1}, for example).

\smallskip
(ii)\quad Suppose now that the full-sized horoballs do {\it not} touch another. Then,
the minimal distance $d$ of their centres satisfies $d>1$.
Assume that one full-sized horoball is centred at the singular point $a_s$, $s\in\{2,3,6\}$, in $\Delta$.
Since $\Gamma_{\infty}$ identifies all full-sized horoballs, Lemma \ref{elliptic} yields an order two
elliptic element $\gamma_s\in\Gamma$ with axis perpendicular at $a_s=:(u_s,1)$ to the order $s$ axis $l_s$.
The element $\gamma_s$ sends the horoball $B_{\infty}$ to the full-sized horoball $B_{u_s}$ and vice versa.
Furthermore, $\gamma_s$ sends the $s$ neighboring full-sized horoballs -- each at distance $d$ from $u_{s}$ --
to $s$ smaller horoballs. By the proof of Lemma \ref{horo5}, these smaller horoballs are of Euclidean diameter $\frac{1}{d^2}$,
and by Corollary \ref{horo3}, their base points are at a distance $\frac{1}{d}$ from $u_s$. Following the terminology
of Adams \cite [p. 5]{Adams1},
call each of these balls a $(\frac{1}{d})$-{\it ball}. By construction, the $(\frac{1}{d})$-{balls} are the biggest
horoballs of diameter less than 1 that are tangent to full-sized horoballs.

Let $B(h)$ be a horoball of diameter $h$ such that $\,\frac{1}{d^2}\le h<1\,$ covering $C$ which is {\it not} tangent to any larger horoball
covering $C$. Then, in the interior of its upper hemisphere, there are no points of tangency with horoballs covering $C$. By Corollary
\ref{horo2}, this upper hemisphere is an open disk of radius 1 with respect to the induced metric on its boundary.
By mapping $B(h)$ to $B_{\infty}$ by means of an element in $\Gamma$, we obtain a disk of radius 1 on the horosphere $H_{\infty}$
which contains no point of tangency with full-sized horoballs.
Such a disk is called a {\it disk of no tangency}. Notice that the existence of a disk of no tangency in the cusp diagram
on $H_{\infty}$ for $C$
has the same effect as an extra full-sized horoball touching the centre of the disk would have.

Furthermore, by Corollary \ref{horo3}, one can deduce the following fact for a cusp of any type
(for a proof, see \cite[pp. 5--6]{Adams1}).

\begin{lem}\label{no-tangency}
 Let $B$ be a full-sized horoball covering the cusp $C$, and let $B_{\frac{1}{d}}$ be a $(\frac{1}{d})$-{ball} touching $B$. If the base point
 $x$ of $B_{\frac{1}{d}}$ has Euclidean distance $d_0(x,y)<1$ from the base point $y$ of another full-sized horoball $B'$
 or if $d_0(x,z)<\frac{1}{d}$ with $z$ a base point
 of another $(\frac{1}{d})$-{ball} $B'_{\frac{1}{d}}$, without $B_{\frac{1}{d}}$ touching $B'$ or $B'_{\frac{1}{d}}$, then there is
 a disk of no tangency in the cusp diagram of $C$.
\end{lem}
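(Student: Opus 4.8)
The plan is to reduce the statement to producing a single horoball $B(h)$ covering $C$ with $\frac{1}{d^2}\le h<1$ that is tangent to no strictly larger horoball covering $C$, and then to invoke the mechanism described just before the lemma: mapping $B(h)$ to $B_{\infty}$ by an element of $\Gamma$ carries the interior of its upper hemisphere, which is an open disk of induced radius $1$ by Corollary \ref{horo2}, to a radius-$1$ disk on $H_{\infty}$ containing no tangency point with a full-sized horoball, i.e.\ a disk of no tangency. The one geometric input needed for this reduction is that a horoball tangent to $B(h)$ at an interior point of its upper hemisphere is necessarily larger than $B(h)$: with $B(h)$ having Euclidean centre at height $\frac{h}{2}$ and a tangent ball of diameter $h_1$ placed by Lemma \ref{horo0}, the point of tangency sits at height $\frac{h\,h_1}{h+h_1}$, which exceeds $\frac{h}{2}$ exactly when $h_1>h$. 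Hence \emph{tangent to no larger horoball} forces the upper hemisphere to be free of tangencies, and the reduction is complete.

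First I would treat the full-sized case. Applying Lemma \ref{horo5} to the non-tangent pair $B_{\frac{1}{d}}$ (diameter $\frac{1}{d^2}$, based at $x$) and $B'$ (diameter $1$, based at $y$) with $d_0(x,y)=r<1$ yields a horoball $B''$ covering $C$ of diameter $\frac{1}{d^2 r^2}$. Since every horoball covering the maximal cusp $C$ has diameter at most $1$, we get $r\ge\frac{1}{d}$; and $r=\frac{1}{d}$ is impossible, since by Lemma \ref{horo0} a full-sized horoball at Euclidean distance $\frac{1}{d}$ from $x$ would touch $B_{\frac{1}{d}}$, contradicting the hypothesis that $B_{\frac{1}{d}}$ does not touch $B'$. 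Together with $r<1$ this places the diameter strictly inside $(\frac{1}{d^2},1)$. The second case is identical in structure: applying Lemma \ref{horo5} to the two $(\frac{1}{d})$-balls $B_{\frac{1}{d}}$ and $B'_{\frac{1}{d}}$ (each of diameter $\frac{1}{d^2}$) at Euclidean distance $d_0(x,z)=s<\frac{1}{d}$ gives a horoball of diameter $\frac{1}{d^4 s^2}$, and the same two estimates — diameter $\le 1$ and the impossibility $s=\frac{1}{d^2}$ via Lemma \ref{horo0} — again place the diameter strictly in $(\frac{1}{d^2},1)$.

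It then remains to verify that the horoball $B''$ so produced is tangent to no larger horoball; this is the \emph{crux}. I would transport the picture by the element $g\in\Gamma$ with $g(B_{\frac{1}{d}})=B_{\infty}$, under which $B''=g(B')$, the tangent full-sized ball $B$ becomes a full-sized ball touching $B_{\infty}$, and the interior of the upper hemisphere of $B''$ is carried to the induced-radius-$1$ disk on $\partial B'$ about the point $q_x$ lying in the direction of $x$. By Lemma \ref{horo1} the point $q_x$ lies at induced distance $\frac{1}{r}>1$ from the top of $B'$, hence below the equator of $B'$, while by the height computation above every finite horoball meets the full-sized $B'$ only at or below its equator. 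Using Corollary \ref{horo3} to convert the Euclidean hypotheses $d_0(x,y)<1$ (respectively $d_0(x,z)<\frac{1}{d}$) and the non-touching assumptions into induced distances on $\partial B'$, one checks that this radius-$1$ region carries no tangency point, so that $B''$ is tangent to no larger horoball and the reduction applies. The main obstacle is precisely this last step: the diameter bookkeeping is routine, but clearing the upper hemisphere requires ruling out every horoball — full-sized and $(\frac{1}{d})$-ball alike — that could intrude into the relevant induced-radius-$1$ region, and it is here that the sharp distance dictionary of Corollary \ref{horo3}, together with the borderline analysis against full-sized horoballs, does the real work.
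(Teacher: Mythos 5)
Your reduction and your diameter bookkeeping are exactly the intended ones (the paper gives no proof of this lemma but defers to Adams \cite[pp.~5--6]{Adams1}, whose argument begins the same way): the height computation showing that a tangency in the open interior of the upper hemisphere of $B(h)$ forces a strictly larger tangent ball, the use of Corollary \ref{horo2} to turn that hemisphere into a radius-$1$ disk on $H_\infty$, and the application of Lemma \ref{horo5} with the endpoint values excluded via Lemma \ref{horo0} all correctly produce, in both cases of the hypothesis, a horoball $B''$ covering $C$ of diameter strictly between $\frac{1}{d^2}$ and $1$.

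The crux step, however, has a genuine gap: the particular ball $B''$ need not be tangent to no larger horoball, and your proposed verification cannot be completed. On $\partial B'$ the tangency points of all horoballs other than $B_\infty$ lie at induced distance at least $d$ from the north pole (the $(\frac{1}{d})$-balls touching $B'$ do so at distance exactly $d$), while $q_x$ lies at distance $\frac{1}{r}$ with $1<\frac{1}{r}<d$ (from $\frac{1}{d}<r<1$). Whenever $d<2$ — which covers essentially the whole admissible range, see Remark \ref{bound-d} — a tangency point at distance $d$ from the north pole that is nearly collinear with $q_x$ lies at distance about $d-\frac{1}{r}<1$ from $q_x$, i.e.\ inside your radius-$1$ region, and nothing in the hypotheses ($d_0(x,y)<1$, no touching) excludes such a configuration. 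When it occurs, $B''$ \emph{is} tangent to a strictly larger horoball, namely the $g$-image of that $(\frac{1}{d})$-ball, whose diameter again lies in $(\frac{1}{d^2},1)$ because any horoball covering $C$ tangent to a full-sized horoball is either $B_\infty$ or has diameter at most $\frac{1}{d^2}$ (this is the paper's remark that the $(\frac{1}{d})$-balls are the biggest horoballs of diameter less than $1$ tangent to full-sized ones). So your direct check stalls: a failed check yields not a contradiction but merely a larger intermediate ball. The missing idea — Adams' actual device — is to use $B''$ solely to show that the set of horoballs covering $C$ with diameter in $(\frac{1}{d^2},1)$ is nonempty, and then to pass to a member $B^*$ of \emph{maximal} diameter; this exists since the horoball family is $\Gamma_\infty$-periodic and locally finite, so only finitely many diameters exceed $\frac{1}{d^2}$. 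Maximality excludes tangency with a larger ball of diameter less than $1$; the quoted fact excludes tangency with a full-sized ball; and tangency with $B_\infty$ would make $B^*$ full-sized. Hence $B^*$ is tangent to no larger horoball, and your (correct) reduction applies to $B^*$ in place of $B''$.
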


With these preliminary remarks, we investigate in detail each of the cases $s\in\{2,3,6\}$.

{\it Order 6.\quad} Suppose that a full-sized disk $B=B_1$ is centred at the singular point $b_1=a_6$ in $D$, and that
$B_2, B_3$ are full-sized disks centred at the vertices $b_2,b_3$ at distance $d$ from $B_1$ in the cusp triangle $D$.
For $1\le i\le 3$, let $x_i$
denote the center (at height 1) of the $(\frac{1}{d})$-{ball} touching $B_i$ in $D$.
Following Adams \cite[Figure 2]{Adams1}, we define the Euclidean distances
$u=d_0(x_i,x_j)\,,\,v\,$ and $w$
as in Figure \ref{fig:Figure5}.

\begin{figure}
  \centering
  \def\svgwidth{0.7\textwidth}
  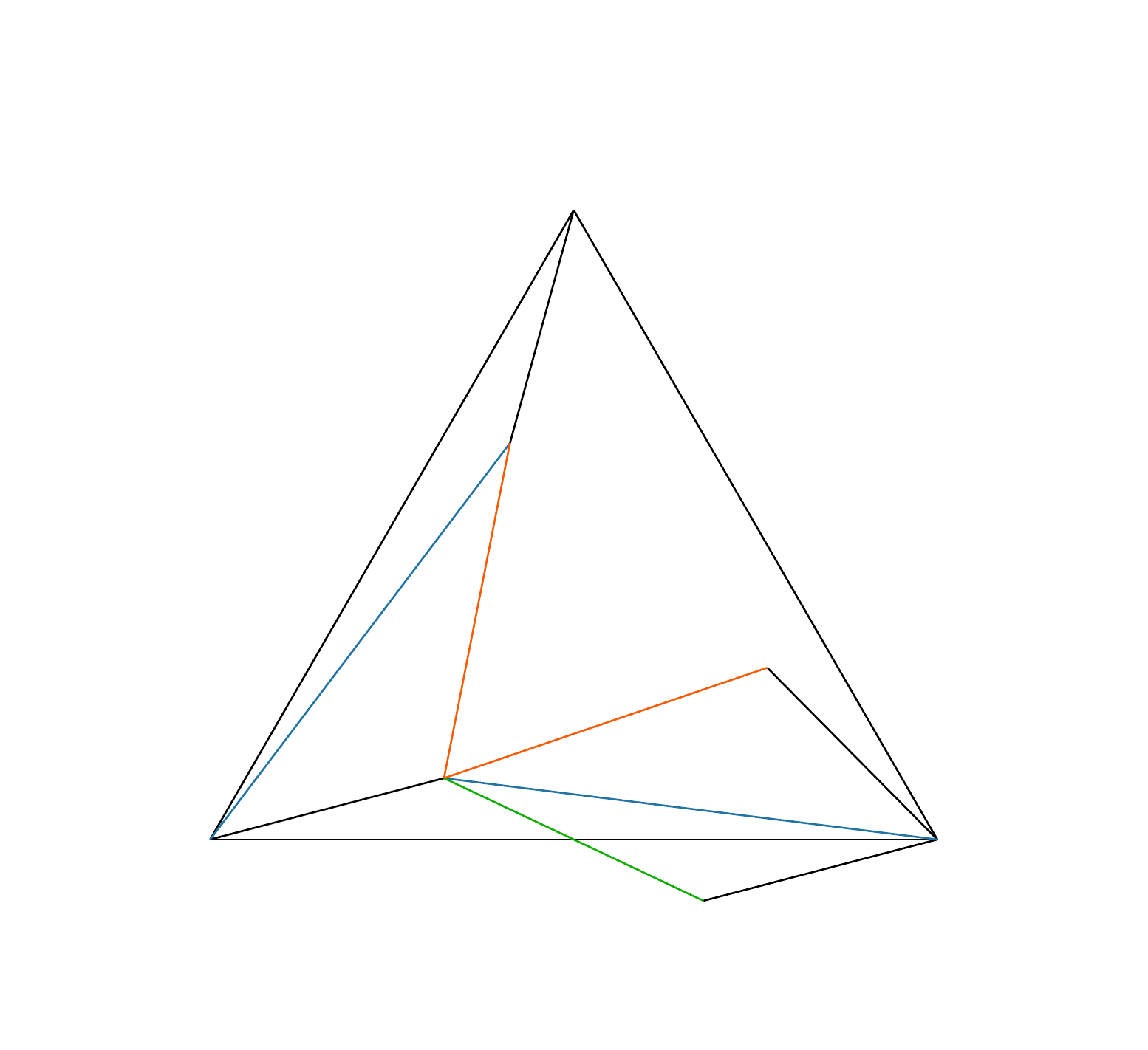
  \caption{$\left(\frac{1}{d}\right)$-balls in the $\{2,3,6\}$-cusp triangle $D$}
  \label{fig:Figure5}
\end{figure}

Consider the triangle $\Delta(b_1,x_1,b_2)$ with edge lengths $d_0(b_1,x_1)=\frac{1}{d}$, $d_0(x_1,b_2)=w$
and $d_0(b_1,b_2)=d$,
and denote by $\theta=\measuredangle(x_1,b_2)$ the angle at $b=b_1$.
By symmetry of $D$, we may suppose that $0\le \theta\le\frac{\pi}{6}$. The elementary law of cosines allows one
to deduce the following identities (see \cite[(1)--(3)]{Adams1})
 \begin{align}
        u^2&=d^2+\frac{3}{d^2}-\sqrt{3}\,\sin\theta-3\cos\theta=
        d^2+\frac{3}{d^2}-2\,\sqrt{3}\,\cos(\frac{\pi}{6}-\theta)\label{eq:1}  \\
        v^2&=d^2+\frac{4}{d^2}-4\,\cos\theta \label{eq:2} \\
        w^2&=d^2+\frac{1}{d^2}-2\,\cos\theta \label{eq:3}\,\,.
\end{align}

The strategy as developed in detail by Adams \cite[pp. 6--10]{Adams1} consists now in analysing the different
tangency possibilities of the $(\frac{1}{d})$-balls with respect
to one or several full-sized horoballs and among themselves.

A crucial point will be the dichotomy of $\Gamma_{\infty}$ preserving orientation, or not. If $\Gamma_{\infty}$ contains a reflection, then we say that the
horoball configuration in the cusp diagram $D$ has a {\it mirror symmetry}.

Altogether, we will be able to determine or estimate explicitly
the lengths $d$ and $\tau$, the associated cusp volume $\vol(C)=\frac{\sqrt{3}\,d^2}{48}$ (in case of no mirror symmetry)
and to describe the corresponding orbifold $V$ under the volume constraint $\vol(V)\le
\vol(V_*)=v_*$
(see \eqref{eq:minvol-bound}). Furthermore, one has to check whether the resulting orbifold $V$ is arithmetic or not. This is not difficult if its fundamental group is given by a hyperbolic Coxeter group (see Section \ref{Coxeter}) or by an explicit presentation in $\PSL_2(\mathbb C)$ (see \cite[Section 8]{MR1}).
\begin{remark}\label{bound-d}
Assuming the rough volume estimate \hbox{\rm (}see \eqref{eq:vol536}, \eqref{eq:density-bound}\hbox{\rm )}
\[
\hbox{\rm vol}(V)\ge \frac{\hbox{\rm vol}(C)}{d_3(\infty)}=\frac{\sqrt{3}\,d^2}{48\,d_3(\infty)}>\hbox{\rm vol}(V_*)=v_*\approx0.171502\,\,,
\]
where $d_3(\infty)\approx 0.853276$, we obtain $d>2.013813$.
Hence, we only have to consider values for $d>1$ with $d\le 2.013813$ or -- in the case that $\Gamma_{\infty}$ is orientation-preserving -- values for $d$ with $d\le 1.423982$.
\end{remark}
We start with the easy case of $(\frac{1}{d})$-balls touching at least two full-sized horoballs. This case has a complete answer showing that
the resulting orbifold is arithmetic (see \cite[p.~3 and p.~10]{Adams1} and \cite{NR}).
We provide one illustrating example and suppose that a single $(\frac{1}{d})$-ball
touches three full-sized horoballs giving a cusp diagram as depicted in
\cite[Figure 3(b)]{Adams1}. It easily follows that $\,d=\sqrt[\leftroot{-2}\uproot{2}4]{3}\,$
yielding a cusp volume of $\frac{1}{16}$. Associated
to this configuration is a unique
orbifold with fundamental group that is commensurable to the arithmetic Coxeter group
$[3,6,3]$ (see \cite[p.~10]{Adams1} in the corresponding oriented case).

The delicate case is
when each $(\frac{1}{d})$-ball touches a unique full-sized horoball. Let us consider the mutual position of the $(\frac{1}{d})$-balls.
By Lemma \ref{3balls}, we can exclude the case that three $(\frac{1}{d})$-balls touch each other.

Next, observe that we can assume that the distance $w$ as depicted in Figure \ref{fig:Figure5} satisfies
\begin{equation}\label{eq:wd}
w\ge1\quad\hbox{implying that}\quad d\ge \frac{1}{2}\,\big(\,\sigma+\sqrt{\sigma^2-4}\,\big)\approx 1.515464\quad\hbox{\rm for}\quad\sigma=\sqrt{3+\sqrt{3}}\,\,.
\end{equation}
In fact, if there is {\it no} disk of no tangency, this is a direct consequence of Lemma \ref{no-tangency}. If there {\it is} a disk of no tangency in $D$, then $d\ge\sqrt{3}$. Since $0\le \theta\le\frac{\pi}{6}$, equation \eqref{eq:3} for $w$ implies that $w\ge\frac{2}{\sqrt{3}}>1$ and the asserted estimate for $d$.

{\it Consequence.} If $\Gamma_{\infty}$ is orientation-preserving, the bound $\,d\ge1.515464$ from \eqref{eq:wd}
yields a too big cusp estimate $\frac{\vol(C)}{d_3(\infty)}$ in comparison with $v_*$. In particular, we do not have to analyse in depth cusp diagrams
$D$ with no mirror symmetry.

Now, if two $(\frac{1}{d})$-balls touch one another but no further $(\frac{1}{d})$-ball touches this pair, then the point of
tangency created by this pair must coincide with a 2-fold singular point $a_2$ in $D$. Since the tangency point is the midpoint of an edge $e$ of $D$, one has $v=\frac{1}{d^2}$ with $v$ given by equation \eqref{eq:2}. Implementing this identity into the equation \eqref{eq:3} for $w$, the property $w\ge1$ yields the inequality
\begin{equation}\label{eq:vd}
d^6-2d^4-2d^2+1\ge0\,\,,
\end{equation}
and hence $\,d\ge2\cos\frac{\pi}{5}>1.6$.
One can conclude the case by different considerations (for Adams' conclusion, see \cite[pp. 7-8]{Adams1}).
First, assume that the centres of the two $(\frac{1}{d})$-balls are {\it not} aligned with $b_1$ and $b_2$ on the edge $e$ of $D$. Then, the group $\Gamma_{\infty}$ consists of rotations, only, and we are done.
\newline
Next, assume that the centres of the two $(\frac{1}{d})$-balls  lie on the edge $e$. Then,
\[
 d=\frac{2}{d} +\frac{1}{d^2}\,\,,
\]
and hence, $d=2\cos\frac{\pi}{5}$.
There is a unique orbifold associated to this situation, which is
based on an orthoscheme $R(\frac{\pi}{5},\frac{\pi}{3})$ and its reflection group (see Section \ref{volume}). Indeed, the orbifold equals $V_*$ with non-arithmetic fundamental group $[5,3,6]$ (see \cite[pp. 6-8]{Adams1}).


Assume now that the $(\frac{1}{d})$-balls {\it do not touch} each other. 
Following \cite[p. 8]{Adams1}, and by Lemma \ref{elliptic}, there is an order 2 elliptic element exchanging the horoball $B_\infty$ and a full-sized horoball $B_j$ while sending a neighbouring full-sized horoball $B_k$ to a $(\frac{1}{d})$-ball $B_x$, say (compare Figure \ref{fig:einsdurchw236}).
Then, the $(\frac{1}{d})$-ball $B_y$ (at distance $w$ from $B_j$) touching $B_k$ gets sent to a ball $B_s$ (at distance $\frac{1}{w}$ from $B_j$) touching $B_x$.
Like in \cite{Adams1} we call this ball a {\it $(\frac{1}{w})$-ball}. Its diameter equals $\frac{1}{w^2d^2}$ by Lemma \ref{horo5}. We study now the positions of the $(\frac{1}{w})$-balls and distinguish several cases.

$\bullet\quad$ Suppose that the $(\frac{1}{w})$-balls of the three $(\frac{1}{d})$-balls in $D$ coincide.
This implies that the $(\frac{1}{w})$-ball is centred in the 3-fold singular point $p=a_3$ in $D$ yielding $w=\frac{\sqrt{3}}{d}$.
Knowing all the side lengths of the triangle $\Delta(j,k,x)$ one can calculate the angle $\theta$ with the law of cosines as $\cos\theta=\frac{5}{2\sqrt{7}}$ which yields $d=\sqrt[\leftroot{-2}\uproot{2}4]{7}>1.6$ by means of \eqref{eq:3}. Furthermore, one can derive that the points $j,x$ and $y$ are aligned.
The situation is sketched in Figure \ref{fig:einsdurchw236}.
\begin{figure}
  \centering
  \def\svgwidth{0.5\textwidth}
  \footnotesize
  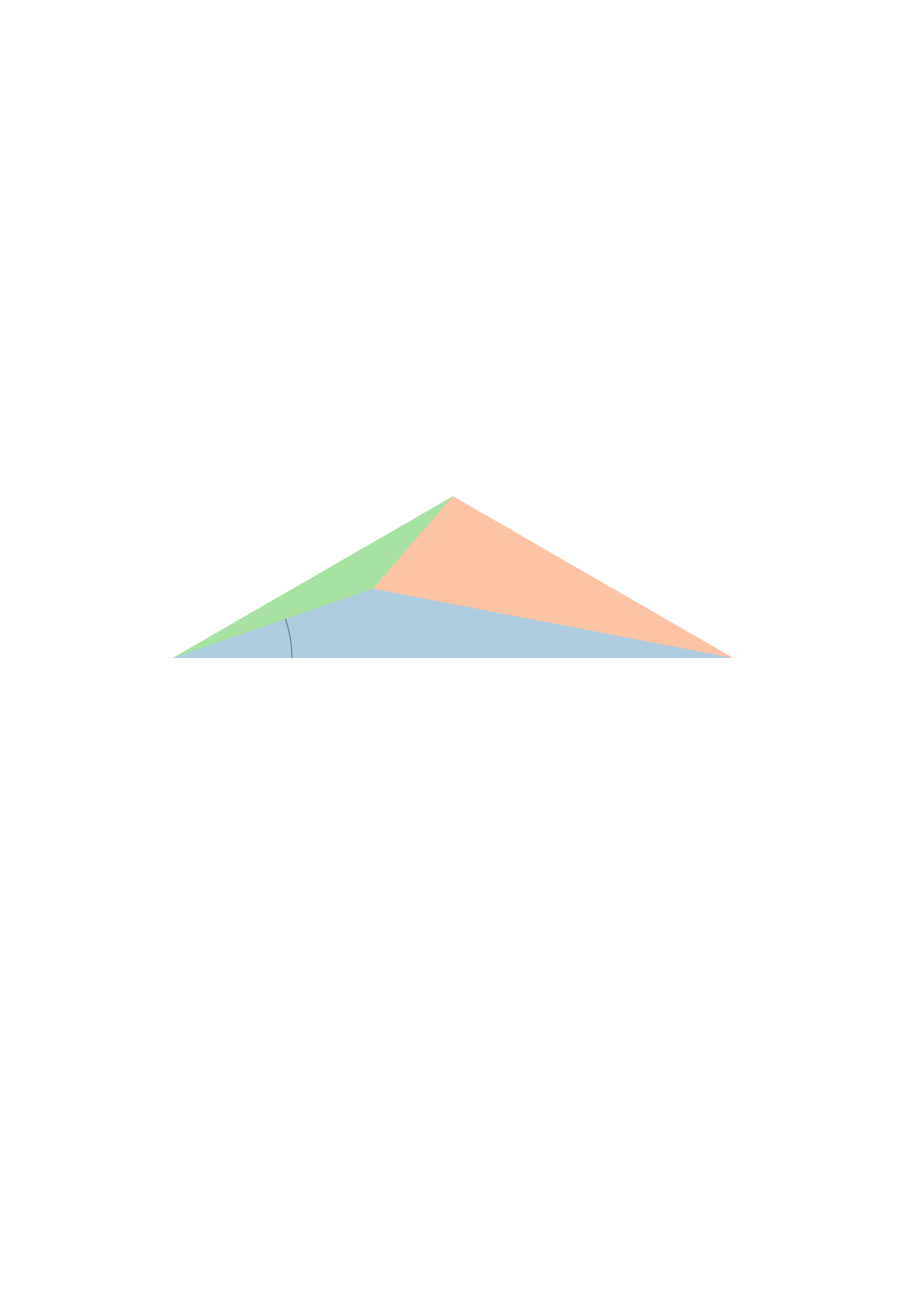
  \caption{A single $(\frac{1}{w})$-ball in a $\{2,3,6\}$-cusp triangle $D$}
  \label{fig:einsdurchw236}
\end{figure}

As a consequence, the horoball configuration in the cusp diagram $D$ has no mirror symmetry. Hence, the cusp volume  equals
\[
\vol(C)=\frac{\sqrt{21}}{24}>0.19>v_*\,\,,
\]
and we can exclude this case from our considerations.
\begin{remark}\label{21-24}
Adams considered the case above for an oriented orbifold $O=\mathbb H^3/\Gamma$ and calculated the corresponding cusp volume correctly as $\frac{\sqrt{21}}{24}$. However, in \cite[p. 10]{Adams1}, he gives a wrong fundamental polyhedron for $\Gamma$ and a volume which is too big. In the Appendix \ref{subsec:appendix1},
we correct Adams' analysis, compute the volume and prove the arithmeticity of the orbifold $O$.
\end{remark}

$\bullet\quad$ Suppose that two $(\frac{1}{w})$-balls of two  $(\frac{1}{d})$-balls coincide, having center $x$, say, and that no further $(\frac{1}{w})$-ball touches them. If $D$ has no mirror symmetry, then by means of the estimate $d\ge1.515464$ given by \eqref{eq:wd}, we obtain
\[
\frac{\vol(C)}{d_3(\infty)}=\frac{\sqrt{3}\,d^2}{24\,d_3(\infty)}>0.19>v_*\,\,.
\]
Hence, we may assume that
the center $x$ coincides with an edge center $a_2$ of $D$. It follows that $d=\frac{2}{w}$ which, by means of \eqref{eq:3} and with $\theta=0$,
yields $d^4-2d^2-3=0$ with solution $d=\sqrt{3}$. There is a unique orbifold realising this configuration. It is arithmetic and of volume $\,\frac{3}{8}\loba(\frac{\pi}{3})$, being commensurable to the 2-cusped quotient space by the Coxeter group $[6,3,6]$.

$\bullet\quad$ Suppose that the $(\frac{1}{w})$-balls are distinct and that $D$ has a mirror symmetry. In general, the $(\frac{1}{w})$-balls cannot touch full-sized balls since then two $(\frac{1}{d})$-balls touch another, in contrast to our assumption above. Furthermore, three $(\frac{1}{w})$-balls cannot touch one another pairwise by Lemma \ref{3balls}.

Since $D$ has a mirror symmetry, the group $\Gamma_{\infty}$ is a reflection group. As a consequence, the centres of the $(\frac{1}{d})$-balls lie either on the edges or on the angle bisectors of $D$.

Assume first that the centres of the $(\frac{1}{d})$-balls lie on the edges of $D$. The situation is depicted in Figure \ref{fig:oneoverdinline}.
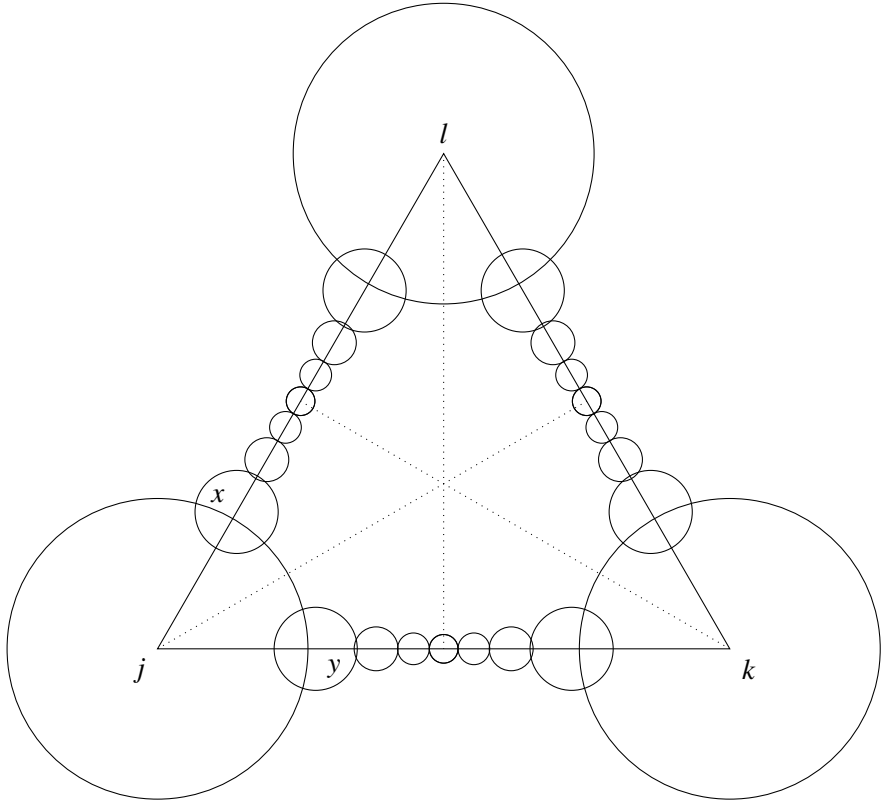
\begin{figure}[ht]
  \centering
  \begin{tikzpicture}[scale=4]
    \pgfmathsetmacro{\nn}{10}
    \pgfmathsetmacro{\d}{2*cos(180/\nn)}
    \pgfmathsetmacro{\height}{\d*sqrt(3)/2}
    \pgfmathsetmacro{\heightb}{\d/sqrt(3)}
    \pgfmathsetmacro{\w}{\d-1/\d}
    \draw (210:\heightb) -- (330:\heightb) -- (90:\heightb) -- cycle;
    \draw (210:\heightb) circle[radius=0.5] node[below left]{$j$};
    \draw (330:\heightb) circle[radius=0.5] node[below right]{$k$};
    \draw (90:\heightb) circle[radius=0.5] node[above]{$l$};
    \draw ($(210:\heightb)+(1/\d, 0)$) node[below right] {$y$};
    \draw ($(210:\heightb)+(60:1/\d)$) node[above left] {$x$};
    \pgfmathsetmacro{\dkglobal}{\d}
    \pgfmathsetmacro{\rkglobal}{1/\d}
    \foreach \k in {1,...,\numexpr (\nn-2)/2\relax}
    {
      \foreach \x in {210, 330, 90}
      {
        \foreach \y in {-150, 150}
        {
          \draw ($(\x:\heightb) + (\x+\y:1/\dkglobal)$)
            circle[radius=0.5*\rkglobal^2];
        }
      }
      \pgfmathparse{\d-1/\dkglobal}
      \global\let\dkglobal=\pgfmathresult
      \pgfmathparse{\rkglobal/\dkglobal}
      \global\let\rkglobal=\pgfmathresult
    }
    \draw[dotted] (210:\heightb) -- +(30:\height);
    \draw[dotted] (330:\heightb) -- +(30+120:\height);
    \draw[dotted] (90:\heightb) -- +(30-120:\height);
  \end{tikzpicture}
  \caption{$(\frac{1}{d})$-balls aligned along the edges of $D$}
  \label{fig:oneoverdinline}
\end{figure}
Consider the horoballs smaller than the full-sized horoballs in a hierarchical way as follows. For the $(\frac{1}{d})$-balls, we have the following three possibilities
\begin{enumerate}
  \item Each $(\frac{1}{d})$-ball touches two full-sized balls.
  \item Each $(\frac{1}{d})$-ball touches one full-sized ball and one other $(\frac{1}{d})$-ball.
  \item Each $(\frac{1}{d})$-ball touches one full-sized ball and \emph{no} other $(\frac{1}{d})$-ball.
\end{enumerate}
In the third case, there are $(\frac{1}{w})$-balls which are smaller images of the $(\frac{1}{d})$-balls under an order $2$ elliptic element $r$ which maps $B_\infty$ to a full-sized horoball according to Lemma \ref{elliptic}.
For those $(\frac{1}{w})$-balls, there are the analogous three cases as above, and so on.
This defines a sequence of horoball diagrams whose first elements are depicted in Figure \ref{fig:seq}.
Note that Adams \cite{Adams1} discussed the depicted horoball diagrams in detail (see above).
In particular, the third horoball diagram corresponds to the group $[5,3,6]$ while the fourth and the two first ones correspond to arithmetic groups.

For a given horoball diagram of this type, define a sequence
\[d_1 := d\,\,,\,\, d_2:= w=d - \frac{1}{d_1}, \ldots
\]
in the sense that in the third case where the $(\frac{1}{d_k})$-balls do not touch, there are $(\frac{1}{d_{k+1}})$-balls which are the smaller images of the $(\frac{1}{d_k})$-balls under the isometry $r$.
This implies the recursion
\begin{equation}
  d_{k+1} = d - \frac{1}{d_{k}}\,\,,\,\,k\ge1\,\,.
  \label{eq:recursion}
\end{equation}

\begin{figure}[ht]
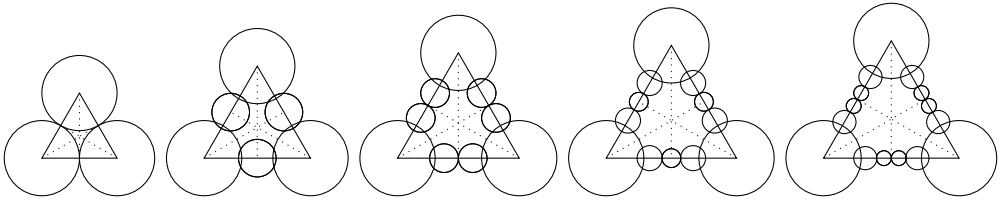

  \centering
  \oneoverdalignedthree\hfill
  \oneoverdaligned{4}\hfill
  \oneoverdaligned{5}\hfill
  \oneoverdaligned{6}\hfill
  \oneoverdaligned{7}
  \caption{The beginning of the sequence of horoball diagrams with aligned $(\frac{1}{d})$-balls.}
  \label{fig:seq}
\end{figure}

Let $k_\text{max}:=\max\left\{k\mid\text{the horoball diagram $D$ contains a $\frac{1}{d_k}$-ball}\right\}$. There are two cases.

If one $(\frac{1}{d_{k_{max}}})$-ball touches two $(\frac{1}{d_{{k_{max}}-1}})$-balls, the centre of the $(\frac{1}{d_{k_{max}}})$-ball lies in the midpoint $a_2$ between two neighbouring full-sized balls, leading to the equation
\begin{equation}
  \frac{1}{d_{k_{max}}} = \frac{d}{2}.
  \label{eq:caseA}
\end{equation}
If there is a pair of touching $(\frac{1}{d_{k_{max}}})$-balls, it implies that one of those $(\frac{1}{d_{k_{max}}})$-balls is fixed by $r$ because otherwise $r$ sends $(\frac{1}{d_k})$-balls to $(\frac{1}{d_{k-1}})$-balls or to $(\frac{1}{d_{k+1}})$-balls but there is no $(\frac{1}{d_{{k_{max}}+1}})$-ball.
This yields the equation
\begin{equation}
  1 = d - \frac{1}{d_{k_{max}}}.
  \label{eq:caseB}
\end{equation}

The recursion \eqref{eq:recursion} together with the end conditions \eqref{eq:caseA} or \eqref{eq:caseB} prove that $d$ has to be the root of a polynomial related to the Chebyshev polynomials of the first kind or of the second kind, respectively.
In case \eqref{eq:caseA}, this implies
\begin{equation}
  d = 2\,\cos\left(\frac{\pi}{2k_{max}+2} \right)\,\,,
  \label{eq:dcaseA}
\end{equation}
and in case \eqref{eq:caseB}, it implies
\begin{equation}
  d = 2\,\cos\left(\frac{\pi}{2k_{max}+3} \right)\,\,.
  \label{eq:dcaseB}
\end{equation}
Details can be found in \cite{Drewitz}.
This recursion allows us to investigate the horoball diagrams and to characterise the group $\Gamma$ assuming that $\Gamma_\infty$ is a reflection group.
\begin{proposition}\label{prop:reflect}
  In the case of the $\left\{ 2,3,6 \right\}$-cusp where $\Gamma_\infty$ is a reflection group and the full-sized horoballs are centred in the singular points $a_6$, the group $\Gamma$ having a horoball diagram according to Figure \ref{fig:oneoverdinline} contains the Coxeter group
\[
\gr{6}\gd\gr{\alpha_k}\bullet
\]
where $\,d =: 2\,\cos\alpha_k\,$ is given according to \eqref{eq:dcaseA} and \eqref{eq:dcaseB}, respectively.
  \begin{proof}
    Since the Coxeter group $\Gamma_\infty=\gr{6}\gd\bullet\,$ is contained in $\Gamma$, it only remains to prove the existence of the last generator in $\Gamma$.
    Consider an elliptic isometry $r$ of order 2 mapping $B_\infty$ to a full-sized horoball and vice versa.

    If $r$ is a reflection, we can see that it is the desired one:
    Its reflection plane $R$ is a hemisphere orthogonal to the two reflection planes of $\Gamma_\infty$ passing through $a_6$.
    The hemisphere has radius 1, and its centre has distance $\frac{d}{2}$ to the last reflection plane of $\Gamma_\infty$ implying that the angle of intersection $\alpha$ satisfies $\cos\alpha=\frac{d}{2}$, that is, $\alpha=\alpha_k$.

    In the case where $r$ is a rotation of order 2, we can choose it such that the full-sized ball $B_j$ is mapped to the $(\frac{1}{d})$-ball $B_x$, and $B_l$ is mapped to $B_y$; see Figure \ref{fig:oneoverdinline}.
    Then, $r$ is the rotation in the edge $ab$ of the orthoscheme defined by the planes mentioned above; see Figure \ref{fig:orthofun}.
    If $s\in\Gamma_\infty$ is the reflection in the face $(a,b, \infty)$, then $rs=sr$ is the reflection in the hemisphere $R$ as above and belongs to $\Gamma$.
    \begin{figure}[ht]
      \centering
      \begin{tikzpicture}
        \pgfmathsetmacro{\width}{4}
        \pgfmathsetmacro{\height}{0.3*\width}
        \pgfmathsetmacro{\angle}{45}
        \coordinate (A) at (0, 0);
        \coordinate (B) at ($(\width/2, 0)+cos(\angle)*(\width/2, 0) + sin(\angle)*(0, 0, \width/2)$);
        \coordinate (C) at (\width, 0, 0);
        \coordinate (D) at (\width, \height, 0);
        \draw (A) node[left] {$\infty=q$} -- (B) -- (C) -- (D) -- cycle;
        \draw[thick, red] (B) node[below]{$a$} -- (D) node[above]{$b$};
        \draw[dashed] (A) -- (C);
        \coordinate (w3) at ($0.5*(A)+0.5*(B)$);
        \draw ($0.9*(w3)+0.1*(C)$) -- (w3) -- ($0.9*(w3)+0.1*(D)$);
        \draw (w3) node[below left]{$\scriptstyle \frac{\pi}{3}$};
        \coordinate (w6) at ($0.5*(A)+0.5*(D)$);
        \draw ($0.9*(w6)+0.1*(C)$) -- (w6) -- ($0.9*(w6)+0.1*(B)$);
        \draw (w6) node[above left]{$\scriptstyle \frac{\pi}{6}$};
        \coordinate (wa) at ($0.5*(C)+0.5*(D)$);
        \draw ($0.94*(wa)+0.06*(A)$) -- (wa) -- ($0.9*(wa)+0.1*(B)$);
        \draw (wa) node[right]{$\alpha_k$};
      \end{tikzpicture}
      \caption{Fundamental orthoscheme}
      \label{fig:orthofun}
    \end{figure}
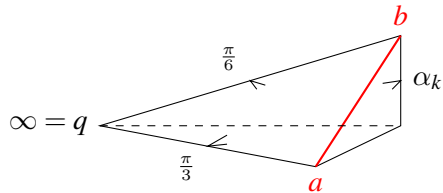
  \end{proof}
\end{proposition}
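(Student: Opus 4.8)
The plan is to exploit the fact that $\Gamma_\infty\subset\Gamma$ already contributes three reflections generating the Euclidean triangle group $[6,3]$ on $H_\infty$ — the reflections in the three vertical walls standing over the edges of the characteristic triangle $\Delta$ with vertices $a_2,a_3,a_6$. Hence the whole problem reduces to exhibiting one further reflection in $\Gamma$ whose wall meets the wall of $\Gamma_\infty$ opposite $a_6$ at the prescribed angle $\alpha_k$. To manufacture such an element I would invoke Lemma \ref{elliptic}: since $\Gamma_\infty$ identifies all full-sized horoballs and $B_\infty$ is tangent to the full-sized horoball $B$ centred at $a_6$, their point of tangency $(a_6,1)$ lies on the axis of an order two elliptic $r\in\Gamma$ whose axis is tangent to both horoballs; in particular $r$ interchanges $B_\infty$ and $B$.

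The core of the argument is to convert $r$ into the desired wall-reflection and to pin down its angle. First I would treat the case where $r$ is itself a reflection, i.e.\ an inversion in a geodesic hemisphere $R$. Because $r$ swaps the horoball $B_\infty$ at height $1$ with the full-sized horoball of diameter $1$ based at $a_6$, an inversion computation forces $R$ to be centred at $a_6$ with radius $1$: the image of $\{t=1\}$ under inversion in a sphere of radius $\rho$ at $a_6$ is a sphere of diameter $\rho^2$ tangent to $\{t=0\}$ at $a_6$, which must have diameter $1$. Since $R$ passes through its centre $a_6$, it is orthogonal to the two vertical $\Gamma_\infty$-walls that meet at angle $\tfrac{\pi}{6}$ there. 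The right angle of $\Delta$ sitting at $a_2$ makes the perpendicular foot from $a_6$ to the opposite wall (the plane over $[a_2,a_3]$) land at $a_2$, so that wall lies at Euclidean distance $d_0(a_2,a_6)=d/2$ from $a_6$. The intersection angle $\alpha$ of a unit hemisphere centred at $a_6$ with a vertical plane at distance $d/2$ satisfies $\cos\alpha=d/2$, whence $\alpha=\alpha_k$ by the defining relation $d=:2\cos\alpha_k$; adjoining the reflection in $R$ to $\Gamma_\infty$ then gives $[6,3,\alpha_k]$, with the admissible $d$ exactly those produced by the recursion \eqref{eq:recursion} through \eqref{eq:caseA}, \eqref{eq:caseB}, i.e.\ \eqref{eq:dcaseA} and \eqref{eq:dcaseB}.

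The more delicate case is when $r$ is an orientation-preserving half-turn. Here I would choose $r$ so that it carries $B_j$ to the $(\frac{1}{d})$-ball $B_x$ and $B_l$ to $B_y$ (see Figure \ref{fig:oneoverdinline}), placing its axis along the common-perpendicular edge $ab$ of the orthoscheme of Figure \ref{fig:orthofun}. The idea is to recover $R$ as a product: taking $s\in\Gamma_\infty$ to be the reflection in the vertical wall through $a$, $b$ and $\infty$, the axis of $r$ lies in the plane of $s$, so $r$ and $s$ commute and $rs=sr$ is the reflection in the plane through that axis orthogonal to the plane of $s$ — which is precisely the hemisphere $R$ from the first case. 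Thus in either case $\Gamma$ contains the reflection in $R$, completing the identification of the subgroup $[6,3,\alpha_k]$.

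The main obstacle I anticipate is the clean verification, in the rotation case, that $rs=sr$ is genuinely the reflection in the intended hemisphere $R$ and not in some other plane. This rests on the elementary fact that a half-turn about a geodesic $\ell$ composed with a reflection in a plane containing $\ell$ equals the reflection in the unique plane through $\ell$ orthogonal to that plane, together with the claim that the axis of $r$ lies on $R$ while $R$ is orthogonal to the wall of $s$. A secondary point requiring care is confirming, via the horoball geometry of Corollaries \ref{horo2} and \ref{horo3}, that the aligned configuration of Figure \ref{fig:oneoverdinline} really forces the centre of $R$ onto the singular point $a_6$ and the distance to the opposite wall to equal $d/2$, so that the relation $\cos\alpha_k=d/2$ holds exactly.
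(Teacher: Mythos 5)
Your proposal is correct and takes essentially the same route as the paper's own proof: both reduce the claim to producing one additional reflection, obtain an order-two elliptic $r$ swapping $B_\infty$ with the full-sized horoball at $a_6$ via Lemma \ref{elliptic}, and then split into the reflection case (where the unit hemisphere centred at $a_6$ meets the opposite wall at distance $d/2$, giving $\cos\alpha_k=d/2$) and the half-turn case (where composing with the reflection $s\in\Gamma_\infty$ in the wall through the axis yields $rs=sr$ as the desired reflection). Your added justifications -- the inversion computation forcing $R$ to have radius $1$ and centre $a_6$, and the orthogonality of $R$ to the vertical walls through its centre -- merely make explicit what the paper asserts without proof.
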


As a consequence of Proposition \ref{prop:reflect}, the
groups $\Gamma$ with distances $\sqrt{2}\le d\le\sqrt{3}$ between full-sized horoballs are of smallest covolume if $\Gamma$ coincides with (a finite index subgroup of) the arithmetic Coxeter group $[l,3,6]$ for $l=3,4$ or $6$, or with the non-arithmetic Coxeter group $[5,3,6]$.

If the distance $d$ between full-sized horoballs satisfies $d>\sqrt{3}$, then the hyperbolic Coxeter group
$\,\gr{6}\gd\gr{\alpha_k}\bullet\,$ with $2\cos\alpha_k=d\,$
of Proposition \ref{prop:reflect} has infinite covolume since the orthoscheme $R(\alpha_k,\frac{\pi}{3})$ has an ultraideal vertex given by the graph $\,\gd\gr{\alpha_k}\bullet\,$ (see Section \ref{volume}).
In order to construct an orbifold with finite volume from it, its fundamental group $\Gamma$ has to contain additional isometries.
\begin{proposition}\label{prop:polarplane}
  In the case of Proposition \ref{prop:reflect} with $\sqrt{3}<d\leq 2$, the minimal possible orbifold volume is obtained from the truncated Coxeter orthoscheme $R_t(\alpha_k,\frac{\pi}{3})$ with Coxeter graph
  \[
  \gr{6}\gd\gr{\alpha_k}\bullet\cdot\cdots\bullet\qquad\hbox{where}\quad d = 2\,\cos\alpha_k\,\,.
  \]
  \begin{proof}
    If the horoball diagram appears as in Figure \ref{fig:oneoverdinline} with $d>\sqrt{3}$, then the orthoscheme $\,\gr{6}\gd\gr{\alpha_k}\bullet\,$ has an ultraideal vertex $v_0$.
    Hence, there has to be an additional horoball $B_+$ close to the centre of $D$ in order to obtain finite volume.
    In view of the realisations of the horoball $B_+$, Proposition \ref{prop:reflect} guarantees the existence of reflections mapping $B_\infty$ to the full-sized balls $B_j$, $B_k$, and $B_l$.
    Denote the associated reflection planes by $P_j$, $P_k$, and $P_l$, respectively.
    The horoball $B_+$ cannot intersect any of those planes because otherwise it would intersect its image under the corresponding reflection.
    The biggest possible horoball $B_+$ (yielding the minimal volume orbifold) hence {\it touches} all three planes $P_j$, $P_k$, and $P_l$.
    Those three planes are orthogonal to the polar plane $P_0$ of $v_0$.
    Therefore, $P_j$, $P_k$, and $P_l$ are left invariant under the reflection $r_0$ in the plane $P_0$.
    The horoballs $B_\infty$ and $B_+$ are the only ones touching all three planes at once, implying that one is the image of the other under $r_0$.
    Thus, the polar plane $P_0$ is the bisector of $B_\infty$ and $B_+$.

    In our constructions, a fundamental polyhedron of $\Gamma$ can be found by considering the Dirichlet-Vorono\u{\i} cell of $B_\infty$ modulo $\Gamma_\infty$.
    The previous arguments show that a fundamental polyhedron of $\Gamma$ has to include at least the truncated orthoscheme $R_t(\alpha_k,\frac{\pi}{3})$ as asserted.

    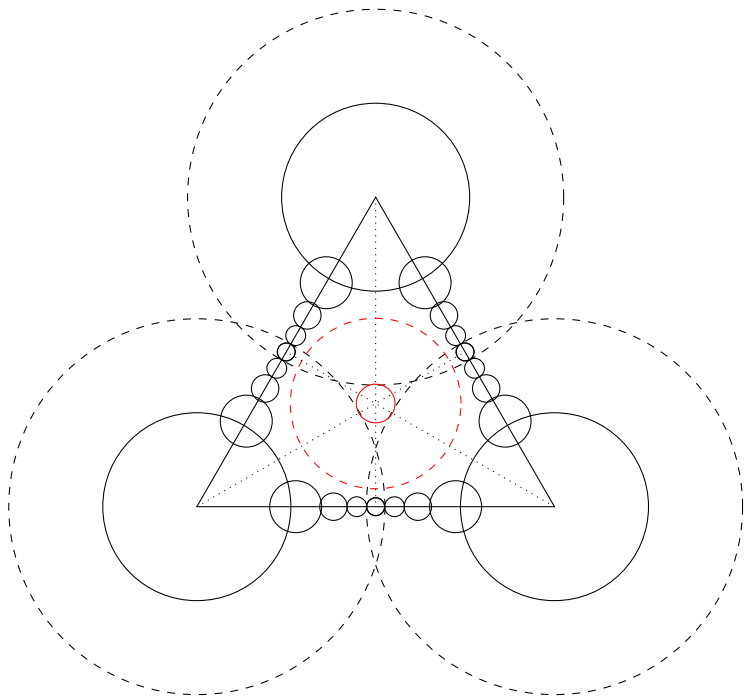
\begin{figure}[ht]
      \centering
      \begin{tikzpicture}[scale=2.5]
        \pgfmathsetmacro{\nn}{10}
        \pgfmathsetmacro{\d}{2*cos(180/\nn)}
        \pgfmathsetmacro{\height}{\d*sqrt(3)/2}
        \pgfmathsetmacro{\heightb}{\d/sqrt(3)}
        \pgfmathsetmacro{\w}{\d-1/\d}
        \draw (210:\heightb) -- (330:\heightb) -- (90:\heightb) -- cycle;
        \draw (210:\heightb) circle[radius=0.5];
        \draw (330:\heightb) circle[radius=0.5];
        \draw (90:\heightb) circle[radius=0.5];
        \draw[dashed] (210:\heightb) circle[radius=1];
        \draw[dashed] (330:\heightb) circle[radius=1];
        \draw[dashed] (90:\heightb) circle[radius=1];
        \pgfmathsetmacro{\dkglobal}{\d}
        \pgfmathsetmacro{\rkglobal}{1/\d}
        \foreach \k in {1,...,\numexpr (\nn-2)/2 \relax}
        {
          \foreach \x in {210, 330, 90}
          {
            \foreach \y in {-150, 150}
            {
              \draw ($(\x:\heightb) + (\x+\y:1/\dkglobal)$)
                circle[radius=0.5*\rkglobal^2];
            }
          }
          \pgfmathparse{\d-1/\dkglobal}
          \global\let\dkglobal=\pgfmathresult
          \pgfmathparse{\rkglobal/\dkglobal}
          \global\let\rkglobal=\pgfmathresult
        }
        \draw[red] (0, 0) circle[radius=(\heightb^2-1)/2];
        \draw[dashed, red] (0, 0) circle[radius=sqrt(\heightb^2-1)];
        \draw[dotted] (210:\heightb) -- +(30:\height);
        \draw[dotted] (330:\heightb) -- +(30+120:\height);
        \draw[dotted] (90:\heightb) -- +(30-120:\height);
      \end{tikzpicture}
      \caption{$(\frac{1}{d})$-balls in line}
      \label{fig:oneoverdinlinepolar}
    \end{figure}
  \end{proof}
\end{proposition}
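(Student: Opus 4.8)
The plan is to show that, once $\Gamma_\infty$ is a reflection group centred as in Proposition \ref{prop:reflect}, the requirement of \emph{finite} covolume forces a single additional capping horoball near the centre of $D$, and that making this horoball as large as possible produces exactly the truncated orthoscheme $R_t(\alpha_k,\frac{\pi}{3})$. First I would record that for $\sqrt{3}<d\le 2$ the angle $\alpha_k=\arccos(d/2)$ satisfies $\alpha_k<\frac{\pi}{6}$, so that $\alpha_k+\frac{\pi}{3}<\frac{\pi}{2}$; by the discussion in Section \ref{volume} this is precisely the condition that the orthoscheme $\gr{6}\gd\gr{\alpha_k}\bullet$ acquire the ultraideal vertex $v_0$ associated with the subgraph $\gd\gr{\alpha_k}\bullet$, and hence that this orthoscheme has infinite volume. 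Since $\Gamma$ is a lattice, its fundamental domain has finite volume, so there must be at least one further horoball $B_+$ covering $C$ whose vertical projection lies near the $3$-fold centre $a_3$ of $D$; the resulting orbifold volume then equals the volume of the infinite orthoscheme truncated by the bisecting plane of $B_\infty$ and $B_+$. Thus minimising the orbifold volume is equivalent to truncating as deeply as possible, that is, to making $B_+$ as large as possible.

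Next I would pin down the constraints on $B_+$. By Proposition \ref{prop:reflect}, $\Gamma$ contains reflections carrying $B_\infty$ onto each of the three full-sized horoballs $B_j,B_k,B_l$; denote their reflection planes by $P_j,P_k,P_l$. Because all horoballs covering $C$ have pairwise disjoint interiors, $B_+$ cannot cross any of these planes, for otherwise it would meet its own reflected image. Consequently the diameter of $B_+$ is maximal exactly when $B_+$ is simultaneously tangent to $P_j$, $P_k$ and $P_l$, and I would argue that this tangency condition determines $B_+$ uniquely, the three planes being symmetrically arranged about the vertical axis through $a_3$.

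The heart of the argument is then to identify this maximal $B_+$ with the reflection of $B_\infty$ in the polar plane $P_0$ of $v_0$. In the projective model, $P_0$ is characterised by being orthogonal to every plane through $v_0$; since $P_j,P_k,P_l$ are the faces of the orthoscheme meeting at $v_0$, each is orthogonal to $P_0$ and hence invariant under the reflection $r_0$ in $P_0$. Therefore $r_0$ preserves the whole configuration of planes, so $r_0(B_\infty)$ is again tangent to all of $P_j,P_k,P_l$; as $B_\infty$ and $r_0(B_\infty)$ are the only two horoballs with this property, the maximal $B_+$ must equal $r_0(B_\infty)$, and $P_0$ is the perpendicular bisector of $B_\infty$ and $B_+$. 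Cutting the Dirichlet--Vorono\u{\i} cell of $B_\infty$ modulo $\Gamma_\infty$ along $P_0$ then yields precisely the truncated orthoscheme $R_t(\alpha_k,\frac{\pi}{3})$ of Section \ref{volume}, which by the preceding monotonicity is the minimal-volume fundamental domain, giving minimal orbifold volume $\vol(R_t(\alpha_k,\frac{\pi}{3}))$ (see Figure \ref{fig:oneoverdinlinepolar}).

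The step I expect to be the main obstacle is the orthogonality claim, namely that the three reflection planes $P_j,P_k,P_l$ really are the orthoscheme faces incident to $v_0$ and are therefore all orthogonal to the polar plane $P_0$; establishing this cleanly requires translating the horoball-diagram data of Proposition \ref{prop:reflect} into the face structure of the orthoscheme and invoking the polar-plane construction in the projective model. A secondary point needing care is the monotonicity assertion that enlarging $B_+$ raises the truncating hemisphere $P_0$ and hence strictly decreases the volume cut from the cusp, which I would justify from the diameter--height relation of Corollary \ref{horo3} together with the volume behaviour recorded in Remark \ref{Schlaefli}.
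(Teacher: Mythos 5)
Your proposal reproduces the paper's own argument step for step (the ultraideal vertex forces an extra horoball $B_+$; the maximal $B_+$ touches the three reflection planes; orthogonality of those planes to the polar plane $P_0$ makes the reflection $r_0$ swap $B_\infty$ and $B_+$; truncating the Dirichlet--Vorono\u{\i} cell gives $R_t(\alpha_k,\frac{\pi}{3})$), but the justification you offer for the crucial orthogonality step is incorrect. You assert that $P_j$, $P_k$, $P_l$ ``are the faces of the orthoscheme meeting at $v_0$''. They are not. The orthoscheme $R(\alpha_k,\frac{\pi}{3})$ of Proposition \ref{prop:reflect} is the cone over the characteristic triangle $\Delta$ with vertices $a_2,a_3,a_6$, cut off by the \emph{single} hemisphere $P_j$ of radius $1$ centred at $a_6=b_j$; it has exactly one non-vertical face. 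The three faces incident to the ultraideal vertex $v_0$ are $P_j$ together with the two vertical reflection planes of $\Gamma_\infty$ lying over the segments $a_2a_3$ and $a_3a_6$ (the wall over $a_2a_6$ is the face opposite $v_0$). The hemispheres $P_k$ and $P_l$ are not faces of this orthoscheme at all; they are $\Gamma_\infty$-images of $P_j$ bounding other copies of it. So your premise, as stated, does not yield orthogonality of $P_k$ and $P_l$ to $P_0$, which is exactly what the identification $B_+=r_0(B_\infty)$ requires.

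The conclusion is nevertheless true, and the repair is short. The two vertical faces at $v_0$ both contain the vertical axis through the centre of $D$ (both lie over lines through $a_3$), so $v_0$ lies on the projective extension of that axis. The order-$3$ rotation of $\Gamma_\infty$ about this axis fixes that projective line pointwise, hence fixes $v_0$, while permuting $P_j$, $P_k$, $P_l$ cyclically; since $v_0$ lies on the projective extension of $P_j$, it lies on all three hemispheres, and therefore all three are orthogonal to the polar plane $P_0$ of $v_0$. Equivalently, one can verify directly that $P_0$ is the hemisphere centred at the centre of $D$ with radius $\sqrt{d^2/3-1}$ (which exists precisely because $d>\sqrt{3}$, matching the dashed red circle in Figure \ref{fig:oneoverdinlinepolar}): it is orthogonal to every vertical wall through $a_3$, and orthogonality to each $P_i$ reduces to the identity $(d/\sqrt{3})^2=1^2+(d^2/3-1)$, the corners $b_i$ being at distance $d/\sqrt{3}$ from $a_3$. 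With this correction, your maximality and uniqueness arguments for $B_+$ and the identification of the fundamental polyhedron with $R_t(\alpha_k,\frac{\pi}{3})$ go through exactly as in the paper, and the volume comparison is then completed by Remark \ref{Schlaefli} and formula \eqref{eq:trunc-36}, as you indicate.
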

By Proposition \ref{prop:polarplane}, the minimal possible orbifold volume in the case $\,\sqrt{3}<d=2\cos\alpha_k\le2\,$ is given by the volume of the truncated Coxeter orthoscheme $R_t(\alpha_k,\frac{\pi}{3})$. In Section \ref{volume}, an explicit formula for $\vol( R_t(\alpha_k,\frac{\pi}{3}))$ is given by \eqref{eq:trunc-36}. Furthermore, by Remark \ref{Schlaefli}, $\vol (R_t(\alpha_k,\frac{\pi}{3}))$ is strictly increasing when $\alpha_k$ decreases, that is, when $d$ increases. In particular, for $\,\sqrt{3}<d\le2\,$, we get the lower volume bound
\[
\vol (R_t(\alpha_k,\frac{\pi}{3}))\ge\vol (R_t(\frac{\pi}{7},\frac{\pi}{3}))>0.317811>v_*\,\,.
\]

In view of Proposition \ref{prop:polarplane}, it remains to identify groups $\Gamma$ with full-sized horoballs satisfying $2<d\le2.013813$ (see Remark \ref{bound-d}).
It turns out that $\Gamma$ has the same basic structure as before, and truncation of its fundamental orthoscheme with the polar plane $P_0$ gives a polyhedron $R_t$ of smallest volume because of the following proposition.
\begin{proposition}\label{prop:dbigger2}
For any horoball on an angle bisector as depicted in Figure \ref{fig:oneoverdinlinedbig}, its bisector with $B_{\infty}$ intersects the polar plane $P_0$ exactly below the boundary of the triangle $D$ at height $1$ in the upper half space $U^3$.
  Any other cutting plane leads to a bigger volume than the volume of $R_t$.
  \begin{proof}
    The proof is basic trigonometry.
    If the centre of a horoball $B$ on an angle bisector of $D$ has distance $a$ from the boundary of $D$ and touches the bisector of the two closest full-sized horoballs, then its radius $r$ satisfies
    \begin{equation}
      (1+r)^2 = l^2 + r^2 = \frac{d^2}{4} + a^2 + r^2\,\,,
      \label{eq:radius}
    \end{equation}
    where $l$ is the distance between the centres of $B$ and of the next full-sized horoball.
    Figure \ref{fig:cutalong} depicts a vertical cut in $U^3$ through the centres of the two horoballs.
    The identity  \eqref{eq:radius} implies
    \begin{equation*}
      2r = \frac{d^2}{4} + a^2 - 1.
    \end{equation*}
 Using that the radius of the bisector is $\sqrt{2r}$, the height $h$ of the bisector is then independent of the distance $a$ :
    \begin{equation*}
      h^2 = \left(\sqrt{2r}\right)^2 - a^2 = \frac{d^2}{4} - 1.
    \end{equation*}
  \end{proof}
\end{proposition}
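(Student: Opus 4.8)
The plan is to reduce the statement to the elementary Euclidean geometry of two hemispheres standing over the cusp triangle $D$, exactly along the lines suggested by \eqref{eq:radius}. First I would fix coordinates on $H_\infty=\{t=1\}$: let $e$ be the edge of $D$ nearest to $B$, place the two full-sized horoballs of diameter $1$ at its endpoints (which are at Euclidean distance $d$), and put the midpoint $a_2$ of $e$ at the origin, so that the angle bisector of $D$ carrying $B$ is the perpendicular bisector of $e$. Then the base point of $B$ sits at horizontal distance $a$ from $e$, and its distance $l$ to the base point of either neighbouring full-sized horoball satisfies $l^2=\tfrac{d^2}{4}+a^2$. I also record, from the proof of Proposition~\ref{prop:polarplane}, that $P_0$ is the bisector of $B_\infty$ and the central horoball $B_+$, hence a hemisphere centred at the centre of $D$; since $B_+$ is externally tangent to the three reflection hemispheres $P_j,P_k,P_l$ (each of radius $1$ and centred at a vertex of $D$), its diameter equals $\tfrac{d^2}{3}-1$, so $P_0$ has radius $\sqrt{\tfrac{d^2}{3}-1}$.

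Next I would determine the radius $r$ of $B$. That $B$ touches the bisector of its two closest full-sized horoballs means precisely that $B$ is externally tangent to the two radius-$1$ hemispheres centred at the endpoints of $e$; writing out this tangency gives $(1+r)^2=l^2+r^2$, which is \eqref{eq:radius}, and hence $2r=\tfrac{d^2}{4}+a^2-1$. By the horoball computations collected in Lemma~\ref{horo0} and the surrounding discussion, the bisector of $B$ with $B_\infty$ is then the hemisphere of radius $\sqrt{2r}$ centred at the base point of $B$.

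The heart of the argument is to intersect this bisector with the vertical wall erected over $e$. Restricting the hemisphere of radius $\sqrt{2r}$, centred at distance $a$ from $e$, to that wall yields a semicircle of radius-squared $2r-a^2=\tfrac{d^2}{4}-1$, which is visibly independent of $a$. The decisive point is that $P_0$ restricts to the \emph{same} semicircle: the centre of $D$ lies at distance $\tfrac{d}{2\sqrt3}$ from $e$, so over $e$ the hemisphere $P_0$ has radius-squared $\big(\tfrac{d^2}{3}-1\big)-\tfrac{d^2}{12}=\tfrac{d^2}{4}-1$. Since two distinct spheres sharing a circle meet exactly in that circle, the bisector of $B$ with $B_\infty$ and the polar plane $P_0$ intersect precisely in this semicircle, which projects onto $e\subset\partial D$ and sits at height $\sqrt{\tfrac{d^2}{4}-1}$, real and $<1$ because $2<d\le 2.013813$. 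This is the asserted intersection property, valid for every admissible $a$.

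For the volume claim I would view a fundamental polyhedron as the Dirichlet--Vorono\u{\i} cell of $B_\infty$ modulo $\Gamma_\infty$, cut out by the exteriors of the bisector hemispheres. As $B$ runs along the bisector, its distance $a$ to $\partial D$ is maximal at the incentre, that is at the centre of $D$, where $a=\tfrac{d}{2\sqrt3}$ and $B$ becomes $B_+$; this is the deepest bisector and it coincides with $P_0$, so $P_0$ removes the largest region and truncating the orthoscheme by it gives the smallest polyhedron, namely $R_t$. Any other cutting plane removes strictly less, hence yields a larger volume, the monotonicity being made precise by Schl\"afli's formula (Remark~\ref{Schlaefli}). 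The one genuinely delicate step is the coincidence in the previous paragraph — that the $B$--$B_\infty$ bisector and $P_0$ meet exactly over $\partial D$, independently of $a$ — since this is what makes the truncation consistent and, together with the extremal position of $B_+$, pins down $R_t$; the remainder is routine trigonometry.
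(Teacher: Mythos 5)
Your proposal is correct and takes essentially the same route as the paper: the tangency relation $(1+r)^2=\frac{d^2}{4}+a^2+r^2$, the bisector hemisphere of radius $\sqrt{2r}$, and the key observation that its restriction to the vertical wall over the edge has squared radius $2r-a^2=\frac{d^2}{4}-1$, independent of $a$. You merely make explicit what the paper leaves implicit — the matching computation showing $P_0$ cuts the same semicircle, the "two spheres sharing a circle meet exactly in it" step, and the containment argument for the volume claim — though for that last point plain containment of the truncated regions suffices, and the appeal to Schl\"afli's formula is unnecessary.
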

\begin{figure}[ht]
  \centering
  \begin{tikzpicture}[scale=2.5]
    \pgfmathsetmacro{\l}{1.3}
    \pgfmathsetmacro{\radius}{\l^2-1}
    \pgfmathsetmacro{\height}{2}
    \pgfmathsetmacro{\eps}{0.2}
    \draw (0, 0.5) circle[radius=0.5];
    \draw[blue] (\l,\radius/2) circle[radius=\radius/2];
    \draw[thick] (-1.5, 0) -- (\l+1.5, 0);
    \draw (-1, 1) -- (\l+1, 1);
    \draw[dashed] (1, 0) arc[radius=1, start angle=0, end angle=180];
    \pgfmathsetmacro{\bisrad}{sqrt(\radius)}
    \draw[dashed, blue] (\l+\bisrad, 0) arc[radius=\bisrad, start angle=0, end angle=180];
    \draw (0, 0) -- (\l, \radius/2) node[midway, above]{1} node[very near end, above]{$r$}
      -- (\l, 0) node[midway, right]{$r$};
    \draw[|-|] (0, -\eps) -- (\l, -\eps)
      node[below, midway]{$l = \sqrt{\frac{d^2}{4} + a^2}$};
    \draw[|-|] (\l, -\eps) -- (\l+\bisrad, -\eps)
    node[below, midway]{$\sqrt{2r}$};
  \end{tikzpicture}
  \caption{Vertical cut along $l$ through the centres of the two horoballs in $U^3$ }
  \label{fig:cutalong}
\end{figure}
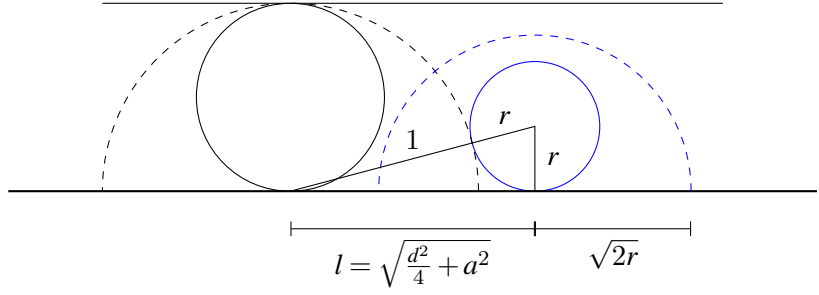
By means of Proposition \ref{prop:dbigger2} and as a final step in the analysis of horoball diagrams $D$ with $(\frac{1}{d_k})$-balls aligned on the edges of $D$ according to Figure \ref{fig:oneoverdinline}, we can state the following result.
\begin{proposition}\label{prop:trunc-dbigger2}
  Under the assumptions of Proposition \ref{prop:reflect} with $d>2$, the minimal possible volume is obtained by an orbifold
  related to a truncated orthoscheme $R_t(\beta,\frac{\pi}{6})$
  with Vinberg graph
\[
\gr{\beta}\gr{6}\gd\bullet\cdot\cdots\bullet\qquad\hbox{where}\quad 0<\beta<\frac{\pi}{15}\,\,.
\]
\end{proposition}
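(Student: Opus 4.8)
The plan is to transport the Dirichlet--Vorono\u{\i} analysis of Proposition \ref{prop:polarplane}, but to reorganise the fundamental domain around the central order-$3$ point $a_3$ of $D$ rather than around a vertex $a_6$. First I would record why the structure of Propositions \ref{prop:reflect} and \ref{prop:polarplane} breaks down once $d>2$: the reflection hemisphere of radius $1$ centred at $a_6$ then lies at Euclidean distance $\frac{d}{2}>1$ from the perpendicular bisector plane of $\Gamma_\infty$ through $a_2$ and $a_3$, hence is disjoint from it, so the angle $\alpha_k$ with $2\cos\alpha_k=d$ is no longer real and $R_t(\alpha_k,\frac{\pi}{3})$ degenerates. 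Correspondingly, the edge-aligned recursion \eqref{eq:recursion} admits no valid end condition \eqref{eq:caseA} or \eqref{eq:caseB} for $d>2$, so the only way to close up the Dirichlet domain of $B_\infty$ to finite volume is to insert a horoball near the centre of $D$.

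Next, exactly as in Proposition \ref{prop:polarplane}, the reflection-group structure of $\Gamma_\infty$ together with Lemma \ref{elliptic} provides reflections carrying $B_\infty$ onto the three full-sized horoballs at the vertices $a_6$ of $D$, their reflection planes being the radius-$1$ hemispheres centred at those vertices. The largest horoball $B_+$ touching all three of these planes is, by symmetry, centred at $a_3$. Applying Proposition \ref{prop:dbigger2} with $a$ the inradius $\frac{d}{2\sqrt3}$ and $l$ the circumradius $\frac{d}{\sqrt3}$ of $D$, the identity \eqref{eq:radius} gives $B_+$ the diameter $k=\frac{d^2}{3}-1$, so that the bisector of $B_\infty$ and $B_+$ is the hemisphere $H_1$ centred at $a_3$ of radius $\rho=\sqrt{k}=\sqrt{\frac{d^2}{3}-1}$. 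Being centred at $a_3$, the plane $H_1$ is orthogonal to the two reflection planes of $\Gamma_\infty$ meeting at $a_3$ and makes an angle $\beta$ with the edge plane of $D$, which lies at distance $\frac{d}{2\sqrt3}$ from $a_3$, whence
\[
\cos\beta=\frac{d/(2\sqrt3)}{\rho}=\frac{d}{2\sqrt{d^2-3}}\,\,.
\]
Thus $H_1$, together with the three reflection planes of $\Gamma_\infty$, bounds an orthoscheme of type $R(\beta,\frac{\pi}{6})$ whose ideal vertex is the $\{2,3,6\}$-corner at $\infty$ and whose vertex above $a_6$ is ultraideal, since $\beta+\frac{\pi}{6}<\frac{\pi}{2}$. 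Truncating by the polar plane $P_0$ then yields $R_t(\beta,\frac{\pi}{6})$ with the Vinberg graph displayed in the statement; the maximality of $B_+$ makes this the smallest admissible cap, and the independence-of-$a$ conclusion of Proposition \ref{prop:dbigger2} (truncation height $\sqrt{\frac{d^2}{4}-1}$) guarantees that $R_t(\beta,\frac{\pi}{6})$ realises the minimal volume.

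Finally I would pin down the range of $\beta$. Since $\frac{d}{2\sqrt{d^2-3}}$ equals $1$ at $d=2$ and is strictly decreasing for $d>2$, the angle $\beta$ grows from $0^{+}$; evaluating at the largest admissible value $d\le 2.013813$ supplied by Remark \ref{bound-d} gives $\cos\beta>\cos\frac{\pi}{15}$, that is $0<\beta<\frac{\pi}{15}$ (in particular $\beta<\frac{\pi}{3}$, consistent with the ultraideal vertex). Combined with the monotonicity of Remark \ref{Schlaefli} and the volume formula \eqref{eq:trunc-63}, this forces $\vol(R_t(\beta,\frac{\pi}{6}))>\vol(R_t(\frac{\pi}{15},\frac{\pi}{6}))>v_*$, which is exactly what the subsequent exclusion argument requires.

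The main obstacle is the middle step: proving that the minimal-volume cap is genuinely the horoball centred at $a_3$ and that the four bounding planes assemble into an orthoscheme of the precise type $R(\beta,\frac{\pi}{6})$, rather than into some other combinatorial arrangement of the walls, together with the verification that no competing horoball or cutting plane yields a smaller volume. This is where Proposition \ref{prop:dbigger2}, and especially its independence-of-$a$ conclusion, does the essential work; the remaining computations reducing to $\cos\beta=\frac{d}{2\sqrt{d^2-3}}$ and the range $0<\beta<\frac{\pi}{15}$ are then routine trigonometry.
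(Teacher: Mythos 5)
Your proposal is correct and follows essentially the same route as the paper: its proof likewise rests on the structure established in Propositions \ref{prop:reflect}, \ref{prop:polarplane} and \ref{prop:dbigger2}, derives the same relation $\cos\beta=\frac{d}{2\sqrt{d^2-3}}$ by exactly the trigonometry you carry out in $U^3$, and obtains $0<\beta<\frac{\pi}{15}$ from the bound $d\le 2.013813$ of Remark \ref{bound-d}. The only difference is one of detail: the paper's proof is terse and leaves implicit the identification of the walls (the bisector of $B_\infty$ and $B_+$ as the hemisphere of radius $\sqrt{\frac{d^2}{3}-1}$ centred at $a_3$, and the truncating polar plane as the radius-$1$ hemisphere at $a_6$), whereas you spell this out explicitly.
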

\begin{proof}
We have to verify the bound $\beta<\frac{\pi}{15}$, only. In fact, we are assuming the upper bound $d\le2.013813$ according to Remark \ref{bound-d}. By a similar trigonometrical computation in the upper half space $U^3$ as in the proof of Proposition \ref{prop:dbigger2} above, one can relate $\beta$ and $d$ according to
\[
\cos\beta=\frac{d}{2\,\sqrt{d^2-3}}\,\,.
\]
In this way, we get the estimate $\beta<\frac{\pi}{15}$.
\end{proof}
By Proposition \ref{prop:trunc-dbigger2}, the minimal possible orbifold volume in the case $\,d>2\,$ is given by the volume of the truncated Coxeter orthoscheme $R_t(\beta,\frac{\pi}{6})$. By \eqref{eq:trunc-63} of Section \ref{volume}, we have an explicit formula for $\vol( R_t(\beta,\frac{\pi}{6}))$ which, by its strict monotonicity behavior with respect to $\beta<\frac{\pi}{15}$, yields the lower volume bound
\[
\vol (R_t(\beta,\frac{\pi}{6}))\ge\vol (R_t(\frac{\pi}{15},\frac{\pi}{6}))>0.416491>v_*\,\,.
\]

These investigations complete the analysis of the case that the centres of the $(\frac{1}{d})$-balls lie on the edges of $D$.
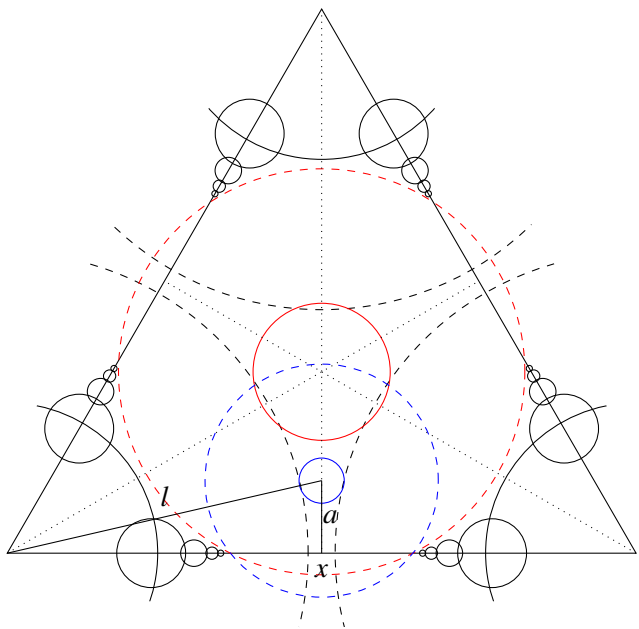
\begin{figure}[ht]
  \centering
    \begin{tikzpicture}[scale=4]
    \pgfmathsetmacro{\nn}{10}
    \pgfmathsetmacro{\d}{2.09}
    \pgfmathsetmacro{\height}{\d*sqrt(3)/2}
    \pgfmathsetmacro{\heightb}{\d/sqrt(3)}
    \pgfmathsetmacro{\w}{\d-1/\d}
    \clip (210:\heightb*1.1) -- (270:\heightb/1.4) -- (330:\heightb*1.1)
      -- (30:\heightb/1.4) -- (90:\heightb*1.1) -- (150:\heightb/1.4) -- cycle;
    \draw (210:\heightb) -- (330:\heightb) -- (90:\heightb) -- cycle;
    \draw (210:\heightb) circle[radius=0.5];
    \draw (330:\heightb) circle[radius=0.5];
    \draw (90:\heightb) circle[radius=0.5];
    \draw[dashed] (210:\heightb) circle[radius=1];
    \draw[dashed] (330:\heightb) circle[radius=1];
    \draw[dashed] (90:\heightb) circle[radius=1];
    \pgfmathsetmacro{\dkglobal}{\d}
    \pgfmathsetmacro{\rkglobal}{1/\d}
    \foreach \k in {1,...,\numexpr (\nn-2)/2 \relax}
    {
      \foreach \x in {210, 330, 90}
      {
        \foreach \y in {-150, 150}
        {
          \draw ($(\x:\heightb) + (\x+\y:1/\dkglobal)$)
            circle[radius=0.5*\rkglobal^2];
        }
      }
      \pgfmathparse{\d-1/\dkglobal}
      \global\let\dkglobal=\pgfmathresult
      \pgfmathparse{\rkglobal/\dkglobal}
      \global\let\rkglobal=\pgfmathresult
    }
    \draw[red] (0, 0) circle[radius=(\heightb^2-1)/2];
    \draw[dashed, red] (0, 0) circle[radius=sqrt(\heightb^2-1)];
    \draw[dotted] (210:\heightb) -- +(30:\height);
    \draw[dotted] (330:\heightb) -- +(30+120:\height);
    \draw[dotted] (90:\heightb) -- +(30-120:\height);
    \pgfmathsetmacro{\aaa}{0.4*\d/sqrt(3)/2}
    \pgfmathsetmacro{\aaaradius}{\d^2/4+\aaa^2-1}
    \draw[dashed, blue] (0, \aaa-\heightb/2) circle[radius=sqrt(\aaaradius)];
    \draw[blue] (0, \aaa-\heightb/2) circle[radius=\aaaradius/2];
    \draw (210:\heightb) -- (0, \aaa-\heightb/2) node[midway,above] {$l$};
    \draw (0, -\heightb/2) -- (0, \aaa-\heightb/2) node[midway] {\hspace{0.6em}$a$};
    \draw (0, -\heightb/2) node[below]{$x$};
  \end{tikzpicture}
 \caption{$(\frac{1}{d})$-balls on the edges of $D$ with $d>2$}
  \label{fig:oneoverdinlinedbig}
\end{figure}

In order to finish the part of full-sized horoballs centred at singular points of order $6$ and of distance $d>1$, we need to study the case when the centres of the $(\frac{1}{d})$-balls lie on the angle bisectors of $D$. We are still assuming that $\Gamma_{\infty}$ is a reflection group.

The $(\frac{1}{d})$-balls cannot touch each other because then three balls would touch pairwise which is impossible because the full-sized balls do not touch.
This implies the existence of $(\frac{1}{w})$-balls.
For a fixed distance $d$, the position of the $(\frac{1}{w})$-balls is given by the similarity of the two triangles $(j, z, l)$ and $(j, x, p)$.
See Figure \ref{fig:horoedwschief} for the labels.
This similarity and the reflection symmetry of the horoball diagram $D$ also prove that the centre of the full-sized horoball $B_j$ is on a common line with those of the $(\frac{1}{d})$-ball $B_z$ and the $(\frac{1}{w})$-ball $B_p$.

\begin{figure}
  \begin{center}
    \begin{tikzpicture}[scale=3]
      \pgfmathsetmacro{\d}{sqrt(1+sqrt(3))}
      \pgfmathsetmacro{\height}{\d*sqrt(3)/2}
      \pgfmathsetmacro{\angle}{30}
      \pgfmathsetmacro{\w}{sqrt(\d^2+1/\d^2-2*cos(\angle))}
      \pgfmathsetmacro{\angleb}{acos(0.5*(\w/\d+\d/\w-1/(\w*\d^3)))}
      \draw (0, 0) -- (\d, 0) -- (60:\d) -- cycle;
      \draw (0, 0) circle[radius=0.5] node[below left]{$j$};
      \draw (\d, 0) circle[radius=0.5] node[below right]{$k$};
      \draw (60:\d) circle[radius=0.5] node[left]{$l$};
      \draw ($(0, 0)+(\angle:1/\d)$) circle[radius=0.5/\d^2] node[below right]{$x$};
      \draw ($(\d, 0)+(\angle+120:1/\d)$) circle[radius=0.5/\d^2] node[above right]{$y$};
      \draw ($(60:\d)+(\angle-120:1/\d)$) circle[radius=0.5/\d^2] node[right]{$z$};
      \draw ($(0, 0)+(\angle+\angleb:1/\w)$)
        circle[radius=0.5/\d^2/\w^2] node[above]{$p$};
      \draw ($(\d, 0)+(\angle+\angleb+120:1/\w)$)
        circle[radius=0.5/\d^2/\w^2] node[right]{$q$};
      \draw ($(60:\d)+(\angle+\angleb-120:1/\w)$)
        circle[radius=0.5/\d^2/\w^2] node[above left]{$t$};
      \foreach \x in {-1, 1}{
        \draw ($(0, 0)+(60*\x+\angle:1/\d)$) circle[radius=0.5/\d^2];
        \draw ($(\d, 0)+(60*\x+\angle+120:1/\d)$) circle[radius=0.5/\d^2];
        \draw ($(60:\d)+(60*\x+\angle-120:1/\d)$) circle[radius=0.5/\d^2];
      }
      \draw ($(0, 0)+(-60+\angle+\angleb:1/\w)$)
          circle[radius=0.5/\d^2/\w^2];
      \draw ($(\d, 0)+(-60+\angle+\angleb+120:1/\w)$)
          circle[radius=0.5/\d^2/\w^2];
      \draw ($(60:\d)+(-60+\angle+\angleb-120:1/\w)$)
          circle[radius=0.5/\d^2/\w^2];
      \draw[dotted] (0, 0) -- +(30:\height);
      \draw[dotted] (\d, 0) -- +(30+120:\height);
      \draw[dotted] (60:\d) -- +(30-120:\height);
      \coordinate (A) at (0, 0);
      \coordinate (B) at (30:1/\d);
      \coordinate (C) at (\angle+\angleb:1/\w);
      \draw (A) -- (B) -- (C)
        pic [draw=green!50!black, fill=green!20, angle radius=3mm] {angle = C--B--A};
      \coordinate (D) at (60:\d);
      \coordinate (E) at ($(60:\d) + (-90:1/\d)$);
      \draw (D) -- (E) -- (A)
        pic [draw=green!50!black, fill=green!20, angle radius=3mm] {angle = D--E--A};
    \end{tikzpicture}
  \end{center}
  \caption{Horoball diagram $D$ with $d=\sqrt{1+\sqrt{3}}$}
  \label{fig:horoedwschief}
\end{figure}
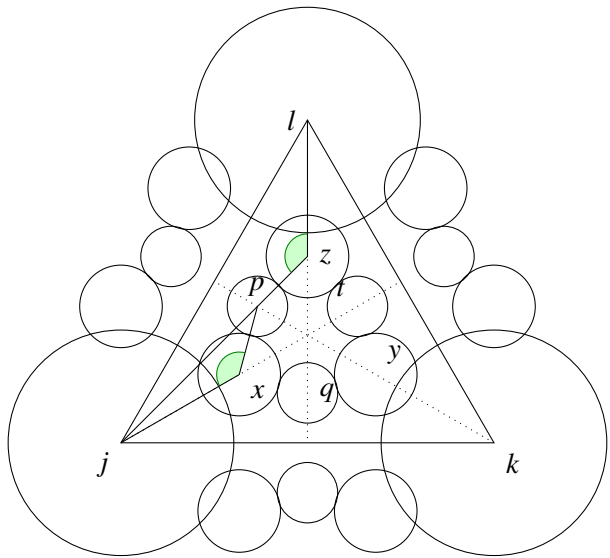

Note that Figure \ref{fig:horoedwschief} depicts the case where the $(\frac{1}{w})$-balls touch two $(\frac{1}{d})$-balls but the above  statement about the alignment of their centres stays true if the $(\frac{1}{w})$-balls do not touch two $(\frac{1}{d})$-balls.
This in turn means that each $(\frac{1}{d})$-ball touches two $(\frac{1}{w})$-balls because of the present symmetries.
Hence, there are twelve $(\frac{1}{w})$-balls around a full-sized horoball.

Knowing that the centres $j$, $p$, and $z$ are on the same line, we can calculate $d$ by using
\begin{equation*}
  w = \frac{1}{w}+\frac{1}{wd^2}
\end{equation*}
and the identity $w^2=d^2 + \frac{1}{d^2}-2\cos\frac{\pi}{6}$ according to \eqref{eq:3}.
This implies $d^2=1+\sqrt{3}$.

The following considerations allow us to describe the orbifold associated to this horoball diagram.
They are also valid if $d$ is bigger, that is, if each of the $(\frac{1}{w})$-balls touches only one $(\frac{1}{d})$-ball.
Since we assume that $\Gamma_\infty$ is a reflection group, there is a reflection $s\in\Gamma_\infty$ which exchanges $B_j$ and $B_k$.
By Lemma \ref{elliptic}, there is an order 2 elliptic element $r$ in $\Gamma$ which swaps
\begin{equation*}
  B_j  \leftrightarrow B_\infty\qquad,\qquad B_k  \leftrightarrow B_x\,\,.
\end{equation*}
Hence, $rsr$ is the reflection in the bisector of $B_\infty$ and $B_x$ and an element of the group $\Gamma$.
This proves the following result.

\begin{proposition}
  In the case of a $\left\{ 2,3,6 \right\}$-cusp where $\Gamma_\infty$ is a reflection group and where the $(\frac{1}{d})$-balls are centred on the angle bisectors of the horoball diagram $D$, there are reflections in $\Gamma$ mapping $B_\infty$ to any $(\frac{1}{d})$-ball.
  \label{prop:reflectioned}
\end{proposition}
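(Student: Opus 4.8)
The plan is to manufacture the desired reflection by conjugating a mirror of $\Gamma_\infty$ with the order-2 elliptic element provided by Lemma \ref{elliptic}, and then to transport it across the horoball diagram using the symmetries of $\Gamma_\infty$. Since $\Gamma_\infty$ is a reflection group acting on the $\{2,3,6\}$-cusp triangle $D$, two adjacent full-sized horoballs $B_j$ and $B_k$ centred at equivalent vertices $a_6$ are exchanged by a reflection $s\in\Gamma_\infty$, so that $s(B_j)=B_k$ while $s(B_\infty)=B_\infty$.

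First I would apply Lemma \ref{elliptic} to the tangency of $B_\infty$ with the full-sized horoball $B_j$: this produces an order-2 elliptic element $r\in\Gamma$ whose axis runs through the touching point and which swaps $B_j\leftrightarrow B_\infty$. By the analysis preceding the statement (see Figure \ref{fig:horoedwschief}), the same element sends the neighbouring full-sized horoball $B_k$ to the $(\tfrac1d)$-ball $B_x$, that is, $r(B_k)=B_x$.

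Next I would form the conjugate $rsr$ (recall $r^{-1}=r$, as $r$ has order two). The conjugate of a reflection by any isometry is again a reflection, whose mirror is the $r$-image of the mirror of $s$; hence $rsr\in\Gamma$ is a reflection. Tracking the horoballs yields
\[
(rsr)(B_\infty)=r\bigl(s\bigl(r(B_\infty)\bigr)\bigr)=r\bigl(s(B_j)\bigr)=r(B_k)=B_x,
\]
so $rsr$ carries $B_\infty$ onto the $(\tfrac1d)$-ball $B_x$; being a reflection that exchanges two tangent horoballs, it is precisely the reflection in their common bisector. Finally, to reach \emph{every} $(\tfrac1d)$-ball I would exploit the action of $\Gamma_\infty$ on the set of $(\tfrac1d)$-balls: if $g\in\Gamma_\infty$ carries $B_x$ to another $(\tfrac1d)$-ball $B_{x'}$, then $g\,(rsr)\,g^{-1}\in\Gamma$ is again a reflection, and since $g$ fixes $B_\infty$ it maps $B_\infty$ to $B_{x'}$.

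The hard part will be the second step, namely confirming that the elliptic element $r$ from Lemma \ref{elliptic} sends the specific neighbour $B_k$ to a $(\tfrac1d)$-ball (and not to some other horoball), which rests on the alignment of the centres $j$, $x$ and the adjacent $(\tfrac1w)$-ball along a common line as indicated in Figure \ref{fig:horoedwschief}; this collinearity is what forces the images to match. One must also verify that $\Gamma_\infty$ acts on the $(\tfrac1d)$-balls with $B_x$ in an orbit covering all of them, which follows from the transitivity of $\Gamma_\infty$ on the full-sized horoballs together with the point-group symmetry at $a_6$.
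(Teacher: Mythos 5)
Your proof is correct and follows essentially the same route as the paper: a reflection $s\in\Gamma_\infty$ exchanging $B_j$ and $B_k$, the order-2 elliptic element $r$ from Lemma \ref{elliptic} with $r(B_j)=B_\infty$ and $r(B_k)=B_x$, and the conjugate $rsr$ as the reflection in the bisector of $B_\infty$ and $B_x$ (the paper leaves the final transport by $\Gamma_\infty$ to the other $(\frac{1}{d})$-balls implicit). Two small remarks: the step you flag as the hard part, namely that $r(B_k)$ is a $(\frac{1}{d})$-ball, is automatic in the paper's framework because the $(\frac{1}{d})$-balls are by definition the $r$-images of the neighbouring full-sized horoballs; and $B_\infty$ and $B_x$ are not tangent (the $(\frac{1}{d})$-ball has diameter $\frac{1}{d^2}<1$), though this slip is harmless since a reflection interchanging two horoballs is the reflection in their bisector regardless of tangency.
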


In the situation of Figure \ref{fig:horoedwschief}, the distance $u$ between two neighbouring $(\frac{1}{d})$-balls can be calculated as $\frac{1}{d}$.
Using this information, one can consider the polyhedron created from the triangular cone with apex $\infty$ and angles $\frac{\pi}{2}, \frac{\pi}{3}, \frac{\pi}{6}$ by cutting with the bisector/reflection plane of $rsr$.
The bisector is orthogonal to one side of the cone and has angle $\frac{\pi}{3}$ with the other two because its centre has (Euclidean) distance $\frac{1}{2d}$ from the hyperplanes and the radius of the bisector is $\frac{1}{d}$.
Thus, it can be seen that this polyhedron is the fundamental polyhedron of the (non-arithmetic) reflection group $[(3^3,6)]$ with Coxeter graph
\begin{center}
  \begin{tikzpicture}[anchor=base, scale=0.5, baseline,
      bullet/.style={circle, fill=black, minimum size=5pt, inner sep=0pt}]
    \draw (-1, 1) -- (1, 1) node[midway, above]{6};
    \draw (-1,1) node[bullet]{}
      -- (-1, -1) node[bullet]{}
      -- (1, -1) node[bullet]{}
      -- (1, 1) node[bullet]{};
  \end{tikzpicture}\,\,.
\end{center}

\medskip
One can see that $r$ induces the internal symmetry of that polyhedron (and its graph).
Hence, $\Gamma$ is equal to the group extension $\Gamma_{\circ}^r=[(3^3,6)]\ast C_r$ of $[(3^3,6)]$ by means of the cyclic group generated by $r$; see Remark \ref{incomm}. The volume of the tetrahedron $[(3^3,6)]$ as given by \eqref{eq:volcycle} is bigger than $0.36$ so that $\vol(\mathbb H^3/\Gamma_{\circ}^r)>v_*$.

If we do not assume that the $(\frac{1}{w})$-balls touch two $(\frac{1}{d})$-balls, we have basically the same case distinction as when the $(\frac{1}{d_k})$-balls are centred on the edges $D$.
\begin{enumerate}
  \item There is a single $(\frac{1}{d_{k_{\text{max}}}})$-ball, or
  \item there are two touching $(\frac{1}{d_{k_{\text{max}}}})$-balls,
\end{enumerate}
where $k_\text{max}:=\max\left\{k\mid\text{the horoball diagram $D$ contains a $(\frac{1}{d_k})$-ball}\right\}$.
See Figure \ref{fig:anglepi6} for a sketch of the first couple of horoball diagrams in that sequence.

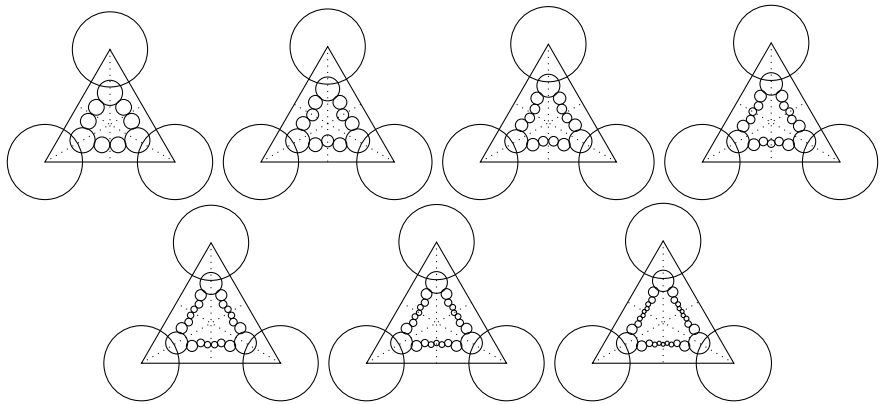
\begin{figure}[ht]
  \centering
    \begin{tikzpicture}
      \pgfmathsetmacro{\d}{1.73}
      \pgfmathsetmacro{\height}{\d*sqrt(3)/2}
      \pgfmathsetmacro{\angle}{30}
      \pgfmathsetmacro{\w}{sqrt(\d^2+1/\d^2-2*cos(\angle))}
      \pgfmathsetmacro{\angleb}{acos(0.5*(\w/\d+\d/\w-1/(\w*\d^3)))}
      \pgfmathsetmacro{\x}{\w-1/(\w*\d^2)}
      \draw (0, 0) -- (\d, 0) -- (60:\d) -- cycle;
      \draw (0, 0) circle[radius=0.5];
      \draw (\d, 0) circle[radius=0.5];
      \draw (60:\d) circle[radius=0.5];
      \draw ($(0, 0)+(\angle:1/\d)$) circle[radius=0.5/\d^2];
      \draw ($(\d, 0)+(\angle+120:1/\d)$) circle[radius=0.5/\d^2];
      \draw ($(60:\d)+(\angle-120:1/\d)$) circle[radius=0.5/\d^2];
      \draw ($(0, 0)+(\angle+\angleb:1/\w)$)
        circle[radius=0.5/\d^2/\w^2];
      \draw ($(\d, 0)+(\angle+\angleb+120:1/\w)$)
        circle[radius=0.5/\d^2/\w^2];
      \draw ($(60:\d)+(\angle+\angleb-120:1/\w)$)
        circle[radius=0.5/\d^2/\w^2];
      \draw ($(0, 0)+(\angle-\angleb:1/\w)$)
        circle[radius=0.5/\d^2/\w^2];
      \draw ($(\d, 0)+(\angle-\angleb+120:1/\w)$)
        circle[radius=0.5/\d^2/\w^2];
      \draw ($(60:\d)+(\angle-\angleb-120:1/\w)$)
        circle[radius=0.5/\d^2/\w^2];
      \draw[dotted] (0, 0) -- +(30:\height);
      \draw[dotted] (\d, 0) -- +(30+120:\height);
      \draw[dotted] (60:\d) -- +(30-120:\height);
    \end{tikzpicture}
    \begin{tikzpicture}
      \pgfmathsetmacro{\d}{sqrt(sqrt(2)+sqrt(3))}
      \pgfmathsetmacro{\height}{\d*sqrt(3)/2}
      \pgfmathsetmacro{\angle}{30}
      \pgfmathsetmacro{\w}{sqrt(\d^2+1/\d^2-2*cos(\angle))}
      \pgfmathsetmacro{\angleb}{acos(0.5*(\w/\d+\d/\w-1/(\w*\d^3)))}
      \pgfmathsetmacro{\x}{\w-1/(\w*\d^2)}
      \draw (0, 0) -- (\d, 0) -- (60:\d) -- cycle;
      \draw (0, 0) circle[radius=0.5];
      \draw (\d, 0) circle[radius=0.5];
      \draw (60:\d) circle[radius=0.5];
      \draw ($(0, 0)+(\angle:1/\d)$) circle[radius=0.5/\d^2];
      \draw ($(\d, 0)+(\angle+120:1/\d)$) circle[radius=0.5/\d^2];
      \draw ($(60:\d)+(\angle-120:1/\d)$) circle[radius=0.5/\d^2];
      \draw ($(0, 0)+(\angle+\angleb:1/\w)$)
        circle[radius=0.5/\d^2/\w^2];
      \draw ($(\d, 0)+(\angle+\angleb+120:1/\w)$)
        circle[radius=0.5/\d^2/\w^2];
      \draw ($(60:\d)+(\angle+\angleb-120:1/\w)$)
        circle[radius=0.5/\d^2/\w^2];
      \draw ($(0, 0)+(\angle-\angleb:1/\w)$)
        circle[radius=0.5/\d^2/\w^2];
      \draw ($(\d, 0)+(\angle-\angleb+120:1/\w)$)
        circle[radius=0.5/\d^2/\w^2];
      \draw ($(60:\d)+(\angle-\angleb-120:1/\w)$)
        circle[radius=0.5/\d^2/\w^2];
      \draw ($(0, 0)+(\angle-\angleb:1/\x)$)
        circle[radius=0.5/\d^2/\w^2/\x^2];
      \draw ($(\d, 0)+(\angle-\angleb+120:1/\x)$)
        circle[radius=0.5/\d^2/\w^2/\x^2];
      \draw ($(60:\d)+(\angle-\angleb-120:1/\x)$)
        circle[radius=0.5/\d^2/\w^2/\x^2];
      \draw[dotted] (0, 0) -- +(30:\height);
      \draw[dotted] (\d, 0) -- +(30+120:\height);
      \draw[dotted] (60:\d) -- +(30-120:\height);
    \end{tikzpicture}
    \begin{tikzpicture}
      \pgfmathsetmacro{\d}{1.81}
      \pgfmathsetmacro{\height}{\d*sqrt(3)/2}
      \pgfmathsetmacro{\angle}{30}
      \pgfmathsetmacro{\w}{sqrt(\d^2+1/\d^2-2*cos(\angle))}
      \pgfmathsetmacro{\angleb}{acos(0.5*(\w/\d+\d/\w-1/(\w*\d^3)))}
      \pgfmathsetmacro{\x}{\w-1/(\w*\d^2)}
      \pgfmathsetmacro{\y}{sqrt(\d^2+1/\x^2 - 2*\d/\x*cos(30-\angleb))}
      \draw (0, 0) -- (\d, 0) -- (60:\d) -- cycle;
      \draw (0, 0) circle[radius=0.5];
      \draw (\d, 0) circle[radius=0.5];
      \draw (60:\d) circle[radius=0.5];
      \draw ($(0, 0)+(\angle:1/\d)$) circle[radius=0.5/\d^2];
      \draw ($(\d, 0)+(\angle+120:1/\d)$) circle[radius=0.5/\d^2];
      \draw ($(60:\d)+(\angle-120:1/\d)$) circle[radius=0.5/\d^2];
      \draw ($(0, 0)+(\angle+\angleb:1/\w)$)
        circle[radius=0.5/\d^2/\w^2];
      \draw ($(\d, 0)+(\angle+\angleb+120:1/\w)$)
        circle[radius=0.5/\d^2/\w^2];
      \draw ($(60:\d)+(\angle+\angleb-120:1/\w)$)
        circle[radius=0.5/\d^2/\w^2];
      \coordinate (wa) at ($(0, 0)+(\angle-\angleb:1/\w)$);
      \coordinate (wa2) at ($(\d, 0)+(\angle+\angleb+120:1/\w)$);
      \coordinate (wb) at ($(\d, 0)+(\angle-\angleb+120:1/\w)$);
      \coordinate (wb2) at ($(60:\d)+(\angle+\angleb-120:1/\w)$);
      \coordinate (wc) at ($(60:\d)+(\angle-\angleb-120:1/\w)$);
      \coordinate (wc2) at ($(0, 0)+(\angle+\angleb:1/\w)$);
      \draw ($(0, 0)+(\angle-\angleb:1/\w)$)
        circle[radius=0.5/\d^2/\w^2];
      \draw ($(\d, 0)+(\angle-\angleb+120:1/\w)$)
        circle[radius=0.5/\d^2/\w^2];
      \draw ($(60:\d)+(\angle-\angleb-120:1/\w)$)
        circle[radius=0.5/\d^2/\w^2];
      \draw ($(0, 0)+(\angle+\angleb:1/\x)$)
        circle[radius=0.5/\d^2/\w^2/\x^2];
      \draw ($(\d, 0)+(\angle+\angleb+120:1/\x)$)
        circle[radius=0.5/\d^2/\w^2/\x^2];
      \draw ($(60:\d)+(\angle+\angleb-120:1/\x)$)
        circle[radius=0.5/\d^2/\w^2/\x^2];
      \draw ($(0, 0)+(\angle-\angleb:1/\x)$)
        circle[radius=0.5/\d^2/\w^2/\x^2];
      \draw ($(\d, 0)+(\angle-\angleb+120:1/\x)$)
        circle[radius=0.5/\d^2/\w^2/\x^2];
      \draw ($(60:\d)+(\angle-\angleb-120:1/\x)$)
        circle[radius=0.5/\d^2/\w^2/\x^2];
      \draw[dotted] (0, 0) -- +(30:\height);
      \draw[dotted] (\d, 0) -- +(30+120:\height);
      \draw[dotted] (60:\d) -- +(30-120:\height);
    \end{tikzpicture}
    \begin{tikzpicture}
      \pgfmathsetmacro{\d}{sqrt(2*cos(36)+sqrt(3))}
      \pgfmathsetmacro{\height}{\d*sqrt(3)/2}
      \pgfmathsetmacro{\angle}{30}
      \pgfmathsetmacro{\w}{sqrt(\d^2+1/\d^2-2*cos(\angle))}
      \pgfmathsetmacro{\angleb}{acos(0.5*(\w/\d+\d/\w-1/(\w*\d^3)))}
      \pgfmathsetmacro{\x}{\w-1/(\w*\d^2)}
      \pgfmathsetmacro{\y}{sqrt(\d^2+1/\x^2 - 2*\d/\x*cos(30-\angleb))}
      \draw (0, 0) -- (\d, 0) -- (60:\d) -- cycle;
      \draw (0, 0) circle[radius=0.5];
      \draw (\d, 0) circle[radius=0.5];
      \draw (60:\d) circle[radius=0.5];
      \draw ($(0, 0)+(\angle:1/\d)$) circle[radius=0.5/\d^2];
      \draw ($(\d, 0)+(\angle+120:1/\d)$) circle[radius=0.5/\d^2];
      \draw ($(60:\d)+(\angle-120:1/\d)$) circle[radius=0.5/\d^2];
      \draw ($(0, 0)+(\angle+\angleb:1/\w)$)
        circle[radius=0.5/\d^2/\w^2];
      \draw ($(\d, 0)+(\angle+\angleb+120:1/\w)$)
        circle[radius=0.5/\d^2/\w^2];
      \draw ($(60:\d)+(\angle+\angleb-120:1/\w)$)
        circle[radius=0.5/\d^2/\w^2];
      \coordinate (wa) at ($(0, 0)+(\angle-\angleb:1/\w)$);
      \coordinate (wa2) at ($(\d, 0)+(\angle+\angleb+120:1/\w)$);
      \coordinate (wb) at ($(\d, 0)+(\angle-\angleb+120:1/\w)$);
      \coordinate (wb2) at ($(60:\d)+(\angle+\angleb-120:1/\w)$);
      \coordinate (wc) at ($(60:\d)+(\angle-\angleb-120:1/\w)$);
      \coordinate (wc2) at ($(0, 0)+(\angle+\angleb:1/\w)$);
      \draw ($(0, 0)+(\angle-\angleb:1/\w)$)
        circle[radius=0.5/\d^2/\w^2];
      \draw ($(\d, 0)+(\angle-\angleb+120:1/\w)$)
        circle[radius=0.5/\d^2/\w^2];
      \draw ($(60:\d)+(\angle-\angleb-120:1/\w)$)
        circle[radius=0.5/\d^2/\w^2];
      \draw ($(0, 0)+(\angle+\angleb:1/\x)$)
        circle[radius=0.5/\d^2/\w^2/\x^2];
      \draw ($(\d, 0)+(\angle+\angleb+120:1/\x)$)
        circle[radius=0.5/\d^2/\w^2/\x^2];
      \draw ($(60:\d)+(\angle+\angleb-120:1/\x)$)
        circle[radius=0.5/\d^2/\w^2/\x^2];
      \draw ($(0, 0)+(\angle-\angleb:1/\x)$)
        circle[radius=0.5/\d^2/\w^2/\x^2];
      \draw ($(\d, 0)+(\angle-\angleb+120:1/\x)$)
        circle[radius=0.5/\d^2/\w^2/\x^2];
      \draw ($(60:\d)+(\angle-\angleb-120:1/\x)$)
        circle[radius=0.5/\d^2/\w^2/\x^2];
      \draw ($(0, 0)+(\angle+15:1/\y)$)
        circle[radius=0.5/\d^2/\w^2/\x^2/\y^2];
      \draw ($(\d, 0)+(120+\angle+15:1/\y)$)
        circle[radius=0.5/\d^2/\w^2/\x^2/\y^2];
      \draw ($(60:\d)+(-120+\angle+15:1/\y)$)
        circle[radius=0.5/\d^2/\w^2/\x^2/\y^2];
      \draw[dotted] (0, 0) -- +(30:\height);
      \draw[dotted] (\d, 0) -- +(30+120:\height);
      \draw[dotted] (60:\d) -- +(30-120:\height);
    \end{tikzpicture}
    \begin{tikzpicture}
      \pgfmathsetmacro{\d}{1.85}
      \pgfmathsetmacro{\height}{\d*sqrt(3)/2}
      \pgfmathsetmacro{\angle}{30}
      \pgfmathsetmacro{\w}{sqrt(\d^2+1/\d^2-2*cos(\angle))}
      \pgfmathsetmacro{\angleb}{acos(0.5*(\w/\d+\d/\w-1/(\w*\d^3)))}
      \pgfmathsetmacro{\x}{\w-1/(\w*\d^2)}
      \pgfmathsetmacro{\y}{sqrt(\d^2+1/\x^2 - 2*\d/\x*cos(30-\angleb))}
      \pgfmathsetmacro{\anglec}{acos(0.5*(\d^2+\y^2-1/\x^2)/(\d*\y))}
      \pgfmathsetmacro{\z}{sqrt(\d^2+1/\y^2 - 2*\d/\y*cos(30-\anglec))}
      \pgfmathsetmacro{\angled}{acos(0.5*(\d^2+\z^2-1/\y^2)/(\d*\z))}
      \draw (0, 0) -- (\d, 0) -- (60:\d) -- cycle;
      \draw (0, 0) circle[radius=0.5];
      \draw (\d, 0) circle[radius=0.5];
      \draw (60:\d) circle[radius=0.5];
      \draw ($(0, 0)+(\angle:1/\d)$) circle[radius=0.5/\d^2];
      \draw ($(\d, 0)+(\angle+120:1/\d)$) circle[radius=0.5/\d^2];
      \draw ($(60:\d)+(\angle-120:1/\d)$) circle[radius=0.5/\d^2];
      \draw ($(0, 0)+(\angle+\angleb:1/\w)$)
        circle[radius=0.5/\d^2/\w^2];
      \draw ($(\d, 0)+(\angle+\angleb+120:1/\w)$)
        circle[radius=0.5/\d^2/\w^2];
      \draw ($(60:\d)+(\angle+\angleb-120:1/\w)$)
        circle[radius=0.5/\d^2/\w^2];
      \draw ($(0, 0)+(\angle-\angleb:1/\w)$)
        circle[radius=0.5/\d^2/\w^2];
      \draw ($(\d, 0)+(\angle-\angleb+120:1/\w)$)
        circle[radius=0.5/\d^2/\w^2];
      \draw ($(60:\d)+(\angle-\angleb-120:1/\w)$)
        circle[radius=0.5/\d^2/\w^2];
      \draw ($(0, 0)+(\angle+\angleb:1/\x)$)
        circle[radius=0.5/\d^2/\w^2/\x^2];
      \draw ($(\d, 0)+(\angle+\angleb+120:1/\x)$)
        circle[radius=0.5/\d^2/\w^2/\x^2];
      \draw ($(60:\d)+(\angle+\angleb-120:1/\x)$)
        circle[radius=0.5/\d^2/\w^2/\x^2];
      \draw ($(0, 0)+(\angle-\angleb:1/\x)$)
        circle[radius=0.5/\d^2/\w^2/\x^2];
      \draw ($(\d, 0)+(\angle-\angleb+120:1/\x)$)
        circle[radius=0.5/\d^2/\w^2/\x^2];
      \draw ($(60:\d)+(\angle-\angleb-120:1/\x)$)
        circle[radius=0.5/\d^2/\w^2/\x^2];
      \draw ($(0, 0)+(\angle+\anglec:1/\y)$)
        circle[radius=0.5/\d^2/\w^2/\x^2/\y^2];
      \draw ($(\d, 0)+(120+\angle+\anglec:1/\y)$)
        circle[radius=0.5/\d^2/\w^2/\x^2/\y^2];
      \draw ($(60:\d)+(-120+\angle+\anglec:1/\y)$)
        circle[radius=0.5/\d^2/\w^2/\x^2/\y^2];
      \draw ($(0, 0)+(\angle-\anglec:1/\y)$)
        circle[radius=0.5/\d^2/\w^2/\x^2/\y^2];
      \draw ($(\d, 0)+(120+\angle-\anglec:1/\y)$)
        circle[radius=0.5/\d^2/\w^2/\x^2/\y^2];
      \draw ($(60:\d)+(-120+\angle-\anglec:1/\y)$)
        circle[radius=0.5/\d^2/\w^2/\x^2/\y^2];
      \draw[dotted] (0, 0) -- +(30:\height);
      \draw[dotted] (\d, 0) -- +(30+120:\height);
      \draw[dotted] (60:\d) -- +(30-120:\height);
    \end{tikzpicture}
    \begin{tikzpicture}
      \pgfmathsetmacro{\d}{sqrt(2*sqrt(3))}
      \pgfmathsetmacro{\height}{\d*sqrt(3)/2}
      \pgfmathsetmacro{\angle}{30}
      \pgfmathsetmacro{\w}{sqrt(\d^2+1/\d^2-2*cos(\angle))}
      \pgfmathsetmacro{\angleb}{acos(0.5*(\w/\d+\d/\w-1/(\w*\d^3)))}
      \pgfmathsetmacro{\x}{\w-1/(\w*\d^2)}
      \pgfmathsetmacro{\y}{sqrt(\d^2+1/\x^2 - 2*\d/\x*cos(30-\angleb))}
      \pgfmathsetmacro{\anglec}{acos(0.5*(\d^2+\y^2-1/\x^2)/(\d*\y))}
      \pgfmathsetmacro{\z}{sqrt(\d^2+1/\y^2 - 2*\d/\y*cos(30-\anglec))}
      \pgfmathsetmacro{\angled}{acos(0.5*(\d^2+\z^2-1/\y^2)/(\d*\z))}
      \draw (0, 0) -- (\d, 0) -- (60:\d) -- cycle;
      \draw (0, 0) circle[radius=0.5];
      \draw (\d, 0) circle[radius=0.5];
      \draw (60:\d) circle[radius=0.5];
      \draw ($(0, 0)+(\angle:1/\d)$) circle[radius=0.5/\d^2];
      \draw ($(\d, 0)+(\angle+120:1/\d)$) circle[radius=0.5/\d^2];
      \draw ($(60:\d)+(\angle-120:1/\d)$) circle[radius=0.5/\d^2];
      \draw ($(0, 0)+(\angle+\angleb:1/\w)$)
        circle[radius=0.5/\d^2/\w^2];
      \draw ($(\d, 0)+(\angle+\angleb+120:1/\w)$)
        circle[radius=0.5/\d^2/\w^2];
      \draw ($(60:\d)+(\angle+\angleb-120:1/\w)$)
        circle[radius=0.5/\d^2/\w^2];
      \draw ($(0, 0)+(\angle-\angleb:1/\w)$)
        circle[radius=0.5/\d^2/\w^2];
      \draw ($(\d, 0)+(\angle-\angleb+120:1/\w)$)
        circle[radius=0.5/\d^2/\w^2];
      \draw ($(60:\d)+(\angle-\angleb-120:1/\w)$)
        circle[radius=0.5/\d^2/\w^2];
      \draw ($(0, 0)+(\angle+\angleb:1/\x)$)
        circle[radius=0.5/\d^2/\w^2/\x^2];
      \draw ($(\d, 0)+(\angle+\angleb+120:1/\x)$)
        circle[radius=0.5/\d^2/\w^2/\x^2];
      \draw ($(60:\d)+(\angle+\angleb-120:1/\x)$)
        circle[radius=0.5/\d^2/\w^2/\x^2];
      \draw ($(0, 0)+(\angle-\angleb:1/\x)$)
        circle[radius=0.5/\d^2/\w^2/\x^2];
      \draw ($(\d, 0)+(\angle-\angleb+120:1/\x)$)
        circle[radius=0.5/\d^2/\w^2/\x^2];
      \draw ($(60:\d)+(\angle-\angleb-120:1/\x)$)
        circle[radius=0.5/\d^2/\w^2/\x^2];
      \draw ($(0, 0)+(\angle+\anglec:1/\y)$)
        circle[radius=0.5/\d^2/\w^2/\x^2/\y^2];
      \draw ($(\d, 0)+(120+\angle+\anglec:1/\y)$)
        circle[radius=0.5/\d^2/\w^2/\x^2/\y^2];
      \draw ($(60:\d)+(-120+\angle+\anglec:1/\y)$)
        circle[radius=0.5/\d^2/\w^2/\x^2/\y^2];
      \draw ($(0, 0)+(\angle-\anglec:1/\y)$)
        circle[radius=0.5/\d^2/\w^2/\x^2/\y^2];
      \draw ($(\d, 0)+(120+\angle-\anglec:1/\y)$)
        circle[radius=0.5/\d^2/\w^2/\x^2/\y^2];
      \draw ($(60:\d)+(-120+\angle-\anglec:1/\y)$)
        circle[radius=0.5/\d^2/\w^2/\x^2/\y^2];
      \draw ($(0, 0)+(\angle+\angled:1/\z)$)
        circle[radius=0.5/\d^2/\w^2/\x^2/\y^2/\z^2];
      \draw ($(\d, 0)+(120+\angle+\angled:1/\z)$)
        circle[radius=0.5/\d^2/\w^2/\x^2/\y^2/\z^2];
      \draw ($(60:\d)+(-120+\angle+\angled:1/\z)$)
        circle[radius=0.5/\d^2/\w^2/\x^2/\y^2/\z^2];
      \draw[dotted] (0, 0) -- +(30:\height);
      \draw[dotted] (\d, 0) -- +(30+120:\height);
      \draw[dotted] (60:\d) -- +(30-120:\height);
    \end{tikzpicture}
    \begin{tikzpicture}
      \pgfmathsetmacro{\d}{sqrt(2*cos(180/7)+sqrt(3))}
      \pgfmathsetmacro{\height}{\d*sqrt(3)/2}
      \pgfmathsetmacro{\heightb}{\d/sqrt(3)}
      \pgfmathsetmacro{\angle}{30}
      \pgfmathsetmacro{\w}{sqrt(\d^2+1/\d^2-2*cos(\angle))}
      \pgfmathsetmacro{\angleb}{acos(0.5*(\w/\d+\d/\w-1/(\w*\d^3)))}
      \pgfmathsetmacro{\x}{\w-1/(\w*\d^2)}
      \pgfmathsetmacro{\y}{sqrt(\d^2+1/\x^2 - 2*\d/\x*cos(30-\angleb))}
      \pgfmathsetmacro{\anglec}{acos(0.5*(\d^2+\y^2-1/\x^2)/(\d*\y))}
      \pgfmathsetmacro{\z}{sqrt(\d^2+1/\y^2 - 2*\d/\y*cos(30-\anglec))}
      \pgfmathsetmacro{\angled}{acos(0.5*(\d^2+\z^2-1/\y^2)/(\d*\z))}
      \pgfmathsetmacro{\zz}{sqrt(\d^2+1/\z^2 - 2*\d/\z*cos(30-\angled))}
      \pgfmathsetmacro{\anglee}{acos(0.5*(\d^2+\zz^2-1/\z^2)/(\d*\zz))}
      \draw (0, 0) -- (\d, 0) -- (60:\d) -- cycle;
      \draw (0, 0) circle[radius=0.5];
      \draw (\d, 0) circle[radius=0.5];
      \draw (60:\d) circle[radius=0.5];
      \draw ($(0, 0)+(\angle:1/\d)$) circle[radius=0.5/\d^2];
      \draw ($(\d, 0)+(\angle+120:1/\d)$) circle[radius=0.5/\d^2];
      \draw ($(60:\d)+(\angle-120:1/\d)$) circle[radius=0.5/\d^2];
      \draw ($(0, 0)+(\angle+\angleb:1/\w)$)
      circle[radius=0.5/\d^2/\w^2];
      \draw ($(\d, 0)+(\angle+\angleb+120:1/\w)$)
        circle[radius=0.5/\d^2/\w^2];
      \draw ($(60:\d)+(\angle+\angleb-120:1/\w)$)
        circle[radius=0.5/\d^2/\w^2];
      \draw ($(0, 0)+(\angle-\angleb:1/\w)$)
        circle[radius=0.5/\d^2/\w^2];
      \draw ($(\d, 0)+(\angle-\angleb+120:1/\w)$)
        circle[radius=0.5/\d^2/\w^2];
      \draw ($(60:\d)+(\angle-\angleb-120:1/\w)$)
        circle[radius=0.5/\d^2/\w^2];
      \draw ($(0, 0)+(\angle+\angleb:1/\x)$)
        circle[radius=0.5/\d^2/\w^2/\x^2];
      \draw ($(\d, 0)+(\angle+\angleb+120:1/\x)$)
        circle[radius=0.5/\d^2/\w^2/\x^2];
      \draw ($(60:\d)+(\angle+\angleb-120:1/\x)$)
        circle[radius=0.5/\d^2/\w^2/\x^2];
      \draw ($(0, 0)+(\angle-\angleb:1/\x)$)
        circle[radius=0.5/\d^2/\w^2/\x^2];
      \draw ($(\d, 0)+(\angle-\angleb+120:1/\x)$)
        circle[radius=0.5/\d^2/\w^2/\x^2];
      \draw ($(60:\d)+(\angle-\angleb-120:1/\x)$)
        circle[radius=0.5/\d^2/\w^2/\x^2];
      \draw ($(0, 0)+(\angle+\anglec:1/\y)$)
        circle[radius=0.5/\d^2/\w^2/\x^2/\y^2];
      \draw ($(\d, 0)+(120+\angle+\anglec:1/\y)$)
        circle[radius=0.5/\d^2/\w^2/\x^2/\y^2];
      \draw ($(60:\d)+(-120+\angle+\anglec:1/\y)$)
        circle[radius=0.5/\d^2/\w^2/\x^2/\y^2];
      \draw ($(0, 0)+(\angle-\anglec:1/\y)$)
        circle[radius=0.5/\d^2/\w^2/\x^2/\y^2];
      \draw ($(\d, 0)+(120+\angle-\anglec:1/\y)$)
        circle[radius=0.5/\d^2/\w^2/\x^2/\y^2];
      \draw ($(60:\d)+(-120+\angle-\anglec:1/\y)$)
        circle[radius=0.5/\d^2/\w^2/\x^2/\y^2];
      \foreach \vz in {-1, 1}{
        \draw ($(0, 0)+(\angle+\vz*\angled:1/\z)$)
          circle[radius=0.5/\d^2/\w^2/\x^2/\y^2/\z^2];
        \draw ($(\d, 0)+(120+\angle+\vz*\angled:1/\z)$)
          circle[radius=0.5/\d^2/\w^2/\x^2/\y^2/\z^2];
        \draw ($(60:\d)+(-120+\angle+\vz*\angled:1/\z)$)
          circle[radius=0.5/\d^2/\w^2/\x^2/\y^2/\z^2];
        }
      \draw ($(0, 0)+(\angle+\anglee:1/\zz)$)
        circle[radius=0.5/\d^2/\w^2/\x^2/\y^2/\z^2/\zz^2];
      \draw ($(\d, 0)+(120+\angle+\anglee:1/\zz)$)
        circle[radius=0.5/\d^2/\w^2/\x^2/\y^2/\z^2/\zz^2];
      \draw ($(60:\d)+(-120+\angle+\anglee:1/\zz)$)
        circle[radius=0.5/\d^2/\w^2/\x^2/\y^2/\z^2/\zz^2];
      \draw[dotted] (0, 0) -- +(30:\height);
      \draw[dotted] (\d, 0) -- +(30+120:\height);
      \draw[dotted] (60:\d) -- +(30-120:\height);
    \end{tikzpicture}
  \caption{Horoball diagrams with angle $\theta=\frac{\pi}{6}$}
  \label{fig:anglepi6}
\end{figure}

In the part above, we discussed the first case with $k_\text{max}=2$, $d_2=w$ and a horoball diagram $D$ depicted in Figure \ref{fig:horoedwschief}.
There, the isometry $r$ maps the full-sized horoball $B_k$ to the $(\frac{1}{d_1})$-ball $B_x$ and the $(\frac{1}{d_1})$-ball $B_y$ to the $(\frac{1}{d_2})$-ball $B_q$.

In general, there is a similar pattern as follows.
A $(\frac{1}{d_k})$-ball close to the full-sized ball $B_k$ gets mapped to a $(\frac{1}{d_{k+1}})$-ball closer to the full-sized ball $B_j$.

(a)\quad If there is a unique $(\frac{1}{d_{k_\text{max}}})$-ball in the corresponding part of the horoball diagram $D$, then a $(\frac{1}{d_{k_\text{max}-1}})$-ball on the right gets mapped to this $(\frac{1}{d_{k_\text{max}}})$-ball.

(b)\quad If there are two $(\frac{1}{d_{k_\text{max}}})$-ball in  this part of the
$D$, then a $(\frac{1}{d_{k_\text{max}-1}})$-ball on the right gets mapped to the left $(\frac{1}{d_{k_\text{max}}})$-ball.
This implies that in the second case the right-hand $(\frac{1}{d_{k_\text{max}}})$-ball is {\it fixed}.

In both cases, the reflection $rsr$ then maps a $(\frac{1}{d_k})$-ball on the right to a $(\frac{1}{d_{k+2}})$-ball on the left as long is $k\leq k_\text{max}-2$.
In the case (a), a $(\frac{1}{d_{k_\text{max}-1}})$-ball on the right is fixed by $rsr$ because it is the only ball left.
In the case (b), the $(\frac{1}{d_{k_\text{max}-1}})$-ball on the right has to be mapped to the right $(\frac{1}{d_{k_\text{max}}})$-ball.
Confer to Figure \ref{fig:ballactions} for a sketch of illustrative examples for the cases (a) and (b).
Each pair of horoballs with matching colour and pattern is exchanged by $rsr$.
Those horoballs with a solid fill are fixed.

\begin{figure}
  \centering
  \begin{tikzpicture}[scale=2.5]
    \pgfmathsetmacro{\d}{1.81}
    \pgfmathsetmacro{\height}{\d*sqrt(3)/2}
    \pgfmathsetmacro{\angle}{30}
    \pgfmathsetmacro{\w}{sqrt(\d^2+1/\d^2-2*cos(\angle))}
    \pgfmathsetmacro{\angleb}{acos(0.5*(\w/\d+\d/\w-1/(\w*\d^3)))}
    \pgfmathsetmacro{\x}{\w-1/(\w*\d^2)}
    \pgfmathsetmacro{\y}{sqrt(\d^2+1/\x^2 - 2*\d/\x*cos(30-\angleb))}
    \draw (0, 0) -- (\d, 0) -- (60:\d) -- cycle;
    \draw[fill=gray, fill opacity=0.5] (0, 0) circle[radius=0.5];
    \draw[pattern color=green, pattern=horizontal lines] (\d, 0) circle[radius=0.5];
    \draw (60:\d) circle[radius=0.5];
    \draw ($(0, 0)+(\angle:1/\d)$) circle[radius=0.5/\d^2];
    \draw[pattern color=red, pattern=north east lines] ($(\d, 0)+(\angle+120:1/\d)$) circle[radius=0.5/\d^2];
    \draw ($(60:\d)+(\angle-120:1/\d)$) circle[radius=0.5/\d^2];
    \draw ($(0, 0)+(\angle+\angleb:1/\w)$)
      circle[radius=0.5/\d^2/\w^2];
    \draw[pattern color=blue, pattern=crosshatch dots] ($(\d, 0)+(\angle+\angleb+120:1/\w)$)
      circle[radius=0.5/\d^2/\w^2];
    \draw ($(60:\d)+(\angle+\angleb-120:1/\w)$)
      circle[radius=0.5/\d^2/\w^2];
    \coordinate (wa) at ($(0, 0)+(\angle-\angleb:1/\w)$);
    \coordinate (wa2) at ($(\d, 0)+(\angle+\angleb+120:1/\w)$);
    \coordinate (wb) at ($(\d, 0)+(\angle-\angleb+120:1/\w)$);
    \coordinate (wb2) at ($(60:\d)+(\angle+\angleb-120:1/\w)$);
    \coordinate (wc) at ($(60:\d)+(\angle-\angleb-120:1/\w)$);
    \coordinate (wc2) at ($(0, 0)+(\angle+\angleb:1/\w)$);
    \draw[pattern color=green, pattern=horizontal lines] ($(0, 0)+(\angle-\angleb:1/\w)$)
      circle[radius=0.5/\d^2/\w^2];
    \draw ($(\d, 0)+(\angle-\angleb+120:1/\w)$)
      circle[radius=0.5/\d^2/\w^2];
    \draw ($(60:\d)+(\angle-\angleb-120:1/\w)$)
      circle[radius=0.5/\d^2/\w^2];
    \draw ($(0, 0)+(\angle+\angleb:1/\x)$)
      circle[radius=0.5/\d^2/\w^2/\x^2];
    \draw[pattern color=blue, pattern=crosshatch dots] ($(\d, 0)+(\angle+\angleb+120:1/\x)$)
      circle[radius=0.5/\d^2/\w^2/\x^2];
    \draw ($(60:\d)+(\angle+\angleb-120:1/\x)$)
      circle[radius=0.5/\d^2/\w^2/\x^2];
    \draw[pattern color=red, pattern=north east lines] ($(0, 0)+(\angle-\angleb:1/\x)$)
      circle[radius=0.5/\d^2/\w^2/\x^2];
    \draw ($(\d, 0)+(\angle-\angleb+120:1/\x)$)
      circle[radius=0.5/\d^2/\w^2/\x^2];
    \draw ($(60:\d)+(\angle-\angleb-120:1/\x)$)
      circle[radius=0.5/\d^2/\w^2/\x^2];
    \draw[dotted] (0, 0) -- +(30:\height);
    \draw[dotted] (\d, 0) -- +(30+120:\height);
    \draw[dotted] (60:\d) -- +(30-120:\height);
  \end{tikzpicture}\hfill
  \begin{tikzpicture}[scale=2.5]
    \pgfmathsetmacro{\d}{sqrt(2*cos(36)+sqrt(3))}
    \pgfmathsetmacro{\height}{\d*sqrt(3)/2}
    \pgfmathsetmacro{\angle}{30}
    \pgfmathsetmacro{\w}{sqrt(\d^2+1/\d^2-2*cos(\angle))}
    \pgfmathsetmacro{\angleb}{acos(0.5*(\w/\d+\d/\w-1/(\w*\d^3)))}
    \pgfmathsetmacro{\x}{\w-1/(\w*\d^2)}
    \pgfmathsetmacro{\y}{sqrt(\d^2+1/\x^2 - 2*\d/\x*cos(30-\angleb))}
    \draw (0, 0) -- (\d, 0) -- (60:\d) -- cycle;
    \draw[fill=gray, fill opacity=0.5] (0, 0) circle[radius=0.5];
    \draw[pattern color=green, pattern=horizontal lines] (\d, 0) circle[radius=0.5];
    \draw (60:\d) circle[radius=0.5];
    \draw ($(0, 0)+(\angle:1/\d)$) circle[radius=0.5/\d^2];
    \draw[pattern color=red, pattern=north east lines] ($(\d, 0)+(\angle+120:1/\d)$) circle[radius=0.5/\d^2];
    \draw ($(60:\d)+(\angle-120:1/\d)$) circle[radius=0.5/\d^2];
    \draw ($(0, 0)+(\angle+\angleb:1/\w)$)
      circle[radius=0.5/\d^2/\w^2];
    \draw[pattern color=blue, pattern=crosshatch dots] ($(\d, 0)+(\angle+\angleb+120:1/\w)$)
      circle[radius=0.5/\d^2/\w^2];
    \draw ($(60:\d)+(\angle+\angleb-120:1/\w)$)
      circle[radius=0.5/\d^2/\w^2];
    \coordinate (wa) at ($(0, 0)+(\angle-\angleb:1/\w)$);
    \coordinate (wa2) at ($(\d, 0)+(\angle+\angleb+120:1/\w)$);
    \coordinate (wb) at ($(\d, 0)+(\angle-\angleb+120:1/\w)$);
    \coordinate (wb2) at ($(60:\d)+(\angle+\angleb-120:1/\w)$);
    \coordinate (wc) at ($(60:\d)+(\angle-\angleb-120:1/\w)$);
    \coordinate (wc2) at ($(0, 0)+(\angle+\angleb:1/\w)$);
    \draw[pattern color=green, pattern=horizontal lines] ($(0, 0)+(\angle-\angleb:1/\w)$)
      circle[radius=0.5/\d^2/\w^2];
    \draw ($(\d, 0)+(\angle-\angleb+120:1/\w)$)
      circle[radius=0.5/\d^2/\w^2];
    \draw ($(60:\d)+(\angle-\angleb-120:1/\w)$)
      circle[radius=0.5/\d^2/\w^2];
    \draw ($(0, 0)+(\angle+\angleb:1/\x)$)
      circle[radius=0.5/\d^2/\w^2/\x^2];
    \draw[fill=gray, fill opacity=0.5] ($(\d, 0)+(\angle+\angleb+120:1/\x)$)
      circle[radius=0.5/\d^2/\w^2/\x^2];
    \draw ($(60:\d)+(\angle+\angleb-120:1/\x)$)
      circle[radius=0.5/\d^2/\w^2/\x^2];
    \draw[pattern color=red, pattern=north east lines] ($(0, 0)+(\angle-\angleb:1/\x)$)
      circle[radius=0.5/\d^2/\w^2/\x^2];
    \draw ($(\d, 0)+(\angle-\angleb+120:1/\x)$)
      circle[radius=0.5/\d^2/\w^2/\x^2];
    \draw ($(60:\d)+(\angle-\angleb-120:1/\x)$)
      circle[radius=0.5/\d^2/\w^2/\x^2];
    \draw ($(0, 0)+(\angle+15:1/\y)$)
      circle[radius=0.5/\d^2/\w^2/\x^2/\y^2];
    \draw[pattern color=blue, pattern=crosshatch dots] ($(\d, 0)+(120+\angle+15:1/\y)$)
      circle[radius=0.5/\d^2/\w^2/\x^2/\y^2];
    \draw ($(60:\d)+(-120+\angle+15:1/\y)$)
      circle[radius=0.5/\d^2/\w^2/\x^2/\y^2];
    \draw[dotted] (0, 0) -- +(30:\height);
    \draw[dotted] (\d, 0) -- +(30+120:\height);
    \draw[dotted] (60:\d) -- +(30-120:\height);
  \end{tikzpicture}
  \caption{Action of $rsr$ on some horoballs according to the cases (a) and (b)}
  \label{fig:ballactions}
\end{figure}
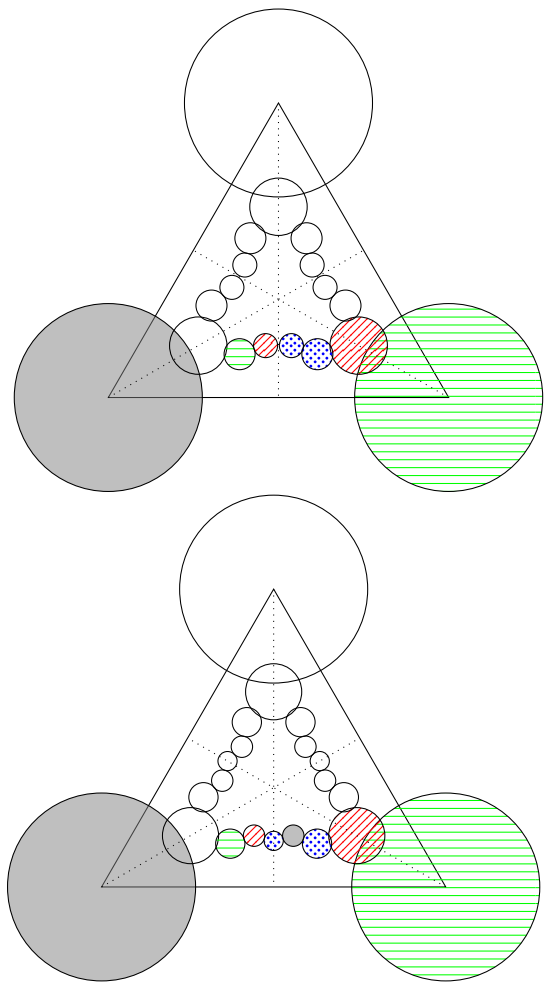

\begin{lem}\label{prop:case-b}
  The case (b) is impossible.
  \label{prop:thingimpossible}
  \begin{proof}
    The previously discussed fact that $rsr$ maps $(\frac{1}{d_k})$-balls to $(\frac{1}{d_{k+2}})$-balls proves that the $(\frac{1}{d_{2k+1}})$-balls are on one line with the $(\frac{1}{d_1})$-balls.
    We already showed that a neighboring pair of a $(\frac{1}{d_1})$-ball and a $(\frac{1}{d_2})$-ball are on a line with a full-sized horoball.
    This implies that the $(\frac{1}{d_{2k}})$-balls are not on the same line as the $(\frac{1}{d_{2k+1}})$-balls.
    However, this means that the right-hand $(\frac{1}{d_{k_\text{max}}})$-ball is not on the same line as the right-hand $(\frac{1}{d_{k_\text{max}-1}})$-ball.
    Thus it cannot be true that they get mapped to one another by $rsr$, proving the statement.
  \end{proof}
\end{lem}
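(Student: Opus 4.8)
The plan is to convert the collinearity relations among successive generations of horoballs into a parity invariant that the reflection $rsr$ must preserve, and then to show that the configuration demanded by case (b) violates it. Concretely, I would first record precisely how the three relevant isometries act on the nested sequence of $(\frac{1}{d_k})$-balls defined by the recursion \eqref{eq:recursion}. The order-$2$ element $r$ supplied by Lemma \ref{elliptic} sends a $(\frac{1}{d_k})$-ball adjacent to $B_k$ to a $(\frac{1}{d_{k+1}})$-ball adjacent to $B_j$, i.e.\ it shifts the generation index by $1$; the reflection $s\in\Gamma_\infty$ interchanges $B_j$ and $B_k$; and consequently the reflection $rsr$ sends each right-hand $(\frac{1}{d_k})$-ball to a left-hand $(\frac{1}{d_{k+2}})$-ball, shifting the index by $2$ (this is exactly the action displayed in Figure \ref{fig:ballactions}). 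The crucial observation is that $rsr$ \emph{preserves the parity} of the generation index.

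Next I would establish the geometric dichotomy between even and odd generations. Iterating the index-$2$ shift together with the mirror symmetry of $D$, the odd-generation $(\frac{1}{d_{2k+1}})$-balls all lie on a single line, namely the line carrying the $(\frac{1}{d_1})$-balls, and likewise the even-generation balls lie on a common line. These two lines are genuinely distinct: by the similarity argument already recorded for Figure \ref{fig:horoedwschief}, a full-sized ball $B_j$ is collinear with its neighbouring $(\frac{1}{d_1})$-ball and the subsequent $(\frac{1}{d_2})$-ball, so the $(\frac{1}{d_{2k}})$-balls cannot lie on the same line as the $(\frac{1}{d_{2k+1}})$-balls. Hence $rsr$ carries the odd line to itself and the even line to itself, and in particular cannot send an odd-generation ball to an even-generation one.

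With this in hand the contradiction in case (b) is immediate. In case (b) there are two touching $(\frac{1}{d_{k_{\max}}})$-balls, and---since there is no $(\frac{1}{d_{k_{\max}+1}})$-ball to receive its image---the right-hand $(\frac{1}{d_{k_{\max}-1}})$-ball is forced to be mapped by $rsr$ to the right-hand $(\frac{1}{d_{k_{\max}}})$-ball. But $k_{\max}-1$ and $k_{\max}$ have opposite parities, so these two balls lie on the two different lines identified above; the parity-preserving reflection $rsr$ cannot interchange them. This contradiction rules out case (b).

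The main obstacle is the second step: one must pin down rigorously that the odd- and even-generation balls lie on distinct lines. This requires careful bookkeeping of the ``left/right'' labelling under repeated application of $r$, $s$ and $rsr$, together with a correct placement of the reflection axis of $rsr$ relative to the radial line through $B_j$, the adjacent $(\frac{1}{d_1})$-ball and the $(\frac{1}{d_2})$-ball. Once the alignment of consecutive generations from Figure \ref{fig:horoedwschief} is combined with the index-$2$ periodicity of $rsr$, the parity obstruction follows and the lemma is proved.
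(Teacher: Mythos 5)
Your strategy is the same as the paper's: $rsr$ shifts the generation index by two, the odd-generation balls lie on the line $L$ carrying the $(\frac{1}{d_1})$-balls, and case (b) would force $rsr$ to identify balls of opposite parity. But the geometric fact you rely on to separate the parities is misstated, and in the form you state it, it is false. You claim that a full-sized ball $B_j$ is collinear with \emph{its} neighbouring $(\frac{1}{d_1})$-ball and the subsequent $(\frac{1}{d_2})$-ball, i.e.\ that these three centres lie on a radial line through $j$ (your closing paragraph repeats this picture when it speaks of ``the radial line through $B_j$, the adjacent $(\frac{1}{d_1})$-ball and the $(\frac{1}{d_2})$-ball''). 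In the configuration at hand this is impossible: the $(\frac{1}{d_1})$-ball adjacent to $B_j$ sits \emph{on} the angle bisector at $j$, while the $(\frac{1}{d_2})$-balls near $j$ sit symmetrically \emph{off} that bisector. The collinearity actually established by the similarity of the triangles $(j,z,l)$ and $(j,x,p)$ in Figure \ref{fig:horoedwschief} is different: $B_j$ is collinear with its adjacent $(\frac{1}{d_2})$-ball $B_p$ and the $(\frac{1}{d_1})$-ball $B_z$ adjacent to the \emph{other} full-sized ball $B_l$. It is this version that does the work, since the line $jpz$ meets the odd line (through the $(\frac{1}{d_1})$-balls) only in the single point $z$, forcing the $(\frac{1}{d_2})$-ball off that line.

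A second, related flaw is your assertion that the even-generation balls all lie on one common line. This is false (the full-sized balls lie on the edges of $D$, the $(\frac{1}{d_2})$-balls on lines such as $jz$, the $(\frac{1}{d_4})$-balls on yet other lines), and it is also unnecessary. All your contradiction needs is: (i) odd-generation balls lie on $L$; (ii) even-generation balls do not; (iii) $rsr(L)=L$. Concerning (iii), note that on $\partial\HH^3$ the map $rsr$ is an inversion in a circle, not a Euclidean reflection, so knowing it permutes finitely many odd balls does not by itself give $rsr(L)=L$; one should either observe that the centre of inversion (the base point of a $(\frac{1}{d_1})$-ball) lies on $L$, or argue that an anti-M\"obius map sending three points of the circline $L\cup\{\infty\}$ into it maps that circline onto itself. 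Given (iii), point (ii) follows cleanly because the full-sized balls are off $L$ and both $s$ and $rsr$ preserve $L$, so all even-generation balls stay off $L$ --- or via the corrected collinearity, as the paper does. With these repairs your parity argument closes exactly as in the paper; as written, the key separating fact is wrong, and the ``careful bookkeeping'' you defer to would not go through from the radial-line picture you describe.
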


Hence, it remains to discuss the general case (a) with a single $(\frac{1}{d_{k_\text{max}}})$-ball.
It turns out to be helpful to consider the subgroup $\Gamma'$ of $\Gamma$ generated by $\Gamma_\infty$ and $rsr$.
In fact, this move allows us to drop the rotation $r$ and to purge the full-sized horoballs as well as the $(\frac{1}{d_{2k}})$-balls from the horoball diagram $D$.
Then, the $(\frac{1}{d_{2k+1}})$-balls can be increased to yield a  cusp which is a maximal cusp again. More precisely, the $(\frac{1}{d})$-balls become full-sized horoballs (of diameter $\frac{1}{d}$), and then, the horoball diagram $D$ is renormalised in order to have the horosphere $H_{\infty}=\partial B_{\infty}$ at distance $1$ from $\partial U^3$.
Figure \ref{fig:reflsubgrp} depicts the cases with small $k_\text{max}$.

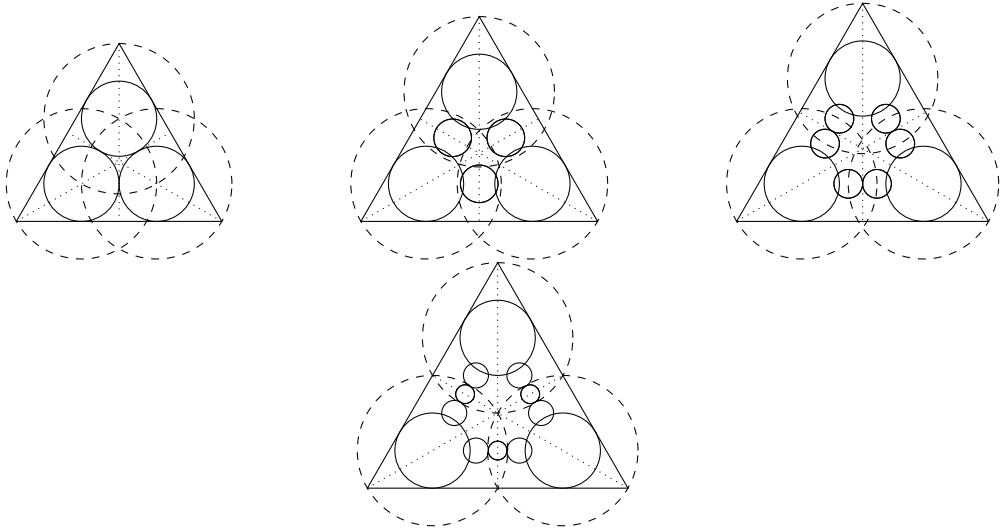
\begin{figure}[ht]
  \centering
  \begin{tikzpicture}
    \pgfmathsetmacro{\nn}{3}
    \pgfmathsetmacro{\d}{2*cos(180/\nn)}
    \pgfmathsetmacro{\height}{\d*sqrt(3)/2}
    \pgfmathsetmacro{\heightb}{\d/sqrt(3)}
    \pgfmathsetmacro{\w}{\d-1/\d}
    \draw (210:\heightb+1) -- (330:\heightb+1) -- (90:\heightb+1) -- cycle;
    \draw (210:\heightb) circle[radius=0.5];
    \draw (330:\heightb) circle[radius=0.5];
    \draw (90:\heightb) circle[radius=0.5];
    \draw[dashed] (210:\heightb) circle[radius=1];
    \draw[dashed] (330:\heightb) circle[radius=1];
    \draw[dashed] (90:\heightb) circle[radius=1];
    \draw[dotted] (210:\heightb+1) -- +(30:\height+1.5);
    \draw[dotted] (330:\heightb+1) -- +(30+120:\height+1.5);
    \draw[dotted] (90:\heightb+1) -- +(30-120:\height+1.5);
  \end{tikzpicture}\hfill
  \begin{tikzpicture}
    \pgfmathsetmacro{\nn}{4}
    \pgfmathsetmacro{\d}{2*cos(180/\nn)}
    \pgfmathsetmacro{\height}{\d*sqrt(3)/2}
    \pgfmathsetmacro{\heightb}{\d/sqrt(3)}
    \pgfmathsetmacro{\w}{\d-1/\d}
    \draw (210:\heightb+1) -- (330:\heightb+1) -- (90:\heightb+1) -- cycle;
    \draw (210:\heightb) circle[radius=0.5];
    \draw (330:\heightb) circle[radius=0.5];
    \draw (90:\heightb) circle[radius=0.5];
    \draw[dashed] (210:\heightb) circle[radius=1];
    \draw[dashed] (330:\heightb) circle[radius=1];
    \draw[dashed] (90:\heightb) circle[radius=1];
    \draw ($(210:\heightb)+(1/\d, 0)$);
    \draw ($(210:\heightb)+(60:1/\d)$);
    \pgfmathsetmacro{\dkglobal}{\d}
    \pgfmathsetmacro{\rkglobal}{1/\d}
    \foreach \k in {1,...,\numexpr (\nn-2)/2\relax}
    {
      \foreach \x in {210, 330, 90}
      {
        \foreach \y in {-150, 150}
        {
          \draw ($(\x:\heightb) + (\x+\y:1/\dkglobal)$)
            circle[radius=0.5*\rkglobal^2];
        }
      }
      \pgfmathparse{\d-1/\dkglobal}
      \global\let\dkglobal=\pgfmathresult
      \pgfmathparse{\rkglobal/\dkglobal}
      \global\let\rkglobal=\pgfmathresult
    }
    \draw[dotted] (210:\heightb+1) -- +(30:\height+1.5);
    \draw[dotted] (330:\heightb+1) -- +(30+120:\height+1.5);
    \draw[dotted] (90:\heightb+1) -- +(30-120:\height+1.5);
  \end{tikzpicture}\hfill
  \begin{tikzpicture}
    \pgfmathsetmacro{\nn}{5}
    \pgfmathsetmacro{\d}{2*cos(180/\nn)}
    \pgfmathsetmacro{\height}{\d*sqrt(3)/2}
    \pgfmathsetmacro{\heightb}{\d/sqrt(3)}
    \pgfmathsetmacro{\w}{\d-1/\d}
    \draw (210:\heightb+1) -- (330:\heightb+1) -- (90:\heightb+1) -- cycle;
    \draw (210:\heightb) circle[radius=0.5];
    \draw (330:\heightb) circle[radius=0.5];
    \draw (90:\heightb) circle[radius=0.5];
    \draw[dashed] (210:\heightb) circle[radius=1];
    \draw[dashed] (330:\heightb) circle[radius=1];
    \draw[dashed] (90:\heightb) circle[radius=1];
    \draw ($(210:\heightb)+(1/\d, 0)$);
    \draw ($(210:\heightb)+(60:1/\d)$);
    \pgfmathsetmacro{\dkglobal}{\d}
    \pgfmathsetmacro{\rkglobal}{1/\d}
    \foreach \k in {1,...,\numexpr (\nn-2)/2\relax}
    {
      \foreach \x in {210, 330, 90}
      {
        \foreach \y in {-150, 150}
        {
          \draw ($(\x:\heightb) + (\x+\y:1/\dkglobal)$)
            circle[radius=0.5*\rkglobal^2];
        }
      }
      \pgfmathparse{\d-1/\dkglobal}
      \global\let\dkglobal=\pgfmathresult
      \pgfmathparse{\rkglobal/\dkglobal}
      \global\let\rkglobal=\pgfmathresult
    }
    \draw[dotted] (210:\heightb+1) -- +(30:\height+1.5);
    \draw[dotted] (330:\heightb+1) -- +(30+120:\height+1.5);
    \draw[dotted] (90:\heightb+1) -- +(30-120:\height+1.5);
  \end{tikzpicture}\hfill
  \begin{tikzpicture}
    \pgfmathsetmacro{\nn}{6}
    \pgfmathsetmacro{\d}{2*cos(180/\nn)}
    \pgfmathsetmacro{\height}{\d*sqrt(3)/2}
    \pgfmathsetmacro{\heightb}{\d/sqrt(3)}
    \pgfmathsetmacro{\w}{\d-1/\d}
    \draw (210:\heightb+1) -- (330:\heightb+1) -- (90:\heightb+1) -- cycle;
    \draw (210:\heightb) circle[radius=0.5];
    \draw (330:\heightb) circle[radius=0.5];
    \draw (90:\heightb) circle[radius=0.5];
    \draw[dashed] (210:\heightb) circle[radius=1];
    \draw[dashed] (330:\heightb) circle[radius=1];
    \draw[dashed] (90:\heightb) circle[radius=1];
    \draw ($(210:\heightb)+(1/\d, 0)$);
    \draw ($(210:\heightb)+(60:1/\d)$);
    \pgfmathsetmacro{\dkglobal}{\d}
    \pgfmathsetmacro{\rkglobal}{1/\d}
    \foreach \k in {1,...,\numexpr (\nn-2)/2\relax}
    {
      \foreach \x in {210, 330, 90}
      {
        \foreach \y in {-150, 150}
        {
          \draw ($(\x:\heightb) + (\x+\y:1/\dkglobal)$)
            circle[radius=0.5*\rkglobal^2];
        }
      }
      \pgfmathparse{\d-1/\dkglobal}
      \global\let\dkglobal=\pgfmathresult
      \pgfmathparse{\rkglobal/\dkglobal}
      \global\let\rkglobal=\pgfmathresult
    }
    \draw[dotted] (210:\heightb+1) -- +(30:\height+1.5);
    \draw[dotted] (330:\heightb+1) -- +(30+120:\height+1.5);
    \draw[dotted] (90:\heightb+1) -- +(30-120:\height+1.5);
  \end{tikzpicture}
  \caption{Horoball diagrams of the reflection subgroups generated by $\Gamma_{\infty}$ and $rsr$}
  \label{fig:reflsubgrp}
\end{figure}

As a consequence, the group $\Gamma'$ is a hyperbolic Coxeter group, and by the same methods as above, one can identify $\Gamma'$ by means
of the Coxeter graph describing a Coxeter polyhedron $P=P(\frac{\pi}{k})$, with an order $2$ internal symmetry plane, according to
\begin{center}
  \begin{tikzpicture}[anchor=base, scale=0.5, baseline,
      bullet/.style={circle, fill=black, minimum size=5pt, inner sep=0pt}]
    \draw (-1, 1) -- (1, 1) node[midway, above]{6};
    \draw (-1,1) node[bullet]{}
      -- (-1, -1) node[bullet]{}
      -- (1, -1) node[bullet]{} node[midway, below]{$k$}
      -- (1, 1) node[bullet]{};
  \end{tikzpicture}
\end{center}
where 
$k$ is an integer with $k\ge3$. For $3\le k\le 6$, the volume of $P(\frac{\pi}{k})$ satisfies $\vol(P(\frac{\pi}{k}))\ge \vol([3^3,6])>2\,v_*\,$ by \eqref{eq:volcycle}.

In the case $k=6$, the fundamental polyhedron $P$ of $\Gamma'$ has four cusps, and for $k>7$, there are two ultraideal vertices.
We do not have to consider those cases because we can easily estimate the orbifold volume using the cusp volume.
If $k=6$, the distance between two full-sized horoballs is $d=\sqrt{3}$ as calculated above in Equation \eqref{eq:dcaseA}.
For this case and for $d\ge \sqrt{3}$, the side length of $D$ is $e=d+\sqrt{3} \ge 2\sqrt{3}$ (compare Figure \ref{fig:volabschaetzungcycle}).
The cusp volume can then be bounded by
\begin{equation}
  \vol(C) = \frac{e^2\sqrt{3}}{48} \ge \frac{\sqrt{3}}{4}>0.43>2\,v_*.
\end{equation}

\begin{figure}
\centering
  \begin{tikzpicture}[scale=3]
    \pgfmathsetmacro{\nn}{6}
    \pgfmathsetmacro{\d}{2*cos(180/\nn)}
    \pgfmathsetmacro{\height}{\d*sqrt(3)/2}
    \pgfmathsetmacro{\heightb}{\d/sqrt(3)}
    \pgfmathsetmacro{\w}{\d-1/\d}
    \draw (210:\heightb+1) -- (330:\heightb+1) -- (90:\heightb+1) -- cycle;
    \draw (210:\heightb) circle[radius=0.5];
    \draw (330:\heightb) circle[radius=0.5];
    \draw (90:\heightb) circle[radius=0.5];
    \draw[dashed] (210:\heightb) circle[radius=1];
    \draw[dashed] (330:\heightb) circle[radius=1];
    \draw[dashed] (90:\heightb) circle[radius=1];
    \draw ($(210:\heightb)+(1/\d, 0)$);
    \draw ($(210:\heightb)+(60:1/\d)$);
    \pgfmathsetmacro{\dkglobal}{\d}
    \pgfmathsetmacro{\rkglobal}{1/\d}
    \foreach \k in {1,...,\numexpr (\nn-2)/2\relax}
    {
      \foreach \x in {210, 330, 90}
      {
        \foreach \y in {-150, 150}
        {
          \draw ($(\x:\heightb) + (\x+\y:1/\dkglobal)$)
            circle[radius=0.5*\rkglobal^2];
        }
      }
      \pgfmathparse{\d-1/\dkglobal}
      \global\let\dkglobal=\pgfmathresult
      \pgfmathparse{\rkglobal/\dkglobal}
      \global\let\rkglobal=\pgfmathresult
    }
    \draw[dotted] (210:\heightb+1) -- +(30:\height+1.5);
    \draw[dotted] (330:\heightb+1) -- +(30+120:\height+1.5);
    \draw[dotted] (90:\heightb+1) -- +(30-120:\height+1.5);
    \pgfmathsetmacro{\eps}{0.1}
    \draw[|-|] ($(210:\heightb+1)+(0, -4*\eps)$) -- ($(330:\heightb+1)+(0, -4*\eps)$) node[midway, below]{$e$};
    \draw[|-|] ($(210:\heightb)+(0, -0.5-\eps)$) -- ($(330:\heightb)+(0, -0.5-\eps)$) node[midway, below]{$d$};
    \draw[|-|] ($(210:\heightb+1)+(0, -\eps)$) -- ($(210:\heightb)+(0, -0.5-\eps)$) node[midway, below]{$\frac{\sqrt{3}}{2}$};
    \draw[|-|] ($(330:\heightb+1)+(0, -\eps)$) -- ($(330:\heightb)+(0, -0.5-\eps)$) node[midway, below]{$\frac{\sqrt{3}}{2}$};
  \end{tikzpicture}
  \caption{Horoball diagram of the reflection subgroup $\Gamma'$ with $k=6$}
  \label{fig:volabschaetzungcycle}
\end{figure}
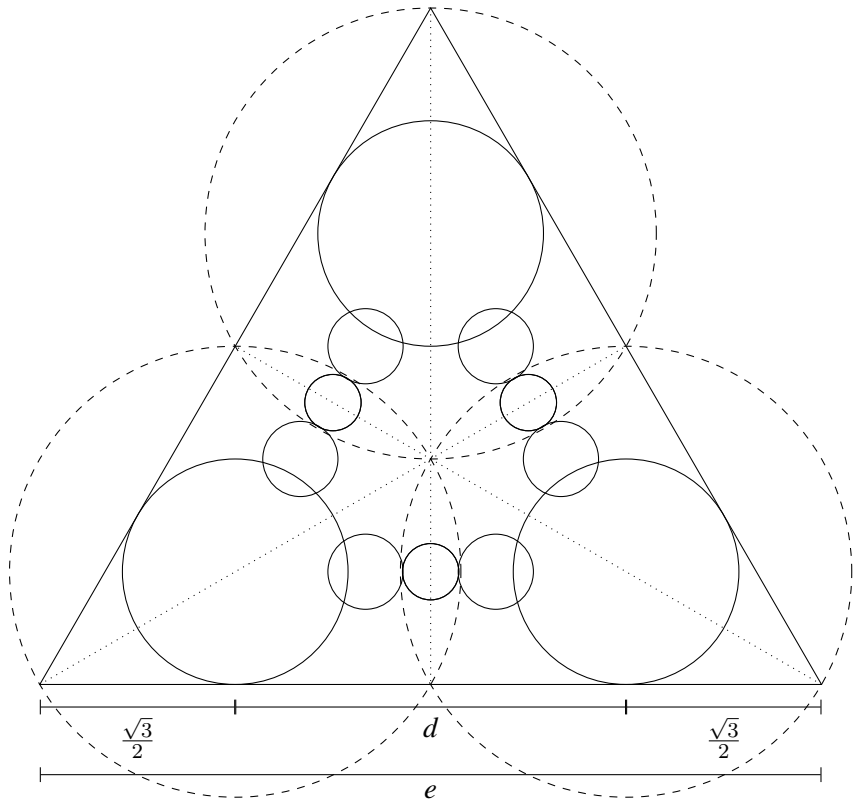
{\it Order 3.\quad} Suppose that a full-sized disk $B=B_1$ is centred at the singular point $b=a_3$ in the barycenter of the triangle $D$. Consider a
full-sized horoball $B_2$ whose center is at distance $d$ from $b$. It follows that the inradius of the cusp triangle $D$ equals
$\frac{d}{2}$ and that the edge length of $D$ equals $\sqrt{3}\,d$. In particular, the cusp volume
$\vol(C)$ is given by $\frac{\sqrt{3}\,d^2}{16}$. In the case that $\Gamma_{\infty}$ is orientation-preserving, the assumption $d>1$ yields the cusp volume estimate
\[
\frac{\sqrt{3}\,d^2}{8}>0.21>v_*\,\,.
\]

Hence, we assume that the cusp triangle $D$ has a mirror symmetry. Since $d>1$, there are three $(\frac{1}{d})$-{balls}, each of diameter $\frac{1}{d^2}$, touching
$B$ in $D$ in a symmetrically arranged way. Denote by $x$ the centre of one of these
$(\frac{1}{d})$-{balls}. The distance $d_0(b,x)$ equals $\frac{1}{d}$.

$\bullet\,\,$ If $x$ lies on the edge $e_{23}$
defined by $b=a_3$ and the midpoint $a_2$ of an edge of $D$, then
the bound
\[
 \frac{d}{2}=d_0(b,a_2)\ge d_0(b,x)=\frac{1}{d}
\]
yields $d\ge\sqrt{2}$ and, hence,
$\vol(C)\ge\frac{\sqrt{3}}{8}\simeq 0.216506$, which is too big in comparison with $v_*=\vol(\mathbb H^3/[5,3,6])\approx0.171502$
(see \eqref{eq:minvol-bound}).

$\bullet\,\,$ If $x\notin e_{23}$, then $x$ lies on the edge segment $e_{36}$
of $\Delta$ defined by $b=a_3$ and a vertex $a_6$ of $D$. Since $d>1$, the center $x$ is
different from $a_6$. Furthermore, it is
easy to see that $d$ and hence $\vol(C)$ take minimal values if the $(\frac{1}{d})$-{ball} $B_x$ is internally tangent to the border of $D$
(see Figure \ref{fig:Figure6}).
\begin{figure}
  \centering
  \def\svgwidth{0.4\textwidth}
\begingroup%
  \makeatletter%
  \providecommand\color[2][]{%
    \errmessage{(Inkscape) Color is used for the text in Inkscape, but the package 'color.sty' is not loaded}%
    \renewcommand\color[2][]{}%
  }%
  \providecommand\transparent[1]{%
    \errmessage{(Inkscape) Transparency is used (non-zero) for the text in Inkscape, but the package 'transparent.sty' is not loaded}%
    \renewcommand\transparent[1]{}%
  }%
  \providecommand\rotatebox[2]{#2}%
  \newcommand*\fsize{\dimexpr\f@size pt\relax}%
  \newcommand*\lineheight[1]{\fontsize{\fsize}{#1\fsize}\selectfont}%
  \ifx\svgwidth\undefined%
    \setlength{\unitlength}{298.01882368bp}%
    \ifx\svgscale\undefined%
      \relax%
    \else%
      \setlength{\unitlength}{\unitlength * \real{\svgscale}}%
    \fi%
  \else%
    \setlength{\unitlength}{\svgwidth}%
  \fi%
  \global\let\svgwidth\undefined%
  \global\let\svgscale\undefined%
  \makeatother%
  \begin{picture}(1,1.64872147)%
    \lineheight{1}%
    \setlength\tabcolsep{0pt}%
    \put(0,0){\includegraphics[width=\unitlength,page=1]{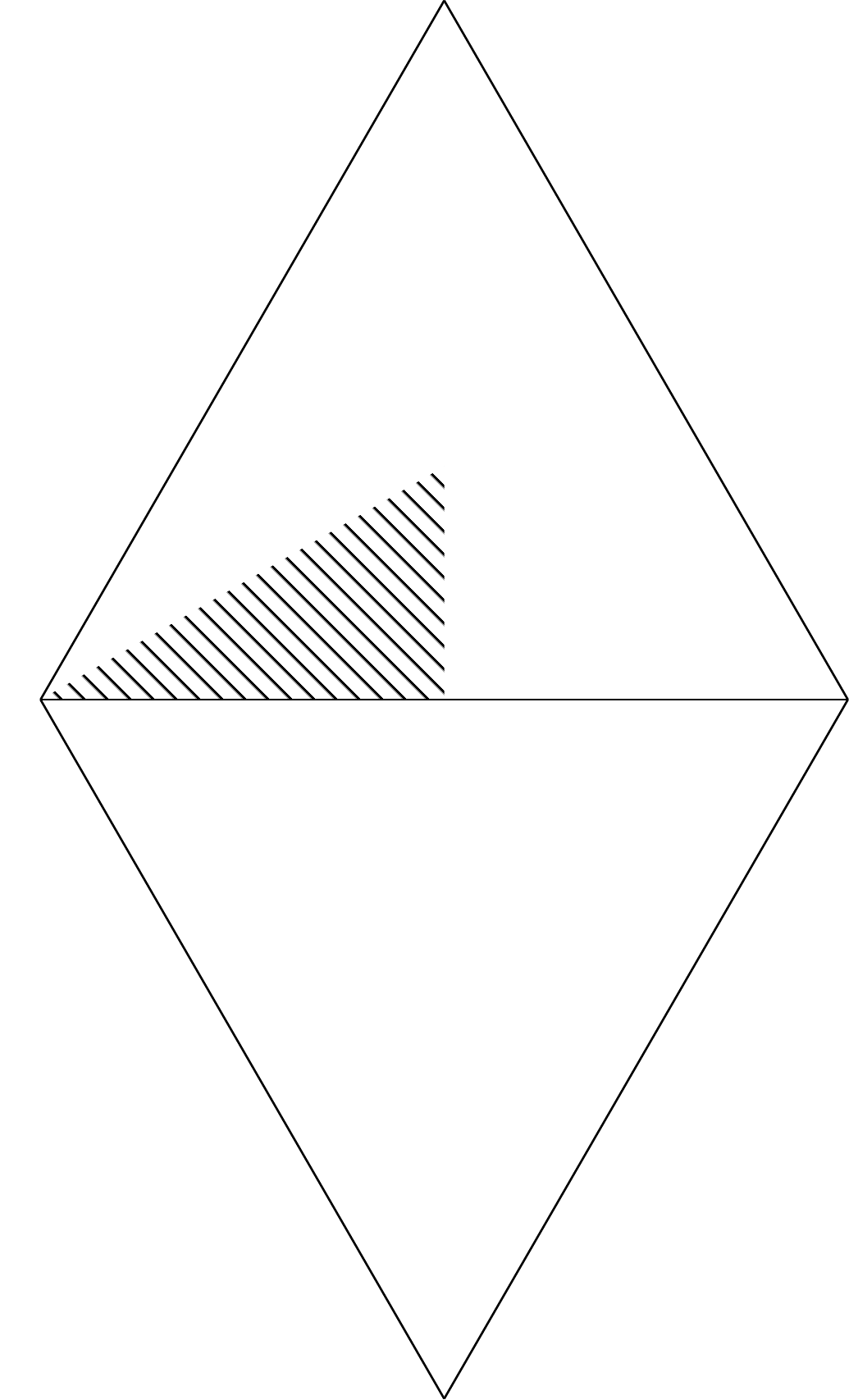}}%
    \put(0.543915,1.08753062){\color[rgb]{0,0,0}\makebox(0,0)[lt]{\lineheight{1.25}\smash{\begin{tabular}[t]{l}$a_3$\end{tabular}}}}%
    \put(0.41161991,1.18652193){\color[rgb]{0,0,0}\makebox(0,0)[lt]{\lineheight{1.25}\smash{\begin{tabular}[t]{l}$B$\end{tabular}}}}%
    \put(0.06329166,0.99850405){\color[rgb]{0,0,0}\makebox(0,0)[lt]{\lineheight{1.25}\smash{\begin{tabular}[t]{l}$D$\end{tabular}}}}%
    \put(0.52876411,0.83804052){\color[rgb]{0,0,0}\makebox(0,0)[lt]{\lineheight{1.25}\smash{\begin{tabular}[t]{l}$a_2$\end{tabular}}}}%
    \put(-0.00304747,0.77354716){\color[rgb]{0,0,0}\makebox(0,0)[lt]{\lineheight{1.25}\smash{\begin{tabular}[t]{l}$a_6$\end{tabular}}}}%
    \put(0.53395047,0.46444377){\color[rgb]{0,0,0}\makebox(0,0)[lt]{\lineheight{1.25}\smash{\begin{tabular}[t]{l}$B_2$\end{tabular}}}}%
    \put(0,0){\includegraphics[width=\unitlength,page=2]{figure6.pdf}}%
  \end{picture}%
\endgroup%

  \caption{A $\{2,3,6\}$-cusp triangle $D$ with a full-sized horoball $B$ centred in $a_3$}
  \label{fig:Figure6}
\end{figure}
In this situation, and since the radius of
$B_x$ equals $\frac{1}{2d^2}$, we deduce the inequality
\[
 d\ge\frac{1}{d^2}+\frac{1}{d}\,\,,
\]
whose solution $d\ge1.324718$ yields the cusp volume bound
\[
\vol(C)\ge0.189971>v_*\,\,.
\]

{\it Order 2.\quad} Suppose that a full-sized horoball $B=B_1$ is centred at the singular point $b_1=a_2$ in $D$, and let
$B_2, B_3$ be full-sized horoballs centred at $b_2,b_3$ in $D$, both at distance $d$ from $b_1$. Then, the edge length
of the cusp triangle $D$ equals $2d$ while the distance from $b_1$ to the centre of $D$ is given by
$r=\frac{d}{\sqrt{3}}$ (representing the inradius of $D$). In particular, we have
$\vol(C)=\frac{\sqrt{3}\,d^2}{12}$, and there are disks of no tangency centred at the vertices of $D$. Hence, we cannot assume that $w\ge1$.

However, the case that $\Gamma_{\infty}$ is orientation-preserving can be excluded since the corresponding cusp volume for $d>1$ immediately yields
the estimate
\[
\vol(C)=\frac{\sqrt{3}\,d^2}{6}>\frac{\sqrt{3}}{6}>v_*\,\,.
\]
Assume that $D$ has a mirror symmetry. We analyse the different tangency possibilities of the $(\frac{1}{d})$-balls with respect
to one or several full-sized horoballs and among themselves.
To this end, the following fact is very useful.

{\it Fact}. The centres of the four $(\frac{1}{d})$-balls touching one full-sized horoball form a rectangle whose diagonals intersect at the angle $\frac{\pi}{3}$.

$\bullet\,\,$ If a single $(\frac{1}{d})$-ball touches three full-sized horoballs in $D$, then it is centred at $a_3$. It follows that $r=\frac{1}{d}$, and hence,  $d=\sqrt[\leftroot{-2}\uproot{2}4]{3}\,$
so that $\vol(C)=\frac{1}{4}>v_*$.

$\bullet\,\,$ If a single $(\frac{1}{d})$-ball touches a pair of neighboring full-sized horoballs but does not touch a third full-sized one, then the smallest cusp volume arises if the centres of the $(\frac{1}{d})$-ball and of the full-sized horoballs which it touches are aligned. This follows easily from the above {\it fact}. We deduce that $d=\sqrt{2}$ and
that $\vol(C)=\frac{\sqrt{3}}{6}>v_*$.

$\bullet\,\,$ Suppose that each $(\frac{1}{d})$-ball touches a unique full-sized horoball. As in the case when a full-sized horoball $B$ is centred at a singular point  $a_6$ of order $6$ in $D$ with distance $w$ from
$a_6$ to the centre $x$ of a $(\frac{1}{d})$-ball not touching it (see Figure \ref{fig:Figure5}), we obtain the identical equation \eqref{eq:3}
for $w$ when the full-sized horoball $B$ is centered at a singular point $a_2$ in $D$ with the condition $\,0\le\theta\le\frac{\pi}{3}$
(see Figure \ref{fig:Figure7}).

There are again several cases to consider.
The case that three $(\frac{1}{d})$-balls are mutually tangent in $D$ can be excluded by Lemma \ref{3balls}.
If two $(\frac{1}{d})$-balls are tangent and do not touch the third one in $D$, then the smallest volume arises if their centres and the centres of the associated full-sized horoballs form an isoscele trapezoid;
see
Figure \ref{fig:Figure7}.
\begin{figure}
  \centering
  \def\svgwidth{0.5\textwidth}
\begingroup%
  \makeatletter%
  \providecommand\color[2][]{%
    \errmessage{(Inkscape) Color is used for the text in Inkscape, but the package 'color.sty' is not loaded}%
    \renewcommand\color[2][]{}%
  }%
  \providecommand\transparent[1]{%
    \errmessage{(Inkscape) Transparency is used (non-zero) for the text in Inkscape, but the package 'transparent.sty' is not loaded}%
    \renewcommand\transparent[1]{}%
  }%
  \providecommand\rotatebox[2]{#2}%
  \newcommand*\fsize{\dimexpr\f@size pt\relax}%
  \newcommand*\lineheight[1]{\fontsize{\fsize}{#1\fsize}\selectfont}%
  \ifx\svgwidth\undefined%
    \setlength{\unitlength}{284.11408871bp}%
    \ifx\svgscale\undefined%
      \relax%
    \else%
      \setlength{\unitlength}{\unitlength * \real{\svgscale}}%
    \fi%
  \else%
    \setlength{\unitlength}{\svgwidth}%
  \fi%
  \global\let\svgwidth\undefined%
  \global\let\svgscale\undefined%
  \makeatother%
  \begin{picture}(1,1.05017757)%
    \lineheight{1}%
    \setlength\tabcolsep{0pt}%
    \put(0,0){\includegraphics[width=\unitlength,page=1]{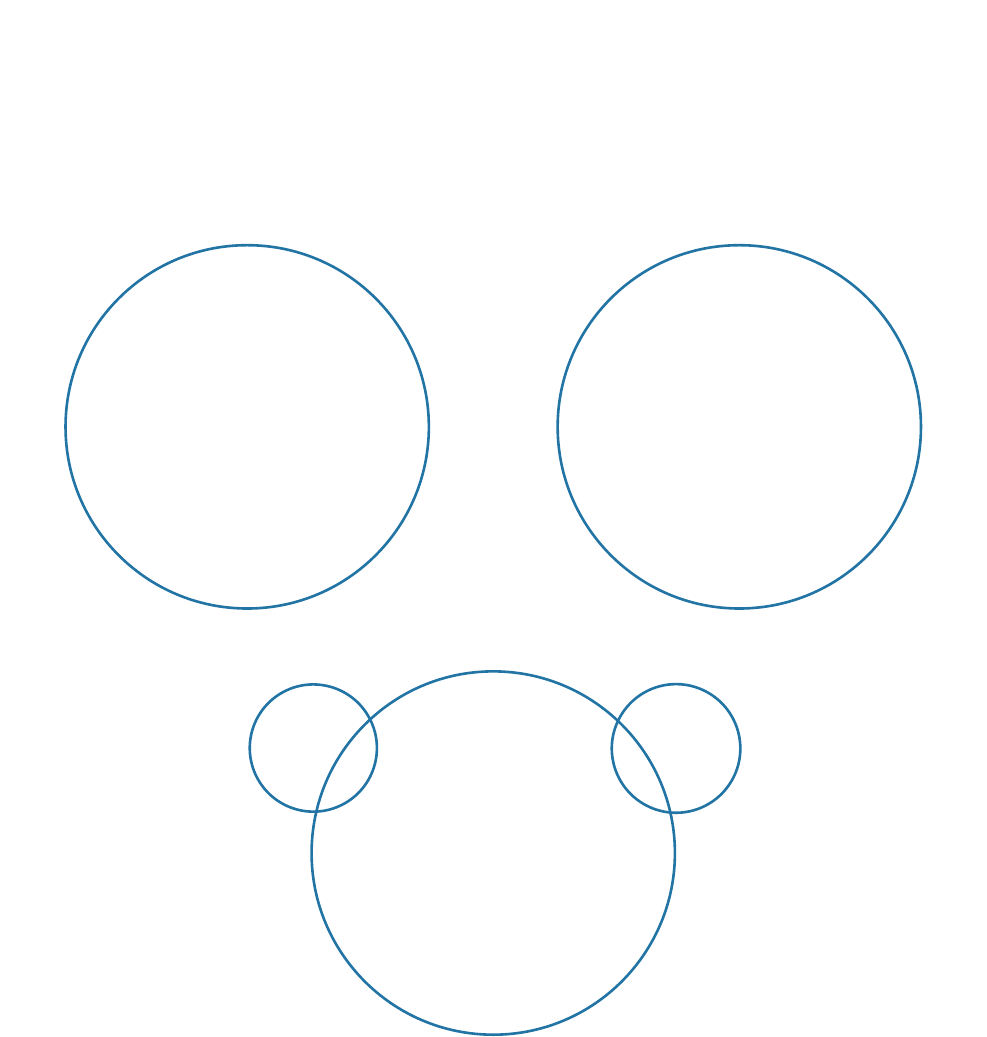}}%
    \put(0.48562864,0.14440558){\color[rgb]{0,0,0}\makebox(0,0)[lt]{\lineheight{1.25}\smash{\begin{tabular}[t]{l}$b_1$\end{tabular}}}}%
    \put(0.75270055,0.63214693){\color[rgb]{0,0,0}\makebox(0,0)[lt]{\lineheight{1.25}\smash{\begin{tabular}[t]{l}$b_2$\end{tabular}}}}%
    \put(0,0){\includegraphics[width=\unitlength,page=2]{figure7_new.pdf}}%
  \end{picture}%
\endgroup%

  \caption{Full-sized horoballs centred at singular points of order $2$}
  \label{fig:Figure7}
\end{figure}
In this case, by Ptolemy's theorem, the square of the diagonal of the trapezoid equals
\[
w^2=\frac{1}{d}+\frac{1}{d^2}\,\,.
\]
By means of equation \eqref{eq:3} with $0\le\theta\le\frac{\pi}{3}$,
we deduce that
 $d^3-d-1\ge0$ and $d\ge 1.32471$. Hence, $\vol(C)>\frac{1}{4}>v_*$.

Suppose that the $(\frac{1}{d})$-balls do not touch each other.
In particular, the minimum distance $\mu$ of their centres is bigger than $\frac{1}{d^2}$. Since $D$ has mirror symmetry,
we get a (possibly degenerate) isoscele trapezoid formed by the centres of two closest $(\frac{1}{d})$-balls in $D$ and the centres of the two full-sized horoballs which they touch. By
Ptolemy's theorem, we deduce that
\begin{equation}\label{eq:ud}
w^2=\mu\,d+\frac{1}{d^2}>\frac{1}{d}+\frac{1}{d^2}\,\,,
\end{equation}
and hence $\vol(C)>\frac{1}{4}>v_*$ in a similar way as above.

As a summary of all the investigations in Section \ref{step3a}, we obtain the following result.
\begin{proposition}\label{prop:case236}
 Let $V$ be a non-arithmetic hyperbolic 3-orbifold with a single cusp $C=B_{\infty}/\Gamma_{\infty}$ which is rigid of type $\{2,3,6\}$.
 Suppose that $\Gamma_{\infty}$ gives rise to only one equivalence class of full-sized horoballs. Then,
 \[
  \hbox{\rm \vol}(V)\ge\hbox{\rm \vol}(V_*)\,\,,
 \]
and equality holds if and only if the orbifold $V$ is isometric to $V_*=\HH^3/[5,3,6]$.
\end{proposition}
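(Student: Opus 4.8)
The plan is to obtain Proposition \ref{prop:case236} as the synthesis of the exhaustive case analysis carried out throughout Section \ref{step3a}: under the standing hypotheses, the cusp diagram $D$ is a Euclidean equilateral triangle tiled by copies of its characteristic triangle $\Delta=[3,6]$, and the whole geometry is controlled by the minimal centre distance $d\ge1$ of the full-sized horoballs together with the position of these horoballs relative to the singular points $a_2,a_3,a_6$. First I would dispose of the configuration in which \emph{no} full-sized horoball is centred at a singular point: there \eqref{eq:no-singular} combined with the density bound \eqref{eq:density-bound} (equivalently \eqref{eq:vol-Delta}) forces $\vol(V)>v_*$, so we may assume that some full-sized horoball sits at a singular point $a_s$, $s\in\{2,3,6\}$.

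Next I would organise the argument along the dichotomy $d=1$ versus $d>1$. In the touching case $d=1$ (part (i)), the three possibilities (centre at $a_6$, $a_3$, $a_2$) yield respectively the arithmetic groups $[3,3,6]$, the $\mathbb Z_2$-extension of $[3,6,3]$, and $[3^{1,1},6]$ (commensurable to $[4,3,6]$), none of which is non-arithmetic. In the non-touching case $d>1$ (part (ii)), Remark \ref{bound-d} restricts attention to $d\le 2.013813$, and the Consequence following \eqref{eq:wd} eliminates every orientation-preserving configuration because the resulting estimate $\vol(C)/d_3(\infty)$ already exceeds $v_*$. One is thus reduced to configurations with a mirror symmetry, i.e. $\Gamma_\infty$ a reflection group. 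Here I would run through the subcases already settled: $(\frac1d)$-balls touching several full-sized horoballs give arithmetic groups; coincidence of the $(\frac1w)$-balls at $a_3$ or at $a_2$ gives cusp volume $\frac{\sqrt{21}}{24}>v_*$ or the arithmetic group $[6,3,6]$; and the generic situations with distinct $(\frac1w)$-balls split according to whether the $(\frac1d)$-balls lie on the edges or on the angle bisectors of $D$.

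The decisive family is the edge-aligned one, analysed through the recursion \eqref{eq:recursion} and Propositions \ref{prop:reflect}--\ref{prop:trunc-dbigger2}. I would invoke these to conclude that for $\sqrt2\le d\le\sqrt3$ the groups of smallest covolume are the Coxeter groups $[l,3,6]$ with $l\in\{3,4,6\}$ (arithmetic) together with $[5,3,6]$ (non-arithmetic), realised at $d=2\cos\frac{\pi}{5}$ by \eqref{eq:dcaseB}, whereas for $d>\sqrt3$ the truncated orthoschemes $R_t(\alpha_k,\tfrac{\pi}{3})$ and $R_t(\beta,\tfrac{\pi}{6})$ have volume strictly above $v_*$ by \eqref{eq:trunc-36}, \eqref{eq:trunc-63} and the monotonicity of Remark \ref{Schlaefli}. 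For the bisector-aligned family I would use Lemma \ref{prop:case-b} to discard case (b) and the passage to the reflection subgroup $\Gamma'$ (Figure \ref{fig:reflsubgrp}) to reduce the survivors to the Coxeter polyhedra $P(\tfrac{\pi}{k})$, all of volume $\ge\vol([(3^3,6)])>2v_*$, together with the non-arithmetic but too voluminous extension $\Gamma_\circ^r$. Collecting all cases, the only non-arithmetic group of covolume $\le v_*$ is $[5,3,6]$, of covolume exactly $v_*$, which yields both the inequality and the equality clause.

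The main obstacle is \emph{completeness}: one must be certain that the enumeration of tangency patterns of the $(\frac1d)$- and $(\frac1w)$-balls, together with the on-edge versus on-bisector dichotomy, genuinely exhausts every $\Gamma_\infty$-periodic horoball packing compatible with a single equivalence class. The recursion \eqref{eq:recursion} with its Chebyshev-type solutions \eqref{eq:dcaseA}, \eqref{eq:dcaseB} is precisely what makes this enumeration finite and tractable, while the arithmeticity tests of Section \ref{Coxeter} (Vinberg's cycle criterion and Guglielmetti's refinement) are what isolate $[5,3,6]$ as the single non-arithmetic minimiser.
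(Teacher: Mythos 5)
Your outline follows the paper's own strategy -- it is precisely the synthesis of Section \ref{step3a} -- and the parts you treat are faithfully reproduced: the non-singular placement via \eqref{eq:no-singular}, the touching case $d=1$ for all three placements, and the order-$6$ analysis for $d>1$ (tangency patterns of the $(\frac{1}{d})$- and $(\frac{1}{w})$-balls, the edge/bisector dichotomy, the recursion \eqref{eq:recursion} with Propositions \ref{prop:reflect}--\ref{prop:trunc-dbigger2}, Lemma \ref{prop:case-b} and the subgroup $\Gamma'$, with $[5,3,6]$ isolated as the unique non-arithmetic minimiser). However, your case analysis for $d>1$ is not exhaustive, and this is a genuine gap: you only enumerate sub-cases in which the full-sized horoballs are centred at the order-$6$ points. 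The paper must also -- and does -- treat the placements with $d>1$ at the barycentre $a_3$ and at the edge midpoints $a_2$. These blocks are not vacuous: for $a_3$ one needs the two sub-cases where the centre $x$ of a $(\frac{1}{d})$-ball lies on $e_{23}$ (giving $d\ge\sqrt{2}$, $\vol(C)\ge\frac{\sqrt3}{8}>v_*$) or on $e_{36}$ with internal tangency to the border of $D$ (giving $d\ge\frac{1}{d^2}+\frac{1}{d}$, hence $d\ge1.324718$ and $\vol(C)\ge0.189971>v_*$); for $a_2$ one needs the rectangle \emph{Fact} and Ptolemy's theorem applied to the isoscele trapezoid of centres (giving $d^3-d-1\ge0$ and $\vol(C)>\frac14>v_*$), plus the no-tangency variant \eqref{eq:ud}. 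Without these the enumeration you yourself flag as the main obstacle is incomplete, and the proposition is not proved.

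A second, smaller inaccuracy: you claim that ``the Consequence following \eqref{eq:wd} eliminates every orientation-preserving configuration''. In the paper that Consequence is derived under the hypotheses of the order-$6$ placement \emph{and} that each $(\frac{1}{d})$-ball touches a unique full-sized horoball, so that $w\ge1$ holds via Lemma \ref{no-tangency}. For the $a_2$ placement the bound $w\ge1$ fails (there are disks of no tangency centred at the vertices of $D$), and the orientation-preserving case is excluded there by the direct estimate $\vol(C)=\frac{\sqrt{3}\,d^2}{6}>\frac{\sqrt{3}}{6}>v_*$; similarly for $a_3$ one uses $\frac{\sqrt{3}\,d^2}{8}>v_*$. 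The conclusion you want is true, but it requires these separate placement-by-placement estimates rather than a single appeal to \eqref{eq:wd}.
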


\subsubsection{The case \texorpdfstring{$\{2,4,4\}$}{(2,4,4)}}\label{step3b}
(i)\quad As in Section \ref{step3a}, (i), we start by assuming that there are at least two
full-sized horoballs, which are centred at equivalent
singular points and which touch one another in the (square) cusp diagram $D$.
That is, the minimal distance $d$ of the centres of full-sized horoballs
equals 1 (see also \cite[Section 5, p. 11]{Adams1}).

\begin{figure}
  \centering
  \def\svgwidth{0.5\textwidth}
\begingroup%
  \makeatletter%
  \providecommand\color[2][]{%
    \errmessage{(Inkscape) Color is used for the text in Inkscape, but the package 'color.sty' is not loaded}%
    \renewcommand\color[2][]{}%
  }%
  \providecommand\transparent[1]{%
    \errmessage{(Inkscape) Transparency is used (non-zero) for the text in Inkscape, but the package 'transparent.sty' is not loaded}%
    \renewcommand\transparent[1]{}%
  }%
  \providecommand\rotatebox[2]{#2}%
  \newcommand*\fsize{\dimexpr\f@size pt\relax}%
  \newcommand*\lineheight[1]{\fontsize{\fsize}{#1\fsize}\selectfont}%
  \ifx\svgwidth\undefined%
    \setlength{\unitlength}{484.43634442bp}%
    \ifx\svgscale\undefined%
      \relax%
    \else%
      \setlength{\unitlength}{\unitlength * \real{\svgscale}}%
    \fi%
  \else%
    \setlength{\unitlength}{\svgwidth}%
  \fi%
  \global\let\svgwidth\undefined%
  \global\let\svgscale\undefined%
  \makeatother%
  \begin{picture}(1,1.00000002)%
    \lineheight{1}%
    \setlength\tabcolsep{0pt}%
    \put(0,0){\includegraphics[width=\unitlength,page=1]{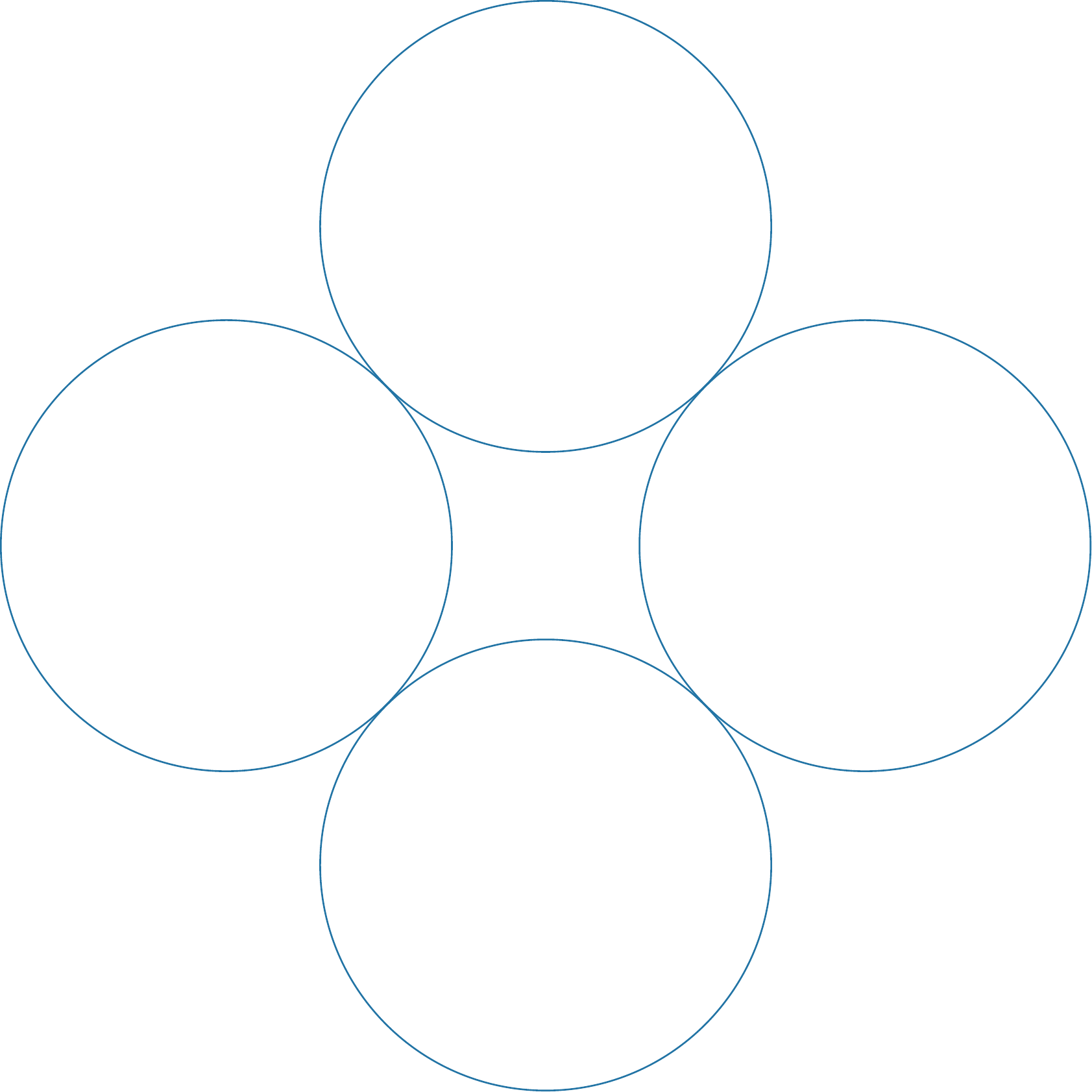}}%
    \put(0.48894149,0.16){\color[rgb]{0,0,0}\makebox(0,0)[lt]{\lineheight{1.25}\smash{\begin{tabular}[t]{l}$a_2$\end{tabular}}}}%
    \put(0.19257349,0.16){\color[rgb]{0,0,0}\makebox(0,0)[lt]{\lineheight{1.25}\smash{\begin{tabular}[t]{l}$a_4$\end{tabular}}}}%
    \put(0,0){\includegraphics[width=\unitlength,page=2]{figure8.pdf}}%
  \end{picture}%
\endgroup%

  \caption{A full-sized horoball centred at $a_2$}
  \label{fig:Figure8}
\end{figure}

{\it Order 4.\quad}Suppose that there is a full-sized horoball centred at one of the two singular points $a_4$ of order 4 in $\Delta$.
Then, $\,d_0(a_2,a_4)=\frac{1}{2}$, and the associated cusp volume $\vol(C)$ equals $\frac{1}{16}$. Similarly to the case of Section
\ref{step3a}, one can check that there is a unique orbifold $V$
related to this cusp configuration. In fact, it is the quotient of $\HH^3$ by the {\it arithmetic} Coxeter group $[3,4,4]$
(see also \cite[p. 13]{Adams1}).  The quotient space $V=\mathbb H^3/[3,4,4]$ has volume
$\frac{\omega_3}{48}=\frac{1}{6}\,\loba(\frac{\pi}{4})\approx0.0763304$ (see Section \ref{volume}).
By \cite[Theorem 5.2 and Theorem 6.1]{Adams1},
we know that the space $\mathbb H^3/[3,4,4]$ has minimal volume among {\it all} hyperbolic 3-orbifolds with at
least one cusp of type $\{2,4,4\}$.
\newline

{\it Order 2.\quad}If there is a full-sized horoball
centred at $a_2\in \Delta$, then we get
$d_0(a_2,a_4)=\frac{1}{\sqrt{2}}$, and
the cusp volume
equals $\frac{1}{8}$ (see \cite[Figure 6(b), p. 12]{Adams1} and Figure \ref{fig:Figure8}).
Again, there is a unique orbifold
corresponding to this configuration. Its fundamental group is given by
the Coxeter group $[4^{1,1},3]$ and therefore commensurable to the arithmetic Coxeter group $[3,4,4]$ (see \cite[p. 130]{JKRT2}).

\smallskip
(ii)\quad Suppose now that the full-sized horoballs do {\it not} touch one another implying that the minimal
distance $d$ between the centres $a_s$ (for $s=2$ or $s=4$) of full-sized horoballs satisfies $d>1$.

Notice that if the stabiliser $\Gamma_{\infty}$ is orientation-preserving,
then the cusp volume equals
\begin{equation}\label{eq:or-pres}
\vol(C)=\frac{d^2}{2\,s}>\frac{1}{2s}\,\,.
\end{equation}
In particular, for $s=2$, we deduce that the cusp volume
yields $\vol(C)>\frac{1}{4}>v_*$.
Moreover, the case $s=4$ can also be excluded as follows. If there is a disk of no tangency, then $d\ge\sqrt{2}$ and $\vol(C)>v_*$. If there is {\it no} disk of no tangency, then by Lemma \ref{no-tangency} we have that the distance $w$ of the centre of a $(\frac{1}{d})$-ball touching $B$ to the centre of a neighboring full-sized horoball satisfies $w\ge1$. The equation \eqref{eq:3} for $w$ remains valid under the adapted constraint $0\le\theta\le\frac{\pi}{4}$ in the square diagram $D$.
The inequality $w\ge1$ now implies that
\[
d^4-(1+\sqrt{2})\,d^2+1\ge0\,\,,
\]
and hence $\,\vol(C)=\frac{d^2}{8}>\frac{1.88}{8}>0.2>v_*$.

{\it Consequence.} We can assume that the cusp diagram $D$ has a mirror symmetry.

As in the case \ref{step3a}, (ii), we study
the $(\frac{1}{d})$-balls and their tangency behavior with respect to the full-sized horoballs
centred at a singular point $a_s$.
Observe that for each full-sized horoball $B$, there are four $(\frac{1}{d})$-balls touching $B$. The angles formed by their centres measured from the center $B$ is a multiple of $\frac{\pi}{4}$. This leads to the
following first cases; see Figure
 \ref{fig:Figure9}.

(a)\quad Suppose that one $(\frac{1}{d})$-ball $B_x$ is touching four full-sized horoballs so that the center $x$ coincides with a
singular point of order $s=4$. By symmetry, we can suppose that $x$ is the center
of the cusp
diagram $D$.

$\bullet\quad$Assume first that the centres of the full-sized horoballs lie at the singular points of order 2 in $D$, that is, $D$ has circumradius $d$.
Since the
distance between the center $x$ and the centre of each of the full-sized horoballs equals $\frac{1}{d}$,
we deduce that $d=\sqrt[\leftroot{-2}\uproot{2}4]{2}$. Hence, for the cusp volume, we get $\vol(C)=\frac{d^2}{8}=\frac{\sqrt{2}}{8}\approx0.176777$
and therefore $\vol(C)>v_*$ (see \eqref{eq:or-pres}).

\begin{figure}
  \centering
  \def\svgwidth{0.4\textwidth}
\begingroup%
  \makeatletter%
  \providecommand\color[2][]{%
    \errmessage{(Inkscape) Color is used for the text in Inkscape, but the package 'color.sty' is not loaded}%
    \renewcommand\color[2][]{}%
  }%
  \providecommand\transparent[1]{%
    \errmessage{(Inkscape) Transparency is used (non-zero) for the text in Inkscape, but the package 'transparent.sty' is not loaded}%
    \renewcommand\transparent[1]{}%
  }%
  \providecommand\rotatebox[2]{#2}%
  \newcommand*\fsize{\dimexpr\f@size pt\relax}%
  \newcommand*\lineheight[1]{\fontsize{\fsize}{#1\fsize}\selectfont}%
  \ifx\svgwidth\undefined%
    \setlength{\unitlength}{452.46172856bp}%
    \ifx\svgscale\undefined%
      \relax%
    \else%
      \setlength{\unitlength}{\unitlength * \real{\svgscale}}%
    \fi%
  \else%
    \setlength{\unitlength}{\svgwidth}%
  \fi%
  \global\let\svgwidth\undefined%
  \global\let\svgscale\undefined%
  \makeatother%
  \begin{picture}(1,1.00000002)%
    \lineheight{1}%
    \setlength\tabcolsep{0pt}%
    \put(0,0){\includegraphics[width=\unitlength,page=1]{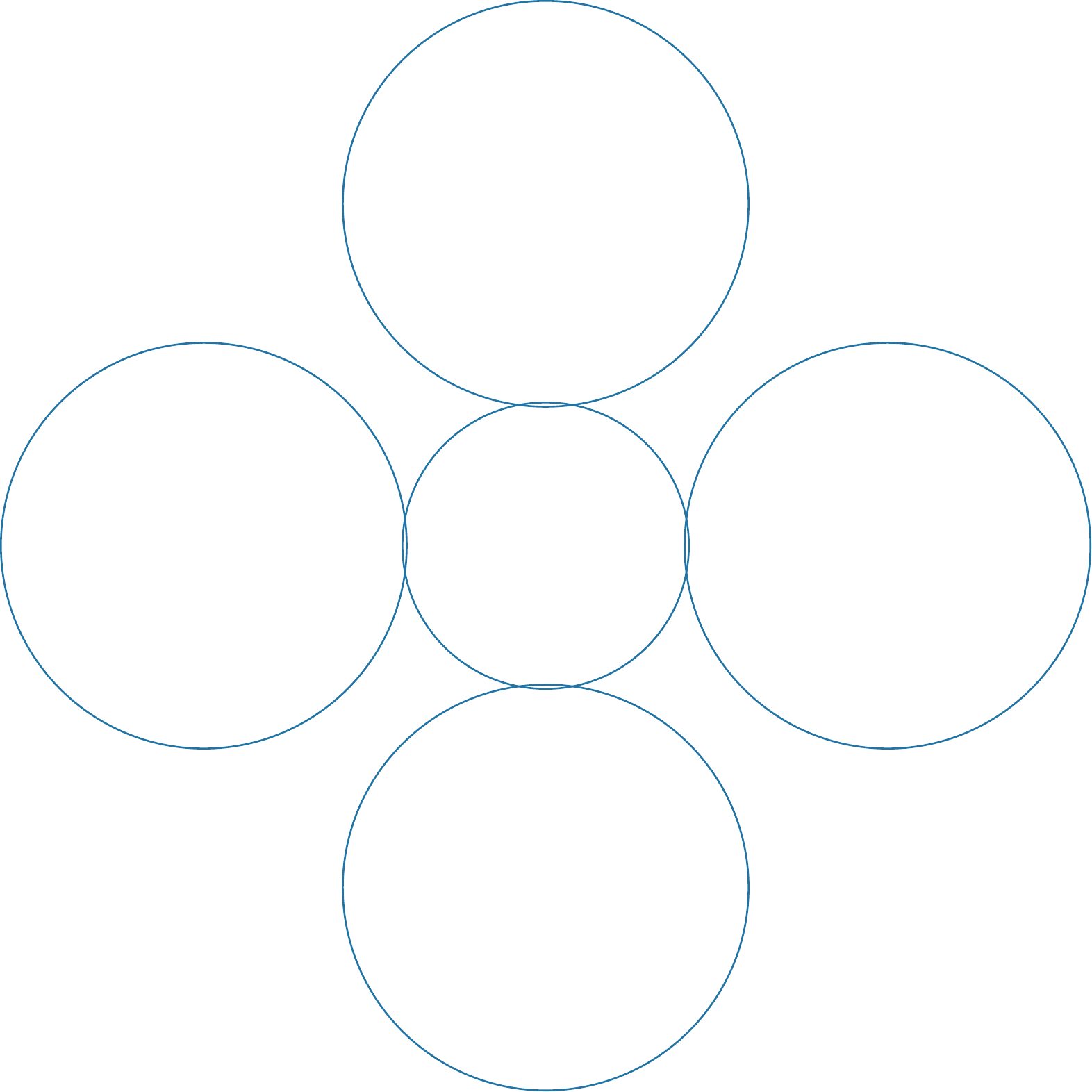}}%
    \put(0.44743044,0.27555292){\color[rgb]{0,0,0}\makebox(0,0)[lt]{\lineheight{1.25}\smash{\begin{tabular}[t]{l}$\frac{1}{d}$\end{tabular}}}}%
    \put(0.69512329,0.4183431){\color[rgb]{0,0,0}\makebox(0,0)[lt]{\lineheight{1.25}\smash{\begin{tabular}[t]{l}$d$\end{tabular}}}}%
    \put(0,0){\includegraphics[width=\unitlength,page=2]{figure9a.pdf}}%
  \end{picture}%
\endgroup%

  \def\svgwidth{0.4\textwidth}
\begingroup%
  \makeatletter%
  \providecommand\color[2][]{%
    \errmessage{(Inkscape) Color is used for the text in Inkscape, but the package 'color.sty' is not loaded}%
    \renewcommand\color[2][]{}%
  }%
  \providecommand\transparent[1]{%
    \errmessage{(Inkscape) Transparency is used (non-zero) for the text in Inkscape, but the package 'transparent.sty' is not loaded}%
    \renewcommand\transparent[1]{}%
  }%
  \providecommand\rotatebox[2]{#2}%
  \newcommand*\fsize{\dimexpr\f@size pt\relax}%
  \newcommand*\lineheight[1]{\fontsize{\fsize}{#1\fsize}\selectfont}%
  \ifx\svgwidth\undefined%
    \setlength{\unitlength}{522.46156156bp}%
    \ifx\svgscale\undefined%
      \relax%
    \else%
      \setlength{\unitlength}{\unitlength * \real{\svgscale}}%
    \fi%
  \else%
    \setlength{\unitlength}{\svgwidth}%
  \fi%
  \global\let\svgwidth\undefined%
  \global\let\svgscale\undefined%
  \makeatother%
  \begin{picture}(1,1.00000002)%
    \lineheight{1}%
    \setlength\tabcolsep{0pt}%
    \put(0,0){\includegraphics[width=\unitlength,page=1]{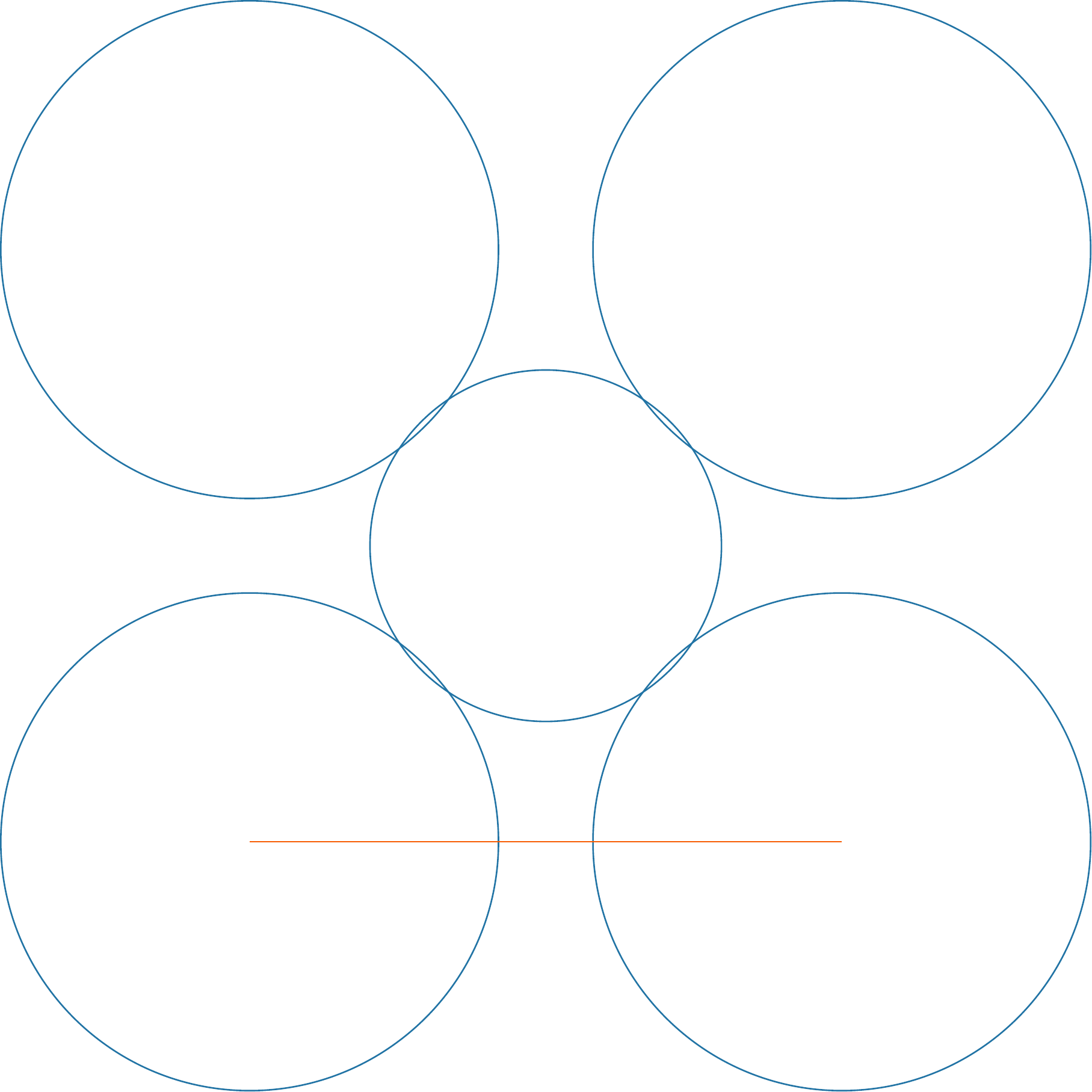}}%
    \put(0.48526919,0.17800527){\color[rgb]{0,0,0}\makebox(0,0)[lt]{\lineheight{1.25}\smash{\begin{tabular}[t]{l}$d$\end{tabular}}}}%
    \put(0,0){\includegraphics[width=\unitlength,page=2]{figure9b.pdf}}%
    \put(0.28324004,0.35106585){\color[rgb]{0,0,0}\makebox(0,0)[lt]{\lineheight{1.25}\smash{\begin{tabular}[t]{l}$\frac{1}{d}$\end{tabular}}}}%
    \put(0,0){\includegraphics[width=\unitlength,page=3]{figure9b.pdf}}%
  \end{picture}%
\endgroup%

  \caption{$(\frac{1}{d})$-balls touching four full-sized balls}  \label{fig:Figure9}
\end{figure}
$\bullet\quad$Assume next that the centres of the full-sized horoballs lie at the vertices (of order 4) of $D$.
Hence, $D$ has circumradius $\frac{1}{d}$.
We deduce that $d=\sqrt[\leftroot{-2}\uproot{2}4]{2}$ as well. However, for the cusp volume,
we obtain $\vol(C)=\frac{\sqrt{2}}{16}$. As shown by \cite[p. 13]{Adams1} in the oriented case, there is a unique orbifold corresponding to
this configuration. It is the arithmetic quotient of an ideal Coxeter tetrahedron with dihedral angles
$\frac{\pi}{4}$, $\frac{\pi}{4}$ and $\frac{\pi}{2}$
by its orientation-preserving symmetry group. Hence, in the non-oriented case, a hyperbolic orbifold with cusp volume
$\frac{\sqrt{2}}{16}$ would have a fundamental group commensurable to the arithmetic Coxeter group $[4^{[4]}]$ and therefore can be excluded from
our consideration.

(b)\quad Suppose that one $(\frac{1}{d})$-ball $B_x$ is touching exactly two full-sized horoballs.

$\bullet\quad$If these
full-sized ones are centred at singular points of order 2 in $D$, then the circumradius of $D$ equals $d$.
Since the center $x$ is aligned with the centres of the full-sized horoballs which $B_x$ touches, we deduce that $d=\sqrt{2}$ and $\vol(C)=\frac{d^2}{8}=\frac{1}{4}>v_*$.

$\bullet\quad$If the two full-sized horoballs are centred at vertices (of order 4) of $D$, then again $d=\sqrt{2}$. It follows that $\vol(C)=\frac{1}{8}$.
Observe that there is a disk of no tangency based at the centre of $D$ and touching the full-sized horoballs.
Given this, the only possible such configuration, and which leads to a singly cusped orbifold, arises by halfening
the Coxeter orthoscheme $[4,4,4]$ with vertices $q,p_1,p_2,p_3$
by means of the plane $P$ passing through the edge $p_2p_3$ and orthogonal to the
(doubly infinite) edge $qp_4$ (see Figure \ref{fig:ortho}). However, a simple computation shows that the horoball sector
$B_{\infty}\cap[4,4,4]$ associated to $q$, has non-empty intersection with $P$ in contradiction
to the
maximality of the cusp.
Hence, this configuration can not be realised.

(c)
Suppose that a $(\frac{1}{d})$-ball 
is touching only one full-sized horoball $B$.
\newline
Observe that, and similarly to the distance $w$, the distance
$v$ of the centre of $B$ to the centre of its neighboring $(\frac{1}{d})$-balls as given by \eqref{eq:2}
remains valid
under the constraint $0\le\theta\le\frac{\pi}{4}$ (the distance $u$ between the centres of two
$(\frac{1}{d})$-balls needs a slight modification, though).

$\bullet\quad$ Assume that $B$ is centred at a singular point of order 2 in $D$. As in the case of the cusp of type $\{2,3,6\}$, and by symmetry,
we get a (possibly degenerate) isoscele trapezoid formed by the centres
of two neighboring $(\frac{1}{d})$-balls at distance $\mu\ge\frac{1}{d^2}$ and by the centres of the full-sized horoballs which they touch.
We obtain the identity \eqref{eq:ud} which, combined with the equation \eqref{eq:3} for $w^2$ and the condition $0\le\theta\le\frac{\pi}{4}$, yields $d^3-\sqrt{2}\,d-1\ge0$ and the estimate
$\,d\ge1.450405>\sqrt{2}$. As a consequence, the volume of the cusp $C$ satisfies $\vol(C)=\frac{d^2}{8}>\frac{1}{4}>v_*$.

$\bullet\quad$ More delicate is the case when $B$ is centred at a singular point of order 4. 
By symmetry, we can suppose that the centre of $B$ coincides with a vertex of $D$.
For the cusp volume, we obtain $\vol(C)=\frac{d^2}{16}$.

As in \cite[Section 5]{Adams3}, we consider the  mutual positions of the $(\frac{1}{d})$-balls associated to the full-sized horoballs. The case that four $(\frac{1}{d})$-balls are closest to one another, that is, each one is tangent
to two other ones and they all are symmetrically arranged around the centre of $D$, can be excluded by applying verbatim Adams' corresponding argument.

Suppose next that two $(\frac{1}{d})$-balls are tangent, and no other $(\frac{1}{d})$-ball is tangent to them. Since $D$ has a mirror symmetry, this can only happen if they are tangent at a singular point $a_2$ of order 2 with centres
being aligned with the centres of the corresponding full-sized horoballs. We deduce that $d=\frac{2}{d}+\frac{1}{d^2}$ and hence $d=2\cos\frac{\pi}{5}$. It follows that
\[
\frac{\vol(C)}{d_3(\infty)}>0.19>v_*\,\,.
\]
It remains to investigate the case when the $(\frac{1}{d})$-balls do not touch. As in the analogous case of a cusp of type $\{2,3,6\}$, we consider the $(\frac{1}{w})$-balls with their position relative to the $(\frac{1}{d})$-balls. Assume first that four $(\frac{1}{w})$-balls coincide and, hence, have  center equal to the centre of the square $D$. Then, the circumradius of $D$ equals $\frac{1}{w}$ so that $w=\frac{\sqrt{2}}{d}$. By means of \eqref{eq:3}, one can compute that $d=\sqrt[\leftroot{-2}\uproot{2}4]{5}$ and $\cos\theta=\frac{2}{\sqrt{5}}$; see Figure \ref{fig:einsdurchw244}. Hence, the cusp diagram $D$ has no mirror symmetry, and the corresponding
cusp volume yields $\vol(C)=\frac{\sqrt{5}}{8}>\frac{1}{4}>v_*$.
By performing analogous computations as in the case of a type $\{2,3,6\}$-cusp, we can show furthermore that there is a unique corresponding oriented orbifold that is arithmetic. For more details, see Appendix \ref{subsec:appendix2}.

The case that two of the $(\frac{1}{w})$-balls coincide  in the interior of $D$ can be excluded by symmetry.

Hence, it remains to investigate the situation when all $(\frac{1}{w})$-balls are distinct. In order to finish this case, it is sufficient to check the following two extremal possibilities for a $(\frac{1}{w})$-ball touching two $(\frac{1}{d})$-balls associated to two different full-sized balls.

$\bullet\quad$ Suppose first that a $(\frac{1}{w})$-ball touches two $(\frac{1}{d})$-balls in such a way that their centres are aligned on an edge of $D$. Then, $\frac{1}{w}=\frac{d}{2}$ which by means of \eqref{eq:wd} and $\theta=0$ implies that
$d^4-2\,d^2-3=0$ with solution $d=\sqrt{3}$. Hence, $\vol(C)=\frac{3}{16}>v_*$.

$\bullet\quad$ Suppose next that a $(\frac{1}{w})$-ball touches two $(\frac{1}{d})$-balls whose centres lie on the diagonals
of $D$. Notice that there are eight $(\frac{1}{w})$-balls around each full-sized horoball.
Then, one can show in a similar way as in the case of a cusp of type $\{2,3,6\}$ that $d=\sqrt{1+\sqrt{2}}$. It follows that $\frac{\vol(C)}{d_3(\infty)}=\frac{1+\sqrt{2}}{16\,d_3(\infty)}>0.176>v_*$.

Furthermore, we can describe the associated 1-cusped orbifold in an analogous way as in the case of a cusp of type $\{2,3,6\}$ and the explanations referring to Figure \ref{fig:horoedwschief}. The orbifold has a non-arithmetic fundamental group and is built upon a polyhedron arising from halfening the Coxeter tetrahedron with cyclic Coxeter symbol $[(4^3,3)]$ and Coxeter graph with internal symmetry as given by
\vskip-.5cm
\begin{center}
  \begin{tikzpicture}[anchor=base, scale=0.5, baseline,
      bullet/.style={circle, fill=black, minimum size=5pt, inner sep=0pt}]
    \draw (-1, 1) -- (1, 1) node[midway, above]{$4$};
    \draw (-1,1) node[bullet]{}
      -- (-1, -1) node[bullet]{}
      -- (1, -1) node[bullet]{}
      -- (1, 1) node[bullet]{};%
  \node[text width=1cm] at (-.7,-.2) {$4$};
    \node[text width=1cm] at (2.2,-.2) {$4\quad.$};
  \end{tikzpicture}
\end{center}
\medskip
As a result, and by \cite[p. 348]{JKRT1}, the volume of the orbifold is about $0.27814$ and hence too large.

Summarising the investigations in Section \ref{step3b}, we can state the following result.
\begin{proposition}\label{prop:case244}
 Let $V$ be a non-arithmetic hyperbolic 3-orbifold with a single cusp $C=B_{\infty}/\Gamma_{\infty}$ which is rigid of type $\{2,4,4\}$.
 Suppose that $\Gamma_{\infty}$ gives rise to only one equivalence class of full-sized horoballs. Then,
 \[
  \hbox{\rm \vol}(V)>\hbox{\rm \vol}(V_*)\,\,.
 \]
\end{proposition}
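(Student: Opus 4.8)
The plan is to assemble the case analysis carried out throughout this subsection and to verify that, in every configuration consistent with the hypotheses, the resulting orbifold is either arithmetic --- and hence excluded --- or has volume strictly larger than $v_*$. Since $C$ is rigid of type $\{2,4,4\}$, the stabiliser $\Gamma_{\infty}$ acts on the square cusp diagram $D$, and the discussion splits according to the minimal centre distance $d$ of full-sized horoballs into the tangent regime $d=1$ and the regime $d>1$. The principal tool throughout is the density bound \eqref{eq:density-bound}, which lets one pass from a lower estimate on $\vol(C)$ to the desired lower estimate on $\vol(V)$.

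First I would dispose of the tangent case $d=1$: the two admissible placements of a full-sized horoball at a singular point of order $4$ or $2$ yield, respectively, the Coxeter group $[3,4,4]$ and the group $[4^{1,1},3]$, both arithmetic, so non-arithmeticity rules them out. For $d>1$ the orientation-preserving subcase is eliminated immediately by \eqref{eq:or-pres} together with \eqref{eq:density-bound}, so I may assume that $D$ carries a mirror symmetry and then run through the three cases (a), (b), (c) according to how many full-sized horoballs a $(\frac{1}{d})$-ball meets. In case (a) one is forced to $d=\sqrt[4]{2}$, giving either $\vol(C)=\frac{\sqrt{2}}{8}>v_*$ or the arithmetic group commensurable to $[4^{[4]}]$; case (b) forces $d=\sqrt{2}$, producing either $\vol(C)=\frac14>v_*$ or a configuration incompatible with maximality of the cusp. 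The delicate case (c), where each $(\frac{1}{d})$-ball touches a unique full-sized horoball, is handled exactly as in the $\{2,3,6\}$ analysis: with $B$ centred at an order-$2$ point one obtains $d\ge 1.450405$ and $\vol(C)>\frac14$, while with $B$ at an order-$4$ vertex I would track the positions of the $(\frac{1}{w})$-balls, excluding the coincident arithmetic case $d=\sqrt[4]{5}$ and the tangency $d=2\cos\frac{\pi}{5}$ by their volumes, and disposing of the aligned configuration $d=\sqrt{3}$ (where $\vol(C)=\frac{3}{16}>v_*$) and the diagonal configuration $d=\sqrt{1+\sqrt{2}}$, whose associated non-arithmetic orbifold is built from $[(4^3,3)]$ and has volume about $0.27814$.

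The structural point that makes the conclusion \emph{strict} --- in contrast to Proposition \ref{prop:case236}, where equality with $v_*$ is attained --- is that no $\{2,4,4\}$ configuration ever realises the value $v_*$: the extremal lattice $[5,3,6]$ carries a $\{2,3,6\}$ cusp, not a $\{2,4,4\}$ one. Consequently every surviving non-arithmetic configuration satisfies either $\vol(C)/d_3(\infty)>v_*$ directly or produces an explicit orbifold of volume exceeding $v_*$, whence $\vol(V)>\vol(V_*)$. The hard part will not be any single computation but the bookkeeping required to guarantee exhaustiveness: one must confirm that the hierarchy of $(\frac{1}{d_k})$-balls on the diagonals is governed by the same recursion \eqref{eq:recursion} and terminates, and --- crucially --- that no borderline case yields a volume exactly equal to $v_*$, so that the inequality can be asserted strictly.
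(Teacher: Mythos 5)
Your outline reproduces the paper's own case analysis almost step by step---the tangent case $d=1$ giving $[3,4,4]$ and $[4^{1,1},3]$, the reduction to a mirror-symmetric diagram, the cases (a)/(b)/(c) by the number of full-sized horoballs a $(\frac{1}{d})$-ball touches, and the list of extremal values $d=\sqrt[4]{2},\ \sqrt{2},\ 2\cos\frac{\pi}{5},\ \sqrt[4]{5},\ \sqrt{3},\ \sqrt{1+\sqrt{2}}$ with their volumes---so structurally it is the same proof. There is, however, one concrete gap: your claim that for $d>1$ the orientation-preserving subcase is ``eliminated immediately by \eqref{eq:or-pres} together with \eqref{eq:density-bound}.'' This is true only when the full-sized horoballs sit at order-$2$ singular points ($s=2$), where \eqref{eq:or-pres} gives $\vol(C)>\frac{1}{4}>v_*$. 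When they sit at order-$4$ points ($s=4$), \eqref{eq:or-pres} gives only $\vol(C)=\frac{d^2}{8}>\frac{1}{8}=0.125$, and even after dividing by $d_3(\infty)\approx 0.853276$ the bound \eqref{eq:density-bound} yields $\vol(V)>0.146$, which is \emph{below} $v_*\approx 0.171502$, so the case does not close. The paper handles it by a dichotomy: either there is a disk of no tangency, forcing $d\ge\sqrt{2}$, or Lemma \ref{no-tangency} applies, so the distance $w$ satisfies $w\ge 1$, which via \eqref{eq:3} with $0\le\theta\le\frac{\pi}{4}$ gives $d^4-(1+\sqrt{2})\,d^2+1\ge 0$, hence $d^2>1.88$ and $\vol(C)>0.2>v_*$. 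Without this extra argument the reduction to mirror symmetry---on which all of your cases (a)--(c) rest---is not justified.

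Two smaller points. In case (c) with $B$ centred at an order-$4$ vertex you omit the configuration in which four $(\frac{1}{d})$-balls form a pairwise-tangent ring around the centre of $D$; the paper excludes it by applying Adams' corresponding argument verbatim, and your exhaustiveness bookkeeping must cover it explicitly. Also, the paper does not rerun the recursion \eqref{eq:recursion} in the $\{2,4,4\}$ setting: once the $(\frac{1}{w})$-balls are all distinct it argues that checking the two extremal positions (centres aligned on an edge, giving $d=\sqrt{3}$, and centres on the diagonals, giving $d=\sqrt{1+\sqrt{2}}$ and the orbifold from halving $[(4^3,3)]$) suffices---this is where the termination worry you raise is actually resolved. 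Your explanation of why the inequality is strict (no $\{2,4,4\}$ configuration realises $v_*$, unlike the $\{2,3,6\}$ case) agrees with the paper.
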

\subsection{More than one equivalence class of full-sized horoballs}\label{step4}
Assume that the crystallographic group $\Gamma_{\infty}$  gives rise to more than one equivalence class of full-sized horoballs. We show that the corresponding orbifold has volume strictly bigger than $v_*=\vol(\mathbb H^3/[5,3,6])$ (if it exists) by treating the cases of a cusp $C$ of type $\{2,3,6\}$ or of type $\{2,4,4\}$ separately. For simplicity, we assume that the group $\Gamma$ is {\it orientation-preserving} by passing to its orientation-preserving subgroup of index 2 if necessary.
\subsubsection{The case $\{2,3,6\}$}\label{subsubsec:1}
As in the case of one equivalence class,
inequivalent horoballs have to be centred in the singular points $a_s\,,\,s\in\{2,3,6\}$ of the cusp diagram $D$ because otherwise already the cusp volume becomes too big (see Section \ref{step3} and \eqref{eq:no-singular}).

Suppose first that there are three equivalence classes
of full-sized horoballs with respect to the action of $\Gamma_{\infty}$. The smallest volume then arises if $d_0(a_2,a_3)=1$.
It follows that the (oriented) cusp volume
satisfies $\vol(C)=\frac{\sqrt{3}}{2}>2\,v_*$, and that
the orbifold volume is too big.

Consider the case of exactly two equivalence classes of full-sized horoballs.
Continue to denote by $\tau$ the shortest translation length in $\Gamma_\infty$ and by $d$ the shortest distance between two equivalent full-sized horoballs.
In the case where the full-sized horoballs are centred in singular points of order 2 and 3, we get $\tau\ge2$ which leads to a cusp volume of at least $\vol(C)\geq \frac{\sqrt{3}}{2}>2\,v_*$.

Using the fact that the full-sized horoballs are centred in singular points $a_s$, we can deduce the following slightly generalised version of Adams' Lemma \ref{elliptic} in the case of a (oriented) cusp $C$ of type $\{2,3,6\}$.
\begin{lem}\label{lem:hororotation}
Suppose that the full-sized horoballs of a $\{2,3,6\}$-cusp are centred in singular points $a_s$ in the cusp diagram $D$. Then, there is a rotation of order 2 in $\Gamma$ whose axis is tangent to a full-sized horoball and the horoball $B_{\infty}$.\end{lem}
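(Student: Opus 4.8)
The plan is to exhibit the required order-2 rotation explicitly and to reduce the whole statement to the single task of producing an element of $\Gamma$ that \emph{interchanges} $B_\infty$ with a suitable full-sized horoball. After normalising so that one full-sized horoball $B_0$ sits at the origin and is centred at the singular point $a_s=p=(0,0,1)$, it is tangent to $B_\infty$ at $p$, and the vertical axis $l_s=[0,\infty]$ carries, by the singular-point hypothesis (see Section \ref{notations}), an order-$s$ rotation $\rho\in\Gamma$ fixing $0$, $\infty$, and hence both $B_\infty$ and $B_0$. The natural candidate is the half-turn $\gamma$ about any geodesic $\alpha$ through $p$ orthogonal to $l_s$ (on the boundary, $\gamma$ is $z\mapsto 1/z$): it swaps $0\leftrightarrow\infty$, hence $B_\infty\leftrightarrow B_0$, and $\alpha$ is tangent to the horosphere $H_\infty$ and to $\partial B_0$ at $p$. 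Thus it remains only to show that such a $\gamma$ lies in $\Gamma$.

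First I would record the reduction. It suffices to find \emph{any} $h\in\Gamma$ with $h(B_\infty)=B_0$ and $h(B_0)=B_\infty$. Indeed, such an $h$ fixes the common tangency point $p$, and since we are in the orientation-preserving situation of Section \ref{step4} it lies in $\mathrm{Stab}^+_\Gamma(p)\subset\SO(3)$, acting on the unit tangent sphere at $p$. Writing $N$ and $S$ for the (antipodal) directions from $p$ to the centres of $B_\infty$ and $B_0$, the element $h$ sends $N$ to $-N$; a one-line computation in $\SO(3)$ then shows that $h$ is necessarily an involution, namely a half-turn about an axis perpendicular to $l_s$. Consequently $h=\gamma$ for some geodesic $\alpha$ through $p$ orthogonal to $l_s$, which is automatically tangent to both horoballs. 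Hence the lemma is equivalent to the existence of a $\Gamma$-element swapping $B_\infty$ and $B_0$.

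The crux is producing this swap. Since every full-sized horoball is a $\Gamma$-image of $B_\infty$, choose $g\in\Gamma$ with $g(B_\infty)=B_0$; if already $g(B_0)=B_\infty$ we are done by the reduction, so assume not. Following the pattern of Adams' Lemma \ref{elliptic}, I would try to correct $g$ by an element of $\mathrm{Stab}_\Gamma(B_0)=g\,\Gamma_\infty\,g^{-1}$: replacing $g$ by $kg$ with $k\in\mathrm{Stab}_\Gamma(B_0)$ keeps $g(B_\infty)=B_0$ while moving $g(B_0)$ within its $\mathrm{Stab}_\Gamma(B_0)$-orbit among the neighbours of $B_0$, and the swap is obtained precisely when this orbit meets $B_\infty$. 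This is exactly the step where Adams uses that $\Gamma_\infty$ identifies all full-sized horoballs (so $\Gamma_\infty$ is transitive on neighbours); in our setting that hypothesis is dropped and is replaced by the singular-point hypothesis. The point is that $B_\infty$ and $B_0$ are \emph{mutual $\rho$-fixed neighbours}: $\rho\in\Gamma_\infty$ fixes the neighbour $B_0$ of $B_\infty$, while the same $\rho\in\mathrm{Stab}_\Gamma(0)$ fixes the neighbour $B_\infty$ of $B_0$. Transporting this symmetry through $g$ and combining it with the crystallographic description of the cusp group $\mathrm{Stab}_\Gamma(0)$ (as used repeatedly in Section \ref{step3}) forces $g(B_0)$ into the $\mathrm{Stab}_\Gamma(B_0)$-class of $B_\infty$, which yields the desired swap.

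The main obstacle is this last orbit-matching: proving, from the mere fact that the full-sized horoballs are centred at singular points, that $g(B_0)$ and $B_\infty$ are equivalent under the cusp group at $0$. Equivalently, one must show that the order-$s$ singular geodesic $l_s$, which runs from the cusp at $\infty$ to the $\Gamma$-equivalent cusp at $0$ through the maximal-tangency point $p$, admits an orientation-preserving reversal in $\Gamma$; discreteness forbids any loxodromic translation along $l_s$, so $\mathrm{Stab}_\Gamma(l_s)$ is $\langle\rho\rangle$ possibly enlarged by exactly such a reversal, and the content of the lemma is that the reversal is present. The orientation-preserving reduction of Section \ref{step4} is essential here, since it guarantees that the swapping isometry is a half-turn, that is, the claimed order-2 rotation, rather than an orientation-reversing involution.
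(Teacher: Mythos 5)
Your first two paragraphs are correct and coincide with the paper's endgame: any $h\in\Gamma$ interchanging $B_\infty$ and $B_0$ must fix the tangency point $p$, and an orientation-preserving isometry fixing $p$ and sending the direction of $l_s$ to its opposite is automatically a half-turn about a geodesic through $p$ orthogonal to $l_s$, hence tangent to both horoballs. The problem is that the \emph{existence} of the swap --- which you yourself call ``the crux'' and, in your last paragraph, ``the content of the lemma'' --- is never actually established. The decisive sentence of your third paragraph (``Transporting this symmetry through $g$ and combining it with the crystallographic description of the cusp group \dots forces $g(B_0)$ into the $\mathrm{Stab}_\Gamma(B_0)$-class of $B_\infty$'') is an assertion, not an argument, and your final paragraph only restates the needed fact (that the reversal of $l_s$ lies in $\Gamma$) without deriving it. So the proposal reduces the lemma to its essential content and stops there.

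What is missing is exactly the two-step argument the paper uses to close this gap. With your normalisation, take $h\in\Gamma$ with $h(0)=\infty$, so $h(B_0)=B_\infty$ and $h(B_\infty)$ is a full-sized horoball centred at $h(\infty)$. First, conjugating the order-$s$ rotation $\rho$ by $h$ shows that the \emph{vertical} geodesic through $h(\infty)$ is a singular axis carrying a rotation of order $s$; since rotations about vertical axes lie in $\Gamma_\infty$, the point $h(\infty)$ is a singular point of $\Gamma_\infty$ of the same order $s$. Second --- and this is where the hypothesis $\{2,3,6\}$ enters --- the singular points of a fixed order $s\in\{2,3,6\}$ form a \emph{single} $\Gamma_\infty$-orbit (the quotient of the horosphere is the orbifold with exactly one cone point of each order), so some $\delta\in\Gamma_\infty$ carries $h(\infty)$ to $0$; then $\delta h$ swaps $0$ and $\infty$, hence swaps $B_0$ and $B_\infty$, and your reduction finishes the proof. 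Your argument never invokes this single-orbit property, and no argument that avoids it can be complete: for a cusp of type $\{2,4,4\}$ the order-$4$ singular points fall into two inequivalent classes, and the paper explicitly records in Section \ref{subsubsec:2} that Lemma \ref{lem:hororotation} has no $\{2,4,4\}$ analogue for precisely this reason. A proof working ``from the mere fact that the full-sized horoballs are centred at singular points'', as yours is phrased, would apply verbatim to $\{2,4,4\}$ and prove too much.
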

  \begin{proof}
    Take a full-sized horoball centred at a singular point $a_s=(u,1)$ where $u\in\mathbb R^2$ belongs to the boundary $\partial U^3$. Since the group $\Gamma$ acts transitively on the set of horoballs covering $C$, there is an isometry $\gamma\in\Gamma$ mapping $u$ to $\infty$.
    Since the axis $l_s$ formed by $u$ and $\infty$ is the rotational axis of an isometry in $\Gamma_{\infty}$, the image geodesic $\gamma(l_s)$ is also the axis of a rotation of the same order.
  Thus, by composing $\gamma$ with a suitable isometry in $\Gamma_\infty$, we can assume that $\gamma$ maps $\infty$ back to $u$. It follows that the element $\gamma$ has to be a rotation of order $2$ in $\Gamma$ whose axis is tangent to the full-sized horoball and $B_{\infty}$.
  \end{proof}

There are two cases to consider for a possible placement of the centres of the full-sized horoballs.

{\it Centres at $a_6$ and $a_3$.\quad}
Assume that the full-sized horoballs are centred in singular points $a_s$ of order $6$ and $3$, respectively.
Denote by $e=\frac{\tau}{\sqrt{3}}$ the distance between $a_6$ and $a_3$ in the cusp diagram $D$.

In the minimal case, $e=1$; see Figure \ref{fig:236-6and2-touching}.
\begin{figure}[h]
  \centering
  \def\svgwidth{0.5\textwidth}
\begingroup%
  \makeatletter%
  \providecommand\color[2][]{%
    \errmessage{(Inkscape) Color is used for the text in Inkscape, but the package 'color.sty' is not loaded}%
    \renewcommand\color[2][]{}%
  }%
  \providecommand\transparent[1]{%
    \errmessage{(Inkscape) Transparency is used (non-zero) for the text in Inkscape, but the package 'transparent.sty' is not loaded}%
    \renewcommand\transparent[1]{}%
  }%
  \providecommand\rotatebox[2]{#2}%
  \newcommand*\fsize{\dimexpr\f@size pt\relax}%
  \newcommand*\lineheight[1]{\fontsize{\fsize}{#1\fsize}\selectfont}%
  \ifx\svgwidth\undefined%
    \setlength{\unitlength}{582.24708137bp}%
    \ifx\svgscale\undefined%
      \relax%
    \else%
      \setlength{\unitlength}{\unitlength * \real{\svgscale}}%
    \fi%
  \else%
    \setlength{\unitlength}{\svgwidth}%
  \fi%
  \global\let\svgwidth\undefined%
  \global\let\svgscale\undefined%
  \makeatother%
  \begin{picture}(1,0.91527017)%
    \lineheight{1}%
    \setlength\tabcolsep{0pt}%
    \put(0,0){\includegraphics[width=\unitlength,page=1]{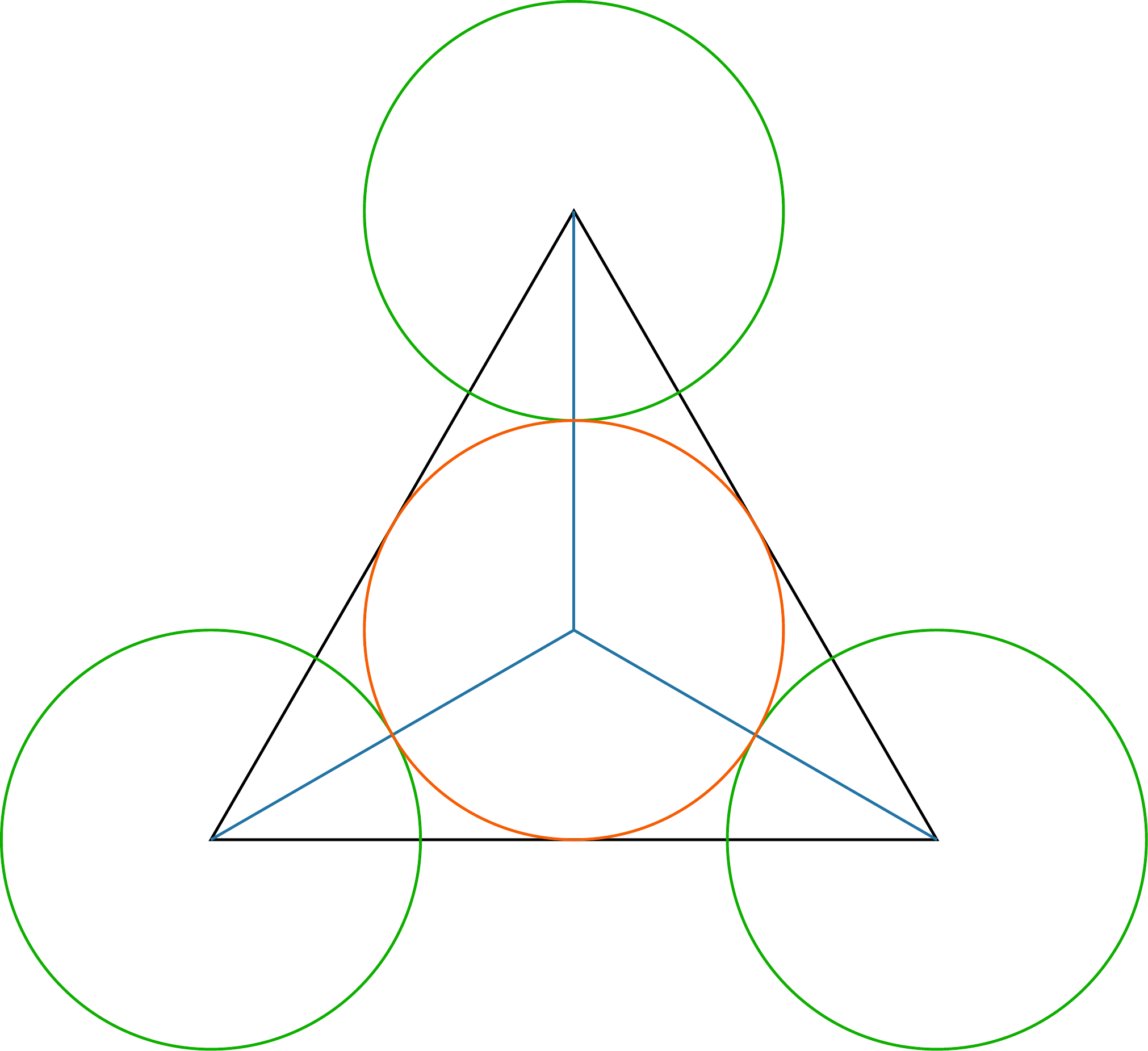}}%
    \put(0.4,0.33){\color[rgb]{0,0,0}\makebox(0,0)[lt]{\lineheight{1.25}\smash{\begin{tabular}[t]{l}$e$\end{tabular}}}}%
    \put(0.41393127,0.13797274){\color[rgb]{0,0,0}\makebox(0,0)[lt]{\lineheight{1.25}\smash{\begin{tabular}[t]{l}$\tau$\end{tabular}}}}%
    \put(0.16552068,0.1451872){\color[rgb]{0,0,0}\makebox(0,0)[lt]{\lineheight{1.25}\smash{\begin{tabular}[t]{l}$a_6$\end{tabular}}}}%
    \put(0.47939045,0.32161495){\color[rgb]{0,0,0}\makebox(0,0)[lt]{\lineheight{1.25}\smash{\begin{tabular}[t]{l}$a_3$\end{tabular}}}}%
  \end{picture}%
\endgroup%

  \caption{Inequivalent full-sized horoballs touch one another}
  \label{fig:236-6and2-touching}
\end{figure}
By Lemma \ref{lem:hororotation}, we have the existence of rotations of order 2 which exchange the full-sized horoballs centred at $a_6=(u_6,1)$ and at $a_3=(u_3,1)$ with the horoball $B_\infty$.
It is not difficult to see that the rotation which sends $\infty$ to $u_3$ and fixes $u_6$, together with the rotations in the singular axes of order 2, give a face identification of the ideal regular tetrahedron $S_\text{reg}^\infty$ with ideal vertices defined by the 3 vertices of the regular triangle $D$ and $\infty$.
One can also see that the above rotations are generated by reflections in the sides of the characteristic orthoscheme $[3,3,6]$ associated to $S_\text{reg}^\infty$.
That implies that these rotations generate an arithmetic subgroup of finite index in $\Gamma$ so that $\Gamma$ itself is arithmetic.
The rotations around the singular axis of order 3 and the rotation of order 2 mapping $u_6$ to $\infty$ generate $6$ isometries of $S_\text{reg}^\infty$.
As a by-product, we can calculate the volume by means of \eqref{eq:vol336} and obtain
\begin{equation*}
  \vol\left(\mathbb H^3/\Gamma\right) = \frac{1}{6}\,\vol\left(S_\text{reg}^\infty\right) = \frac{1}{2}\,\loba(\frac{\pi}{3}).
\end{equation*}

In the case $e>1$, there is a similar notion to a $(\frac{1}{d})$-ball in the case of one equivalence class.
More precisely, the rotations of Lemma \ref{lem:hororotation} map full-sized horoballs at distance $e$ from a full-sized horoball to horoballs of diameter $\frac{1}{e^2}$ at distance $\frac{1}{e}$ from the full-sized image of $B_{\infty}$.
Accordingly, we call these balls {\it $(\frac{1}{e})$-balls}.
The general situation is depicted in Figure \ref{fig:2equi-ebig}.

\begin{figure}[h]
  \centering
  \def\svgwidth{0.5\textwidth}
\begingroup%
  \makeatletter%
  \providecommand\color[2][]{%
    \errmessage{(Inkscape) Color is used for the text in Inkscape, but the package 'color.sty' is not loaded}%
    \renewcommand\color[2][]{}%
  }%
  \providecommand\transparent[1]{%
    \errmessage{(Inkscape) Transparency is used (non-zero) for the text in Inkscape, but the package 'transparent.sty' is not loaded}%
    \renewcommand\transparent[1]{}%
  }%
  \providecommand\rotatebox[2]{#2}%
  \newcommand*\fsize{\dimexpr\f@size pt\relax}%
  \newcommand*\lineheight[1]{\fontsize{\fsize}{#1\fsize}\selectfont}%
  \ifx\svgwidth\undefined%
    \setlength{\unitlength}{495.15604864bp}%
    \ifx\svgscale\undefined%
      \relax%
    \else%
      \setlength{\unitlength}{\unitlength * \real{\svgscale}}%
    \fi%
  \else%
    \setlength{\unitlength}{\svgwidth}%
  \fi%
  \global\let\svgwidth\undefined%
  \global\let\svgscale\undefined%
  \makeatother%
  \begin{picture}(1,0.90036771)%
    \lineheight{1}%
    \setlength\tabcolsep{0pt}%
    \put(0,0){\includegraphics[width=\unitlength,page=1]{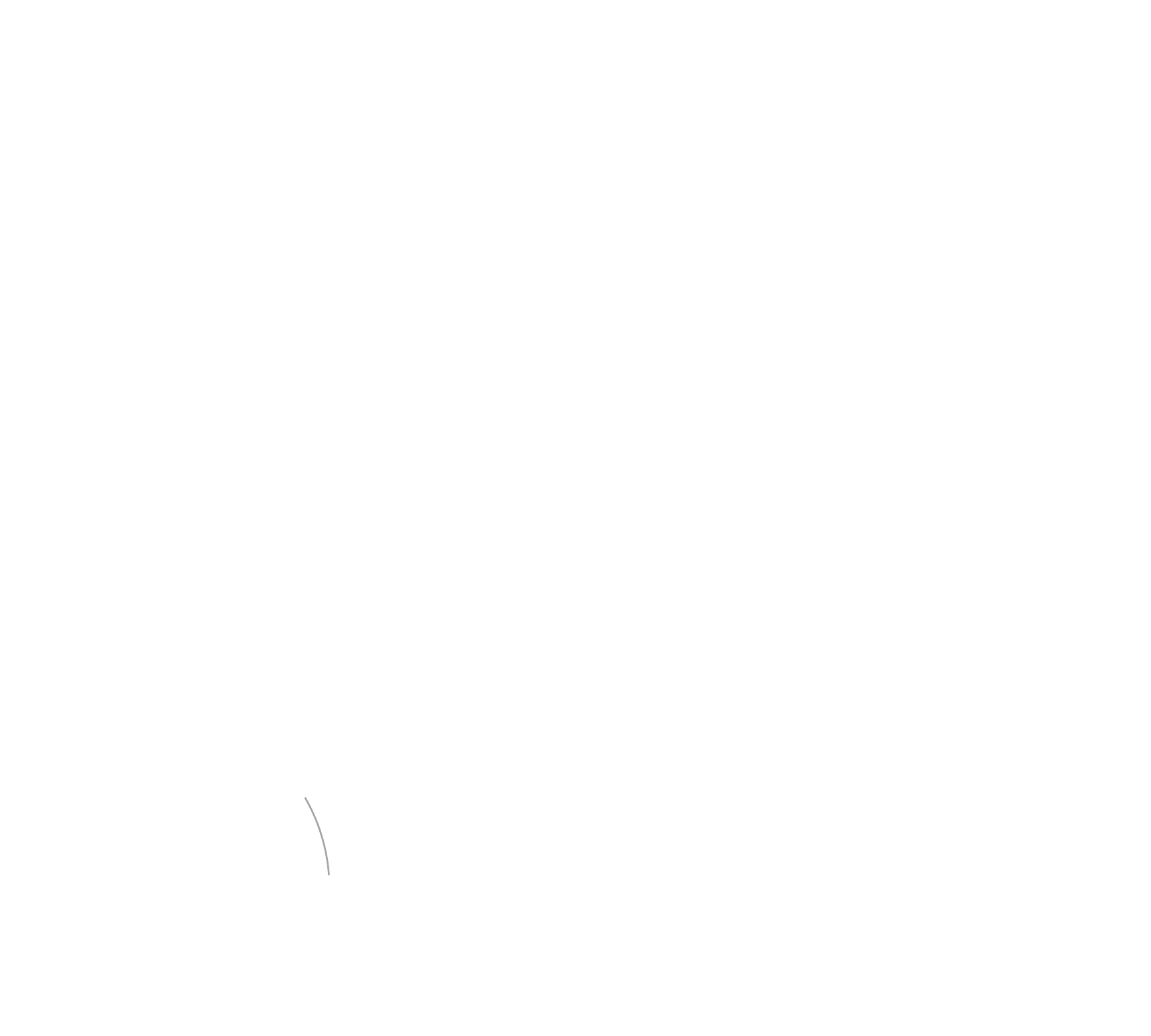}}%
    \put(0.285,0.157){\color[rgb]{0,0,0}\makebox(0,0)[lt]{\lineheight{1.25}\smash{\begin{tabular}[t]{l}$\alpha$\end{tabular}}}}%
    \put(0,0){\includegraphics[width=\unitlength,page=2]{236-2equiclasses-general.pdf}}%
    \put(0.26734577,0.22951759){\color[rgb]{0,0,0}\makebox(0,0)[lt]{\lineheight{1.25}\smash{\begin{tabular}[t]{l}$e$\end{tabular}}}}%
    \put(0.16773948,0.06886219){\color[rgb]{0,0,0}\makebox(0,0)[lt]{\lineheight{1.25}\smash{\begin{tabular}[t]{l}$\frac{1}{e}$\end{tabular}}}}%
  \end{picture}%
\endgroup%

  \caption{Inequivalent full-sized horoballs at distance $e>1$}
  \label{fig:2equi-ebig}
\end{figure}

The minimal distance $e$ is obtained by decreasing $e$ to the point where the $(\frac{1}{e})$-balls touch simultaneously both inequivalent full-sized horoballs; see Figure \ref{fig:2equi-touching}.
For a given angle $\alpha\in[0,\frac{\pi}{6}]$ as defined in Figure \ref{fig:2equi-ebig}, one can calculate the minimal possible $e$ by using the isosceles triangle with base $e$,  opposite angle $\alpha$ and sides $\frac{1}{e}$.
This implies that $\cos\alpha=\frac{e^2}{2}$ and that
\begin{equation}
  e = \sqrt{2\cos\alpha}\ge\sqrt{2\cos\frac{\pi}{6}}=\sqrt[4]{3}.
  \label{eq:1de236}
\end{equation}

\begin{figure}[h]
  \centering
  \def\svgwidth{0.5\textwidth}
\begingroup%
  \makeatletter%
  \providecommand\color[2][]{%
    \errmessage{(Inkscape) Color is used for the text in Inkscape, but the package 'color.sty' is not loaded}%
    \renewcommand\color[2][]{}%
  }%
  \providecommand\transparent[1]{%
    \errmessage{(Inkscape) Transparency is used (non-zero) for the text in Inkscape, but the package 'transparent.sty' is not loaded}%
    \renewcommand\transparent[1]{}%
  }%
  \providecommand\rotatebox[2]{#2}%
  \newcommand*\fsize{\dimexpr\f@size pt\relax}%
  \newcommand*\lineheight[1]{\fontsize{\fsize}{#1\fsize}\selectfont}%
  \ifx\svgwidth\undefined%
    \setlength{\unitlength}{526.88730003bp}%
    \ifx\svgscale\undefined%
      \relax%
    \else%
      \setlength{\unitlength}{\unitlength * \real{\svgscale}}%
    \fi%
  \else%
    \setlength{\unitlength}{\svgwidth}%
  \fi%
  \global\let\svgwidth\undefined%
  \global\let\svgscale\undefined%
  \makeatother%
  \begin{picture}(1,0.90636788)%
    \lineheight{1}%
    \setlength\tabcolsep{0pt}%
    \put(0,0){\includegraphics[width=\unitlength,page=1]{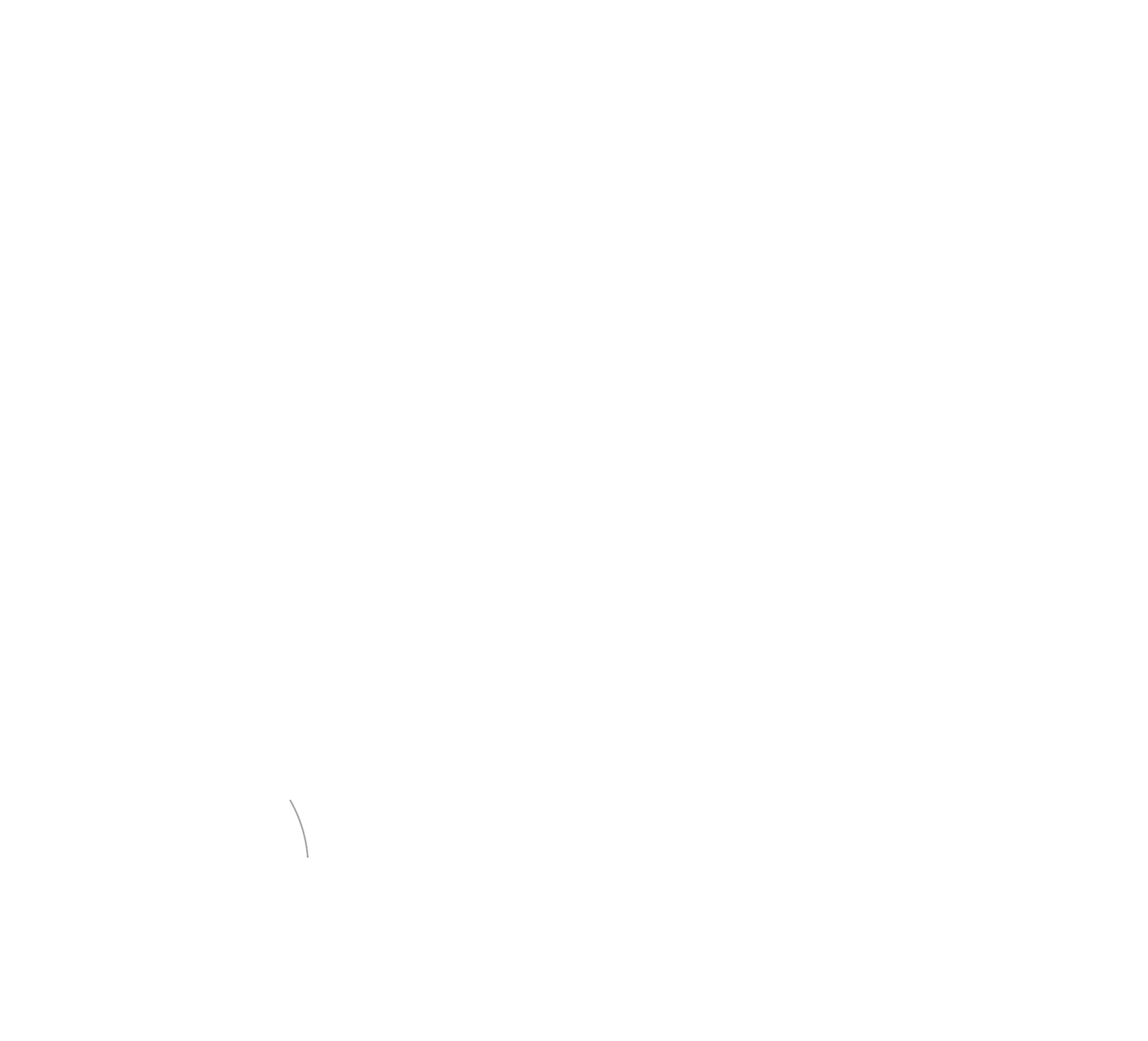}}%
    \put(0.22,0.16604947){\color[rgb]{0,0,0}\makebox(0,0)[lt]{\lineheight{1.25}\smash{\begin{tabular}[t]{l}$\alpha$\end{tabular}}}}%
    \put(0,0){\includegraphics[width=\unitlength,page=2]{236-2equiclasses-touching.pdf}}%
    \put(0.38495963,0.318){\color[rgb]{0,0,0}\makebox(0,0)[lt]{\lineheight{1.25}\smash{\begin{tabular}[t]{l}$e$\end{tabular}}}}%
    \put(0.33374682,0.198){\color[rgb]{0,0,0}\makebox(0,0)[lt]{\lineheight{1.25}\smash{\begin{tabular}[t]{l}$\frac{1}{e}$\end{tabular}}}}%
  \end{picture}%
\endgroup%

  \caption{A $(\frac{1}{e})$-ball touching two inequivalent full-sized horoballs}
  \label{fig:2equi-touching}
\end{figure}

The lower bound on $e$ yields a lower bound for the (oriented) cusp volume. It follows that
\begin{equation*}
  \vol(C) = \frac{\tau e}{8} = \frac{\sqrt{3}\,e^2}{8}\ge\frac{3}{8}>2\,v_*\,\,,
\end{equation*}
which is enough to exclude this case this from our considerations.

{\it Centres at $a_6$ and $a_2$.}\quad
Assume that the full-sized horoballs are centred in singular points $a_6$ and $a_2$, of order $6$ and $2$, respectively.
Denote by $e\ge1$ the shortest distance between $a_6$ and $a_2$ in the diagram $D$.
The next result shows that $e>1$.
\begin{proposition}\label{prop:e-big}
  Suppose that a 1-cusped oriented hyperbolic $3$-orbifold has a cusp of type $\{2,3,6\}$ with precisely two equivalence classes of full-sized horoballs centred in the singular points $a_6$ and $a_2$ of the horoball diagram $D$. Then,  $d_0(a_6,a_2)>1$, and the full-sized horoballs cannot touch each other.
\end{proposition}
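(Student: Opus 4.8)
The plan is to rule out the tangency case $e=1$ by showing that it forces $\Gamma$ to be arithmetic, which is incompatible with $V=\HH^3/\Gamma$ being the non-arithmetic orbifold under study. Since $a_6$ is a vertex and $a_2$ the midpoint of an edge of the regular cusp triangle $D$ of side length $\tau$, one has $e=d_0(a_6,a_2)=\tau/2$; as the two full-sized horodisks have radius $\tfrac12$, they cannot overlap, whence $e\ge1$ with equality exactly when the full-sized horoballs $B_6$ and $B_2$ centred at $a_6$ and $a_2$ are tangent. So I would assume $e=1$, that is $\tau=2$, and derive a contradiction.

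First I would reconstruct the local horoball configuration. Let $\rho\in\Gamma_{\infty}$ be the order $6$ rotation about the vertical axis through $a_6=(u_6,1)$. Applying $\rho$ to $B_2$ produces six full-sized horoballs centred at the six points of $\mathbb R^2$ at Euclidean distance $1$ from $u_6$ and spaced at angle $\tfrac{\pi}{3}$; consecutive centres are at distance $2\sin\tfrac{\pi}{6}=1$, so these six horoballs form a hexagonal ring, each tangent to $B_6$ and to its two neighbours. In particular $B_{\infty}$, $B_6$ and two consecutive ring members $B_2,B_2'$ are four pairwise tangent horoballs whose base points $\infty,u_6,u_2,u_2'$ span a regular ideal tetrahedron $S_{reg}^{\infty}$, the base triangle $u_6u_2u_2'$ being equilateral of side $1$. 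Equivalently, the centres of all full-sized horoballs fill out a triangular lattice of minimal spacing $1$, the $a_6$-class occupying the index-$4$ sublattice of spacing $2$ and the $a_2$-class its complement; this is the densest horoball packing and realizes the cusp density $d_3(\infty)$.

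Next, exactly as in the preceding case with centres at $a_6$ and $a_3$, I would exhibit face-identifications of $S_{reg}^{\infty}$ inside $\Gamma$. By Lemma \ref{lem:hororotation} there are order $2$ rotations interchanging $B_{\infty}$ with $B_6$ and with $B_2$; together with the order $2$ and order $3$ rotations about the singular axes of $\Gamma_{\infty}$ they realise the face-pairings of $S_{reg}^{\infty}$ and are each products of reflections in the walls of the characteristic orthoscheme $[3,3,6]$ of $S_{reg}^{\infty}$. Hence these elements lie in the arithmetic Coxeter group $[3,3,6]$ and generate a finite-index subgroup of $\Gamma$, so that $\Gamma$ itself is arithmetic and commensurable to $[3,3,6]$. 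This contradicts the standing hypothesis that $V$ is non-arithmetic, and therefore $e>1$; in particular the full-sized horoballs centred at $a_6$ and $a_2$ cannot touch. The delicate point is the rigidity step: one must verify that the single tangency genuinely pins the packing down to the regular tetrahedral one and that the resulting face-pairings close up into a finite-index tetrahedral group, which is precisely where the equality case of B\"or\"oczky's density estimate (a regular packing forcing $[3,3,6]$-commensurability, cf. Meyerhoff \cite{Mey1,Mey2}) does the essential work.
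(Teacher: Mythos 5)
There are two genuine gaps here, one logical and one geometric. The logical gap: the proposition carries \emph{no} non-arithmeticity hypothesis, and the paper's proof uses none. What the paper shows is that the tangency configuration $e=d_0(a_6,a_2)=1$ is impossible for \emph{any} discrete group: after normalising (via the order-$6$ rotation in $\Gamma_\infty$) the order-$2$ rotation $r$ of Lemma \ref{lem:hororotation} exchanging $B_\infty$ with the $a_6$-ball so that $r$ fixes two antipodal tangent ring balls, the image of the vertical order-$2$ axis through $a_2$ is the geodesic joining $a_6$ to $a_2$, which therefore has order $2$; then the rotation $r'$ exchanging $B_\infty$ with the $a_2$-ball (or $sr'$, with $s$ the vertical order-$2$ rotation) must fix an antipodal pair among the six full-sized neighbours of that ball, and in both cases (an $a_6$-pair or an $a_2$-pair fixed) some geodesic is forced to be a singular axis of order $2$ and of order $6$ simultaneously --- in the second case because the order-$3$ rotation about $a_3$ carries one of the two relevant geodesics to the other. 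This is absurd for any $\Gamma$, arithmetic or not, so no such orbifold exists. Your argument, by design, can only exclude $e=1$ for non-arithmetic $\Gamma$: even if completed, it proves a strictly weaker, conditional statement, and it cannot detect the actual content of the proposition, namely that the hypothetical group does not exist at all.

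The geometric gap: the arithmeticity step is asserted, not proved, and the mechanism you name does not work. First, Lemma \ref{lem:hororotation} produces the rotations exchanging $B_\infty$ with $B_6$ and with $B_2$ only up to an undetermined rotational parameter; before you may claim they lie in $[3,3,6]$ you must pin this parameter down to a multiple of $\frac{\pi}{3}$ (using that such a rotation permutes the six full-sized horoballs tangent to both exchanged balls), which you never do. Second, these elements do not ``realise the face-pairings of $S_{reg}^\infty$'': the only pairing of the faces of the tangency tetrahedron $(\infty,a_6,a_2,a_2')$ available from $\Gamma_\infty$ and the $B_\infty\!\leftrightarrow\!B_6$ rotation matches the two faces through the edge $(a_6,\infty)$ by the order-$6$ rotation and the two faces through the opposite edge $(a_2,a_2')$ by the order-$2$ rotation; the edge cycle at $(a_2,a_2')$ then has angle sum $\frac{2\pi}{3}$ with trivial holonomy, Poincar\'e's theorem does not apply, and this group is in fact elementary (it preserves the geodesic $(a_6,\infty)$). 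The analogy with the preceding $a_6$/$a_3$ case breaks down exactly here: there, the vertical faces are self-paired by order-$2$ rotations of $\Gamma_\infty$ about midpoints of edges of $D$, which \emph{are} singular points $a_2$; in the present configuration the midpoint of the segment $a_2a_2'$ is a regular point of the diagram, so no such rotation exists, and any attempt to close up a finite-covolume group forces you to analyse the rotation exchanging $B_\infty$ with the $a_2$-ball --- precisely the analysis in which the paper's proof finds its contradiction. Finally, B\"or\"oczky's equality case is a red herring: it only identifies the horoball packing, which is already determined by the hypothesis, and gives no commensurability conclusion about $\Gamma$ whatsoever, so it cannot carry the ``essential work'' you delegate to it.
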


  \begin{proof}
   On the contrary, assume there is such an orbifold with touching full-sized horoballs giving rise to a horoball configuration as in Figure \ref{fig:236-2and6}.
    By Lemma \ref{lem:hororotation}, there is a rotation of order 2 exchanging the horoball $B_1$ centred in the singular point $b_1=(u_1,1)$ of order 6 and the horoball $B_\infty$ at infinity.
    This rotation sends the touching full-sized horoballs around $B_1$ onto each other.
    After possibly using a rotation around the axis $l_1=(u_1,\infty)$, we can assume that there is such a rotation $r$ exchanging $B_1$ and $B_\infty$ while fixing $B_2$ and $B_2'$ centred in $b_2=(u_2,1)$ and $b_2'=(u_2',1)$, respectively.
    This implies that the image geodesic $(u_1,u_2)$
    of the order 2 axis $l_2=(u_2,\infty)$ is also a rotation axis of order 2.

    Using Lemma \ref{lem:hororotation} again, there has to be a rotation $r'$ around a common tangent of $B_2$ and $B_\infty$.
    Denote by $s$ the rotation of order 2 around the axis $l_2$.
    Then $sr'$ is also a rotation through a common tangent of $B_2$ and $B_\infty$.
    The axis of $sr'$ is perpendicular to the rotation axis of $r'$.
    Both rotations $r'$ and $sr'$ have to map full-sized horoballs touching $B_2$ to full-sized horoballs touching $B_2$.
    Hence one of these rotations has to fix two of the touching full-sized horoballs.
    (If $r'$ does not fix two full-sized horoballs touching $B_2$, then it has to map two neighbouring full-sized horoballs touching $B_2$ onto each other.
      The rotational axis of $r'$ then has to pass between those horoballs.
      Since the rotational axis of $sr'$ is orthogonal, it has to pass through the centres of two full-sized horoballs touching $B_2$.)
    Due to symmetry it is enough to consider the cases where $B_1$ or $B_3$ is fixed.
    Assuming that $r'$ fixes $B_1$, then the axis $(u_1, u_2)$ has to have order 6 because of its image $(u_1,\infty)$.
    However, we already noted that it has order 2.
    Assuming that $r'$ fixes $B_3$ centred at $b_3=(u_3,1)$, then the axis $(u_2,u_3)$ has to have order 2 -- the same as its image $(u_3,\infty)$.
    The geodesic $(u_2, u_4)$ related to $b_2$ and $b_4=(u_4,1)$, has to have order 6 as its image $(u_1, \infty)$.
    However, they have to have the same order as they get mapped to each other by rotations around the singular point $a_3$ representing the centre of the cusp diagram $D$.
  \end{proof}
\begin{figure}[h]
  \centering
  \def\svgwidth{0.7\textwidth}
\begingroup%
  \makeatletter%
  \providecommand\color[2][]{%
    \errmessage{(Inkscape) Color is used for the text in Inkscape, but the package 'color.sty' is not loaded}%
    \renewcommand\color[2][]{}%
  }%
  \providecommand\transparent[1]{%
    \errmessage{(Inkscape) Transparency is used (non-zero) for the text in Inkscape, but the package 'transparent.sty' is not loaded}%
    \renewcommand\transparent[1]{}%
  }%
  \providecommand\rotatebox[2]{#2}%
  \newcommand*\fsize{\dimexpr\f@size pt\relax}%
  \newcommand*\lineheight[1]{\fontsize{\fsize}{#1\fsize}\selectfont}%
  \ifx\svgwidth\undefined%
    \setlength{\unitlength}{736.64939268bp}%
    \ifx\svgscale\undefined%
      \relax%
    \else%
      \setlength{\unitlength}{\unitlength * \real{\svgscale}}%
    \fi%
  \else%
    \setlength{\unitlength}{\svgwidth}%
  \fi%
  \global\let\svgwidth\undefined%
  \global\let\svgscale\undefined%
  \makeatother%
  \begin{picture}(1,0.89954583)%
    \lineheight{1}%
    \setlength\tabcolsep{0pt}%
    \put(0,0){\includegraphics[width=\unitlength,page=1]{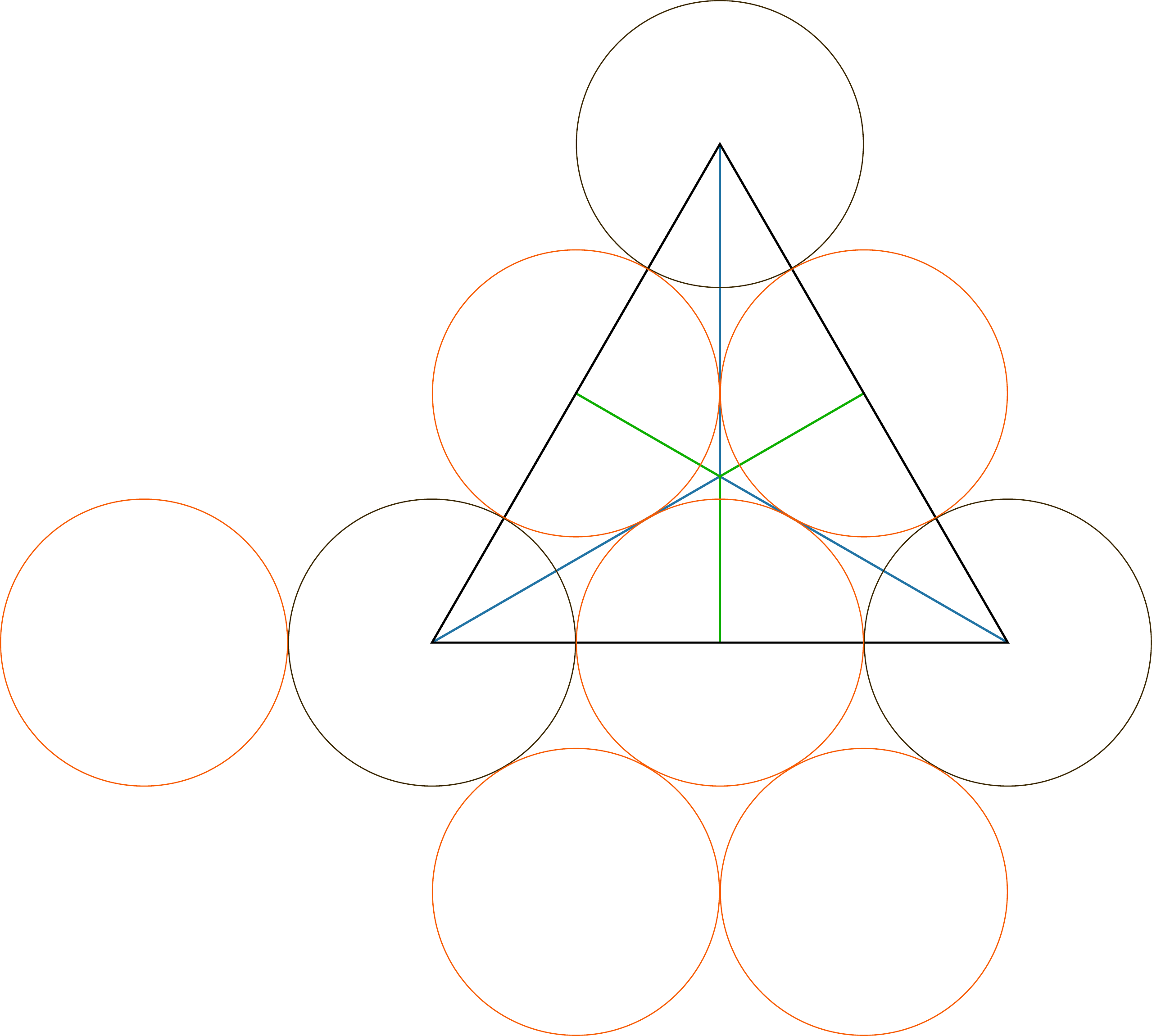}}%
    \put(0.36902938,0.30466622){\color[rgb]{0,0,0}\makebox(0,0)[lt]{\lineheight{1.25}\smash{\begin{tabular}[t]{l}$b_1$\end{tabular}}}}%
    \put(0.61896286,0.30466622){\color[rgb]{0,0,0}\makebox(0,0)[lt]{\lineheight{1.25}\smash{\begin{tabular}[t]{l}$b_2$\end{tabular}}}}%
    \put(0.11909267,0.30466622){\color[rgb]{0,0,0}\makebox(0,0)[lt]{\lineheight{1.25}\smash{\begin{tabular}[t]{l}$b_2'$\end{tabular}}}}%
    \put(0.45182041,0.56743775){\color[rgb]{0,0,0}\makebox(0,0)[lt]{\lineheight{1.25}\smash{\begin{tabular}[t]{l}$b_3$\end{tabular}}}}%
    \put(0.76336199,0.56743775){\color[rgb]{0,0,0}\makebox(0,0)[lt]{\lineheight{1.25}\smash{\begin{tabular}[t]{l}$b_4$\end{tabular}}}}%
  \end{picture}%
\endgroup%

  \caption{Two equivalence classes of full-sized horoballs centred in $a_6$ and $a_2$}
  \label{fig:236-2and6}
\end{figure}

Proposition \ref{prop:e-big} implies that the distance between two inequivalent full-sized horoballs satisfies $e>1$.
Again, consider the associated $(\frac{1}{e})$-balls. The minimal distance $e$ is achieved if a $(\frac{1}{e})$-ball lies on the angle bisector in $D$ and touches three full-sized horoballs as in Figure \ref{fig:236-2and6-touching}.
\begin{figure}[h]
  \centering
  \includegraphics[width=0.5\textwidth]{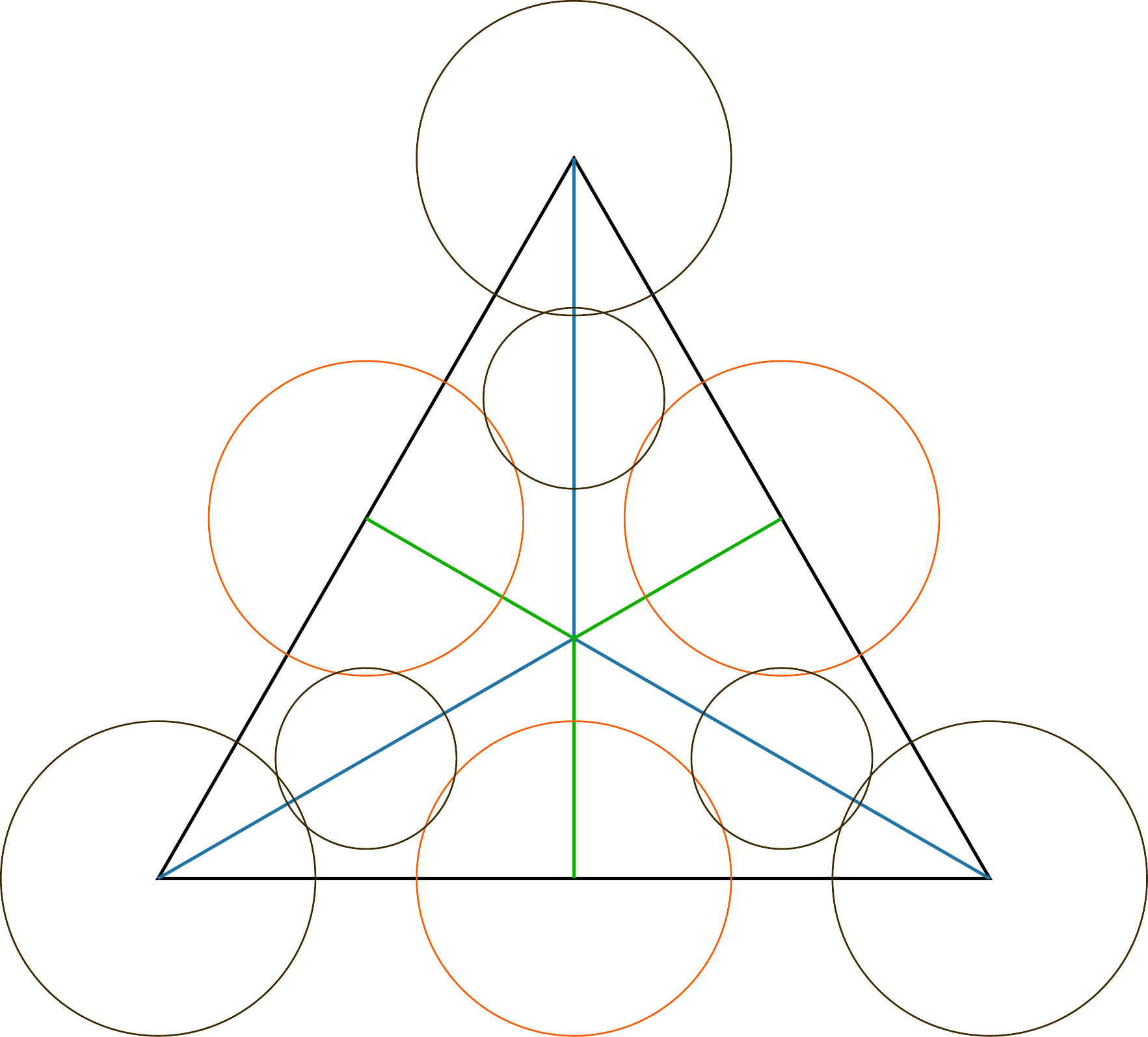}
  \caption{A $(\frac{1}{e})$-ball centred on the angle bisector in $D$}
  \label{fig:236-2and6-touching}
\end{figure}
This gives the lower bound $e\ge \sqrt[4]{3}$. As a consequence, the (oriented) cusp volume yields the estimate
\begin{equation*}
  \vol(C)= \frac{e^2}{2\sqrt{3}}\ge\frac{1}{2}>2\,v_*\,\,,
\end{equation*}
so that the corresponding orbifold volume becomes too big.

\subsubsection{The case $\{2,4,4\}$}\label{subsubsec:2}

Assume that there is a $\{2,4,4\}$-cusp with at least two equivalence classes of full-sized horoballs.
As in Section \ref{subsubsec:1}, we can exclude the case of  three equivalence classes, since the smallest volume would arise if all the full-sized horoballs were centred in the singular points
yielding a (oriented) cusp volume $\vol(C)=\frac{1}{2}>2\,v_*$.

Furthermore, given two equivalence classes of full-sized horoballs, these horoballs have to be centred in the singular points because otherwise the cusp volume would be too big.
If there are full-sized horoballs in $4$- and $2$-fold singular points, the shortest translation length satisfies $\tau\ge 2$ which gives a cusp volume $\vol(C)=\frac{\tau^2}{8}\ge\frac{1}{2}>2\,v_*$.
Thus we only have to consider the case where the full-sized horoballs are centred in the $4$-fold singularities.

\begin{figure}[h]
  \centering
  \def\svgwidth{0.5\textwidth}
\begingroup%
  \makeatletter%
  \providecommand\color[2][]{%
    \errmessage{(Inkscape) Color is used for the text in Inkscape, but the package 'color.sty' is not loaded}%
    \renewcommand\color[2][]{}%
  }%
  \providecommand\transparent[1]{%
    \errmessage{(Inkscape) Transparency is used (non-zero) for the text in Inkscape, but the package 'transparent.sty' is not loaded}%
    \renewcommand\transparent[1]{}%
  }%
  \providecommand\rotatebox[2]{#2}%
  \newcommand*\fsize{\dimexpr\f@size pt\relax}%
  \newcommand*\lineheight[1]{\fontsize{\fsize}{#1\fsize}\selectfont}%
  \ifx\svgwidth\undefined%
    \setlength{\unitlength}{205.22839294bp}%
    \ifx\svgscale\undefined%
      \relax%
    \else%
      \setlength{\unitlength}{\unitlength * \real{\svgscale}}%
    \fi%
  \else%
    \setlength{\unitlength}{\svgwidth}%
  \fi%
  \global\let\svgwidth\undefined%
  \global\let\svgscale\undefined%
  \makeatother%
  \begin{picture}(1,1.29343372)%
    \lineheight{1}%
    \setlength\tabcolsep{0pt}%
    \put(0,0){\includegraphics[width=\unitlength,page=1]{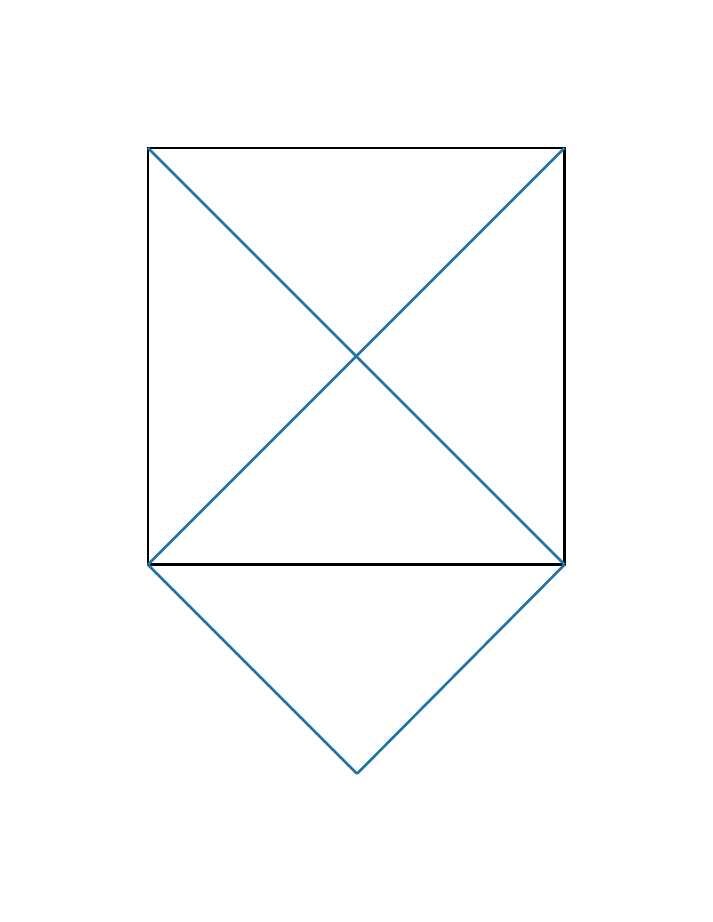}}%
    \put(0.46616775,0.45314404){\color[rgb]{0,0,0}\makebox(0,0)[lt]{\lineheight{1.25}\smash{\begin{tabular}[t]{l}$\tau$\end{tabular}}}}%
    \put(0.36770659,0.69211138){\color[rgb]{0,0,0}\makebox(0,0)[lt]{\lineheight{1.25}\smash{\begin{tabular}[t]{l}$e$\end{tabular}}}}%
    \put(0.16953156,0.45314404){\color[rgb]{0,0,0}\makebox(0,0)[lt]{\lineheight{1.25}\smash{\begin{tabular}[t]{l}$b_2$\end{tabular}}}}%
    \put(0.52752008,0.77551824){\color[rgb]{0,0,0}\makebox(0,0)[lt]{\lineheight{1.25}\smash{\begin{tabular}[t]{l}$b_1$\end{tabular}}}}%
    \put(0.77617327,0.45314404){\color[rgb]{0,0,0}\makebox(0,0)[lt]{\lineheight{1.25}\smash{\begin{tabular}[t]{l}$b_2'$\end{tabular}}}}%
    \put(0.50849977,0.16148669){\color[rgb]{0,0,0}\makebox(0,0)[lt]{\lineheight{1.25}\smash{\begin{tabular}[t]{l}$b_1'$\end{tabular}}}}%
    \put(0,0){\includegraphics[width=\unitlength,page=2]{442-2equiclasses.pdf}}%
  \end{picture}%
\endgroup%

  \caption{Two equivalence classes of full-sized horoballs in
  the diagram of a cusp of type $\{2,4,4\}$}
  \label{fig:244twoEqui}
\end{figure}

However, for a cusp of type $\{2,4,4\}$, there is no equivalent of Lemma \ref{lem:hororotation} since there are
non-conjugate rotations of the same order in $\Gamma_\infty$.
This is the reason why there are two cases to distinguish if one assumes that the full-sized horoballs, representing different equivalence classes, touch each other. We analyse this situation first.

$\bullet\quad$ Suppose that $e=1$.
There is an (orientation preserving) isometry $r\in\Gamma$ mapping the full-sized horoball $B_1$ centred at $b_1=(u_1,1)$ to $B_\infty$.
After possibly using an isometry in $\Gamma_\infty$, we can assume that $r$ maps $B_\infty$ to $B_1$ or to $B_2$ where $B_2$ is an inequivalent full-sized horoball centred in $b_2=(u_2,1)$ in $D$, respectively; see Figure \ref{fig:244twoEqui}.

{\it Case 1.} \quad
If $r$ sends $\infty$ to $u_1$, then $r$ is a rotation of order 2 and has to map the 4 touching full-sized horoballs onto each other.
There are 4 such rotations, and any one of them helps us.
In fact, the order 2 rotations around the midpoints $a_2$ of the sides of the square diagram $D$ with side lengths $\tau$ and their conjugates by $r$ give a side pairing of the ideal regular octahedron $O_\text{reg}^\infty$ formed by the ideal points defined by the 4 vertices and the in-centre of the square diagram $D$ and $\infty$.
Since this side pairing generates a finite index subgroup of $\Gamma$ as well as a finite index subgroup of the arithmetic reflection group $[4,4,4]$, they are all commensurable. In particular, $\Gamma$ has to be arithmetic.

{\it Case 2.} \quad 
If $r$ sends $\infty$ to $u_2$,
we can assume that $r(u_2)=u_1$.
That implies that $r$ is a rotation of order 3, which permutes $\infty$, $u_1$ and $u_2$.
It also permutes the centres  $b_1'$, $b_2'$ of neighbouring full-sized horoballs and a $(\frac{1}{e})$-ball touching all of them as depicted in Figure \ref{fig:244twoEqui}.
Denote by $s$ the rotation of order 2 around the vertical axis defined by the centre of this $(\frac{1}{e})$-ball in $\mathbb H^3$. Then, $rsr^{-1}$ is a rotation of order 2 exchanging $B_\infty$ and the $(\frac{1}{e})$-ball.
It follows that the two rotations of order 4 around the axes associated to $b_1$ and $b_1'$, together with their conjugates by $rsr^{-1}$, give a side-pairing of the ideal regular octahedron $O_\text{reg}^\infty$.
With the same reasoning as above, $\Gamma$ turns out to be arithmetic.

$\bullet\quad$ Suppose that $e>1$. We can use the ideas from the proofs of Lemma \ref{lem:hororotation} and Adams' Lemma \ref{elliptic} as follows.
A full-sized horoball $B_u$ centred at $u\in\partial\HH^3\setminus\{\infty\}$ can be mapped to $B_\infty$, and $B_\infty$ gets mapped to a horoball $B_v$ centred at $v$.
By hypothesis, there is a full-sized horoball at distance $e$ from $B_u$ which gets mapped to a horoball touching $B_v$ and of diameter $\frac{1}{e^2}$.
This is the equivalent of the $(\frac{1}{e})$-ball as above.

\begin{figure}[h]
  \centering
  \def\svgwidth{0.5\textwidth}
\begingroup%
  \makeatletter%
  \providecommand\color[2][]{%
    \errmessage{(Inkscape) Color is used for the text in Inkscape, but the package 'color.sty' is not loaded}%
    \renewcommand\color[2][]{}%
  }%
  \providecommand\transparent[1]{%
    \errmessage{(Inkscape) Transparency is used (non-zero) for the text in Inkscape, but the package 'transparent.sty' is not loaded}%
    \renewcommand\transparent[1]{}%
  }%
  \providecommand\rotatebox[2]{#2}%
  \newcommand*\fsize{\dimexpr\f@size pt\relax}%
  \newcommand*\lineheight[1]{\fontsize{\fsize}{#1\fsize}\selectfont}%
  \ifx\svgwidth\undefined%
    \setlength{\unitlength}{184.67595487bp}%
    \ifx\svgscale\undefined%
      \relax%
    \else%
      \setlength{\unitlength}{\unitlength * \real{\svgscale}}%
    \fi%
  \else%
    \setlength{\unitlength}{\svgwidth}%
  \fi%
  \global\let\svgwidth\undefined%
  \global\let\svgscale\undefined%
  \makeatother%
  \begin{picture}(1,1.00000015)%
    \lineheight{1}%
    \setlength\tabcolsep{0pt}%
    \put(0,0){\includegraphics[width=\unitlength,page=1]{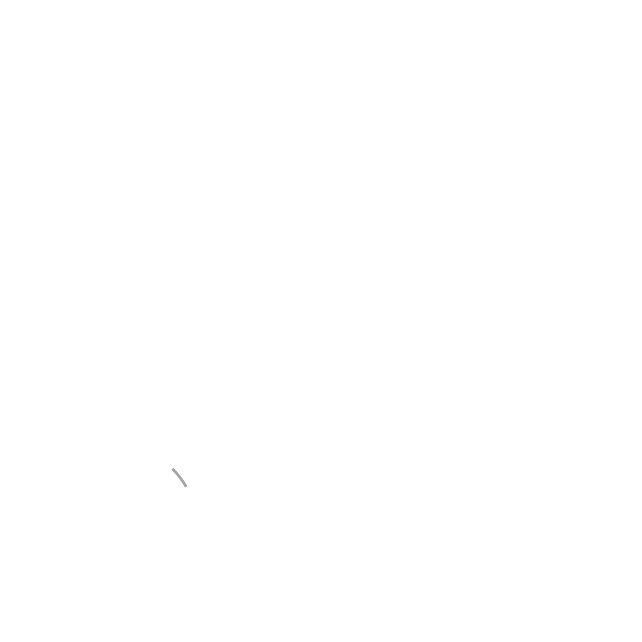}}%
    \put(0.2469702,0.23){\color[rgb]{0,0,0}\makebox(0,0)[lt]{\lineheight{1.25}\smash{\begin{tabular}[t]{l}$\alpha$\end{tabular}}}}%
    \put(0,0){\includegraphics[width=\unitlength,page=2]{442-2equiclasses-setting.pdf}}%
    \put(0.47,0.11){\color[rgb]{0,0,0}\makebox(0,0)[lt]{\lineheight{1.25}\smash{\begin{tabular}[t]{l}$\tau$\end{tabular}}}}%
    \put(0.34835457,0.22663965){\color[rgb]{0,0,0}\makebox(0,0)[lt]{\lineheight{1.25}\smash{\begin{tabular}[t]{l}$\frac{1}{e}$\end{tabular}}}}%
    \put(0.40067019,0.43313353){\color[rgb]{0,0,0}\makebox(0,0)[lt]{\lineheight{1.25}\smash{\begin{tabular}[t]{l}$e$\end{tabular}}}}%
  \end{picture}%
\endgroup%

  \caption{Two inequivalent full-sized horoballs at distance $e$ with $(\frac{1}{e})$-ball in the diagram of a cusp of type $\{2,4,4\}$}
  \label{fig:244twoEqui1de}
\end{figure}

The minimal possible distance $e$ is obtained if the $(\frac{1}{e})$-ball touches inequivalent full-sized horoballs as illustrated in
Figure \ref{fig:244twoEqui1de}.
We can perform the same calculation which led to equation \eqref{eq:1de236} by using the isosceles triangle with angle $\alpha$ satisfying $\alpha\in[0,\frac{\pi}{4}]$.
We obtain the lower bound according to
\begin{equation*}
  e = \sqrt{2\cos\alpha}\ge\sqrt{2\cos\frac{\pi}{4}}=\sqrt[4]{2}\,\,.
\end{equation*}
Hence, the (oriented) cusp volume yields the estimate
\begin{equation*}
  \vol(C) = \frac{e^2}{4}\ge\frac{\sqrt{2}}{4}>0.35>2\,v_*.
\end{equation*}

As a summary of the investigations in Sections \ref{subsubsec:1} and \ref{subsubsec:2}, we can formulate the following result.

\begin{proposition}\label{prop:2equivclasses}
A non-arithmetic cusped hyperbolic $3$-orbifold $\mathbb H^3/\Gamma$ of minimal volume with cusp of the form $C=B_{\infty}/\Gamma_{\infty}$ has the property that
$\Gamma_{\infty}$ permutes all full-sized horoballs.
\end{proposition}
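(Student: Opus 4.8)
The plan is to argue by contradiction, assembling the case analysis carried out in Sections \ref{subsubsec:1} and \ref{subsubsec:2}. By Propositions \ref{prop:case1} and \ref{prop:case2}, a non-arithmetic cusped hyperbolic $3$-orbifold $V=\HH^3/\Gamma$ of minimal volume has exactly one cusp $C=B_\infty/\Gamma_\infty$, and this cusp is rigid of type $\{2,3,6\}$ or $\{2,4,4\}$. I would suppose, for contradiction, that $\Gamma_\infty$ does \emph{not} permute all full-sized horoballs, i.e.\ that there are at least two $\Gamma_\infty$-equivalence classes. Since passing to the orientation-preserving subgroup of index at most $2$ only doubles the covolume, it suffices to bound the (oriented) cusp volume $\vol(C)$ from below and to invoke the density estimate \eqref{eq:density-bound}; I would therefore assume $\Gamma$ orientation-preserving throughout.

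The first step is to record that any full-sized horoball representing an additional equivalence class must be centred at one of the singular points $a_s$, $s\in\{2,q,r\}$, of the cusp diagram $D$: otherwise the least possible cusp volume already exceeds $v_*$ by the computation \eqref{eq:no-singular} of Section \ref{step3}. This reduces the discussion to a short list of configurations indexed by the orders of the singular points carrying full-sized horoballs, and I would then run through these exactly as in the two subsections.

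For three equivalence classes (in either cusp type) the smallest admissible configuration already forces $\vol(C)\ge\frac{\sqrt3}{2}>2\,v_*$, respectively $\vol(C)=\frac12>2\,v_*$, ruling them out. For precisely two classes the analysis splits by the pair of singular orders: in the type $\{2,3,6\}$ case with centres at $a_6,a_3$ the touching configuration $e=1$ yields an \emph{arithmetic} group (commensurable to $[3,3,6]$), while every non-touching configuration gives $e>1$ and hence $\vol(C)\ge\frac38>2\,v_*$; the pair $a_6,a_2$ is excluded from touching by Proposition \ref{prop:e-big}, again forcing $e>1$ and $\vol(C)\ge\frac12>2\,v_*$. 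In the type $\{2,4,4\}$ case, centres at a $4$-fold and a $2$-fold point force $\tau\ge2$ and $\vol(C)\ge\frac12$, while two classes both at $4$-fold points give arithmetic groups (commensurable to $[4,4,4]$) when $e=1$ and $\vol(C)\ge\frac{\sqrt2}{4}>0.35>2\,v_*$ when $e>1$.

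The conclusion then follows from a clean dichotomy: in every configuration with more than one equivalence class, either the resulting group is arithmetic, contradicting the non-arithmeticity of $V$, or the density bound \eqref{eq:density-bound} gives $\vol(V)>v_*=\vol(V_*)$, contradicting minimality (recall that $V_*$ is itself non-arithmetic of volume $v_*$). Hence no such $V$ can support more than one equivalence class, so $\Gamma_\infty$ permutes all full-sized horoballs. I expect the genuinely delicate input to be not this collation but the two facts on which it rests: the arithmeticity detections in the touching cases $e=1$ (where one exhibits a side-pairing of an ideal regular tetrahedron or octahedron and recognises the generated group as commensurable to $[3,3,6]$ or $[4,4,4]$) and the non-touching rigidity of Proposition \ref{prop:e-big}. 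Both are established in the preceding subsections and may here simply be quoted.
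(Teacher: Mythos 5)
Your proposal is correct and takes essentially the same route as the paper: Proposition \ref{prop:2equivclasses} is stated there precisely as the summary of the case analysis in Sections \ref{subsubsec:1} and \ref{subsubsec:2}, and you reproduce that analysis faithfully (reduction to horoballs centred at singular points via \eqref{eq:no-singular}, exclusion of three classes by cusp volume, the bounds $e\ge\sqrt[4]{3}$ resp.\ $e\ge\sqrt[4]{2}$ for non-touching configurations, the arithmeticity detections at $e=1$, and Proposition \ref{prop:e-big}). The concluding dichotomy you draw --- either $\Gamma$ is arithmetic or $\vol(V)>v_*$, contradicting the hypotheses either way --- is exactly how the paper closes the argument.
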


In combination with Proposition \ref{prop:case1}, Proposition \ref{prop:case2}, Proposition
\ref{prop:case236} and Proposition \ref{prop:case244}, Proposition
\ref{prop:2equivclasses} is the final step in the proof of the Theorem 
as stated in Section \ref{sec:intro} and in Section \ref{sec:chapter3}.

\section{Final remarks}\label{sec:chapter4}
The theory and investigations exploited in this work are  valid for cusped hyperbolic $n$-orbifolds of dimensions $n\ge2$. While for $n\ge 4$
the adaption becomes very cumbersome, one can derive in a fairly easy way the following corresponding result for $n=2$ mentioned in  Section \ref{sec:intro} (for details, see \cite{Drewitz}).

\begin{proposition}\label{prop:min2orbi}
Among all non-arithmetic cusped hyperbolic 2-orbifolds, the 1-cusped quotient space of $\HH^2$ by the triangle Coxeter group $[5,\infty]$ has minimal area. As such the orbifold is unique, and its area is given
by $\frac{3\pi}{10}$.
\end{proposition}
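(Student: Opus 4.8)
The plan is to imitate the three-dimensional argument but in its much cheaper two-dimensional incarnation: reduce to a finite list of small-area orbifolds coming from Coxeter triangles, and then settle arithmeticity by the criteria of Section \ref{Coxeter}. The substitute for all the volume estimates of Section \ref{horo} is the Gauss--Bonnet theorem: a cusped hyperbolic $2$-orbifold $\HH^2/\Gamma$ has area $-2\pi\,\chi^{\mathrm{orb}}(\HH^2/\Gamma)$, where $\chi^{\mathrm{orb}}$ is the sum of the non-positive contributions of the genus, the cusps, the interior cone points and the boundary corner reflectors. Since the condition $\area<\tfrac{3\pi}{10}$ reads $\chi^{\mathrm{orb}}>-\tfrac{3}{20}$, this is extremely restrictive, and here the one-dimensional cusp cross-section makes the horocycle diagram trivial, so no packing-density input is really needed.

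First I would separate the possible local types at a cusp. A cusp with orientation-preserving local holonomy (an interior puncture) forces, already for the smallest hyperbolic configuration (genus $0$, one puncture, two cone points of orders $2,3$), the value $\chi^{\mathrm{orb}}=-\tfrac16$, hence $\area\ge\tfrac{\pi}{3}>\tfrac{3\pi}{10}$; the same holds for every orientation-preserving type. Therefore a minimal example has mirror boundary, i.e. its underlying space is a disk bounded by geodesic mirror arcs meeting in corner reflectors (angles $\tfrac{\pi}{n}$) or ideal vertices (cusps). For such a disk with corner reflectors $n_1,\dots,n_c$, one ideal vertex and no interior cone point one computes $\chi^{\mathrm{orb}}=1-\tfrac12\sum_j(1-\tfrac1{n_j})-\tfrac12$, and adding any further side, interior cone point, or second ideal vertex makes $\chi^{\mathrm{orb}}\le-\tfrac14$ (area $\ge\tfrac{\pi}{2}$). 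Thus a minimal example is a Coxeter triangle with exactly one label $\infty$, i.e. a group $[p,\infty]$ of area $\pi(1-\tfrac1p-\tfrac1q)$ with angle data $\tfrac{\pi}{p},\tfrac{\pi}{q},0$; the inequality $\tfrac1p+\tfrac1q\ge\tfrac7{10}$ together with hyperbolicity $\tfrac1p+\tfrac1q<1$ leaves precisely $[3,\infty],[4,\infty],[5,\infty]$, of areas $\tfrac{\pi}{6},\tfrac{\pi}{4},\tfrac{3\pi}{10}$.

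The arithmetic dichotomy is then immediate from Section \ref{Coxeter}. The graphs $[3,\infty]$ and $[4,\infty]$ carry no dotted edge, all their weights lie in $\{2,3,4,6,\infty\}$, and their unique length-$3$ cycle in $2G$ vanishes (it contains the factor $h_{13}=0$ coming from the two orthogonal mirrors); hence by Guglielmetti's criterion \cite[Proposition 1.13]{Gugliel} both are arithmetic, indeed commensurable to $\mathrm{PSL}_2(\ZZ)$. For $[5,\infty]$ the weight $5\notin\{2,3,4,6,\infty\}$, so the same criterion shows that $[5,\infty]$ is non-arithmetic; equivalently, the length-$2$ cycle $(2\cos\tfrac{\pi}{5})^2=\tfrac{3+\sqrt5}{2}$ is not a rational integer, so the $\QQ$-version \eqref{cycles} of Vinberg's criterion also fails. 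Because $[5,\infty]$ is the only orbifold of area $\tfrac{3\pi}{10}$ in the list above (the constraint $\chi^{\mathrm{orb}}=-\tfrac3{20}$ forces $\tfrac1p+\tfrac1q=\tfrac7{10}$, whose sole integral solution is $(p,q)=(2,5)$, and no orientation-preserving or higher type reaches area $\tfrac{3\pi}{10}$), this simultaneously yields minimality among non-arithmetic cusped $2$-orbifolds and uniqueness, with area $\tfrac{3\pi}{10}$.

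Finally, where does the real work sit? In contrast with dimension three there is no genuine horoball-packing obstacle: the whole hierarchy of $(\tfrac1d)$-balls degenerates, and one may check that the horocycle recursion reproducing $d=2\cos\tfrac{\pi}{5}$ singles out exactly the pentagonal angle, in harmony with the distance appearing for $[5,3,6]$. The only point demanding care is the completeness of the enumeration in the second paragraph, namely that every topological type of cusped $2$-orbifold (reflection, rotation, or mixed; one or several cusps; with or without interior cone points) has area at least $\tfrac{3\pi}{10}$ unless it is one of the three triangle reflection groups. This reduces to a finite, elementary case analysis of $\chi^{\mathrm{orb}}$ and presents no conceptual difficulty; it is the two-dimensional shadow of Propositions \ref{prop:case1}, \ref{prop:case236} and \ref{prop:case244}.
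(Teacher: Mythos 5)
Your argument is correct, but it is a genuinely different proof from the one the paper relies on. The paper obtains this proposition as a by-product of the horoball-diagram machinery developed for the three-dimensional Theorem --- cusp diagrams, full-sized horodisks at mutual distance $d$, and the hierarchy/recursion forcing $d=2\cos\frac{\pi}{k}$, which in dimension $2$ singles out the triangle groups $[p,\infty]$ --- with the details carried out in \cite{Drewitz}. You replace all of that by Gauss--Bonnet: since the area equals $-2\pi\chi^{\mathrm{orb}}$, the bound $\mathrm{area}\le\frac{3\pi}{10}$ becomes $\chi^{\mathrm{orb}}\ge-\frac{3}{20}$, a finite enumeration over topological types leaves exactly $[3,\infty]$, $[4,\infty]$, $[5,\infty]$ (areas $\frac{\pi}{6},\frac{\pi}{4},\frac{3\pi}{10}$), and arithmeticity is then decided by the criteria of Section \ref{Coxeter}. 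Your route buys brevity and self-containedness, and it isolates exactly why $n=2$ is easy: area is a topological invariant in this dimension, which is precisely what fails for $n\ge3$ and forces the paper's packing analysis; conversely, the paper's route is dimension-uniform and exhibits the same horocycle hierarchy (including the reappearance of $d=2\cos\frac{\pi}{5}$) in both dimensions. Three points you should tighten when writing this up: (i) the Vinberg/Guglielmetti criteria are quoted in Section \ref{Coxeter} for polyhedra in $\HH^3$, so either note that Vinberg's criterion is valid in every dimension or invoke the classical fact (Takeuchi) that the non-cocompact arithmetic Fuchsian groups are exactly those commensurable with $\PSL_2(\ZZ)$; (ii) an interior puncture does not imply that the orbifold is orientation-preserving, so your dichotomy must also dispose of the mixed and non-orientable types --- a disk with mirror boundary, an interior puncture and a corner reflector has $\chi^{\mathrm{orb}}\le-\frac{1}{4}$, and an underlying $\mathbb{RP}^2$ gives $\chi^{\mathrm{orb}}\le-\frac{1}{2}$ --- which is elementary but must be stated, as you yourself anticipate in your final paragraph; (iii) uniqueness uses the rigidity of triangle orbifolds (their Teichm\"uller space is a point), so that the type $(2,5,\infty)$ carries a unique hyperbolic structure up to isometry.
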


By Selberg's Lemma, any cusped
hyperbolic orbifold has a finite sheeted non-compact cover manifold.
Consider the non-arithmetic cusped 3-orbifold $V_*=\HH^3/[5,3,6]$
of smallest volume in this class.
The minimal index of a torsion-free subgroup of $[5,3,6]$ is given by the l.c.m. of the orders of finite subgroups of $[5,3,6]$ and equals 120. In \cite{Everitt}, it is shown that there are 10 such subgroups which are non-conjugate in $[5,3,6]$, yielding non-isometric non-arithmetic hyperbolic 3-manifolds, all orientable with one or two cusps, covering the orbifold $V_*$. Each of these manifolds is of volume $120\cdot \vol([5,3,6])\approx 20.580199$.

Compare $V_*$
with the non-arithmetic 2-cusped orbifold $V_{\circ}$ of volume
\[
\frac{5}{8}\,\loba(\frac{\pi}{3})+\frac{1}{3}\,\loba(\frac{\pi}{4})\approx 0.364107\,\,
\]
whose fundamental group is given by the tetrahedral Coxeter group $[(3^3,6)]$ (see Example \ref{ex-nonarithm} and \eqref{eq:volcycle}). There is a torsion-free subgroup $\Lambda$ of $[(3^3,6)]$ of minimal possible index equal to $24$
so that the resulting manifold $M=\HH^3/\Lambda$ is of volume
$\approx 8.738570$. Since the groups $[5,3,6]$ and $[(3^3,6)]$
are incommensurable (see Remark \ref{incomm}), the manifold $M$
does not have a common covering manifold with $V_*$.
As a consequence, one gets the following result stated in Section \ref{sec:intro} (for details, see \cite{Drewitz}).
\begin{proposition}\label{prop:min3mani}
The fundamental group of a non-arithmetic cusped hyperbolic 3-manifold of minimal volume is incommensurable to the Coxeter group $[5,3,6]$; its volume is smaller than or equal to $24\cdot \hbox{\rm vol}([(3^3,6)])\approx 8.738570$.
\end{proposition}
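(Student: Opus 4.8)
The plan is to assemble the two assertions from the data gathered in Section~\ref{sec:chapter4}: the upper bound is witnessed by an explicit manifold, and the incommensurability follows by comparing volumes inside the commensurability class of $\Gamma_*=[5,3,6]$. For the upper bound I would exhibit the manifold $M=\HH^3/\Lambda$ directly. Since $\Lambda$ is a torsion-free subgroup of minimal index $24$ in the non-arithmetic Coxeter group $[(3^3,6)]$ (Example~\ref{ex-nonarithm}), the quotient $M$ is a smooth, non-compact, finite-volume hyperbolic $3$-manifold; it is non-arithmetic because arithmeticity is a commensurability invariant and $[(3^3,6)]$ is non-arithmetic. Thus $M$ is a legitimate non-arithmetic cusped hyperbolic $3$-manifold, and by \eqref{eq:volcycle} its volume is exactly $24\cdot\vol([(3^3,6)])\approx 8.738570$. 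Hence the minimal volume $v_{\min}$ over all non-arithmetic cusped hyperbolic $3$-manifolds satisfies $v_{\min}\le\vol(M)=24\cdot\vol([(3^3,6)])$, which is the second assertion. (Existence of $v_{\min}$ follows from the well-ordering of the volume spectrum of cusped hyperbolic $3$-manifolds together with this finite upper bound.)

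For the incommensurability I would first pin down the commensurator of $\Gamma_*$. Since $\Gamma_*$ is non-arithmetic, the Margulis criterion recalled in Section~\ref{cusp} guarantees that $\textrm{Comm}(\Gamma_*)$ is a lattice containing $\Gamma_*$ with finite index, and it is again non-arithmetic and cusped. Therefore $\HH^3/\textrm{Comm}(\Gamma_*)$ is a non-arithmetic cusped hyperbolic $3$-orbifold with $\vol(\HH^3/\textrm{Comm}(\Gamma_*))\le\vol(V_*)=v_*$; by the uniqueness part of the Theorem this forces $\textrm{Comm}(\Gamma_*)=\Gamma_*$, so $\Gamma_*$ is its own commensurator and $V_*$ is the smallest orbifold in its commensurability class.

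Now suppose, towards a contradiction, that the fundamental group $\Gamma$ of a minimal-volume non-arithmetic cusped manifold $N=\HH^3/\Gamma$ is commensurable to $\Gamma_*$. After conjugating (which changes neither $\vol(N)$ nor the commensurability relation) we may assume $\Gamma\cap\Gamma_*$ has finite index in both; since the commensurator is an invariant of the commensurability class, $\Gamma\subseteq\textrm{Comm}(\Gamma)=\textrm{Comm}(\Gamma_*)=\Gamma_*$, and $\Gamma$ has finite index in $\Gamma_*$ because both are lattices. Thus $\Gamma$ is a torsion-free finite-index subgroup of $\Gamma_*$. Any finite subgroup $F\le\Gamma_*$ acts freely on the coset space $\Gamma_*/\Gamma$ (as $F\cap g\Gamma g^{-1}$ is a finite torsion-free, hence trivial, group), so $|F|$ divides $[\Gamma_*:\Gamma]$; consequently $[\Gamma_*:\Gamma]$ is a multiple of the least common multiple of the orders of the finite subgroups of $\Gamma_*$, which equals $120$ and is realised by a torsion-free subgroup of $\Gamma_*$ \cite{Everitt}. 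Hence $\vol(N)=[\Gamma_*:\Gamma]\cdot v_*\ge 120\,v_*\approx 20.580199$, contradicting $\vol(N)=v_{\min}\le\vol(M)\approx 8.738570$. Therefore $\Gamma$ is incommensurable to $[5,3,6]$, which completes the proof.

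The argument is short because the heavy lifting is done by the main Theorem; the only delicate step I expect is the commensurator identification $\textrm{Comm}(\Gamma_*)=\Gamma_*$, since it is precisely what upgrades ``commensurable to $[5,3,6]$'' into ``conjugate to a finite-index subgroup of $[5,3,6]$'' and thereby activates the index bound $120$. Everything else reduces to the numerical comparison $24\,\vol([(3^3,6)])<120\,v_*$.
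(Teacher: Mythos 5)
Your proposal is correct and takes essentially the same route as the paper: the upper bound is witnessed by the index-$24$ torsion-free cover $M=\HH^3/\Lambda$ of the $[(3^3,6)]$-orbifold, and the incommensurability follows because any manifold group commensurable to $\Gamma_*=[5,3,6]$ must (after the Margulis commensurator identification $\textrm{Comm}(\Gamma_*)=\Gamma_*$, forced by the main Theorem) sit inside $\Gamma_*$ with index divisible by $120$, giving volume at least $120\,v_*\approx 20.58 > 8.74$. The only difference is presentational: you spell out the commensurator step and the free-action divisibility argument that the paper defers to \cite{Drewitz}, and your volume comparison makes the paper's citation of \cite{JKRT2} for the incommensurability of $[5,3,6]$ and $[(3^3,6)]$ superfluous.
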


\appendix
\section*{Appendix}
\renewcommand{\thesubsection}{\Alph{subsection}}

\subsection{The case $\{2,3,6\}$}\label{subsec:appendix1}
Consider a discrete group $\Gamma\subset\hbox{Isom}^+\mathbb H^3$
of orientation preserving isometries whose quotient space $O$ has exactly one cusp of the form $C=B_{\infty}/\Gamma_{\infty}$ giving rise to full-sized horoballs of diameter $1$ in the upper half space $U^3$.

Suppose that $C$ is of type $\{2,3,6\}$ and that the group $\Gamma_{\infty}$ induces only one equivalence class of full-sized horoballs, all centred at singular points of order $6$ in the cusp diagram $D$.

In the sequel, we consider the special case when the full-sized horoballs are at distance $d>1$ from one another and when the associated $(\frac{1}{d})$-balls do not touch one another. In particular, these $(\frac{1}{d})$-balls give rise to $(\frac{1}{w})$-balls.
Suppose that the $(\frac{1}{w})$-balls of the three $(\frac{1}{d})$-balls in $D$ coincide with center at the singular point $p=a_3$ of order 3 in $D$; see Figure \ref{fig:appendix-236}.
\begin{figure}
  \centering
  \def\svgwidth{0.5\textwidth}
  \footnotesize
  \input{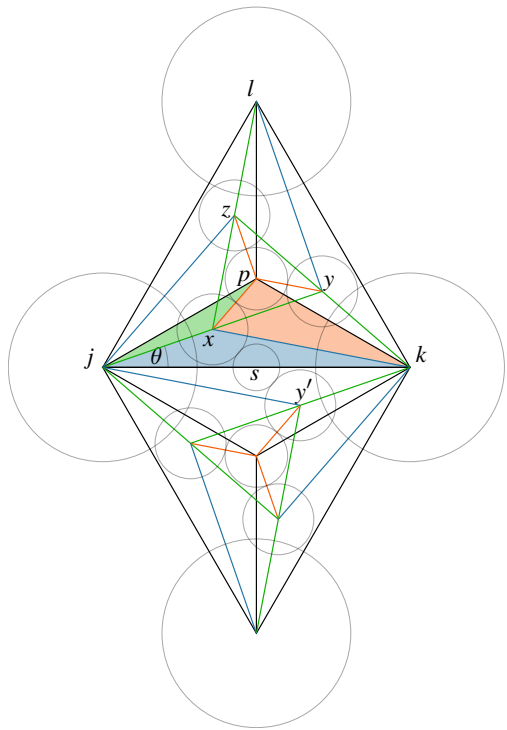}
  \caption{A single $(\frac{1}{w})$-ball in a $\{2,3,6\}$-cusp triangle $D$}
  \label{fig:appendix-236}
\end{figure}
As explained in Section \ref{step3a}, one obtains $w=\frac{\sqrt{3}}{d}$,  $\cos\theta=\frac{5}{2\sqrt{7}}$ and $d=\sqrt[\leftroot{-2}\uproot{2}4]{7}$. In particular, $\vol(C)=\frac{\sqrt{21}}{24}$.

In the following, we correct Adams' determination
of a fundamental polyhedron for $\Gamma$ and the volume computation for $O$ as given in \cite[p. 10]{Adams1}. Furthermore, we show that $O$ is arithmetic (see Remark \ref{21-24} in Section \ref{step3a}).

Consider the full-sized horoballs $B_j$, $B_k$ and $B_l$, as well as the
$(\frac{1}{d})$-balls centred at $x$, $y$ and $z$ as depicted in Figure \ref{fig:appendix-236}. Identify all horoball centres at height $t=1$ in $D$ with the corresponding points in the plane $\{\,t=0\}$.
Recall that the centres $j$, $x$, and $y$ are located on one line.
Denote by $r$ the order 2 rotation which interchanges the horoballs $B_\infty$ and $B_j$ as well as $B_k$ and $B_x$.

The $(\frac{1}{w})$-ball $B_s$ is actually the image of $B_y$ under this rotation $r$ and centred in $a_2$.
It is fixed by a rotation $s\in\Gamma_\infty$ of order 2 around a singular point $a_2$.
\newline
The image of the axis connecting $B_\infty$ and $B_p$ under the rotation $r$ is the geodesic connecting $B_j$ and $B_y$.
Hence the conjugate rotation $s':=rsr$ swaps $B_x$ and $B_\infty$ while fixing $B_j$ and $B_y$.

It is easy enough to find a fundamental polyhedron for a subgroup of $\Gamma$ described by the cusp diagram.
Consider the ideal tetrahedron with vertices $\infty$, $j$, $k$, and $p$.
Each of its ideal vertex triangles has angles $\frac{2\pi}{6}$, $\frac{\pi}{6}$ and
$\frac{\pi}{3}$.
Denote by $\rho$ the clockwise rotation of order 3 around the singular axis connecting the centres of $B_\infty$ and $B_p$.
The symmetries $s$, $s'$, and $\rho$ give face identifications which are visualised in Figure \ref{fig:faceids236}.
The faces $A$ and $B$ are identified with themselves by the appropriate rotation around the given axes.
The face $C$ gets mapped to $D$ by $\rho$.
This gives a {\it subproper side pairing} according to \cite[Section 13.4]{Rat-book} implying that $P$ is a fundamental polyhedron for a subgroup of finite index in $\Gamma$.
Denote this subgroup by $\Theta$.

\begin{figure}
  \centering
  \def\svgwidth{0.5\textwidth}
  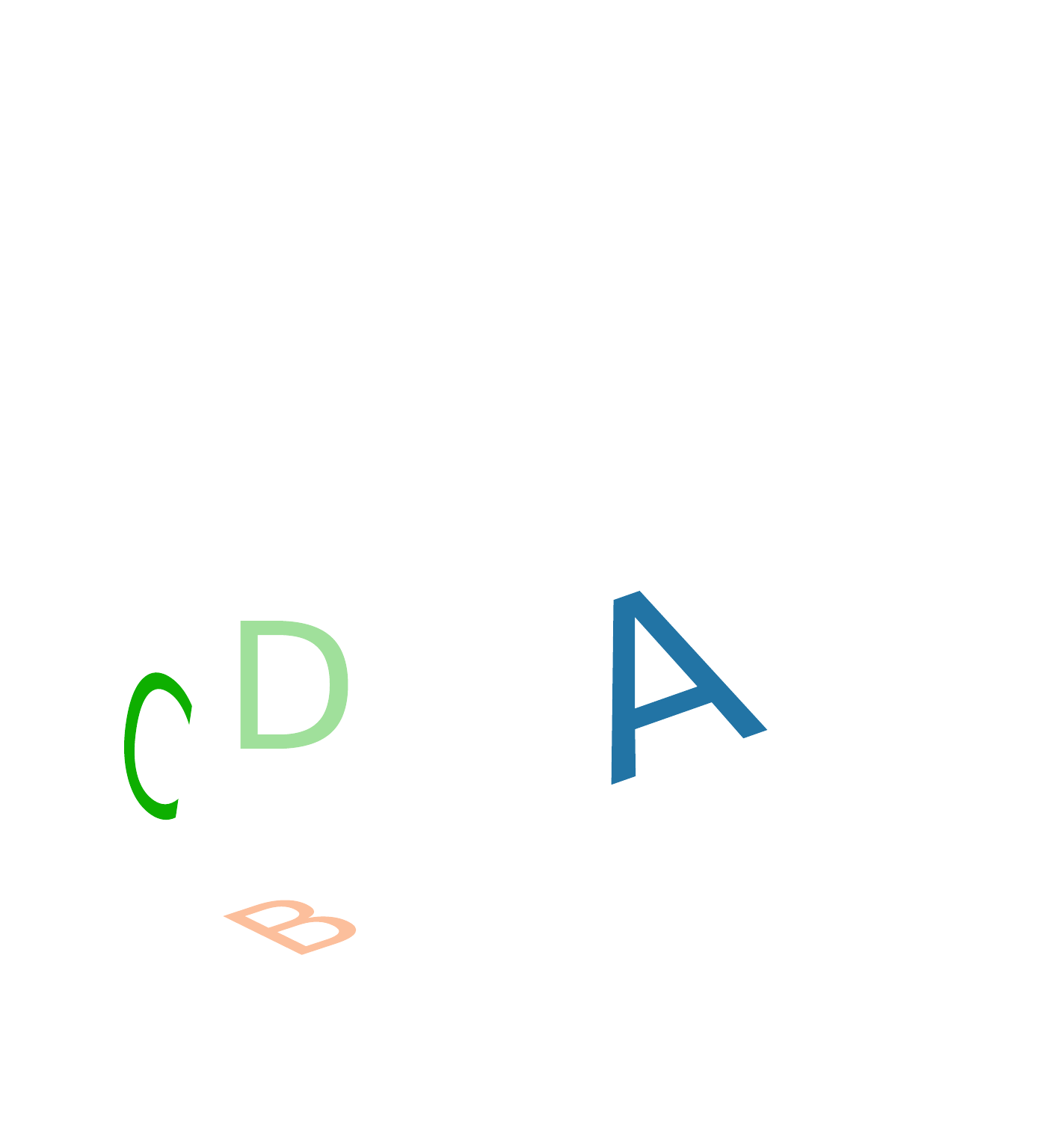
  \caption{Side pairings of $P$ for a subgroup of $\Gamma$}
  \label{fig:faceids236}
\end{figure}

From this we can deduce the arithmeticity of $\Gamma$ as follows.
A representation of $\Theta$ in $\PSL_2(\CC)$ is given by
\begin{align*}
  s&\mapsto \begin{pmatrix}
      i & -i \\
      0 & -i \\
    \end{pmatrix}, &
  \rho & \mapsto \begin{pmatrix}
      -\omega & \omega^2\\
      0 & \omega^2
    \end{pmatrix}, &
  s' & \mapsto \begin{pmatrix}
      i & 0\\
      i \,d^2 \,\ee^{-i\,\beta} & -i
    \end{pmatrix}
    =\begin{pmatrix}
        i & 0\\
        -\frac{\sqrt{3}}{2}+i\frac{5}{2} & -i
      \end{pmatrix},
\end{align*}
where $\omega=\frac{1}{2}+i\frac{\sqrt{3}}{2}$ is a primitive sixth root of unity.
The trace field is $k\Gamma=\QQ(\omega)$.
This can be seen by utilising \cite[Theorem 3.5.9]{MR1} and by taking $\rho^{-1}$, $\rho^{-1} s$, and $s\,rsr$ as generators for the subgroup $\Theta$.
By means of \cite[Theorem 8.3.2]{MR1}, it follows that $\Gamma$ is arithmetic, and the result \cite[Theorem 8.4.1]{MR1} in conjunction with \cite[Theorem 3.3.8]{MR1} allows us to deduce that $\Gamma$ is even commensurable to the $[3,3,6]$.

In order to obtain $\Gamma$, one needs to add $r$ as a generator to $\Theta$.
Replacing the generator $\rho$ by $s\rho$ (a rotation around the axis connecting $\infty$ and $j$) makes it easy to show that $\Gamma = \Theta\rtimes \langle r\rangle$ is a semidirect product.
The subgroup $\Theta$ has index $2$ in $\Gamma$, so half of $P$ is a fundamental polyhedron for $\Gamma$.
This allows us to calculate the volume of the fundamental polyhedron $P$ and of the orbifold $O$ as follows (see Section \ref{volume}).
\begin{equation*}
  \vol(O)=\text{vol}(P) = \frac{1}{2}\,\left\lbrace\loba\left( \frac{2\pi}{3} \right)+2\,\loba\left( \frac{\pi}{6}\right) \right\rbrace
   = \loba\left( \frac{\pi}{3} \right) \approx 0.338314\,\,.
\end{equation*}


\begin{remark}\label{loba-ident1}
Based on Adams' original explanation \cite[p. 10]{Adams1}, there is a dissection of the polyhedron $P$ and its image under $rsr$ into three ideal tetrahedra $T(\infty, j, k, x)$, $T(\infty, j, p, x)$, and $T(\infty, k, p, x)$.
  They are represented by the coloured triangles in Figure \ref{fig:appendix-236}.
  The blue and green tetrahedra have angles $\theta, \frac{\pi}{6}-\theta$ and $\frac{5\pi}{6}$, where $\cos\theta=\frac{5}{2\sqrt{7}}$ as above.
  The red tetrahedron has angles $\theta$, $\frac{\pi}{3}-\theta$, and $\frac{2\pi}{3}$.
  This is useful in order to obtain the following new identity\footnote{
    To the best of the authors knowledge, this identity is new.
  } for the Lobachevsky function evaluated at $\frac{\pi}{3}$.
  \begin{align*}
    \loba\left( \frac{\pi}{3} \right) & =
    \frac{1}{4}\left(
      2\left( \loba\left( \theta \right)+\loba\left( \frac{\pi}{6}-\theta \right)+\loba\left( \frac{5\pi}{6} \right) \right)
      +\loba\left( \theta\right)+\loba\left( \frac{2\pi}{3}-\theta \right)+\loba\left( \frac{\pi}{6} \right) \right)\\
       & = \frac{3}{4}\,\loba\left( \theta \right)+\frac{1}{2}\,\loba\left( \frac{\pi}{6}-\theta \right)+\frac{1}{4}\,\loba\left( \frac{2\pi}{3}-\theta \right)-\frac{1}{4}\,\loba\left( \frac{\pi}{6} \right)\quad\hbox{\rm with}\quad \cos\theta=\frac{5}{2\sqrt{7}}\,\,.
  \end{align*}
\end{remark}


\subsection{The case $\{2,4,4\}$}\label{subsec:appendix2}
As in Appendix \ref{subsec:appendix1}, consider a discrete group $\Gamma\subset\hbox{Isom}^+\mathbb H^3$
of orientation preserving isometries whose quotient space $O$ has exactly one cusp of the form $C=B_{\infty}/\Gamma_{\infty}$ giving rise to full-sized horoballs of diameter $1$ in the upper half space $U^3$.

Suppose that $C$ is of type $\{2,4,4\}$ and that the group $\Gamma_{\infty}$ has only one equivalence class of full-sized horoballs, all centred at singular points of order $4$ in the cusp diagram $D$, and at distance $d>1$ from one another.
Assume that the associated $(\frac{1}{d})$-balls do not touch one another so that each $(\frac{1}{d})$-ball yields $(\frac{1}{w})$-balls.

Suppose that the four $(\frac{1}{w})$-balls associated to the four
$(\frac{1}{d})$-balls in $D$ coincide and, hence, have  centre equal to the centre of the square $D$. Furthermore, the circumradius of $D$ coincides with $\frac{1}{w}$ so that $w=\frac{\sqrt{2}}{d}$.

In a completely analogous manner as in Appendix \ref{subsec:appendix1}, and by means of \eqref{eq:3}, one can show that $d=\sqrt[\leftroot{-2}\uproot{2}4]{5}$ and $\cos\theta=\frac{2}{\sqrt{5}}$. Since the cusp diagram $D$ has no mirror symmetry, the cusp volume yields the large value $\vol(C)=\frac{\sqrt{5}}{8}>v_*$; see Figure \ref{fig:einsdurchw244}.

Furthermore, by performing similar computations as in Appendix \ref{subsec:appendix1}, one can deduce that this configuration gives rise to a unique oriented orbifold $O=\mathbb H^3/\Gamma$ that is arithmetic and in fact commensurable to the Coxeter group $[3,4,4]$.

\begin{figure}
  \centering
  \def\svgwidth{0.5\textwidth}
  \footnotesize
  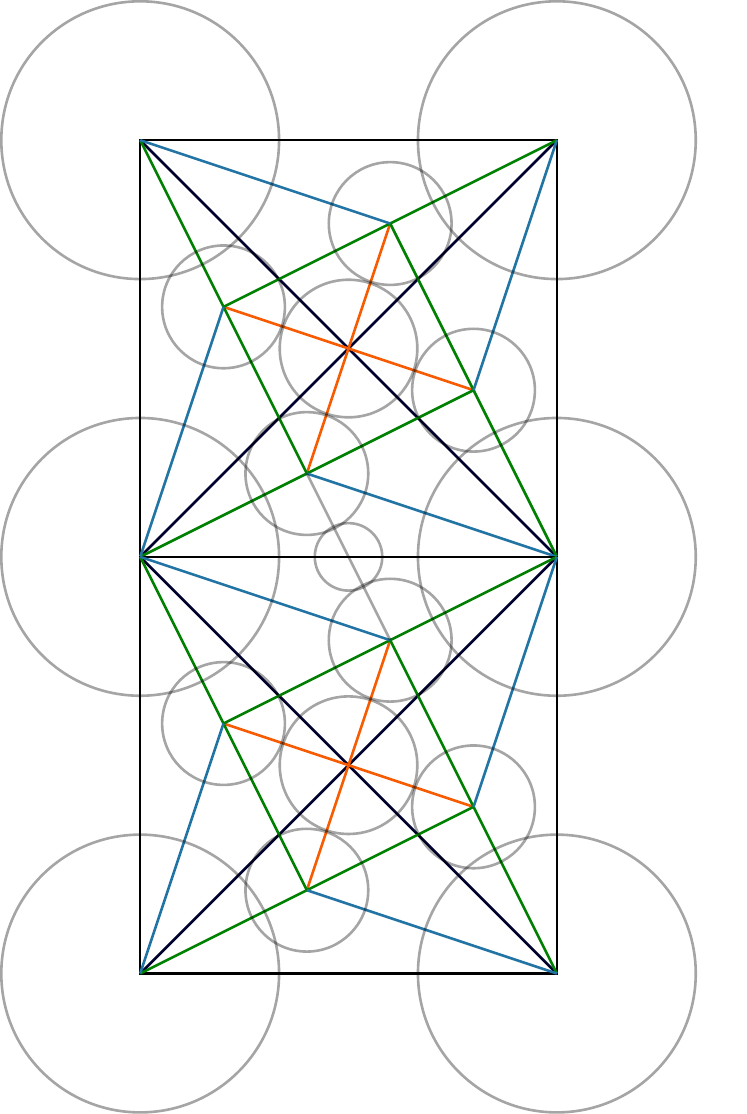
  \caption{The cusp diagram $D$ for the cusp of type $\{2,4,4\}$ with a single $(\frac{1}{w})$-ball at its centre}
  \label{fig:einsdurchw244}
\end{figure}

\begin{remark}\label{loba-ident2}
By a suitable decomposition of a fundamental polyhedron for $\Gamma$ as above, a similar identity for the value $\loba\left(\frac{\pi}{4} \right)$
of the Lobachevsky function in terms of the angle $0<\theta<\frac{\pi}{4}$ satisfying
$\cos\theta=\frac{2}{\sqrt{5}}$ is obtained. This value also gives the volume of the orbifold $O$ as follows.
\begin{align*}
  \loba\left( \frac{\pi}{4} \right)
  & = \frac{1}{4}\left\{
    2\left( \loba\left( \theta \right)+\loba\left( \frac{\pi}{4}-\theta \right)+\loba\left( \frac{3\pi}{4} \right) \right)
  +\loba\left( \theta\right)+\loba\left( \frac{\pi}{2}-\theta \right)+\loba\left( \frac{\pi}{2} \right) \right\rbrace\\
  & = \frac{3}{4}\,\loba\left( \theta \right)
        +\frac{1}{2}\,\loba\left( \frac{\pi}{4}-\theta \right)
        +\frac{1}{4}\,\loba\left( \frac{\pi}{2}-\theta \right)
        -\frac{1}{4}\,\loba\left( \frac{\pi}{4} \right)\\
        & \approx 0.45983\quad \hbox{\rm with}\quad\cos\theta=\frac{2}{\sqrt{5}}\,\,.
\end{align*}
\end{remark}
\bibliography{536}
\bibliographystyle{amsplain}

\end{document}